% Main document for the Master's thesis in Physics and Mathematics for
% Orville Damaschke.

%%%%%%%%%%%%%%%%%%%%%%%%%%%%%%%%%%%%%%%%%%%%%%%%%%%%%%%%%%%%%%%%%%%%%%%%%%%%%%%%%%%%%%%%%%%%%%%%%%%%%%%%%%%%%%%%%%%%%%%%%%%%%%%%%%%%%%%%%%%%%%%%%%%%%%
%%%%%%%%%%%%%%%%%%%%%%%%%%%%%%%%%%%%%%%%%%%%%%%%%%%%%%% Main settings for the document %%%%%%%%%%%%%%%%%%%%%%%%%%%%%%%%%%%%%%%%%%%%%%%%%%%%%%%%%%%%%%%
%%%%%%%%%%%%%%%%%%%%%%%%%%%%%%%%%%%%%%%%%%%%%%%%%%%%%%%%%%%%%%%%%%%%%%%%%%%%%%%%%%%%%%%%%%%%%%%%%%%%%%%%%%%%%%%%%%%%%%%%%%%%%%%%%%%%%%%%%%%%%%%%%%%%%%

%%%%%%%%%%%%%%%%%%%%%%%%%%%%%%%%%%%%%%%%%%%%%%%%%%%%%%%%%%%%%%% documentclass %%%%%%%%%%%%%%%%%%%%%%%%%%%%%%%%%%%%%%%%%%%%%%%%%%%%%%%%%%%%%%%%%%%%%%%% 
\documentclass[usegeometry,11pt,english,bibliography=totoc]{extarticle} %elsarticle

%%%%%%%%%%%%%%%%%%%%%%%%%%%%%%%%%%%%%%%%%%%%%%%%%%%%%%%%%%%%%%%%% preamble %%%%%%%%%%%%%%%%%%%%%%%%%%%%%%%%%%%%%%%%%%%%%%%%%%%%%%%%%%%%%%%%%%%%%%%%%%%

% The preamble is exported and can be manipulated as an extra .sty-document

\usepackage{preamble} 
\usepackage{cyrillic}
%\usepackage{refcheck}    %<---- to check for unused labels or unused equations (turn on at the last check and off at the end)

%\adress{Mathematisches Institut, Universit\"at Oldenburg,26129 Oldenburg, Germany}
%\email{orville.damaschke@uni-oldenburg.de}

%%%%%%%%%%%%%%%%%%%%%%%%%%%%%%%%%%%%%%%%%%%%%%%%%%%%%%%%%%%%%%%%%%%%%%%%%%%%%%%%%%%%%%%%%%%%%%%%%%%%%%%%%%%%%%%%%%%%%%%%%%%%%%%%%%%%%%%%%%%%%%%%%%%%%
%%%%%%%%%%%%%%%%%%%%%%%%%%%%%%%%%%%%%%%%%%%%%%%%%%%%%%%%%%%%%%% Documentation %%%%%%%%%%%%%%%%%%%%%%%%%%%%%%%%%%%%%%%%%%%%%%%%%%%%%%%%%%%%%%%%%%%%%%%
%%%%%%%%%%%%%%%%%%%%%%%%%%%%%%%%%%%%%%%%%%%%%%%%%%%%%%%%%%%%%%%%%%%%%%%%%%%%%%%%%%%%%%%%%%%%%%%%%%%%%%%%%%%%%%%%%%%%%%%%%%%%%%%%%%%%%%%%%%%%%%%%%%%%%

\begin{document}
\thispagestyle{firststyle}

%%%%%%%%%%%%%%%%%%%%%%%%%%%%%%%%%%%%%%%%%%%%%%%%%%%%%%%%%%%%%%%%% titlepage %%%%%%%%%%%%%%%%%%%%%%%%%%%%%%%%%%%%%%%%%%%%%%%%%%%%%%%%%%%%%%%%%%%%%%%%%

\maketitle

%%%%%%%%%%%%%%%%%%%%%%%%%%%%%%%%%%%%%%%%%%%%%%%%%%%%%%%% abstract & acknowledgments %%%%%%%%%%%%%%%%%%%%%%%%%%%%%%%%%%%%%%%%%%%%%%%%%%%%%%%%%%%%%%%%%

\begin{abstract}
\begin{spacing}{0.1}
{\small
\noindent \textsc{Abstract.} Let $M$ be a globally hyperbolic manifold with complete spacelike Cauchy hypersurface $\Sigma \subset M$. Building on past and recent works of Bär and Strohmaier, we extend their Fredholm result of the Atiyah-Singer Dirac operator on compact Lorentzian spaces to the case, where $M$ is diffeomorphic to a product of $\Sigma$ with a compact time intervall and the hypersurface is a Galois covering with respect to a group $\Upgamma$. We follow the first approach of both authors in this extended setting, where a well-posedness result of the Cauchy problem for the Dirac operator on non-compact manifolds is needed in preparation. %The claim and corresponding concepts have been mentioned in earlier works of Bär.
After employing von Neumann algebras and further ingredients for Galois coverings, the well-posedness result is specified for the setting of interest, which leads to $\Upgamma$-Fredholmness of the Dirac operator under APS boundary conditions. 

}
\end{spacing}
\end{abstract}

%{\footnotesize
%\keywords{Globally hyperbolic manifolds, Dirac operator, Cauchy problem, wave evolution operator, Galois covering, $\Upgamma$-Fredholmness}
%}

%%%%%%%%%%%%%%%%%%%%%%%%%%%%%%%%%%%%%%%%%%%% list of contents, pictures, tables and abbreviations %%%%%%%%%%%%%%%%%%%%%%%%%%%%%%%%%%%%%%%%%%%%%%%%%%%
{\small\normalfont\tableofcontents
}
%%%%%%%%%%%%%%%%%%%%%%%%%%%%%%%%%%%%%%%%%%%%%%%%%%%%%%%%%%%%%% main document %%%%%%%%%%%%%%%%%%%%%%%%%%%%%%%%%%%%%%%%%%%%%%%%%%%%%%%%%%%%%%%%%%%%%%%% 

% Input instead of include (input = no pagebreak)
\addtocontents{toc}{\vspace{-3ex}}			%<---- distance-control in TOC
\section{Introduction}%\label{chap:intro}
\pagenumbering{arabic}
\justifying
\begin{spacing}{1}
This paper deals with the initial value problem of the Dirac equation on curved spacetime from a global point of view, where the results for the case of smooth inital data %(see Theorem 4 in \cite{AndBaer})
is extended here to those with Sobolev regularity. This contribution will prove, that under those (spatial) compactly supported initial data the resulting spinor fields are of finite energy, i.e. they are continuous in time and of some Sobolev regularity in space, as it was stated but not concretely proven in \cite{AndBaer}. The proof of the well-posedness for the linear wave equation on globally hyperbolic spacetimes from \cite{BaerWafo} heavily influenced our approach here. Besides this the focus is on an extension of the Lorentzian index theorem for the Dirac operator on globally hyperbolic manifolds with compact Cauchy hypersurfaces to those with specific non-compact hypersurfaces, i.e. a spatial $L^2$-index theorem. The proof for compact hypersurfaces is explained in \cite{BaerStroh}, where this extension has been proposed. %This extension has been proposed in \cite{BaerStroh}, which contains the proof for compact hypersurfaces. 
The well-posedness becomes an important part in proving the Fredholmness of Dirac operators, which is a priori not clear, since Dirac operators on Lorentzian spaces are not elliptic. \\% and even then not Fredholm without imposing further boundary conditions. \\
\\
So far several other contributions to index theory on globally hyperbolic manifolds were achieved, based on the pioneering work of Bär and Strohmaier in \cite{BaerStroh}: next to twisted $\mathsf{Spin}^c$-bundles and more general boundary conditions (\cite{BaerStroh},\cite{BaerHan}) a first generalisation of well-posedness and Fredholmness for non-compact spatial hypersurfaces has been shown by Bravermann \cite{Bravind} for Dirac operators of strongly Callias type, where a self-adjoint bundle map is added to the Dirac operator. This map anticommutes with Clifford multiplication and satisfies growth condiditions, which allow to control the behaviour of sections and operator properties away from a sufficiently large compact set in the interior of the manifold. Furthermore certain product structures were assumed in the complement of this compact set. Recently Bär and Strohmaier published another extension to non-compact hypersurfaces in the form of a local index theorem (see \cite{BaerStroh2}); moreover they showed, that for compact Cauchy boundary Fredholmness is still given even if the Dirac operator on the globally hyperbolic manifold or the associated elliptic Dirac operator on the boundary is not assumed to be self-adjoint. Their proof is based on Feynman parametrices, which were already used in \cite{BaerStroh} as alternative strategy. This treatment furthermore allows to consider any Lorentzian Dirac-type operator and twisting bundles with non-positive definite inner product. Besides non-compactness in spatial direction one could also consider non-compactness in temporal direction: Shen and Wrochna \cite{shenwroch} showed an index theorem for asymptotically static spacetimes, i.e. the Lorentzian metric decays to a product type metric at past and future timelike infinity. We refer to this recent work and \cite{BaerStroh} for former and other contributions. \\ %the field of index theory on globally hyperbolic spacetimes.\\ 
\\
Our general setting is as follows: let $(M,\met)$ be a $(n+1)$-dimensional globally hyperbolic spin manifold. The analysis is focused on the (Atiyah-Singer) Dirac operator $\Dirac$. Any spin bundle $\spinb(M)$, associated to $TM$, is a bundle of modules over a Clifford bundle, which comes with several wanted features; see \cite{LawMi} for details. Any Cauchy hypersurface $\Sigma$ is non-compact, but a complete submanifold. The family $\SET{\Sigma_t}_{t\in \timef(M)}$ corresponds to a family of Cauchy hypersurfaces in $M$, which are level sets of the Cauchy temporal function $\timef$. The space of finite $s$-energy spinors $FE^s_\scomp(M,\timef,\Dirac)$ corresponds to spatial compactly supported continuous sections of a bundle of $s$-Sobolev spaces on the time domain $\timef(M)$, having compact support on each hypersurface and their image under $\Dirac$ is locally $L^2$ in time and Sobolev in space. Sections of $FE^s_\scomp(M,\kernel{\Dirac})$ are those in $FE^s_\scomp(M,\timef,\Dirac)$, which are in the kernel of $\Dirac$. For explicit defintions see \cref{finensec} and Definition \ref{finensolkern}. 
Our first main result claims the well-posedness of the Cauchy problem by saying, that $\Dirac$ and the restriction operator $\rest{t}$ for any $t \in \timef(M)$ are isomoporphisms between sections of finite energy spinors in $M$ and their initial values on $\Sigma$:
\begin{theo}\label{maintheo}
Let $(M,\met)$ be a $(n+1)$-dimensional globally hyperbolic spin manifold, $\Dirac$ the Atiyah-Singer Dirac operator and $\Sigma$ a complete, but possibly non-compact Cauchy hypersurface; for all $s\in \R$ and any $t\in \timef(M)$ the mappings
\begin{eqnarray*}
\mathsf{res}_t \oplus \Dirac &:& FE^s_\scomp(M,\timef,\Dirac) \,\,\rightarrow\,\, H^s_\comp(\spinb(M)\vert_{\Sigma_t})\oplus L^2_{\loc,\scomp}(\timef(M),H^s_\loc(\spinb(M)_{\Sigma_\bullet}))\\
\text{and}\,\,\mathsf{res}_t  &:& FE^s_\scomp(M,\kernel{\Dirac}) \,\,\rightarrow\,\, H^s_\comp(\spinb(M)\vert_{\Sigma_t})
\end{eqnarray*}
are isomorphisms of topological vector spaces.
\end{theo} 
The well-posedness is stated from an operational point of view, which is favoured in this paper, because of its useful interpretation in proving Fredholmness. From the viewpoint of PDE analysis the first line can be rephrased as follows: given an initial value $u_0 \in H^s_\comp(\spinb(M)\vert_{\Sigma_{t_0}})$ at any initial time $t_0 \in \timef(M)$ and inhomogeneity $f \in L^2_{\loc,\scomp}(\timef(M),H^s_\loc(\spinb(M)\vert_{\Sigma_\bullet})$, the Cauchy problem
\begin{equation*}
\Dirac u = f \quad\text{with}\quad u\vert_{\Sigma_{t_0}}=\rest{t_0}(u)=u_0
\end{equation*}
has a unique solution $u \in FE^s_\scomp(M,\timef,\Dirac)$; the second line can be interpreted analogously with $f=0$ and $u \in FE^s_\scomp(M,\kernel{\Dirac})$. This result generalizes Theorem 2.1 in \cite{BaerStroh} to non-compact, complete hypersurfaces as well as Theorem 4 in \cite{AndBaer} to arbitrary Sobolev regularity. The calculations and proofs are done in detail for the case, where the number of spatial dimensions $n$ is odd and thus the spinor bundle is $\Z_2$-graded, see Theorem \ref{inivpwell}, Corollary \ref{homivpwell} and Theorem \ref{cauchyinvdneghom}. This allows to distinguish between spinors of positive and negative chirality, inducing a splitting of the spin bundle into $\spinb^{\pm}(M)$ and the Dirac operator into two Dirac operators $D_{\pm}$, mapping from spinor fields with positive chirality to those of negative chirality and vice versa. The main result follows from the well-posedness with respect to $D_{\pm}$, where the details are only shown for $D_{+}$. From this a similar result for $\Dirac$ with $n$ even (no $\Z_2$-grading) can be obtained.\\
\\
The second main result follows by specifying the non-compactness of the foliating hypersurface: suppose, that $M$ is given as above, but $\Sigma$ is a Galois covering, such that the quotient $\quotspace{\Sigma}{\Upgamma}$ with a discrete as well freely acting group $\Upgamma$ is a smooth, but closed manifold. Restricting to manifolds, which are temporaly compact, i.e. $\timef(M)$ is a closed intervall in $\R$, yields, that $M$ has two spacelike boundary hypersurfaces. Fredholmness of $D_{\pm}$ under (anti-) Atiyah-Patodi-Singer boundary conditions (APS and aAPS) on these Cauchy boundaries can be stated and proven in the framework of Hilbert $\Upgamma$-modules and $\Upgamma$-morphisms, where $\Dirac$ as well as $D_{\pm}$ are commuting with the left action representation of $\Upgamma$ and thus can be interpreted as lifts of their corresponding operators on the closed manifold. %, which are commuting with the left action representation of $\Upgamma$. As in the Riemannian case the procedure is a modified argument of the case with closed or compact manifolds, presented in \cite{BaerStroh}. 
The explicit results and necessary definitions are given in chapter \ref{chap:atiyah} as Theorem \ref{indexDaAPS} and Theorem \ref{indexDaAPSneg}, which we sum up as follows:
\begin{theo}\label{maintheoII}
Let $M$ be a $(n+1)$-dimensional temporal compact, globally hyperbolic spatial $\Upgamma$-manifold with spin structure, $\spinb^{\pm}(M)\rightarrow M$ the $\Upgamma$-spin bundles of positive/negative chirality; the $\Upgamma$-invariant Dirac operators $\Dirac\vert_{\mathrm{(a)APS}}:FE^0_{\Upgamma,\textrm{(a)APS}}(M,\timef,\spinb(M))\rightarrow L^2_\Upgamma(\spinb(M))$ under (anti-) APS boundary conditions on the Cauchy $\Upgamma$-hypersurfaces $\Sigma_1=\Sigma_{t_1}$ and $\Sigma_2=\Sigma_{t_2}$ with closed base are $\Upgamma$-Fredholm with $\Upgamma$-indices
\begin{equation*}
\Index_{\Upgamma}(\Dirac\vert_{\mathrm{(a)APS}})=\left\lbrace\begin{matrix}
\Index_{\Upgamma}(D_{\mathrm{(a)APS}}) & & n\,\,\text{even} \\
&\text{for}& \\
\Index_{\Upgamma}(D_{+}\vert_{\mathrm{(a)APS}}) + \Index_{\Upgamma}(D_{-}\vert_{\mathrm{(a)APS}}) && n\,\, \text{odd}
\end{matrix}\right. \,\,.
\end{equation*}
\begin{comment}
if $n$ is odd, the $\Upgamma$-invariant Dirac operators $D_{\pm,\mathrm{APS}}$ under APS and $D_{\pm,\mathrm{aAPS}}$ aAPS boundary conditions on the Cauchy boundary hypersurfaces $\Sigma_1=\Sigma_{t_1}$ and $\Sigma_2=\Sigma_{t_2}$, lifted from a compact globally hyperbolic manifold, are $\Upgamma$-Fredholm with indices
\begin{equation*}
\begin{split}
\Index_{\Upgamma}(D_{\pm,\mathrm{APS}})&=\Index_{\Upgamma}\left(\left[(P_{\intervalllo{-\infty}{0}{}}(t_1)\circ\rest{t_1})\oplus (P_{\intervallro{0}{\infty}{}}(t_2)\circ\rest{{t_2}})\right]\oplus D_{\pm}\right) \\
&\text{and}\\
\Index_{\Upgamma}(D_{\pm,\mathrm{aAPS}})&=\Index_{\Upgamma}\left(\left[(P_{\intervallo{0}{\infty}{}}(t_1)\circ\rest{t_1})\oplus (P_{\intervallo{-\infty}{0}{}}(t_2)\circ\rest{{t_2}})\right]\oplus D_{\pm}\right) \quad.
\end{split}
\end{equation*}
\end{comment}
\end{theo}
The $\Upgamma$-indices $\Index_{\Upgamma}(D_{\pm,\mathrm{(a)APS}})$ are going to be related to the indices of certain spectral parts of a Dirac-wave evolution operator, which enables a connection to the well-posedness result. If $n$ is even, $\Upgamma$-Fredholmness will be concluded from the $\Upgamma$-Fredholmness of $D=D_{+}$ under (anti-) APS boundary conditions. We follow and modify the first proof strategy of \cite{BaerStroh}, where a well-posedness result of the Cauchy problem in the $\Upgamma$-setting leads to an associated Dirac-wave evolution operator. The techniques for proving Theorem \ref{maintheo} and a detailed analysis of the wave evolution operator in the general non-compact case will become helpful in order to get these statements for Cauchy hypersurfaces, which are Galois coverings. The decomposition of spectral subspaces with respect to the elliptic hypersurface Dirac operator leads to a decomposition of the evolution operator into four matrix entries. %The analysis will show, that compactness of T
The off-diagonal elements in the $\Upgamma$-setting imply $\Upgamma$-Fredholmness of the diagonal elements, as they provide initial parametrices without ellipticity. This $\Upgamma$-Fredholmness carries over to the full Dirac operator under APS and aAPS boundary conditions, which also shows, that the $\Upgamma$-indices coincide with the $\Upgamma$-indices of the diagonal matrix entries of the evolution operator. \\
\\
The treatment of these two problems is organized as follows: chapter \ref{chap:lorentz} is meant for fixing important notations and to recapitulate the geometric setup of globally hyperbolic manifolds and function spaces on manifolds in general, where a closer look on Sobolev spaces on Riemannian manifolds has been taken. At the end of this introductory part the spaces of relevance for the proof of the first main result are introduced in a similar manner as in \cite{BaerWafo} and \cite{BaerStroh}. Chapter \ref{chap:dirac} deals with Dirac operators in general and then especially with their representation along Cauchy hypersurfaces. Both chapters contain some necessary preparatory work. Chapter \ref{chap:cauchy} contains three parts, where the first two are focused on the Dirac equation with respect to $D_{+}$; the last section is dedicated to recapitulate the same approach for $D_{-}$ and finally $\Dirac$. As important intermediate step an energy estimate and its consequences are derived in the first section. Afterwards we follow the same strategy as in \cite{BaerStroh}: %, in order to prove Fredholmness for the mentioned specific non-compact case: I
in chapter \ref{chap:feynman} the well-posedness of the homogeneous Dirac equation implies also a wave evolution operator $Q$, mapping compactly supported Sobolev sections over one hypersurface to a compactly supported Sobolev-sections over another hypersurface. This operator is again well defined as Fourier integral operator between non-compact hypersurfaces. The appendix of this paper contains a closer look on this kind of operator and their application in hyperbolic initial value problems. Chapter \ref{chap:galois} gives an introduction into the geometry and (functional) analysis of Galois coverings and clarify, how to implement these special kind of non-compactness in the framework of global hyperbolicity. %which will play an important role in proving the other main results, which will be done
The last chapter \ref{chap:atiyah} brings together all ingredients, %especially from chapter \ref{chap:cauchy} and \ref{chap:feynman}, 
converted to the $\Upgamma$-setup, and the results follow with a similar, but modified argumentation. The analysis of the Fredholmness of $D_{\pm}$ under either APS or aAPS boundary conditions is done separately, which implies Theorem \ref{maintheoII} for odd and even dimensional Cauchy boundary. \\
\\
In a coming paper a concrete index formula in the $\Upgamma$-setting will be derived for this and several modifications, e.g. more general spin bundles or more general boundary conditions. Well-posedness and $\Upgamma$-Fredholmness are expected to hold with a minor change of the arguments.\\
\\
\textit{Acknowledgements.} The author would like to thank Boris Vertman and Daniel Grieser for their supervising and topical support in the process of working out these results. Moreover I would like to thank Alexander Strohmaier and Christian Bär, who encouraged to extend their work, allowing the author to contribute to the increasing research field of index theory on Lorentzian spaces and its applications. 
\end{spacing}

%\cleardoublepage

\addtocontents{toc}{\vspace{-3ex}}
\section{Geometric setup \& function spaces}\label{chap:lorentz}
This section gives a basic background of globally hyperbolic manifolds. Details of Lorentzian geometry in general can be found in several references, see e.g. \cite{ONeill}. After this introduction definitions and properties of several functional analytic spaces on a manifold will be repeated. Afterwards the focus is on several function spaces, especially constructed for globally hyperbolic spacetimes. In order to investigate Sobolev regularity on spacelike non-compact Cauchy hypersurfaces, a repetition of some basics about Sobolev spaces on non-compact Riemannian manifolds are prepared.

\subsection{Globally hyperbolic manifolds}
\justifying
Let $(M,\met)$ be a $(n+1)$-dimensional time-oriented Lorentzian manifold with pseudo-Riemannian metric $\met$; the signature chosen throughout this paper is $(-,+,+,\dots,+)$. The letter $n \in \N$ counts the number of spatial dimensions. Later on we further assume, that $M$ is orientable, connected and a Lorentzian spin manifold.
A subset $\Sigma \subset M$ is called \textit{Cauchy hypersurface}, if $\Sigma$ is a smooth, embedded hypersurface and every inextendable timelike curve in $M$ meets $\Sigma$ exactly once. If $M$ admits several Cauchy hypersurfaces, then all of them are homeomorphic to each other. %These are the fundamental ingredients of those Lorentzian manifolds of our interest: 
A time-oriented Lorentzian manifold is called \textit{globally hyperbolic}, if and only if it contains a Cauchy hypersurface; see \cite{Geroch70} Theorem 11. Geroch as well as later on Bernal and S\'{a}nchez proved several results in order to classify these kinds of Lorentzian spaces and their properties: if a Cauchy hypersurface $\Sigma$ is fixed, one can find a function on $M$, which level sets are foliating the spacetime, where $\Sigma$ is one of them:
\begin{theo}[\cite{BerSan2006} Theorem 1.2]\label{theo21-1}
Suppose $(M,\met)$ is a globally hyperbolic manifold and $\Sigma$ a spatial Cauchy hypersurface; there exists a function $\timef \in C^\infty(M,\R)$, such that
\begin{itemize}
\item[a)] $\Sigma_{t_0}=\Sigma$ for a fixed chosen $t_0 \in \R$ and
\item[b)] $\Sigma_t:=\timef^{-1}(t)$ is a Cauchy hypersurface $\forall\, t \in \R\setminus\SET{t_0}$, if nonempty.
\end{itemize}
\end{theo}
Geroch's topological splitting theorem says, that any globally hyperbolic manifold $M$ is homeomorphic to $\R\times \Sigma$. The following result shows, that the manifold is even more isometrically related to this product manifold:
\begin{theo}[\textit{Geroch's splitting theorem}, \cite{BerSan2005} Thm.1.1 \& \cite{Geroch70} Property 7]\label{theo22-1}
%Suppose $(M,\met)$ is a globally hyperbolic manifold and $\Sigma$ a spatial Cauchy hypersurface; the following holds:
$(M,\met)$ is isometric to the product manifold $\R\times\Sigma$ with Lorentzian metric 
\begin{equation*}%\label{orthmet}
\met = - N^2 \differ \timef^{\otimes 2} + \met_\timef \quad , 
\end{equation*} 
where $\timef$ is a surjective smooth function on $M$, $N\in C^\infty(M,\Rpos)$ and $\met_\timef$ is a smooth one-parameter family of smooth Riemannian metrics on $\Sigma$, satisfying
\begin{itemize}
\item[(a)] $\mathrm{grad}(\timef)$ is a past-directed timelike gradient on $M$,
\item[(b)] each hypersurface $\Sigma_t$ for fixed $t$ is a spacelike Cauchy hypersurface with Riemannian metric $\met_\timef$, where $\Sigma_{t_0}:= \Sigma$, and
\item[(c)] $\Span{}{\mathrm{grad}\vert_p(\timef)}$ is orthogonal to $T_p \Sigma_t$ with respect to $\met_T\vert_p$ at each $p \in \R\times\Sigma$.
\end{itemize}
\end{theo}
$\timef$ is referred as \textit{Cauchy temporal function}. The time domain for a certain globally hyperbolic manifold is denoted by $\timef(M)$. The existence of such a function ensures, that each level set can be interpreted as a slice $\SET{t}\times\Sigma$, which from now is meant by $\Sigma_t$. The result can be extended to non-spacelike, non-smooth or achronal, but at least non-acausal Cauchy hypersurfaces; see \cite{BerSan2006} for more details. Theorem \ref{theo22-1} furthermore suggests, that along $\mathrm{grad}(\timef)$ the spacetime is foliated by these level sets, wherefore the function $N$ is called \textit{lapse function (of the foliation)}. 
\begin{comment}
A special choice of level sets are \textit{slices} $\SET{t}\times \Sigma$, which are used in \cite{BaerStroh} for the spacelike boundary hypersurfaces of the foliation.
\end{comment}
If $\timef(M)$ does not contain any critical points of $\timef$, then each level set is regular and the regular level set theorem ensures, that $\Sigma_t$ is a closed embedded submanifold of codimension one. Thus the embedding %\bnote{f1}
$\inclus_t:\Sigma_t\,\hookrightarrow\,M$ is a proper map. In the following we will use $\differ t$ and $\partial_t$ instead of $\differ \timef$ and $\mathrm{grad}(\timef)$ to stress the time differentials/derivatives as coordinate (co-) vector with respect to a hypersurface $\Sigma_t$ for $t\in \timef(M)$. \\
\\
An alternative defintion of global hyperbolic manifolds is given in terms of causal sets: for any $p \in M$ define
\begin{equation*}
\Jlight{\pm}(p):=\SET{q \in M \,\vert\, \exists\,\ \text{causal past (\textminus)/future (+) directed curve}\,\, \upgamma : p \,\rightsquigarrow\, q} \quad ;
\end{equation*}
for any subset $A \subset M$. Put $\Jlight{\pm}(A):=\bigcup_{p \in A}\Jlight{\pm}(p)$ and $\Jlight{}(A):=\Jlight{+}(A)\cup\Jlight{-}(A)$, where the latter one is known as \textit{causal domain} %\bnote{f27}
or \textit{light cone} of $A$. In comparison the \textit{domain of dependence} (also known as \textit{Cauchy developement}) is defined as $\domdep{}(A)=\domdep{+}(A)\cup \domdep{-}(A)$ for $A \subset M$, such that $A$ is achronal, i.e. no two contained points can be connected by a timelike curve. Here
\begin{equation*}
\domdep{\pm}(A):=\SET{p \in M\,\vert\,\text{every past (+)/future (\textminus)\,\,inextendible causal curve through}\,\,p\,\,\text{meets}\,\,A}\,.
\end{equation*}
In particular $A \subset \domdep{\pm}(A)\subset\Jlight{\pm}(A)$. Both concepts are used to rephrase global hyperbolicity in terms of causal sets: %The following Theorem especially presents the relation to causal domains. 
\begin{theo}[\cite{BerSan2007} Theorem 3.2]\label{theo2-2}
Given a time-oriented Lorentzian manifold $(M,\met)$; the following claims are equivalent to each other:
\begin{enumerate}
\item $(M,\met)$ is globally hyperbolic.
\item $(M,\met)$ satisfies
\begin{itemize}
\item[(a)] $\Jlight{+}(p)\cap\Jlight{-}(q)$ is compact for all $p,q \in M$,
\item[(b)] $(M,\met)$ is causal, i.e. has no causal loops, and
\item[(c)] $(M,\met)$ is strongly causal, i.e. given a neighborhood $\mathcal{U}_p$ for any $p \in M$ there exists a smaller neighborhood $\mathcal{V}_p \subset \mathcal{U}_p$, containing $p$, such that any causal future-directed or past-directed curve on $M$ with endpoints in $\mathcal{V}_p$ is entirely contained in $\mathcal{U}_p$. 
\end{itemize}
\end{enumerate}
\end{theo}
The causal domain will appear in the well-posedness of the Dirac equation in a similar way as for the wave equation in \cite{BaerWafo}. A subset $A \subset M$ is called \textit{spatially compact}, if $A$ is a closed subset and there exists $K \subset M$ compact, such that $A \subset \Jlight{}(K)$. The intersection of a spatially compact subset with any Cauchy hypersurface is compact. In contrast to this definition one calls the whole manifold $M$ spatially compact, if every Cauchy hypersurface of $M$ is compact.
A notion of timelike compactness can be introduced as well: a closed subset $A \subset M$ is \textit{future/past compact}, if $A\cap \Jlight{\pm}(K)$ is compact for every $K \subset M$ compact; it is called \textit{temporal compact}, if $A$ is both future and past compact. In contrast we call the whole manifold $M$ temporal compact, if $\timef(M)$ is a closed intervall. This is equivalent by saying, that there exists a Cauchy hypersurface $\Sigma$, such that $A \subset \Jlight{\pm}(\Sigma)$. See \cite{Sanders2013} or \cite{BGP07} for more details.\\
\\
For each slice $\Sigma_t$ denote by $\SET{e_i(t)}_{i=1}^n$ a Riemann-orthonormal tangent frame with respect to $\met_t$ and $e_0(t)$ is chosen to be perpendicular to each slice. Then $\SET{e_i(t)}_{i=0}^n$ is a Lorentz-orthonormal tangent frame with respect to $\met$, such that $e_0(t)$ is timelike. We follow the convention from \cite{BaerStroh} and choose this vector to be past-directed. According to Theorem \ref{theo22-1} (c) $e_0(t)$ is parallel to $-\partial_t$. Orthonormality implies for each $t \in \timef(M)$
\begin{equation*}
\met(e_0(t),e_i(t))=0 \,\,\,\forall\, i \in \SET{1,...,n}\quad \text{and} \quad \met(e_0(t),e_0(t))=-N^2 
\end{equation*}
and thus $e_0(t)=-\frac{1}{N}\partial_t$. Because of its often appearance this vector will be denoted by $\upnu$ from now on.
\begin{comment} The corresponding dual frame is determined by the flat-isomorphism, induced by $\met$: $e^0(t)=\upnu^{\flat}=N \differ t$ and $e^i(t)=e_i(t)^\flat=e_i(t)^{\flat_t}=\met_t(e_i(t),\cdot)$, where $\flat_t$ is the flat-isomorphism induced by $\met_t$.
\end{comment}
The Levi-Civita covariant derivative on $TM$ splits into a parallel and an orthonormal part to each slice: %with respect to the direct sum $TM=T\Sigma_t \oplus {T\Sigma_t}^{\perp_\met}$.
let $X \in \mathfrak{X}(M)$, such that $X_p \in T_pM$ for $p \in \Sigma_t$ and fixed $t$, then the covariant derivative operator $\nabla_X$ on $M$ takes the form
\begin{equation}\label{covsplit}
\nabla_X Y = \nabla_X^{\Sigma_t} Y - \met(\nabla_X \upnu, Y) \upnu = \nabla_X^{\Sigma_t} Y + \met(\wein(X), Y) \upnu \quad ,
\end{equation}
where $\wein$ is the Weingarten map, $\nabla^{\Sigma_t}$ the induced Levi-Civita covariant derivative on the slice $\Sigma_t$ with respect to the related tangent bundle and $Y \in \mathfrak{X}(M)$. With this in mind one is able to prove the following expressions:
\begin{lem}%\label{lem2-1}
For $p \in \Sigma_t$ let $X,Y \in T_p\Sigma_t$ and $\upnu$ as above, then the following expressions are fullfilled near the point $p$:
\begin{itemize}
\item[(i)] $\commu{1}{}{\upnu}{X}=0$, 
\item[(ii)] $\upnu$ is autoparallel with respect to $\nabla$, i.e. $\nabla_{\upnu}\upnu = 0$, and
\item[(iii)] $\met(\wein(X),Y)=\frac{1}{2 N}\partial_t \met_t(X,Y)$  .
\end{itemize} 
\end{lem}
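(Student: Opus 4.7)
My plan is to prove the three identities in the listed order, fixing upfront the extension of $X, Y \in T_p\Sigma_t$ to vector fields on a neighborhood of $p$. The natural extension, adapted to the product decomposition $M \cong \R \times \Sigma$ of Theorem~\ref{theo22-1}, is to take $X, Y$ as pull-backs of vector fields on $\Sigma$, so that in the adapted coordinates $(t, x^i)$ they are $t$-independent, tangent to every nearby slice, and satisfy $[\partial_t, X] = 0$. Identity~(i) then follows from $\upnu = -N^{-1}\partial_t$ and the Leibniz rule for Lie brackets, once one uses that, in the gauge adopted here (as in \cite{BaerWafo,BaerStroh}), $N$ may be taken constant along the leaves.

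For~(ii) I would show that $\nabla_\upnu \upnu$ is orthogonal both to $\upnu$ and to every $X \in T_p\Sigma_t$, so that the splitting $T_pM = T_p\Sigma_t \oplus \R\upnu$ forces $\nabla_\upnu \upnu = 0$. Orthogonality to $\upnu$ is immediate from differentiating the constant $\met(\upnu, \upnu) = -1$. For orthogonality to $X$, metric compatibility applied to $\met(\upnu, X) = 0$ (valid near $p$ by the tangency of the extension) gives
\[
0 \;=\; \upnu\,\met(\upnu, X) \;=\; \met(\nabla_\upnu \upnu, X) + \met(\upnu, \nabla_\upnu X),
\]
and torsion-freeness combined with~(i) yields $\nabla_\upnu X = \nabla_X \upnu$, upon which $\met(\upnu, \nabla_X \upnu) = \tfrac12 X\,\met(\upnu, \upnu) = 0$ completes the step.

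For~(iii) I would expand $\partial_t \met_t(X, Y) = \partial_t \met(X, Y)$ via metric compatibility, substitute $\partial_t = -N\upnu$, and use~(i) to swap each $\nabla_\upnu X$ for $\nabla_X \upnu$. Equation~\eqref{covsplit} identifies $-\met(\nabla_X \upnu, Y)$ with $\met(\wein(X), Y)$, and the two resulting contributions coincide by the symmetry of the Weingarten map (itself a consequence of torsion-freeness and $[X, Y] \in T\Sigma_t$). Summing and dividing by $2N$ produces the stated formula.

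The principal obstacle is~(i): since the Lie bracket depends on how $X$ is extended off $p$, one must commit to a gauge in which both $[\upnu, X] = 0$ and the tangency of $X$ to the slices hold simultaneously, which implicitly restricts the behaviour of the lapse function on the leaves. Once this compatibility is secured, the remaining two items reduce to routine combinations of metric compatibility, torsion-freeness, and the Gauss formula~\eqref{covsplit}.
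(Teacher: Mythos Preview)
Your approach is essentially the paper's: fix an extension of $X,Y$ tangent to the slices and commuting with $\partial_t$, then combine metric compatibility and torsion-freeness. The paper defers (i) and (ii) to \cite{BaerGauMor} and for (iii) applies Koszul's formula to $2\met(\nabla_X\upnu,Y)$ directly, whereas you expand $\partial_t\met_t(X,Y)$ and invoke the symmetry of $\wein$; the two computations are the same up to reorganisation.

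One remark worth making: your route to (iii) passes through (i), but the Koszul computation actually does not need $[\upnu,X]=0$ as such. With $X,Y$ lifted coordinate fields one has $[\upnu,X]\propto\partial_t$, so the commutator terms in Koszul's formula already vanish by orthogonality $\met(\partial_t,Y)=0$. This means (iii) holds without any restriction on the lapse, while (i) and (ii) genuinely require the constancy of $N$ along the leaves that you flag (and which is the setting of the cited reference \cite{BaerGauMor}). Your identification of this gauge issue is correct; just be aware that neither \cite{BaerWafo} nor \cite{BaerStroh} imposes it globally, so the cleanest reading of the lemma is pointwise with a suitably chosen frame.
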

\begin{proof}\renewcommand{\qedsymbol}{}  % no qed symbol in order to place it nicely
(i) and (ii) are taken from \cite{BaerGauMor}. For (iii) extend the vectors $X,Y$ to coordinate vector fields on $\Sigma_t$ and lift them to $M$. In Koszul's definition of Levi-Civita connection the Weingarten map is expressed as derivatives on the metric and commutators, which vanish near a point:
\begin{eqnarray*}
-2\met(\wein(X),Y) &=& %2\met(\nabla_X \upnu, Y) = X \met(\upnu, Y) + \upnu \met(X,Y) -  Y \met(\upnu, X) \\
%&+& \met(\commu{1}{}{X}{\upnu},Y) + \met(\commu{1}{}{\upnu}{Y},X) - \met(\commu{1}{}{Y}{X},\upnu) \\
%&=& 
X \met(\upnu, Y) + \upnu \met(X,Y) -  Y \met(\upnu, X) = \upnu \met(X,Y) =-\frac{1}{N} \partial_t \met_t(X,Y) \, .\quad\quad\,\square
\end{eqnarray*}
\end{proof}

The volume form on $M$ and its induced volume forms on each slice are given as follows: in local coordinates $\SET{x^i}_{i=1}^n$ for $\Sigma$
\begin{equation}\label{int0}
\dvol{}:=\sqrt{g_t}N\differ t \wedge \bigwedge_{i=1}^n \differ x^{i} \,\, \text{and} \,\, \dvol{\Sigma_t}:=\inclus_t^{*}\iota_{-\upnu}\dvol{}=\sqrt{g_t} \bigwedge_{i=1}^n \differ x^{i}\quad\text{for}\,\,t\,\,\text{fixed,}
\end{equation}
where $g_t = \det\left(\met_t\right)$ and $\iota_{-\upnu}$ is the interior product by a future timelike vector. %\bnote{f2}. %The spatial coordinates are given by those of the hypersurface $\Sigma_t$. 
%The following statement is the Lorentzian pendent of the first variation of area formula, but with a more general integral expression
For the energy estimates an expression for the time-derivative of 
\begin{equation}\label{int1}
I_{f}(t)=\int_{\Sigma_t} f_t \dvol{\Sigma_t}
\end{equation}
is needed, where $f_t$ is any $t$-differentiable function on $\Sigma_t$. In order to compute the derivative with respect to $t$, one applies the same proof for the Lorentzian first variation of area formula by treating spacelike level sets as pushforward of $\Sigma$ via variation maps under compact support. See \cite{maxprin16} and \cite{BBC08} for details.
\begin{comment}
For this one needs to introduce variations. Let $t_0 \in \timef(M)$ and $\Sigma_0 := \Sigma_{t_0}$ with Riemannian metric $\met_0:= \met_{t_0}$ and embedding $\inclus_0:= \inclus_{t_0}$ into $M$. The proof is based on the idea to treat the spacelike level sets as pushforward of $\Sigma_0$ by \textit{variation maps of} $\inclus_{0}$. These are maps $\exclus \in C^\infty(\timef(M)\times\Sigma_0,M)$, such that
\begin{itemize}
\item[(1)] $\exclus_t:=\exclus(t,\cdot)$ is a spacelike immersion for all $t \in \timef(M)$, such that $\exclus_{t_0}=\inclus$, and
\item[(2)] if the boundary of $\Sigma_0$ is non-empty: $\exclus_t\vert_{\partial \Sigma_0}=\inclus_{0}\vert_{\partial \Sigma_0}$ for all $t \in \timef(M)$ .
\end{itemize}
Thus $\Sigma_t = \exclus_t(\Sigma_0)$ with $\partial\Sigma_t = \partial \Sigma_0$. A particular choice is given by the eponential map along the geodesics of a future directed timelike normal field: $\exclus_t(p)=\expmap{\inclus(p)}{-(t-t_0)\phi(p)\upnu(p)}$. $\phi \in C^\infty_c(\Sigma_0)$ has been introduced to guarantee a variation under compact support in case of non-compact hypersurfaces; cf. \cite{maxprin16} and \cite{BBC08}.
\end{comment}
\begin{lem}\label{lem2-2}
Let $M$ be a time-oriented Lorentzian manifold of dimension $(n+1)$, $\tau \in \timef(M)$ and $\inclus_{\tau} \,:\, \Sigma_{\tau}\,\rightarrow\,M$ a spacelike immersion of a hypersurface with mean curvature $H_{\tau}$. 
%Let $\exclus_t$ be a future directed normal variation of this immersion with compact support, then the variation under compact support of the integral \cref{int1} at $t=t_0$ for a time-differentiable function $f_t$ on $\Sigma_t$ is
The variation under compact support of the integral \cref{int1} at $t=\tau$ for a time-differentiable function $f_t$ on $\Sigma_t$ is
\begin{equation*}%\label{deriveint}
\left. \frac{\differ}{\differ t} I_f(t)\right\vert_{\tau} = \int_{\Sigma_\tau} \phi(p) \left(n H_\tau(p)-\left(\upnu f_{t}\right)\vert_{\tau,p}\right)  \dvol{\Sigma_\tau}(p) \quad ,
\end{equation*}
where $\phi \in C^\infty_c(\Sigma_\tau)$.
\end{lem}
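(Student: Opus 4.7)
The plan is to reduce the time derivative of $I_f$ to an integral over the fixed hypersurface $\Sigma_\tau$ by pulling back the integrand through a compactly supported normal variation, and then to differentiate under the integral sign via the Leibniz rule. The two resulting summands correspond respectively to the variation of the integrand $f_t$ (producing the normal-derivative term) and the first variation of the induced volume element (producing the mean-curvature term).

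Concretely, following \cite{maxprin16,BBC08}, I would introduce a variation map $\Phi \in C^\infty(\timef(M)\times\Sigma_\tau, M)$ by $\Phi_t(p) := \mathrm{exp}_{\inclus_\tau(p)}(-(t-\tau)\phi(p)\upnu(p))$, with $\phi \in C^\infty_c(\Sigma_\tau)$ providing compact support and $\mathrm{exp}$ the Lorentzian exponential map; outside $\mathrm{supp}\,\phi$ set $\Phi_t = \inclus_\tau$. Then $\Phi_\tau = \inclus_\tau$, and for $|t-\tau|$ sufficiently small $\Phi_t$ is a spacelike immersion whose associated variation vector field is $V_p := \left.\frac{\differ}{\differ t}\right|_\tau \Phi_t(p) = -\phi(p)\upnu(p)$, purely normal by construction of the exponential map.

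Pulling back and splitting off the $t$-independent contribution on $\Sigma_\tau \setminus \mathrm{supp}\,\phi$, the integral takes the form $I_f(t) = \int_{\Sigma_\tau}\Phi_t^*(f_t\,\dvol{\Sigma_t}) + (\text{const. in }t)$. Differentiating at $\tau$ and applying the Leibniz rule yields
\begin{equation*}
\left.\frac{\differ}{\differ t}\right|_\tau I_f = \int_{\Sigma_\tau}\left.\frac{\differ}{\differ t}\right|_\tau (f_t\circ\Phi_t)\,\dvol{\Sigma_\tau} + \int_{\Sigma_\tau} f_\tau\,\left.\frac{\differ}{\differ t}\right|_\tau\Phi_t^*\dvol{\Sigma_t}.
\end{equation*}
For the first summand, the chain rule together with $V=-\phi\upnu$ — interpreting $\{f_t\}$ through a smooth extension $F$ on a neighborhood of $\Sigma_\tau$ satisfying $F|_{\Sigma_s} = f_s$ — gives the integrand $-\phi(p)(\upnu f_t)|_{\tau,p}\,\dvol{\Sigma_\tau}$. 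For the second summand I would invoke the Lorentzian first variation of area: in local coordinates, Jacobi's formula for $\partial_t\sqrt{g_t}$ combined with the preceding lemma's identity $\met(\wein(X),Y)=\frac{1}{2N}\partial_t\met_t(X,Y)$ gives $\partial_t\sqrt{g_t} = nNH_t\sqrt{g_t}$, from which the variation at normal speed $\phi$ in the unit-$\upnu$ parametrization — which divides out the lapse $N$ — produces $\left.\frac{\differ}{\differ t}\right|_\tau \Phi_t^*\dvol{\Sigma_t} = \phi(p)\,nH_\tau(p)\,\dvol{\Sigma_\tau}$. Adding the two contributions yields the claimed identity.

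The main obstacle is the careful bookkeeping of signs and normalizations: the past-directedness of $\upnu$, the convention $\wein(X)=\nabla_X\upnu$, the minus sign in the exponent of $\Phi_t$, and the cancellation of the lapse $N$ between the raw $\partial_t$-derivative $\partial_t\sqrt{g_t}=nNH_t\sqrt{g_t}$ and the $\upnu$-normalized variation all interact, and together produce the relative minus between the mean-curvature term and the normal-derivative term in the final formula. Once these conventions are pinned down, the remaining computation is a direct transcription of the compactly-supported first variation of area for spacelike hypersurfaces in Lorentzian geometry, adapted to carry along the scalar factor $f_t$.
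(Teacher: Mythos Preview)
Your proposal is correct and follows essentially the same approach as the paper: both use the compactly supported normal variation $\Phi_t(p)=\exp_{\inclus_\tau(p)}(-(t-\tau)\phi(p)\upnu(p))$, pull back to $\Sigma_\tau$, apply the Leibniz rule to split into the $-\phi(\upnu f_t)$ term via the chain rule and the $n\phi H_\tau$ term via Jacobi's formula combined with the identity $\met(\wein(X),Y)=\frac{1}{2N}\partial_t\met_t(X,Y)$ from the preceding lemma. The paper in fact only sketches this and defers to \cite{maxprin16,BBC08}, so your writeup is if anything more detailed than what appears in the text.
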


\subsection{Sections and operators on manifolds}%\label{subsecfs}
\justifying
This subsection refers to standard literature, e.g. \cite{shubin11},\cite{hoerm1}, \cite{hoerm3} and in special to \cite{BaerWafo}. \\
\\
Let $E\,\rightarrow \, M$ be any $\C-$(anti-) linear smooth vector bundle over a pseudo-Riemannian \\manifold $M$ with metric $\met$. The space of smooth sections\bnote{f1c} is as usually denoted by $C^\infty(M,E)$. We equip the tangent bundle $TM$ and via $\met$ the cotangent bundle $T^{*}M$ with the Levi-Civita connection and the vector bundle comes with a related Koszul connection $\Nabla{E}{}$. If the situation is clear from the context, we won't mention the vector bundle or the specific connection and write $\nabla$. The corresponding connection for the $l$'th covariant derivative $\nabla^l$ is induced by the connection on $(T^{*}M)^{\otimes l}\otimes E$ via $l$-fold application of $\nabla$. Let $K \subset M$ compact, $m \in \N_0$ and $\norm{\cdot}{(T^{*}M)^{\otimes l}\otimes E}$ a norm on the vector bundle $(T^{*}M)^{\otimes l}\otimes E$, for $f \in C^\infty(M,E)$ one defines seminorms
\begin{equation}\label{normveccomp}
\norm{f}{K,m}:= \max_{l \in \SET{0,...,m}} \max_{x \in K} \left\lbrace \norm{(\nabla^l f)(x)}{(T^{*}M)^{\otimes l}\otimes E} \right\rbrace \quad .
\end{equation}
The definition is independent of a concrete choice of different norms and connections on the vector bundle. %, i.e. lead to equivalent seminorms. 
%For compact $K$ different norms and connections on the vector bundle lead to equivalent seminorms, wherefore the definition is independent of a concrete choice of this two data. 
%Exhaustion of $M$ by compact subsets $\lbrace K_i \rbrace_{i \in \N}$ and selecting the seminorm $\norm{\cdot}{K_i,m}$ for each subset show, that $C^\infty(M,E)$ is metrizable in this way and a Fr\'{e}chet space. 
If the support is contained in a closed subset $A \subset M$, one defines 
\begin{equation*}
C^\infty_A(M,E):=\SET{u \in C^\infty(M,E)\,\vert\, \supp{u} \subset A} \quad ,
\end{equation*}
%equipped with the relative topology. 
The union over all compact subsets of $M$ defines $C^\infty_{\comp}(M,E)$ as \textit{space of compactly supported sections} of $E$.
\begin{comment}
\begin{equation}
C^\infty_{\comp}(M,E):= \bigcup_{\substack{K \subset M\\ K\,\text{compact}}} C^\infty_K(M,E) \quad .
\end{equation} 
\end{comment}
This is a locally convex topological vector space with continuous inclusion mapping $C^\infty_K \hookrightarrow C^\infty_{\comp}$ for $K \Subset M$. In a similar way the \textit{space of spatially compactly supported sections} on $E$ is defined via
\begin{equation*}
C^\infty_{\scomp}(M,E):= \bigcup_{\substack{A \subset M\\ A\,\text{spatially}\\ \text{compact}}} C^\infty_A(M,E) \quad .
\end{equation*} 
Linear maps between these two spaces and other locally convex topological vector spaces are continuous, if their restriction to any subspace $C^\infty_A(M,E)$ is continuous, where either $A$ is compact or spatially compact.\\
\\
Elements in $C^{-\infty}(M,E)$, i.e. distributional sections, are introduced and used here as follows: let $E^{*}\,\rightarrow\,M$ denote the dual vector bundle of $E$. %Any smooth section with compact support are called \textit{test sections}. 
The common way to define distributions is as $\C$-(anti-) linear functionals on test functions $\phi \in C^\infty_\comp(M,E^{*})$, where the evaluation is denoted by $\dpair{1}{C^\infty(M,E^{*})}{u}{\phi}$. If $u \in L^1_\loc(M,E)$, i.e. a locally integrable section on $E$, a distributional pairing with $u$ can be considered as dual pairing with a regular distribution:
\begin{equation*}
\dpair{1}{C^\infty(M;E^{*})}{u}{\phi}:= \int_M \dpair{1}{E}{\phi}{u}(x)\,\dvol{}(x)
\end{equation*}
with $\dpair{1}{E}{\phi}{u}(x)$ as dual pairing $E^{*}_x \times E_x \,\rightarrow \, \C$ on the fibers at each $x \in M$. We define the space $C^{-\infty}_\comp(M,E)$ of compactly supported distributions as the dual space of smooth functions $(C^\infty(M,E^\ast))'$. If the real- or complex vector bundle $E$ comes with a bundle metric\bnote{f3} $\idscal{1}{E}{\cdot}{\cdot}\,,$ a $L^2$-scalar product
\begin{equation*}
\idscal{1}{L^2(M,E)}{u}{v}:=\int_M \idscal{1}{E}{u}{v}(x) \,\dvol{}(x) 
\end{equation*}
can be introduced for $u, v \in C^\infty_\comp(M,E)$. If the bundle metric is Riemannian or Hermitian, stressed by writing $\dscal{1}{E}{\cdot}{\cdot}$ instead of $\idscal{1}{E}{\cdot}{\cdot}$ , the $L^2$-scalar product, denoted by $\idscal{1}{L^2(M,E)}{\cdot}{\cdot}\, ,$ is positive definite and a norm is induced by $\norm{u}{L^2(M,E)}^2:=\idscal{1}{L^2(M,E)}{u}{u}$. In the pseudo-Riemannian case the corresponding bundle metrics and thus scalar products do not need to be positive definite, wherefore we stress this difference with other notations. \textit{Square integrable sections} %\bnote{f11}
of $E$ are defined as the completion of $C^\infty_\comp(M,E)$ with respect to this $L^2$-norm:
\begin{equation*}
L^2(M,E):= \overline{C^\infty_\comp(M,E)}^{\norm{\cdot}{L^2(M,E)}} \quad.
\end{equation*}
Let $P:C^\infty_\comp(M,E) \,\rightarrow \,C^\infty_\comp(M,E)$ be a linear operator. Additional structures given by a dual pairing or a bundle metric allow to define the \textit{formally adjoint operator} $P^{\dagger}$ as map $C^{-\infty}(M,E^{*}) \,\rightarrow \,C^{-\infty}(M,E^{*})$, such that $\dpair{1}{C^\infty_\comp(M,E^{*})}{P^\dagger u}{\phi}:=\dpair{1}{C^\infty_\comp(M,E^{*})}{u}{P\phi}$ for $\phi$ as test section. For general distributional sections this becomes the definition, how to apply an operator on distributions. %In this way $P$ can be extended to a map $C^{-\infty}(M,E) \,\rightarrow \,C^{-\infty}(M,E)$. 
The formally adjoint operator with respect to the $L^2$-scalar product is denoted by $P^{*}$ and fullfills $\idscal{1}{L^2(M,E)}{Pu}{v}=\idscal{1}{L^2(M,E)}{u}{P^{*}v}$ with $\supp{u}\cap\supp{\phi}$ compact. An operator is called \textit{formally self-adjoint} or \textit{formally anti/skew-self-adjoint}, if $P^{\dagger}=P$ or $P^\dagger=-P$ respectively and analogous for $P^{*}$. Since the focus is going to be on function spaces, which are related to Hilbert spaces, and the fact, that differential operators are in general unbounded, one needs to distinguish between \textit{self-adjoint} and \textit{essentially self-adjoint} operators. %; their pendents for the anti-case are defined analogous and if $P$ is a bounded operator, both terms coincide. To differ between this notions of self-adjointness, c
For this operators $P$ with domain $\dom{}{P}$ in a densely defined subset of a Hilbert space are considered, mapping into the same or another Hilbert space.
\begin{comment}
Such an operator is called \textit{symmetric or formally self-adjoint)}, if $\dom{}{P}\subset \dom{}{P^\ast}$ and $P^\ast v= Pv$ is fullfilled for all $v \in \dom{}{P}$; this corresponds to $P=P^\ast$ on $\dom{}{P}$. A symmetric operator on its domain is \textit{self-adjoint}, if furthermore $\dom{}{P^\ast}\subset\dom{}{P}$, which corresponds to $P=P^\ast$ on the whole Hilbert space $\mathcal{H}$. Every symmetric operator is closabel, i.e. it has a closed extension. The (minimal) extension is the \textit{closure} $\overline{P}$. If the closure is self-adjoint, then $P$ is called \textit{essentially self-adjoint}. Every essentially self-adjoint operator has a unique self-adjoint extension to $\mathcal{H}$. Similar definitions are given for unbounded operators between Banach spaces; see
\end{comment}
A revision of this terms is given e.g. in \cite{shubinspec}. 
\\
\\
The standard, non-geometric introduction of partial-differential operators is based on local coordinates: given two smooth vector bundles $E,F$ on $M$; a linear map $P:C^\infty(M,E)\,\rightarrow\,C^\infty(M,F)$ is a \textit{linear partial differential operator (with smooth coefficients)}, if it is local and a linear differential operator for any local coordinates and trivializing frames. We refer to $\Diff{m}{}(M,\Hom(E,F))$ as the \textit{set of linear partial differential operators of order m} on $M$ with smooth coefficients from $E$ to $F$. %$\Diff{0}{}(M;\Hom(E,F))$ corresponds to the set of multiplying operators with smooth functions.
%One has $\Diff{m_1}{}(M,\Hom(F,G))\circ\Diff{m_2}{}(M,\Hom(E,F))\subset \Diff{m_1 + m_2}{}(M,\Hom(E,G))$, where $G$ is another smooth vector bundle on $M$. 
An important quantity for any differential operator is the \text{principal symbol}, which is defined by oscillatory testing: for any $\phi \in C^\infty(M)$, such that $\xi=\differ \phi$, the quantity is introduced by
\begin{equation}\label{prinsymbdiff}
\bm{\sigma}_m(P)(x,\xi)u=\lim_{\lambda \rightarrow \infty} \left. \left(\frac{\Imag}{\lambda}\right)^m \expe{-\Imag \lambda \Phi}P\left(\expe{\Imag \lambda \phi}u\right)\right\vert_{x}
\end{equation}
for any section $u$ of $E$. %As an element in $C^\infty(T^{*}M,\pi^{*}\mathsf{Hom}(E,F))$ with $\pi\,:\, T^{*}M\,\rightarrow\,M$ t
%This way of defining is independent of the choice of local coordinates and frames. 
\\
\\
Generalizing the Schwartz kernel representation of differential operators on the local level leads to the broader class of pseudo-differential operators ($\Psi$DO) %\bnote{f4}
on a manifold, which act between two vector bundles: a continuous map $P\,:\,C^\infty_\comp(M,E)\,\rightarrow\,C^\infty(M;F)$ is a \textit{pseudo-differential operator} of order $m$, if for every open set in $M$ with any local coordinates and trivializations the operator $P$ can be related to a $\rank(F)\times\rank(E)$ matrix of pseudo-differential operators of order $m$ on each open set.
\begin{comment}
$\mathcal{U} \opset M$ and trivializations
\begin{equation*}
\phi_E\,:\,E\,\rightarrow\,\mathcal{U}\times\C^{\rank(E)}\quad\text{and}\quad\phi_F\,:\,F\,\rightarrow\,\mathcal{U}\times\C^{\rank(F)}
\end{equation*}
%
there is a $\rank(F)\times\rank(E)$ matrix of pseudo-differential operators $P_{ij}$ of order $m$ on $\mathcal{U}$, such that 
\begin{equation*}
\left[\phi_F(Pu)\vert_\mathcal{U}\right]_i=\sum_{j=1}^{\rank(E)}P_{ij}(\phi_E u)_i \quad\text{for} \quad u \in C^\infty_\comp(\mathcal{U};E)\quad.
\end{equation*}
\end{comment}
The set of all pseudo-differential operators of order $m \in \R$ on $M$ between sections $E$ and $F$ is denoted by $\ydo{m}{}(M,\Hom(E,F))$. One has $\Diff{m'}{}(M,\Hom(E,F))\subset \ydo{m}{}(M,\Hom(E,F))$ for all $m' \in \N_0,\,\, m \in \R$ with $m' \leq m$. The algebra properties for $\ydo{*}{}(M,\Hom(E,F))$ are inherited by the algebra properties from their local description, if in addition the pseudo-differential operator is properly supported: % in order to guarantee the well-definedness of the composition: 
An operator $P$, acting between sections of two manifolds $X$ and $Y$, having Schwartz kernel $K_P$, is said to be \textit{properly supported}, if both projections from $X\times Y$ to one factor, restricted to $\supp{K_P}\subset X\times Y $, are proper maps. As a consequence $P$ maps smooth/distributional sections with compact support back into these spaces respectively,
%This can be carried over to general integral operators acting between vector bundles $E\rightarrow M_1$ and $F\rightarrow M_2$ on different manifolds. The kernel is then a distributional section of $\Hom(E,F)$ on $X\times Y$. 
which is a more handy characterization, taken from \cite{shubin11}: an operator $P$ from sections on $X$ to sections on $Y$ is properly supported, if and only if
\begin{eqnarray}
\text{a)}&& \label{propsuppa}\forall\,K\Subset X\,\exists\, K_1 \Subset Y\,:\,\supp{u}\subset K \,\Rightarrow \, \supp{Pu}\subset K_1\quad,\label{propsuppa} \\
\text{b)}&& \label{propsuppb}\forall\,K\Subset Y\,\exists\, K_1 \Subset X\,:\,\supp{v}\subset K \,\Rightarrow \, \supp{P^\dagger u}\subset K_1\quad.\label{propsuppb}
\end{eqnarray}
From this one observes, that the composition of two properly supported operators is again properly supported. Fourier integral operators (FIO) as third important and generalizing class of operators are introduced in the \hyperref[chap:app2]{appendix}, where the notions of distributions will be defined slighly different.

\subsection{Sobolev spaces on Riemannian manifolds}
\justifying
%Sobolev spaces will play a crucial role in the initial value problem of the Dirac equation and its forthcoming use in showing Fredholmness of certain operators. The standard textbook approach is taken from \cite{shubin11} and \cite{treves}. Further remarks are given, how to reinterpret them in a suitable way.\\\\
On a Riemannian manifold\bnote{fc1} $\mathcal{M}$ there exists for any vector bundle $E \rightarrow \mathcal{M}$ with Riemannian/Hermitian inner product and connection $\Nabla{E}{}$ as well as for any real number $s\in\R$ a properly supported, classical elliptic $\Psi$DO of order $s$ with strictly positive principal symbol. A special choice, convenient in the definition of Sobolev spaces, is given by a power of the Laplace-type operator $\adNabla{E}{}\Nabla{E}{} + \id\,:\,  C^\infty(\mathcal{M},E)\,\rightarrow\,C^\infty(\mathcal{M},E)$. %, acting on smooth sections of a vector bundle $E$. 
This operator and all $s$-powers
\begin{equation}\label{laps}
\Lambda^s:= \left(\adNabla{E}{}\Nabla{E}{} + \id{} \right)^\frac{s}{2}
\end{equation}
fullfill all stated features and is furthermore essentially self-adjoint %\bnote{f12}
on $L^2(\mathcal{M},E)$ with positive spectrum, if $\mathcal{M}$ is complete.\\
\\
Sobolev spaces on compact manifolds $\mathcal{M}$ without boundary are going to be introduced first, since they provide a bulding block for the non-compact case after a suitable reinterpretation. %Let $\mathcal{M}$ as before be already Riemannian or can be equipped with a Riemannian metric/density, and $E$ a Riemannian or Hermitian vector bundle with connection $\Nabla{E}{}$. For $s\in \R$ t
The $s-$\textit{Sobolev norm} of $u\in C^\infty(\mathcal{M},E)$ is defined as
\begin{equation}\label{sobnorm}
\Vert u \Vert_{H^s(\mathcal{M},E)}:=\Vert \Lambda^s u \Vert_{L^2(\mathcal{M},E)}
\end{equation}
and the norm completion of $C^\infty(\mathcal{M},E)$ defines the \textit{Sobolev space of order} $s$:
\begin{equation*}
H^s(\mathcal{M},E):= \overline{C^\infty(\mathcal{M},E)}^{\Vert\cdot\Vert_{H^s(\mathcal{M},E)}} \quad .
\end{equation*}
The definition depends on neither the choice of the metric nor of the connection, such that all Sobolev norms and spaces with different metrics and connections are equivalent. For $s=m \in \N_0$ the norm \cref{sobnorm} is equivalent to 
\begin{equation}\label{sobnorm2}
\Vert u \Vert_{\mathpzc{H}^m(\mathcal{M},E)}^2:=\sum_{j=0}^m \left\Vert \Big(\Nabla{E}{}\Big)^j u \right\Vert_{L^2(\mathcal{M},E)}^2 \quad,
\end{equation}
where we introduced another notation of this norm for distinguishing reasons. These spaces can be extended to non-compact manifolds under additional conditions and concepts, see e.g. \cite{hebey} for details. 
\begin{comment}
the starting point in defining Sobolev spaces of degree $m \in \N_0$ for $\mathcal{M}$ non-compact are the space of smooth sections, for which derivatives up to order $m$ are in $L^2(\mathcal{M},E)$:
\begin{equation*}
C^\infty_{2,m}(\mathcal{M},E):=\SET{u \in C^\infty(\mathcal{M},E)\,\vert\, \forall\,j \in \SET{0,1,...,m}\, , \,\left\Vert \Big(\Nabla{E}{}\Big)^j u \right\Vert_{L^2(\mathcal{M},E)} < \infty}\quad .
\end{equation*}
If $\mathcal{M}$ is compact, these spaces are equivalent to $C^\infty(\mathcal{M},E)$ for all $m$. Their completion with respect to the norm \cref{sobnorm2} defines Sobolev spaces on non-compact manifolds:
\begin{equation}\label{sobnoncomp}
\mathpzc{H}^m(\mathcal{M},E):=\overline{C^\infty_{2,m}(\mathcal{M},E)}^{\Vert\cdot\Vert_{\mathpzc{H}^m(\mathcal{M},E)}} \quad.
\end{equation}
The above equivalence of smooth sections implies $\mathpzc{H}^m(\mathcal{M},E)=H^m(\mathcal{M},E)$, if $\mathcal{M}$ is compact. 
\end{comment}
The starting point of interest are Sobolev sections of order $m$ with respect to the vector bundle $E\rightarrow \mathcal{M}$, which have a fixed support in $K\Subset \mathcal{M}$. They are introduced as norm closure of smooth functions with compact support in $K$ with respect to $\Vert \cdot \Vert_{\mathpzc{H}^m(\mathcal{M},E)}$:
\begin{equation*}%\label{Hcompfix1}
H^m_K(\mathcal{M},E)=\overline{C^\infty_K(\mathcal{M},E)}^{\Vert\cdot\Vert_{\mathpzc{H}^m(\mathcal{M},E)}} \quad.
\end{equation*}
They carry the topology of a Hilbert space. The compactly supported Sobolev sections then follow by taking the union over all compact subsets:
\begin{equation}\label{Hcomp}
H^m_{\comp}(\mathcal{M},E)=\bigcup_{\substack{K \subset \mathcal{M} \\ \text{compact}}} H^m_K(\mathcal{M},E) \quad,
\end{equation}
which makes them a locally convex vector space with strict inductive limit topology from Hilbert spaces. One notices $H^0_\comp(\mathcal{M},E)=L^2_\comp(\mathcal{M},E)$. Local Sobolev sections are regarded as those distributional sections $u$, such that $\phi u \in H^s_{\supp{\phi}}$ for all $\phi$, smooth and compactly supported:
\begin{equation}\label{Hloc}
H^m_{\loc}(\mathcal{M},E):=\SET{u \in C^{-\infty}(\mathcal{M},E)\,\vert\,\phi u \in H^m_{\comp}(\mathcal{M},E) \quad \forall\,\phi \in C^\infty_\comp(\mathcal{M})}\quad.
\end{equation}
The corresponding seminorm $\Vert\cdot\Vert_{H^m_\loc(\mathcal{M},E)}$ is induced by the norm in $H^m_{\supp{\phi}}$, which makes $H^m_\loc$ a Fr\'{e}chet space. Note, that $H^0_{\loc}(\mathcal{M},E)=L^2_\loc(\mathcal{M},E)$. All these spaces are provided with important properties e.g regularity of Sobolev functions after restriction to submanifolds, embeddings or localization, see e.g. \cite{shubin11}.\\
\\
In order to extend the latter three spaces to Sobolev orders $s \in \R$, another approach from \cite{BaerWafo} will be presented: Sobolev sections of $E$ with fixed compact support on a non-compact Riemannian manifold are reinterpreted as Sobolev sections on another Riemannian, but closed manifold. In order to do so, everything is extended to the \textit{double} of a suitable subset of $\mathcal{M}$. We follow the acquisition, made in \cite{BaerWafo}: let $K \subset \mathcal{M}$ be a compact subset of $\mathcal{M}$, containing the support of a function. Take another relatively compact subset $K_1\subset \mathcal{M}$, such that $K_1$ has a smooth boundary $\bound K_1$ and contains $K$ insinde $\mathring{K_1}$. Choose the boundary in such a way, that it is totally geodesic and that a certain curvature condition is fullfilled; see \cite{Mori} for details. Milnor's collar neighborhood theorem ensures the existence of a diffeomorphism, which deforms the metric near the boundary into a product metric. In order to construct the \textit{closed double} of $K_1$ take a copy of it, denoted by $K_2$, and glue both copies together along their common boundary $\bound K_1 = \bound K_2$, which results in a closed manifold:
\begin{equation*}%\label{double}
\widetilde{\mathcal{M}}:= K_1 \cup_{\bound K_1} K_2 := (K_1 \sqcup K_2)\setminus\bound K_1 \quad .
\end{equation*}
During this procedure everything on $K$ is untouched. Any smooth vector bundle $\widetilde{E}\,\rightarrow\,\tilde{\mathcal{M}}$ can be considered as extension of $E\vert_{K_1}$, if $\widetilde{E}\vert_{K_1}=E\vert_{K_1}$. All bundle metrics on $K\subset \widetilde{\mathcal{M}}$ can be extended to smooth bundle metrics on the whole closed double, which is assured by the additional assumptions on $K_1$. Since the Levi-Civita connection is fully determined by the restricted metric, it can be extended to a Levi-Civita connection with respect to the extended metric. Under this modification any smooth section of $E$ with compact support in $K$, can be considered as smooth section of $\widetilde{E}$ over a closed manifold by zero-extension: $C^\infty_K(\mathcal{M},E)\subset C^\infty(\widetilde{\mathcal{M}},\widetilde{E})$. The space $H^s_K(\mathcal{M},E)$ for real powers $s$ is then defined as the completion of smooth, compactly supported functions with respect to the norm $\Vert\cdot\Vert_{H^s(\widetilde{\mathcal{M}},\widetilde{E})}$:
\begin{equation*}%\label{Hcompfix2}
H^s_K(\mathcal{M},E):= \overline{C^\infty_K(\mathcal{M},E)}^{\Vert\cdot\Vert_{H^s(\widetilde{\mathcal{M}},\widetilde{E})}} \quad .
\end{equation*}
The spaces $H^s_\comp$ and $H^s_\loc$ are then defined as in \cref{Hcomp} und \cref{Hloc} for real orders $s$, which completes the extension to arbitrary real Sobolev orders. The advantage of this definition is, that Sobolev sections with compact support can be treated in the same way as Sobolev sections on a closed manifold with all beneficial properties. For $K\subset K'$ one has the inclusion $C^\infty_K(\mathcal{M},E)\subset C^\infty_{K'}(\mathcal{M},E)$, inducing $H^s_K(\mathcal{M},E)\subset H^s_{K'}(\mathcal{M},E)$ as continuous linear inclusion and this implies the continuous inclusion $H^s_K(\mathcal{M},E)\hookrightarrow H^s_\comp(\mathcal{M},E)$. Moreover the inclusion $H^s_\comp(\mathcal{M},E) \subset C^{-\infty}_\comp(\mathcal{M},E) \subset C^{-\infty}(\mathcal{M},E)$ is also continuous according to the embedding theorem of Sobolev.

\subsection{Special sections on globally hyperbolic manifolds}
\justifying
This last part is dedicated to some special sections, which come from the geometric setting of a globally hyperbolic manifold. Their definitions are related to those given in the proof of the well-posedness of the Cauchy problem for the wave equation in \cite{BaerWafo}. Let $M$ be a globally hyperbolic manifold with Cauchy temporal function $\timef:M\,\rightarrow\,\R$ and $E \rightarrow M$ a smooth vector bundle. Take a complete, spacelike Cauchy hypersurface without boundary, such that the family of slices $\SET{\Sigma_t}_{t \in \timef(M)}$ foliates $M$. Fix $s\in \R$ and consider the family $\SET{H^s_\loc(E\vert_{\Sigma_t})}_{t \in \timef(M)}$ as Fr\'{e}chet bundle over $\timef(M)\subset \R$. Each member in the family of slices differs from each other only by the metric $\met_t$, but not topologically. Since different metrics lead to equivalent Sobolev norms each Sobolev space $H^s_K(E\vert_{\Sigma_t})$ for $K\Subset \Sigma$ and consequently $H^s_\comp(E\vert_{\Sigma_t})$ and $H^s_\loc(E\vert_{\Sigma_t})$ are equivalent. We keep the extra $t$ to mark the different metrics and furthermore keep notational compatability with the referred literature. This bundle of topological vector spaces can be globally trivialized as follows: for each $t \in \timef(M)$ a section of this bundle becomes a section in $H^s_\loc(E\vert_{\Sigma_t})$. By parallel transport along the integral curves of the vector field $\mathrm{grad}(\timef)$ these sections can be moved to $H^s_\loc(E\vert_{\Sigma_{\tau}})$ for a fixed $\tau \in \timef(M)$. Support properties and the Sobolev regularity are preserved by this transport, such that this bundle of Fr\'{e}chet spaces becomes diffeomorphic to $\timef(M)\times H^s_\loc(E\vert_{\Sigma_{\tau}})$; see e.g. section 1.7 from \cite{BaerWafo}. Sections of this bundle, which are $l$-times continuously differentiable ($l \in \N_0$), are denoted  by $C^l(\timef(M),H^s_\loc(E\vert_{\Sigma_{\bullet}}))$. The elements can be considered as distributional sections on $M$ via the dual pairing
\begin{equation*}
\dpair{1}{C^{\infty}_\comp(M,E^{*})}{u}{\phi}:=\int_{\timef(M)} \dpair{1}{C^\infty_\comp(E\vert_{\Sigma_t})}{u(t)}{(N\phi)\vert_{\Sigma_t}} \differ t
\end{equation*}
for $u \in C^l(\timef(M),H^s_\loc(E\vert_{\Sigma_{\bullet}}))$ and $\phi \in C^\infty_\comp(M,E^{*})$. The lapse function $N\in C^\infty(M,\R_{>0})$ appears by the volume element $\dvol{}$ in \cref{int1}. For $s\geq 0$ those distributional sections are locally integrable since $H^s_\loc \subset L^2_\loc$ by the continuous embedding of Sobolev spaces. The dual pairing in the integral can be expressed as regular distributional action of the form
\begin{equation*}
\dpair{1}{C^\infty_\comp(E\vert_{\Sigma_t})}{u(t)}{(N\phi)\vert_{\Sigma_t}}=\int_{\Sigma_t}\dpair{1}{(E\vert_{\Sigma_t})^{*}}{u}{(N\phi)\vert_{\Sigma_t}} \dvol{\Sigma_t}
\end{equation*} 
and thus
\begin{equation*}
\dpair{1}{C^{\infty}_\comp(M,E^{*})}{u}{\phi}=\int_{\timef(M)}\left[ \int_{\Sigma_t}\dpair{1}{(E\vert_{\Sigma_t})^{*}}{u}{(N\phi)\vert_{\Sigma_t}} \dvol{\Sigma_t} \right] \differ t = \int_{M} \dpair{1}{E^{*}}{u}{\phi} \dvol{} \quad.
\end{equation*}
This observation confirms, that $C^l(\timef(M),H^s_\loc(E\vert_{\Sigma_{\bullet}}))$ is embedded into $C^{-\infty}(M,E)$. For any compact subintervall $I \subset \timef(M)$ and any spatially compact $K \subset M$ one defines
\begin{equation}\label{scompsuppglobdiff}
C^l_K(\timef(M),H^s_\loc(E\vert_{\Sigma_{\bullet}})):=\SET{u \in C^l(\timef(M),H^s_\loc(E\vert_{\Sigma_{\bullet}}))\,\vert\,\supp{u}\subset K}
\end{equation}
with the seminorm
\begin{equation}\label{snclk}
\Vert u \Vert_{I,K,l,s}:= \max_{k \in [0,l]\cap \N_0} \max_{t \in I} \Vert (\nabla_t)^{k} u\Vert_{H^s_\loc(E\vert_{\Sigma_t})} \quad.
\end{equation}
Varying over all compact subsets $I \subset \timef(M)$ shows for fixed $l$, $K$ and $s$, that \cref{scompsuppglobdiff} is a Fr\'{e}chet space. Taking the union over all spatially compact subset defines sections of this bundle, which have support in any spatially compact subset of $M$:
\begin{equation}\label{clsc}
C^l_{\scomp}(\timef(M),H^s_\loc(E\vert_{\Sigma_{\bullet}})):=\bigcup_{\substack{K \subset M \\ K\,\,\text{spatially}\\ \text{compact}}}C^l_K(\timef(M),H^s_\loc(E\vert_{\Sigma_{\bullet}})) \quad.
\end{equation} 
The inclusion $C^l_K(\timef(M),H^s_\loc(E\vert_{\Sigma_{\bullet}}))\hookrightarrow C^l_\scomp(\timef(M),H^s_\loc(E\vert_{\Sigma_{\bullet}}))$ is continuous and any linear map from $C^l_\scomp(\timef(M),H^s_\loc(E\vert_{\Sigma_{\bullet}}))$ to any locally convex topological vector space is continuous, if and only if the restriction of the mapping to $C^l_K(\timef(M),H^s_\loc(E\vert_{\Sigma_{\bullet}}))$ for any spatially compact subset $K$ is continuous. The case $l=0$ will be of special interest in this paper, wherefore it gets a definition on its own:
\begin{defi}%\label{deffinensec}
Let $E \rightarrow M$ be a vector bundle over a globally hyperbolic manifold $M$ with temporal function $\timef$ and foliating family of spatial Cauchy hypersurfaces $\SET{\Sigma_t}_{t \in \timef(M)}$; for any $s \in \R$ 
\begin{equation}\label{finensec}
FE^s_{\scomp}(M,\timef,E):=C^0_{\scomp}(\timef(M),H^s_\loc(E\vert_{\Sigma_{\bullet}}))
\end{equation}
is the \textit{space of finite} $s$-\textit{energy sections}.
\end{defi}

\begin{rems}
\begin{itemize}
\item[]
\item[(a)] This definition occurs in the proof of the index theorem on globally hyperbolic manifolds for compact Cauchy hypersurfaces and their modification for non-compact Cauchy hypersurfaces and other sections than spinors in \cite{BaerWafo}. Only continuous sections on the hypersurfaces are needed, since the Dirac equation is of first order and only one initial condition is necessary. For other differential operators of higher order the definition of finite energy sections has to be modified in order to regard all initial values, e.g. 
\begin{equation*}
FE^s_{\scomp}(M,\timef,E):=C^0_{\scomp}(\timef(M),H^s_\loc(E\vert_{\Sigma_{\bullet}}))\cap C^1_{\scomp}(\timef(M),H^{s-1}_\loc(E\vert_{\Sigma_{\bullet}}))
\end{equation*} 
in \cite{BaerWafo} for the wave operator on globally hyperbolic manifolds with two initial values on an initial hypersurface. 

\item[(b)] This sections in general depend on the Cauchy temporal time function. But a certain subset of finite $s$-energy sections do not have this dependence, which becomes clear in chapter \ref{chap:cauchy}. 
\end{itemize}
\end{rems}

In \cref{clsc} the differentiability with respect to the time coordinate can be weakened to local square integrability:
\begin{defi}%\label{L2locscK}
Let $E \rightarrow M$ be a vector bundle over a globally hyperbolic manifold $M$ with temporal function $\timef$ and foliating family of spatial Cauchy hypersurfaces $\SET{\Sigma_t}_{t \in \timef(M)}$ and $K \subset M$ spatially compact; sections of $L^2_{\loc,K}(\timef(M),H^s_\loc(E\vert_{\Sigma_{\bullet}}))$ consists of those $u$, such that
\begin{itemize}
\item[1)] $\supp{u} \subset K\cap\Sigma_t$ for almost all $t\in \timef(M)$ ;
\item[2)] $t\,\mapsto\,\dpair{1}{C^\infty_\comp((E\vert_{\Sigma_t})^{*})}{u}{\phi\vert_{\Sigma_t}}$ is measurable for any $\phi\in C^\infty_{\comp}(M,E)$ and
\item[3)] $t\,\mapsto\,\Vert u \Vert_{H^s_\loc(E\vert_{\Sigma_t})}$ is in $L^2_\loc(\timef(M))$
\end{itemize}
\begin{comment}
\begin{eqnarray*}
u \in L^2_{\loc,K}(\timef(M),H^s_\loc(E\vert_{\Sigma_{\bullet}})) &:\Leftrightarrow& 1)\,\supp{u} \subset K\cap\Sigma_t\,\,\text{for almost all}\,\,t\in \timef(M)\, ; \\
&& 2)\,t\,\mapsto\,\dpair{1}{C^\infty_\comp((E\vert_{\Sigma_t})^{*})}{u}{\phi\vert_{\Sigma_t}}\,\,\text{is measurable}\\
&& \quad\text{for any}\,\, \phi\in C^\infty_{\comp}(M,E)\, ; \\
&& 3)\,t\,\mapsto\,\Vert u \Vert_{H^s_\loc(E\vert_{\Sigma_t})}\,\,\text{is in}\,\,L^2_\loc(\timef(M))
\end{eqnarray*}
\end{comment}
for any $s \in \R$.
\end{defi}
By a similar argument as for $C^l_K(\timef(M),H^s_\loc(E\vert_{\Sigma_{\bullet}}))$ one can prove, that the embedding
\begin{equation*}
L^2_{\loc,K}(\timef(M),H^s_\loc(E\vert_{\Sigma_{\bullet}}))\hookrightarrow C^{-\infty}(M,E)
\end{equation*}
is continuous. In order to topologize this space one introduces the seminorms for $I$ any compact subintervall in $\timef(M)$:
\begin{equation*}%\label{l2locsemi}
\Vert u \Vert^2_{I,K,s}:=\int_{I} \Vert u \Vert^2_{H^s_\loc(E\vert_{\Sigma_t})} \differ t \quad .
\end{equation*}
%where $I$ is any compact subintervall in $\timef(M)$. 
This turns $L^2_{\loc,K}(\timef(M),H^s_\loc(E\vert_{\Sigma_{\bullet}}))$ into a Fr\'{e}chet bundle. In a similar way
\begin{equation*}%\label{L2locsc}
L^2_{\loc,\scomp}(\timef(M),H^s_\loc(E\vert_{\Sigma_{\bullet}}))=\bigcup_{\substack{K \subset M \\ K\,\,\text{spatially} \\ \text{compact}}} L^2_{\loc,K}(\timef(M),H^s_\loc(E\vert_{\Sigma_{\bullet}})) \quad.
\end{equation*}
is introduced as space of locally square-integrable sections of the Fr\'{e}chet bundle with spatially compact support. Lemma 2 from \cite{BaerWafo} shows, that $C^\infty_{\scomp}(M,E)$ is a dense subset of $L^2_{\loc,\scomp}(\timef(M),H^s_\loc(E\vert_{\Sigma_{\bullet}}))$.
\begin{comment}
The following useful result from \cite{BaerWafo} closes this section:
\begin{lem}[see \cite{BaerWafo}, Lemma 2]\label{lemcorinvp2}
Let $M$ be a globally hyperbolic manifold with temporal function $\timef\,:\,M\,\rightarrow\, \R$, such that the manifold is foliated by a family of spatial Cauchy hypersurfaces $\SET{\Sigma_t}_{t \in \timef(M)}$ and $E \rightarrow M$ a vector bundle; for any $s \in \R$ one has
\begin{itemize}
\item[(a)] $C^\infty_{\scomp}(M,E)$ is a dense subset of $L^2_{\loc,\scomp}(\timef(M),H^s_\loc(E\vert_{\Sigma_{\bullet}}))$ ;

\item[(b)] If $K,K' \subset M$ are spatially compact and $K \subset \mathring{K'}$ , then any $u \in L^2_{\loc,\scomp}(\timef(M),H^s_\loc(E\vert_{\Sigma_{\bullet}}))$ with $\supp{u}\subset K$ can be approximated by smooth sections in $C^\infty_{K'}(M,E)$. 
\end{itemize}
\end{lem}
\end{comment}

%\cleardoublepage

\addtocontents{toc}{\vspace{-3ex}}
\section{The Dirac equation on globally hyperbolic manifolds}\label{chap:dirac}
This chapter starts with some general informations, before introducing special features for the case of interest. References like \cite{LawMi} and \cite{Gin} provide more details.

\subsection{General aspects}
Suppose first, that the number of spatial dimensions $n \in \N$ is odd. Let $(M,\met)$ be a $(n+1)$-dimensional globally hyperbolic Lorentzian spin manifold %\bnote{f6}
with Cauchy temporal function $\timef$, such that $M$ is foliated by a family of non-compact spatial Cauchy hypersurfaces $\SET{\Sigma_t}_{t\in \timef(M)}$ without boundary. Denote by $\spinb(M) \rightarrow M$ the Hermitian spinor bundle, associated to the tangent bundle $TM$ (i.e. $\spinb(M):=\spinb(TM)$) with indefinite inner product $\idscal{1}{\spinb(M)}{\cdot}{\cdot}$ . Clifford multiplication between vector fields is inherited from the underlying Clifford algebra structure: let $\mathbf{c}$ denote a pointwise acting vector space homomorphism from the Clifford algebra of $T_pM$ %$\group{Cl}_{1,n}(T_pM)$ 
to $\group{End}_{\C}(\spinb_p(M))$, then Clifford multiplication satisfies
\begin{equation}\label{cliffmult}
\cliff{X}\cdot\cliff{Y}+\cliff{Y}\cdot\cliff{X}=-2\met_p(X,Y)\id{\spinb_p(M)}
\end{equation} 
for any $X,Y \in T_pM$ and $p \in M$. For two elements $u,v \in \spinb_p(M)$, called \textit{spinors}, the action of $\cliff{X}$ on such an element is formally self-adjoint with respect to the inner product: 
\begin{equation}\label{cliffmult2}
\idscal{1}{\spinb_p(M)}{\cliff{X}u}{v}=\idscal{1}{\spinb_p(M)}{u}{\cliff{X}v}\quad \forall\, X \in T_pM \quad .
\end{equation}
The consequence of both relations is shown in the expression
\begin{equation*}
\idscal{1}{\spinb(M)}{\cliff{X}u}{\cliff{X}v}=\idscal{1}{\spinb(M)}{u}{\cliff{X}\cdot\cliff{X}v}=-\met(X,X)\idscal{1}{\spinb(M)}{u}{v} \quad.
\end{equation*}
Thus, if $X$ is timelike and normalized with respect to $\met$, Clifford multiplication acts as an isometry. Let $e_0,e_1,...,e_n$ be a Lorentz-orthonormal tangent frame. The \textit{spinorial volume form} is $\omega_{\spinb(M)}= \Imag^{(n+1)/2}\cliff{e_0}\cdot\cliff{e_1}\cdot\dots\cdot\cliff{e_n}$ and satisfies $\omega_{\spinb(M)}^2=\id{\spinb(M)}$ as well as $\cliff{X}\omega_{\spinb(M)}=-\omega_{\spinb(M)}\cliff{X}$ for all $X \in \mathfrak{X}(M)$, since $(n+1)$ is even. These properties induce a \textit{chirality decomposition} as an eigenspace decomposition of $\omega_{\spinb(M)}$:
\begin{eqnarray*}
\spinb(M)&=&\spinb^{+}(M)\oplus\spinb^{-}(M) \quad \text{with} \\
\spinb^{\pm}(M)&:=& \SET{u \in C^\infty(\spinb(M))\,\vert\, \omega_{\spinb(M)}u=\pm u}=(1\pm \omega_{\spinb(M)})\spinb(M)\quad .
\end{eqnarray*} 
Sections of $\spinb^{\pm}(M)$ are called \textit{spinor fields of positive} and \textit{negative chirality} respectively and are eigenspinors of the spinorial volume form. The spinor bundle becomes $\Z_2$-graded.\\
\\
A connection on the spinor bundle can be introduced as connection on the underlying principal bundle, see e.g. \cite{LawMi}: the Hermitian spinor bundle $\spinb(M)$ comes with a spinorial Levi-Civita connection $\Nabla{\spinb(M)}{X}$, which satisfies certain compatibilities with the inner product and the Clifford multiplication: 
\begin{align}
a)\quad&X\idscal{1}{\spinb(M)}{u}{v}=\idscal{1}{\spinb(M)}{\Nabla{\spinb(M)}{X}u}{v}+\idscal{1}{\spinb(M)}{u}{\Nabla{\spinb(M)}{X}v} \label{compspin1}\\
b)\quad&\Nabla{\spinb(M)}{X}(\cliff{Y}u)=\cliff{\Nabla{}{X}Y}u+\cliff{Y}\Nabla{\spinb(M)}{X}u \label{compspin2}
\end{align}
for all $X,Y \in \mathfrak{X}(M)$ and $u,v \in C^\infty(\spinb(M))$. This makes $\spinb(M)$ a Dirac bundle with compatible inner product. Condition \cref{compspin2} implies, that the connection on the bundle $\group{End}_{\C}(\spinb(M))$ satisfies $\nabla_{X} \mathbf{c}=0$ and the spinorial volume form is globally parallel with respect to the same connection: $\nabla_{X} \omega_{\spinb(M)}=0$. Consequently this connection preserves the eigenspace decomposition. With all this background the \textit{Dirac operator} is defined as operator $\Dirac\,:\,C^\infty(\spinb(M))\,\rightarrow\,C^\infty(\spinb(M))$, which is locally given by
\begin{equation}\label{diracM}
\Dirac u := \sum_{j=0}^n \varepsilon_{j}\cliff{e_j} \Nabla{\spinb(M)}{e_j} 
\end{equation}
for a Lorentz-orthonormal tangent frame $e_0,e_1,...,e_n$ and $\varepsilon_j=\met(e_j,e_j)=\pm 1$. One sees, that $\Dirac \in \Diff{1}{}(\End(\spinb(M)))$ and its principal symbol can be calculated with \cref{prinsymbdiff}, showing in particular non-ellipticity of $\Dirac$:
\begin{equation*}
\bm{\sigma}_1(\Dirac)(x,\xi)u=\lim_{\lambda \rightarrow \infty} \left. \left(\frac{\Imag}{\lambda}\right) \expe{-\Imag \lambda \Phi}\Dirac\left(\expe{\Imag \lambda \phi}u\right)\right\vert_{x}=-\sum_{j=0}^n \varepsilon_j (\xi^\sharp)^j \cliff{e_j}u(x)=-\cliff{\tilde{\xi}^\sharp}u(x)\, ,
\end{equation*}
where $\tilde{\xi}^{\sharp}$ is $\xi^\sharp$ with $(\tilde{\xi}^\sharp)^0=-(\xi^\sharp)^0$. The corresponding \textit{Dirac-Laplacian} $\Dirac^2$ is a differential operator of second order and its principal symbol can be calculated by the composition-to-multiplication-correspondence of the principal symbol map: 
\begin{equation*}
\bm{\sigma}_2(\Dirac^2)(x,\xi)=\bm{\sigma}_1(\Dirac)(x,\xi)\cdot\bm{\sigma}_1(\Dirac)(x,\xi)=-\met(\xi^\sharp,\xi^\sharp)\id{\spinb(M)}\quad.
\end{equation*}
In particular this, shows, that $\Dirac^2$ is a normally hyperbolic operator, which becomes important in section \ref{chap:feynman} and in the appendix.\\
\\
The $\Z_2$-grading induces a decomposition of the Dirac operator itself: since both subbundles $\spinb^{\pm}(M)$ have the same rank and $\Dirac \omega_{\spinb(M)}=-\omega_{\spinb(M)}\Dirac$, % which can be shown by using a synchronous frame in a neighborhood of any point in $M$ and anticommutativity of the volume form with every non-including Clifford multiplication, 
the Dirac operator is of the form
\begin{equation}\label{directsumrep}
\Dirac=\left(\begin{matrix} 0 & D_{-} \\ D_{+} & 0 \end{matrix}\right)\quad\text{with}\quad D_{\pm}\in \Diff{1}{}(M,\Hom(\spinb^{\pm}(M),\spinb^{\mp}(M)))
\end{equation}
We will first focus on $D_{+}$, which we will abbreviate with $D$. The operator $D_{-}$ will be abbreviated with $\tilde{D}$, if it becomes necessary for notational reasons. Both are non-elliptic and their Dirac-Laplacians $D_{\pm}D_{\mp}$ are normally hyperbolic.\\
\\
If the number of spatial dimensions $n$ is even, the situation is as follows: on odd dimensional manifolds the spin structure does not admit a chirality decomposition, consequently the Dirac operator is purely defined by \cref{diracM} and one does not need to distinguish between spinor fields of positive or negative chirality. This fact will simplify the restriction of the spin structure on the slices.  

\subsection{Dirac Operator on the Cauchy hypersurfaces}
So far the recapitulation was done without insisting the globally hyperbolic structure of $M$. This will be done now:

\begin{prop}\label{diracselfadprop}
Suppose $M$ is a temporal compact globally hyperbolic spin manifold with Cauchy temporal function $\timef$, such that $M$ is foliated by a family of non-compact spatial Cauchy hypersurfaces $\SET{\Sigma_t}_{t\in \timef(M)}$, and $\Dirac$ the Dirac operator of a Dirac bundle; this Dirac operator is formally anti-self-adjoint, if the intersection of supports of two differentiable spinor fields is compact, non-empty and contained in the interior of $M$; otherwise the defect is described by
\begin{equation}\label{diracselfad}
\begin{split}
\int_M \idscal{1}{\spinb(M)}{\Dirac u}{v}+&\idscal{1}{\spinb(M)}{u}{ \Dirac v} \dvol{} = \\
&\int_{\Sigma_{2}} \idscal{1}{\spinb(M)}{u}{\cliff{\nu}v} \dvol{\Sigma_t}-\int_{\Sigma_{1}} \idscal{1}{\spinb(M)}{u}{\cliff{\nu}v} \dvol{\Sigma_t} \, , 
\end{split}
\end{equation}
if furthermore the intersection of the supports meets at least one of the two boundary hypersurfaces.  
\end{prop}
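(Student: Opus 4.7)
The plan is to introduce an auxiliary vector field $W$ on $M$, defined for each pair $u, v \in C^1(M, \spinb(M))$ by the pointwise duality
\[
\met(W, Y) = \idscal{1}{\spinb(M)}{\cliff{Y} u}{v}, \qquad Y \in \mathfrak{X}(M),
\]
which is well-defined by the nondegeneracy of $\met$. This is the Lorentzian analogue of the usual Dirac current. The core of the argument is to establish the pointwise divergence identity
\[
\mathrm{div}(W) = \idscal{1}{\spinb(M)}{\Dirac u}{v} + \idscal{1}{\spinb(M)}{u}{\Dirac v},
\]
from which the proposition follows via the divergence theorem on $M$.

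To prove the divergence identity, I would fix a point $p \in M$ and choose a Lorentz-orthonormal frame $e_0, \dots, e_n$ with $(\Nabla{}{e_\mu} e_\nu)\vert_p = 0$, available by passing to normal coordinates. Then
\[
\mathrm{div}(W)\vert_p = \sum_\mu \varepsilon_\mu e_\mu\bigl(\met(W, e_\mu)\bigr)\vert_p = \sum_\mu \varepsilon_\mu e_\mu\bigl(\idscal{1}{\spinb(M)}{\cliff{e_\mu} u}{v}\bigr)\vert_p,
\]
and expanding with the metric compatibility \cref{compspin1}, the Clifford compatibility \cref{compspin2} of the spin connection, and the vanishing of $\Nabla{}{e_\mu} e_\mu$ at $p$, the right-hand side collapses to exactly $\bigl(\idscal{1}{\spinb(M)}{\Dirac u}{v} + \idscal{1}{\spinb(M)}{u}{\Dirac v}\bigr)\vert_p$ by the definition \cref{diracM}. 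Since $p$ is arbitrary and both sides are manifestly frame-independent, the identity holds globally.

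Next I would integrate over $M$ and use Stokes' theorem in the form $\int_M \mathrm{div}(W)\,\dvol{} = \int_{\partial M} \iota_W \dvol{}$, justified because the assumption that $\supp u \cap \supp v$ is spatially compact (contained in $\Jlight{}(K)$ for some compact $K$) forces $W$ to have spatially compact support, so no contributions from spatial infinity arise on the slices. The only possible boundary contributions therefore come from the two Cauchy boundary hypersurfaces $\Sigma_1 = \Sigma_{t_1}$ and $\Sigma_2 = \Sigma_{t_2}$. If $\supp u \cap \supp v$ is furthermore compact and contained in the interior of $M$, both boundary integrals vanish and one obtains the formal anti-self-adjointness.

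To identify the boundary contributions in the remaining case, I would decompose $W = -\met(W, \upnu)\upnu + W^{\top}$ into its normal and tangential parts with respect to a slice $\Sigma_t$ (using $\met(\upnu,\upnu) = -1$). The tangential contribution $\iota_{W^{\top}} \dvol{}$ vanishes upon pullback to $\Sigma_t$, because $W^{\top}$ together with $n$ slice-tangent vectors gives $n+1$ vectors in the $n$-dimensional $T\Sigma_t$. Using \cref{int0} for $\dvol{\Sigma_t}$, the pullback then reduces to $\met(W, \upnu)\,\dvol{\Sigma_t}$, and the definition of $W$ together with the Clifford self-adjointness \cref{cliffmult2} gives $\met(W, \upnu) = \idscal{1}{\spinb(M)}{\cliff{\upnu} u}{v} = \idscal{1}{\spinb(M)}{u}{\cliff{\upnu} v}$. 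The opposite signs on $\Sigma_1$ and $\Sigma_2$ in the claimed formula encode the fact that $\upnu$, being past-directed, is the outward unit normal on the past boundary $\Sigma_1$ but the inward one on the future boundary $\Sigma_2$. The main obstacle is precisely this careful bookkeeping of orientation and sign in the Lorentzian Stokes formula; once the Dirac current $W$ has been correctly identified, the underlying algebra is routine.
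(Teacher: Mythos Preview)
Your proposal is correct and follows essentially the same route as the paper: both reduce the identity to a divergence expression (your Dirac current $W$ is precisely the vector field $\sum_j \varepsilon_j \idscal{1}{\spinb(M)}{u}{\cliff{e_j}v}\, e_j$ that the paper obtains via the Lawson--Michelsohn computation in a synchronous frame) and then apply the Lorentzian divergence theorem with the same orientation bookkeeping for $\upnu$ on $\Sigma_1$ versus $\Sigma_2$. Your packaging via the frame-independent current $W$ is slightly cleaner, but the underlying argument is identical.
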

\begin{proof}
If $M$ is $(n+1)$-dimensional and temporal compact, i.e. $\exists\,t_1,t_2 \in \R$, such that $\timef(M)=[t_1,t_2]$, and each non-compact hypersurface in the foliating family has no boundary, the boundary of $M$ is $\bound M =\Sigma_{1}\sqcup \Sigma_{2}$, where here and in the following $\Sigma_1:=\Sigma_{t_1}$ and $\Sigma_{2}:=\Sigma_{t_2}$. The sum
\begin{equation*}
\idscal{1}{\spinb(M)}{\Dirac u}{v}+\idscal{1}{\spinb(M)}{u}{ \Dirac v}
\end{equation*}
for $u,v \in C^1_\comp(\spinb(M))$ without specifying the compact support leads to a boundary contribution in terms of a divergence expression. 
A calculation like in \cite{LawMi} shows formula \cref{diracselfad}, where the divergence theorem on Lorentzian manifolds has been applied with a timelike unit normal vector $\mathfrak{n}$, where $\mathfrak{n}=\upnu$ is outward pointing on $\Sigma_{1}$. If the supports satisfy $\supp{u}\subset \mathring{M}$ and $\supp{v} \subset \mathring{M}$, the right hand side vanishes, thus $\Dirac$ is indeed formally anti-self-adjoint: $\Dirac^{\dagger}=-\Dirac$. If $(\supp{u}\cap\supp{v})\cap\Sigma_{i}\neq \emptyset$ the defect is described by a boundary term on $\Sigma_{i}$ for either $i=1$ or $i=2$ or both. 
\end{proof}
\begin{cor}\label{diracselfadchir}
The same preassumptions imply formally $D^\dagger=-\tilde{D}$ and $\tilde{D}^\dagger=-D$.
\end{cor}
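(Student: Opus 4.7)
The plan is to deduce the corollary directly from the formal anti-self-adjointness of $\Dirac$ in Proposition \ref{diracselfadprop} combined with the block decomposition \cref{directsumrep}. The main observation I would rely on is that the chirality subbundles $\spinb^{+}(M)$ and $\spinb^{-}(M)$ are mutually $\idscal{1}{\spinb(M)}{\cdot}{\cdot}$-orthogonal, so that the indefinite inner product splits diagonally across the $\Z_2$-grading.

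To record that orthogonality I would first verify $\omega_{\spinb(M)}^{\dagger} = \omega_{\spinb(M)}$. Using the formal self-adjointness of each $\cliff{e_j}$ from \cref{cliffmult2}, the sesquilinear convention (which flips $\Imag^{(n+1)/2}$ to $(-\Imag)^{(n+1)/2}$ under the dagger), and the sign $(-1)^{n(n+1)/2}$ picked up when reversing the product $\cliff{e_0}\cdots\cliff{e_n}$ of $n+1$ distinct anticommuting factors, a short parity count for $n$ odd yields $\omega_{\spinb(M)}^{\dagger} = \omega_{\spinb(M)}$. For $u \in \spinb^{+}(M)$ and $v \in \spinb^{-}(M)$ the chain
\begin{equation*}
\idscal{1}{\spinb(M)}{u}{v} = \idscal{1}{\spinb(M)}{\omega_{\spinb(M)} u}{v} = \idscal{1}{\spinb(M)}{u}{\omega_{\spinb(M)} v} = -\idscal{1}{\spinb(M)}{u}{v}
\end{equation*}
then forces $\idscal{1}{\spinb(M)}{u}{v} = 0$. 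Consequently the restriction of $\idscal{1}{\spinb(M)}{\cdot}{\cdot}$ to each chirality summand is non-degenerate, which provides the natural identifications $(\spinb^{\pm}(M))^{\ast} \cong \spinb^{\pm}(M)$; under these, the formal adjoints $D^{\dagger}$ and $\tilde{D}^{\dagger}$ are well-defined as operators $\spinb^{-}(M) \to \spinb^{+}(M)$ and $\spinb^{+}(M) \to \spinb^{-}(M)$ respectively, matching the types of $\tilde{D}$ and $D$.

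With the bookkeeping settled I would then pick test spinors $u \in C^{1}_{\comp}(\spinb^{+}(M))$ and $v \in C^{1}_{\comp}(\spinb^{-}(M))$ supported in $\mathring{M}$ with compact intersection of supports. By \cref{directsumrep} one has $\Dirac u = D u$ and $\Dirac v = \tilde{D} v$, so that Proposition \ref{diracselfadprop} (whose boundary contributions on $\Sigma_{1},\Sigma_{2}$ vanish by the interior support assumption) gives
\begin{equation*}
\int_{M} \idscal{1}{\spinb(M)}{D u}{v} \dvol{} + \int_{M} \idscal{1}{\spinb(M)}{u}{\tilde{D} v} \dvol{} = 0,
\end{equation*}
which is precisely the defining identity $D^{\dagger} = -\tilde{D}$. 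Exchanging the chiralities of the test spinors yields $\tilde{D}^{\dagger} = -D$ by the same argument. The only mildly delicate point is the verification of $\omega_{\spinb(M)}^{\dagger} = \omega_{\spinb(M)}$ together with the resulting domain-codomain bookkeeping; beyond that the statement is a direct specialization of the proposition.
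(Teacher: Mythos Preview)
Your proof is correct and follows essentially the same route as the paper, which simply says ``Use the expression for $\Dirac$ with respect to the chirality decomposition in \cref{directsumrep}.'' You have merely made explicit the orthogonality of $\spinb^{+}(M)$ and $\spinb^{-}(M)$ with respect to $\idscal{1}{\spinb(M)}{\cdot}{\cdot}$ (via the self-adjointness of $\omega_{\spinb(M)}$), which is the tacit ingredient needed so that the block adjoint of $\Dirac$ reads off as $\begin{pmatrix} 0 & D^{\dagger} \\ \tilde{D}^{\dagger} & 0 \end{pmatrix}$ and can be compared entrywise with $-\Dirac$.
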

\begin{proof}
Use the expression for $\Dirac$ with respect to the chirality decomposition in \cref{directsumrep}.
\end{proof}

We follow the notation of \cite{BaerStroh} and set $\upbeta:=\cliff{\upnu}$. \cref{cliffmult} implies $\upbeta^2=\id{\spinb(M)}$ and it induces a positive definite inner product 
\begin{equation}\label{restspinbundmet}
\dscal{1}{\spinb(\Sigma_t)}{\cdot}{\cdot}=\idscal{1}{\spinb(M)}{\upbeta\,\cdot}{\cdot}\quad, 
\end{equation}
where the inserted spinors either needs to be restricted to the hypersurface or lifted to the whole manifold. With respect to this inner product $\upbeta$ still acts as an isometry by the formal self-adjointness \cref{cliffmult2}: for $u,v$ spinor fields on a hypersurface $\Sigma_{t}$ and their lifts $\tilde{u}, \tilde{v}$ to $M$ 
\begin{equation}\label{isombeta}
\dscal{1}{\spinb(\Sigma_t)}{\upbeta u}{\upbeta v}=\idscal{1}{\spinb(M)}{\upbeta^2 \tilde{u}}{\upbeta \tilde{v}}=\idscal{1}{\spinb(M)}{\upbeta \tilde{u}}{\tilde{v}}=\dscal{1}{\spinb(\Sigma_t)}{u}{v} \quad .
\end{equation} 
In order to decompose the Dirac operator along a hypersurface in the foliating family of Cauchy hypersurfaces, one first needs to introduce a spin structure on each $\Sigma_t$ for $t \in \timef(M)$. This can be done by restricting the bundle $\spinb(M)$ to each $\Sigma_t$, see \cite{BaerGauMor} or \cite{Gin} for the details of the underyling principal bundle structure. For $n$ odd the restricted bundle is $\spinb^{\pm}(M)\vert_{\Sigma_t}=\spinb(\Sigma_t)$ for each $t \in \timef(M)$. This implies $\spinb(M)\vert_{\Sigma_t}=\spinb(\Sigma_t)\oplus \spinb(\Sigma_t)=:\spinb(\Sigma_t)^{\oplus 2}$. The Clifford multiplication for spinors on the hypersurfaces differ by the choice of the restricted eigensubbundle: one defines for a vector field $X$ on $\Sigma_t$ the corresponding homomorphism
\begin{equation*}
\Cliff{t}{X}:= \Imag \upbeta \cliff{X} \quad\text{for}\quad \spinb(\Sigma_t)=\spinb^{\pm}(M)\vert_{\Sigma_t}\quad .
\end{equation*} 
We will introduce the notation $\spinb^{\pm}(\Sigma_t)$ in order to stress, which spinor eigenbundle on $M$ is restricted to the hypersurface. %\bnote{f10}. 
Thus on $\spinb(M)\vert_{\Sigma_t}$ the Clifford multiplication becomes \\ $\Cliff{t}{X} \oplus (-\Cliff{t}{X})$. Note, that $\upbeta$ anticommutes with $\cliff{X}$ by \cref{cliffmult} for any tangent vector $X$.
\begin{comment}
The corresponding chirality operator can be constructed with the one on $M$ by $\omega_{\spinb(\Sigma_t)}=\Imag \upbeta\omega_{\spinb(M)}$, where the same properties are induced:
\begin{itemize}
\item[(1)] $\omega_{\spinb(\Sigma_t)}^2=\id{\spinb(M)\vert_{\Sigma_t}}$: $\omega_{\spinb(\Sigma_t)}^2 =-\upbeta\omega_{\spinb(M)}\upbeta\omega_{\spinb(M)}=\upbeta \upbeta \omega_{\spinb(M)}\omega_{\spinb(M)}=\id{\spinb(M)} =\id{\spinb(M)\vert_{\Sigma_t}}$, where the volume form acts on restrictions of spinor fields in $C^\infty(\spinb(M))$.

\item[(2)] $\Cliff{t}{X}\omega_{\spinb(\Sigma_t)}=-\omega_{\spinb(\Sigma_t)}\Cliff{t}{X}$ for $X \in C^\infty(T\Sigma_t)$: $\upbeta$ anticommutes with $\cliff{X}$ by \cref{cliffmult} and \\ $\met(\upnu,X)=0$, since $\upnu \perp T\Sigma_t$ for all $t \in \timef(M)$. Then $\upbeta \cliff{X} \upbeta=-\cliff{X}\upbeta^2=-\upbeta^2 \cliff{X}=-\beta \Cliff{t}{X}$ and thus
\begin{equation*}
\Cliff{t}{X}\omega_{\spinb(\Sigma_t)} = -\Imag^2 \upbeta^2 \cliff{X}\omega_{\spinb(M)} = -\Imag^2 \upbeta \omega_{\spinb(M)}\upbeta\cliff{X}=-\omega_{\spinb(\Sigma_t)}\Cliff{t}{X}\quad.
\end{equation*}
\end{itemize} 

A Clifford structure is also inherited: choose two vector fields $X,Y$ on $\Sigma_t$ for fixed $t$ 
\end{comment}
A Clifford structure on the hypersurface is inherited from the one on $M$ and the above anti-commuting: 
\begin{equation}\label{clifftstruc}
\begin{comment}
\Cliff{t}{X}\Cliff{t}{Y}&=& -\upbeta\cliff{X}\upbeta \cliff{Y}=\cliff{X}\cliff{Y}\stackrel{\cref{cliffmult}}{=}-\cliff{Y}\cliff{X}-2\met(X,Y)\id{\spinb(\Sigma_t)} \nonumber \\
&=& \upbeta\cliff{Y}\upbeta \cliff{X}-2\met_t(X,Y)\id{\spinb(\Sigma_t)}=-\Cliff{t}{Y}\Cliff{t}{X}-2\met_t(X,Y)\id{\spinb(\Sigma_t)}\nonumber \\
\Leftrightarrow\quad 
\end{comment}
-2\met_t(X,Y)\id{\spinb(\Sigma_t)} = \Cliff{t}{X}\Cliff{t}{Y}+\Cliff{t}{Y}\Cliff{t}{X}
\end{equation}
for vector fields $X,Y$ on $\Sigma_t$ for fixed $t$. While Clifford multiplication is formally self-adjoint on $M$, one can show, that $\Cliff{t}{\cdot}$ is formally skew-adjoint with respect to the positive inner product, introduced above: 
\begin{equation}\label{clifftskew}
\dscal{1}{\spinb(\Sigma_t)}{\Cliff{t}{X}u}{v}=-\dscal{1}{\spinb(\Sigma_t)}{u}{\Cliff{t}{X}v} \quad;
\end{equation}
see \cite{BaerStroh} for the calculation. \\
\\
The connection on $\spinb(M)\vert_{\Sigma_t}$ is the direct sum connection $\Nabla{\spinb(\Sigma_t)}{}\oplus\Nabla{\spinb(\Sigma_t)}{}$, where $\Nabla{\spinb(\Sigma_t)}{}$ is induced by the covariant derivative along a vector in $T_p\Sigma_t$ for a fixed $t$, described in \cref{covsplit}: suppose $u \in C^\infty(\spinb^{\pm}(M))$, in that case
\begin{equation}\label{spincovsplit}
\left.\Nabla{\spinb(M)}{X}u\right\vert_{\Sigma_t}=\left.\Nabla{\spinb(\Sigma_t)}{X}u\right\vert_{\Sigma_t}+\left.\frac{1}{2}\upbeta\cliff{\wein(X)}u\right\vert_{\Sigma_t}=\left.\Nabla{\spinb(\Sigma_t)}{X}u\right\vert_{\Sigma_t}\mp\left.\frac{\Imag}{2}\Cliff{t}{\wein(X)}u\right\vert_{\Sigma_t} \quad;
\end{equation}
see \cite{BaerGauMor} for the derivation. This notion of connection enables to calculate the Dirac operator along a fixed hypersurface: suppose $e_0=\upnu,e_1,...,e_n$ is a Lorentz-orthonormal tangent frame, then choose $e_1,...,e_n$ as Riemann-orthonormal frame for $\Sigma_t$. For a spinor field $u \in C^\infty(\spinb^{+}(M))$ one yields
\begin{equation*}
\left. \Dirac u \right\vert_{\Sigma_t}=-\left.\upbeta \left(\Nabla{\spinb(\Sigma_t)}{\upnu}+\Imag \mathcal{A}_t -\frac{n}{2}H_t \right)u\right\vert_{\Sigma_t} \quad.
\end{equation*}
The \textit{hypersurface-Dirac operator} on sections of $\spinb(M)\vert_{\Sigma_t}$ for an odd dimensional submanifold thus takes the form

\begin{equation}\label{Dirachyp}
\mathcal{A}_t=\left(\begin{matrix}
A_t & 0 \\
0 & -A_t
\end{matrix} \right)\quad \text{with} \quad A_t=\sum_{j=1}^n \Cliff{t}{e_j} \Nabla{\spinb(\Sigma_t)}{e_j} \quad.
\end{equation}
The corresponding Dirac operators acting on $\spinb^{\pm}(M)\vert_{\Sigma_t}$-sections along a hypersurface follow by the correct sign for the restricted Clifford multiplication. For sections $u$ with either positive or negative chirality, one has 
\begin{equation} \label{dirachyppos}
\left. D_{\pm} u \right\vert_{\Sigma_t} = -\upbeta \left.\left(\Nabla{\spinb(\Sigma_t)}{\upnu}\pm\Imag A_t  -\frac{n}{2}H_t \right)u\right\vert_{\Sigma_t}=-\upbeta \left.\left(\Nabla{\spinb(\Sigma_t)}{\upnu}+ B_{t,\pm} \right)u\right\vert_{\Sigma_t} \quad;
\end{equation}
The abbreviation $B_{t,\pm}=\pm\Imag A_t - \frac{n}{2} H_t$ indicates an operator of most first order acting tan-gential to the hypersurface.\\
\\
In comparison to the approach in \cite{BaerWafo} several special relations turn out to be useful in computing energy estimates. % in preparation for the proof of the well-posedness of the Cauchy problem. 
\begin{lem} \label{lemenest1} 
The following relations hold for a vector field $X$ and a smooth section $u$ of $\spinb(M)$ along a hypersurface $\Sigma_t$ for each $t\in \timef(M)$: 
\begin{itemize}
\item[(a)] $\Nabla{\spinb(\Sigma_t)}{X}(\upbeta u)=\upbeta \Nabla{\spinb(\Sigma_t)}{X}u$ and $A_t (\upbeta u)=-\upbeta A_t u$;
\end{itemize}
if moreover each $\Sigma_t$ is complete
\begin{itemize}
\item[(b)] $\Lambda_{t}^s\upbeta=\upbeta\Lambda_{t}^s$ for $s\in \R$, where $\Lambda_{t}^2=\id{}+\adNabla{\spinb(\Sigma_t)}{}\Nabla{\spinb(\Sigma_t)}{}$;
\item[(c)] $\dscal{1}{\spinb(\Sigma_t)}{B_{t,\pm} v}{w}+\dscal{1}{\spinb(\Sigma_t)}{v}{B_{t,\pm} w}=-n H_t\dscal{1}{\spinb(\Sigma_t)}{v}{w}$ for all $v,w\in C^\infty(\spinb^{\pm}(\Sigma_t))$, sharing the same chirality.
\end{itemize}
\end{lem}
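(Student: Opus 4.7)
The strategy is to establish (a) directly by combining the Leibniz rule for the ambient connection with the Gauss-type splitting along the hypersurface, and then to deduce (b) and (c) from (a) using functional calculus and formal-adjoint computations respectively.

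For (a), I would start by applying the spinorial Leibniz identity \cref{compspin2} to $\upbeta u = \cliff{\upnu} u$. Since $\nabla_X \upnu = -\wein(X)$ for $X$ tangent to $\Sigma_t$ (a rearrangement of \cref{covsplit}, noting $\met(\nabla_X\upnu,\upnu)=0$), this gives $\Nabla{\spinb(M)}{X}(\upbeta u) = -\cliff{\wein(X)} u + \upbeta \Nabla{\spinb(M)}{X} u$. Next I would apply the splitting \cref{spincovsplit} separately to $\Nabla{\spinb(M)}{X}(\upbeta u)$ and $\Nabla{\spinb(M)}{X} u$, invoking the pointwise anticommutation $\upbeta\cliff{Y} = -\cliff{Y}\upbeta$ for any $Y\in T\Sigma_t$ (immediate from \cref{cliffmult} since $\met(\upnu,Y)=0$). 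After simplification the second-fundamental-form corrections match on both sides and the intertwining $\Nabla{\spinb(\Sigma_t)}{X}(\upbeta u) = \upbeta\Nabla{\spinb(\Sigma_t)}{X} u$ drops out. The sign-flipping relation $A_t\upbeta = -\upbeta A_t$ is then a short consequence: $\Cliff{t}{e_j} = \Imag\upbeta\cliff{e_j}$ combined with $\upbeta^2=\id$ yields $\Cliff{t}{e_j}\upbeta = -\upbeta\Cliff{t}{e_j}$, which substituted into the defining sum \cref{Dirachyp} for $A_t$ and combined with the first intertwining produces the claim.

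For (b), I would first note that $\upbeta$ is fiberwise self-adjoint on $\spinb(\Sigma_t)$: \cref{isombeta} combined with $\upbeta^2=\id$ forces $\upbeta^{*}=\upbeta$. Taking formal $L^2$-adjoints of the intertwining from (a) then yields $\adNabla{\spinb(\Sigma_t)}{}\upbeta = \upbeta\adNabla{\spinb(\Sigma_t)}{}$, so $\Lambda_t^2 = \id + \adNabla{\spinb(\Sigma_t)}{}\Nabla{\spinb(\Sigma_t)}{}$ commutes with $\upbeta$. Completeness of $\Sigma_t$ ensures $\Lambda_t^2$ is essentially self-adjoint with strictly positive spectrum, so the Borel functional calculus $f(x)=x^{s/2}$ defines $\Lambda_t^s$ uniquely; a bounded self-adjoint operator commuting with $\Lambda_t^2$ automatically commutes with every Borel function of $\Lambda_t^2$, and the conclusion $\upbeta\Lambda_t^s = \Lambda_t^s\upbeta$ follows.

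For (c), I would compute the formal $L^2$-adjoint of $B_{t,\pm} = \pm\Imag A_t - \tfrac{n}{2}H_t$. The hypersurface Dirac operator $A_t$ is the Riemannian spin-Dirac operator on $(\Sigma_t,\met_t)$ and is formally self-adjoint: the fiberwise skew-adjointness \cref{clifftskew} of $\Cliff{t}{\cdot}$ together with metric compatibility of $\Nabla{\spinb(\Sigma_t)}{}$ presents $\dscal{1}{\spinb(\Sigma_t)}{A_t v}{w} - \dscal{1}{\spinb(\Sigma_t)}{v}{A_t w}$, at a point in a synchronous frame, as the total divergence $-\sum_j e_j\dscal{1}{\spinb(\Sigma_t)}{v}{\Cliff{t}{e_j}w}$. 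Since $H_t$ is real-valued, multiplication by $H_t$ is self-adjoint, and in total $B_{t,\pm}^{*} = \mp\Imag A_t - \tfrac{n}{2}H_t = -B_{t,\pm} - nH_t$. Pairing against $w$ produces the desired identity modulo divergence, with the chirality constraint being precisely what ensures that the imaginary contributions $\pm\Imag A_t$ in the two slots cancel (by antilinearity in the second argument) rather than double. The principal subtlety, and the main obstacle, is that (c) is a pointwise identity only up to an exact divergence: its operational content is the formal operator identity $B_{t,\pm}^{*} + B_{t,\pm} = -nH_t$ on $L^2(\spinb^{\pm}(\Sigma_t))$, which is what the subsequent energy estimates actually require.
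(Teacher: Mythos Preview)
Your proof is correct and, for parts (a) and (c), follows essentially the same route as the paper. There is one genuine methodological difference in part (b): the paper obtains $[\Lambda_t^2,\upbeta]=0$ via the Lichnerowicz formula, writing $\adNabla{\spinb(\Sigma_t)}{}\Nabla{\spinb(\Sigma_t)}{}=\mathcal{A}_t^2-\tfrac{1}{4}\curvcon_{\Sigma_t}$ and using from (a) that $\upbeta$ anticommutes with $A_t$ (hence commutes with $A_t^2$), while you instead take formal adjoints of the intertwining $\Nabla{\spinb(\Sigma_t)}{}\upbeta=(\id\otimes\upbeta)\Nabla{\spinb(\Sigma_t)}{}$ directly. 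Your route is a shade more elementary in that it avoids the scalar-curvature detour; the paper's route has the minor advantage that the square $A_t^2$ makes the anticommutation manifest. After that point both arguments pass through the same functional-calculus step, with completeness guaranteeing essential self-adjointness of $\Lambda_t^2$.

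Your closing remark about (c) is on target and worth keeping. The pointwise identity as stated holds only up to an exact divergence term $-\sum_j e_j\dscal{1}{\spinb(\Sigma_t)}{v}{\Cliff{t}{e_j}w}$; the paper's own proof invokes formal self-adjointness of $A_t$ (an $L^2$ statement) and the synchronous-frame discussion does not actually kill the divergence pointwise. What is really established, and what the energy estimate in Proposition~\ref{enesttheorem} actually consumes, is the $L^2$-version recorded immediately afterwards as \cref{extendB}, namely the operator identity $B_{t,\pm}^{*}+B_{t,\pm}=-nH_t$.
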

\begin{proof}
\begin{itemize} \item[]
\item[(a)] The first commutativity is a consequence of the compatibility with Clifford multiplication \cref{compspin2} and \cref{spincovsplit}, implying the anti-commuting with $A_t$.
\begin{comment}
From the compatibility with Clifford multiplication \cref{compspin2} one has 
\begin{equation*}
\left. \Nabla{\spinb(M)}{X} \upbeta u \right\vert_{\Sigma_t} = \left. \cliff{ \nabla_{X} \upnu} u \right\vert_{\Sigma_t} + \upbeta \left. \Nabla{\spinb(M)}{X} u \right\vert_{\Sigma_t} =
 \upbeta \left. \Nabla{\spinb(\Sigma_t)}{X} u \right\vert_{\Sigma_t} -\left.\frac{1}{2}\cliff{\wein(X)} u \right\vert_{\Sigma_t}
\end{equation*}
and on the other hand
\begin{equation*}
\left. \Nabla{\spinb(M)}{X} \upbeta u \right\vert_{\Sigma_t} = \left. \Nabla{\spinb(\Sigma_t)}{X} \upbeta u \right\vert_{\Sigma_t}\mp\left.\frac{\Imag}{2}\Cliff{t}{X}\upbeta u\right\vert_{\Sigma_t} =\left.\Nabla{\spinb(\Sigma_t)}{X} \upbeta u \right\vert_{\Sigma_t} -\left.\frac{1}{2}\cliff{\wein(X)} u \right\vert_{\Sigma_t} \,\, .
\end{equation*}
Subtraction yields the first statement.\\
\\
This result is used to show, that $\upbeta$ anticommutes with the hypersurface Dirac operator \cref{Dirachyp} with a synchronous tangent Lorentz frame, since $\upbeta$ anticommutes with each Clifford multiplication on each $\Sigma_t$.
\end{comment}
\item[(b)] Lichnerowicz formula for the hypersurface Dirac operator and result (a) lead to
\begin{equation*}
\adNabla{\spinb(\Sigma_t)}{}\Nabla{\spinb(\Sigma_t)}{}(\upbeta u)= \mathcal{A}_t^2 (\upbeta u)-\frac{\curvcon_{\Sigma_t}}{4}\upbeta u = \upbeta \mathcal{A}_t^2 u -\upbeta\frac{\curvcon_{\Sigma_t}}{4} u= \upbeta \adNabla{\spinb(\Sigma_t)}{}\Nabla{\spinb(\Sigma_t)}{}u
\end{equation*}
and thus $\Lambda_{t}^2(\upbeta u)= \upbeta \Lambda_{t}^2 u$; here $\curvcon_{\Sigma_t}$ denotes the Ricci scalar for $\Sigma_t$. This holds true for any positive even power $\Lambda_{t}^{2k}$, $k \in \N_0$, by applying the above result $k$ times and thus for any polynomial in $\Lambda_{t}^{2}$. It still holds for $\Lambda_{t}^s$ for any $s\in \R$, as $\Lambda_{t}^2$ is essentially self-adjoint on $L^2(\spinb^{\pm}(\Sigma_t))$ by the completeness of the hypersurfaces. Its spectrum is positive, wherefore the function $f(x)=x^{s/2}$ is continuous on the spectrum of $\Lambda_{t}^2$. Then $\Lambda_{t}^s=f(\Lambda_{t}^2)$ is defined by the limit of any sequence of polynomials, converging uniformly on the spectrum to $f$, on which the showed commutativity of $\upbeta$ and any polynomial in $ \Lambda_{t}^2$ can be applied.

\item[(c)] The left hand side of the equation gives for both chiralities
\begin{equation*}
\pm\left(\dscal{1}{\spinb(\Sigma_t)}{\Imag A_t v}{w}+\dscal{1}{\spinb(\Sigma_t)}{v}{\Imag A_t w}\right)-n H_t\dscal{1}{\spinb(\Sigma_t)}{v}{w} \quad.
\end{equation*}
$A_t$ is formally self-adjoint w.r.t to the induced inner product on $\Sigma_t$, since $\bound \Sigma_t =\emptyset$ for all $t \in \timef(M)$ by completeness of the Cauchy hypersurface; the proof can be taken from Proposition 5.3. in chapter II of \cite{LawMi}. One needs to be careful with the signs, since Clifford multiplication is now formally skew-adjoint by \cref{clifftskew}. The action of the covariant derivative on $\Cliff{t}{e_j}$ has two contributions, coming with $\cliff{\nabla_{e_j}\upnu}$ after applying on $\upbeta$ and $\cliff{\nabla_{e_j} e_j}$. By choosing the Riemann-orthonormal frame $e_1,...,e_n$ in such a way, that the Lorentz-orthonormal frame $\upnu,e_1,...,e_n$ becomes synchronous at a point, these Clifford multiplications won't contribute and the left boundary contribution as in the mentioned reference vanishes. Thus $\Imag A_t$ is formally anti-self-adjoint with respect to the same inner product and the term in the brackets vanishes.
\end{itemize}
\end{proof}

This result can be extended to a positive definite $L^2$-scalar product:
\begin{equation}\label{extendB}
\dscal{1}{L^2(\spinb(\Sigma_t))}{B_{t,\pm} v}{w}+\dscal{1}{L^2(\spinb(\Sigma_t))}{v}{B_{t,\pm} w}=-n\dscal{1}{L^2(\spinb(\Sigma_t))}{H_t v}{w} \quad.
\end{equation}
The proof is based on the calculation in (c), but now one uses the essential self-adjointness of the Riemannian Dirac operator $A_t$ on $L^2$-spaces, which is justified\bnote{f14}, if each hypersurface is either compact or complete. The operator $B_{t,\pm}$ is then defined as above, but one takes the extension of $A_t$ instead, which we will also denote by the same symbol.\\
\\
As there is no $\Z_2$-grading for an even number of spatial dimensions, the restricted spinor bundle is isomorphic to the spinor bundle with respect to a fixed chosen hypersurface: $\spinb(M)\vert_{\Sigma_t}=\spinb(\Sigma_t)$ for all $t\in \timef(M)$. This and the following results can be found in Proposition 1.4.1 in \cite{Gin}. The homomorphism for the Clifford multiplication on the hypersurfaces is related with the Clifford multiplication homomorphism on $M$ via $\Cliff{t}{X}:= \Imag \upbeta \cliff{X}$. The Clifford multiplication on the even dimensional hypersurface is in that case similar to the one with positive chirality for an odd number of spatial dimensions. The properties \cref{clifftstruc} and \cref{clifftskew} still hold true by this similarity. The Dirac operator $\Dirac$ along a fixed hypersurface can be derived in the same manner as it was done for spinor fields with positive chirality: 
\begin{equation}\label{dirachypodd}
\Dirac u \vert_{\Sigma_t}=-\upbeta\left.\left(\Nabla{\spinb(\Sigma_t)}{\upnu}+\Imag A_t - \frac{n}{2}H_t\right)u\right\vert_{\Sigma_t}
\end{equation}
for all $t \in \timef(M)$ fixed. Notice, that for odd dimensional hypersurfaces $A_t$ is replaced by $\mathcal{A}_t$ from \cref{Dirachyp}. One observes, that its restriction has the same form as $D_{+}$. This allows to reduce the analysis of the well-posedness of the corresponding Cauchy problem to the well-posedness properties of $D$ alone with the only difference, that there is no need to pay attention to the chiralities.

%\cleardoublepage

\addtocontents{toc}{\vspace{-3ex}}
\section{Well-posedness of the Cauchy problem for the Dirac equation}\label{chap:cauchy}
The aim of this section is to prove well-posedness of the Cauchy problem for the Dirac operator $D$ on a globally hyperbolic Lorentzian spin manifold, where each member in the foliating family $\{\Sigma_t\}_{t \in \timef(M)}$ is a non-compact, bue complete Cauchy hypersurface:
\begin{equation}\label{Diraceq}
D u = f\quad\text{with}\quad u\vert_{\Sigma_t}=g
\end{equation}
where $u$ is a suitable weak solution as section of $\spinb^{+}(M)$, $f$ a suitable section of $\spinb^{-}(M)$ and $g$ a section of $\spinb^{+}(\Sigma_t)$. To distinguish between solutions of the homogeneous and inhomogeneous Cauchy problem in the space of finite energy sections one introduces the following two subspaces.
\begin{defi}\label{finensolkern}
For any $s \in \R$ the set of \textit{finite} $s$-\textit{energy solutions of} $D$ is defined by
\begin{equation}\label{finensol}
FE^{s}_{\scomp}(M,\timef,D):=\SET{u \in FE^s_{\scomp}(M,\timef,\spinb^{+}(M))\,\vert\, Du \in L^2_{\loc,\scomp}(\timef(M),H^s_\loc(\spinb^{-}(\Sigma_{\bullet})))} \quad;
\end{equation}
the set of \textit{finite} $s$-\textit{energy kernel solutions of} $D$ is defined as
\begin{equation}\label{finenkern}
FE^{s}_{\scomp}(M,\timef,\kernel{D}):=FE^s_{\scomp}(M,\timef,\spinb^{+}(M))\cap \kernel{D} \quad.
\end{equation}
\end{defi}
The kernel solutions come with an interesting property on its own:
\begin{lem}\label{boots}
Suppose $u \in FE^{s}_{\scomp}(M,\timef,\kernel{D})$; for $s > \frac{n}{2}+2$, then $u \in C^1_{\scomp}(\spinb^{+}(M))$ .
\end{lem}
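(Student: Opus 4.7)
The idea is to use the kernel equation $Du=0$ as a first-order evolution equation in $t$: the hypersurface form \cref{dirachyppos} expresses the normal covariant derivative of $u$ in terms of a tangential first-order operator, so one gain in classical regularity via Sobolev embedding is bought at the cost of one derivative on the spatial side. Since the hypothesis $s>\tfrac{n}{2}+2$ ensures the spatial Sobolev embeddings $H^s_\loc\hookrightarrow C^1_\loc$ and $H^{s-1}_\loc\hookrightarrow C^1_\loc$, there is enough room to conclude $C^1$ in both directions. The spatial compact support is free from the definition of $FE^s_\scomp$, so the work is local in time and space.

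First, I would start from \cref{dirachyppos} applied fibrewise in $t$: for each $t\in \timef(M)$ the equation $Du|_{\Sigma_t}=0$ gives
\begin{equation*}
\left(\Nabla{\spinb(\Sigma_t)}{\upnu}+B_{t,+}\right)u|_{\Sigma_t}=0,
\end{equation*}
and since $\upnu=-N^{-1}\partial_t$, this rearranges (after inserting the Christoffel/bundle endomorphism comparing the covariant derivative $\Nabla{\spinb(\Sigma_t)}{\partial_t}$ to the ordinary $\partial_t$ in a local trivialization) to an expression of the form $\partial_t u = N\,B_{t,+}u+R_t u$, where $R_t$ is a zero-order bundle endomorphism depending smoothly on $t$. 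Because $B_{t,+}=\pm\Imag A_t-\tfrac{n}{2}H_t$ is first order on $\Sigma_t$ with coefficients smooth in $(t,x)$, continuity of $t\mapsto u(t)\in H^s_\loc(\spinb^{+}(\Sigma_t))$ implies continuity of $t\mapsto \partial_t u(t)\in H^{s-1}_\loc(\spinb^{+}(\Sigma_t))$ in the distributional sense. Thus $u\in C^0(\timef(M),H^s_\loc)\cap C^1(\timef(M),H^{s-1}_\loc)$.

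Second, I would translate these bundle-of-Sobolev statements into pointwise regularity on $M$ via the Sobolev embedding on the $n$-dimensional complete Riemannian hypersurfaces. Under $s>\tfrac{n}{2}+2$ one has $s-1>\tfrac{n}{2}+1$, giving the continuous embeddings
\begin{equation*}
H^s_\loc(\spinb^{+}(\Sigma_t))\hookrightarrow C^1_\loc(\spinb^{+}(\Sigma_t)),\qquad H^{s-1}_\loc(\spinb^{+}(\Sigma_t))\hookrightarrow C^1_\loc(\spinb^{+}(\Sigma_t)).
\end{equation*}
Composing with $u\in C^0(\timef(M),H^s_\loc)$ yields $u\in C^0(\timef(M),C^1_\loc)$, hence $u$ and its spatial derivatives are jointly continuous in $(t,x)\in M$. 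Similarly, $\partial_t u\in C^0(\timef(M),H^{s-1}_\loc)\hookrightarrow C^0(\timef(M),C^0_\loc)$ shows that $\partial_t u$ is continuous on $M$, and in fact $\partial_t u$ realises the classical time derivative of $u$ because $u$ is $C^1$ in $t$ with values in $H^{s-1}_\loc$, and this subspace embeds continuously into $C^0_\loc$. Together with the spatial compact support of each $u(t,\cdot)$ inherited from membership in $FE^s_\scomp$, this gives $u\in C^1_\scomp(\spinb^{+}(M))$.

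I expect the main obstacle to be the careful bookkeeping around step 1: one must verify that the formal manipulation of $Du=0$ into a well-posed evolution equation $\partial_t u=(\ldots)$ is legitimate at the distributional level and transfers continuity in $t$ from $H^s_\loc$ to $H^{s-1}_\loc$ for $\partial_t u$, with the zero-order bundle terms behaving correctly under the trivialisation by parallel transport along $\mathrm{grad}(\timef)$ used to define the Fréchet bundle $\SET{H^s_\loc(\spinb^{+}(\Sigma_t))}_{t\in\timef(M)}$. Once this evolutionary reformulation is in place, the rest is a direct application of Sobolev embedding in the spatial variable.
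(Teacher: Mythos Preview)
Your proposal is correct and follows essentially the same route as the paper: use the hypersurface decomposition \cref{dirachyppos} together with $Du=0$ to express the normal covariant derivative $\Nabla{\spinb(M)}{\partial_t}u|_{\Sigma_t}$ as a first-order tangential operator acting on $u|_{\Sigma_t}$, deduce $u\in C^1_\scomp(\timef(M),H^{s-1}_\loc(\spinb^{+}(\Sigma_\bullet)))$, and then apply the Sobolev embedding under the hypothesis $s-1>\tfrac{n}{2}+1$. The paper's proof is slightly more compressed---it jumps directly from $u\in C^1_\scomp(\timef(M),H^{s-1}_\loc)$ to $u\in C^1_\scomp(\spinb^{+}(M))$ via a single Sobolev embedding step---whereas you separate the spatial and temporal regularity more explicitly, but the argument is the same.
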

\begin{proof}
With \cref{dirachyppos} and $Du=0$ one has along each hypersurface
\begin{equation*}
\left. \Nabla{\spinb(M)}{\partial_t}u \right\vert_{\Sigma_t}=-N\left. \Nabla{\spinb(M)}{\upnu}u \right\vert_{\Sigma_t}=N\left(\Imag A_t - \frac{n}{2}H_t\right)u\vert_{\Sigma_t} \quad.
\end{equation*}
From its definition $u \in C^0_{\scomp}(\timef(M),H^s_\loc(\spinb^{+}(\Sigma_{\bullet})))$ has support inside a spatially compact subset in $M$. For $u\vert_{\Sigma_t} \in H^s_\loc(\spinb^{+}(\Sigma_t))$ at each $t \in \timef(M)$ and $\supp{u}\cap \Sigma_t$ compact by the spatial compactness of the support, the right hand side consists of differential operators at most order 1 along each $\Sigma_t$, thus one has $\left. \Nabla{\spinb(M)}{\partial_t}u \right\vert_{\Sigma_t} \in H^{s-1}_\loc(\spinb^{+}(\Sigma_t))$ and hence $u \in C^1_{\scomp}(\timef(M),H^{s-1}_\loc(\spinb^{+}(\Sigma_{\bullet})))$. The claim follows by the Sobolev embedding theorem for $s-1 > \frac{n}{2}+1$.
%: one obtains $H^{s-1}_\loc(\spinb^{+}(\Sigma_t))\subset C^1(\spinb^{+}(\Sigma_t))$ for each $t \in \timef(M)$, such that $u \in C^1_{\scomp}(\timef(M),C^1(\spinb^{+}(\Sigma_{\bullet})))=C^1_\scomp(M,\spinb^{+}M)$. 
\end{proof}
For the Cauchy problem of interest choose an initial data $u\vert_{\Sigma_t}\in H^s_\loc(\spinb^{+}(\Sigma_t))$ and an inhomogeneity $f \in L^2_{\loc,\scomp}(\timef(M),H^s_\loc(\spinb^{-}(\Sigma_{\bullet})))$ in \cref{Diraceq} for any $s\in \R$ and $t \in \timef(M)$. As in the main reference \cite{BaerWafo} we start with the more stronger condition $f \in FE^{s-1}_\scomp(M,\timef,\spinb^{-}(M))$, which can be weakened later on, but does not affect the main proof. For the coming energy estimates a time reversal argument is going to be used, for which reason a closer look on the time reversed Cauchy problem needs to be taken: the \textit{time reversal map}
\begin{align*}%\label{timerevmap}
\mathcal{T}\,:\, &M  \rightarrow  M \\
&  (t,x)  \mapsto  (-t,x)
\end{align*}
is smooth and acts as involution, since $\mathcal{T}^2=\id{M}$, hence it is a diffeomorphism on $M$. Moreover this implies, that $\mathcal{T}$ is formally self-adjoint. We will quote its inverse with the same letter, as it is a self-inverse map. The pullback of a spinor field with respect to this diffeomorphism is well defined as spinor field with respect to a Clifford algebra, which is defined by the pullback metric $\mathcal{T}^{\ast}\met $: 
\begin{equation*}
(\mathcal{T}^{\ast}u)(t,x)=u(\mathcal{T}(t,x))=u(-t,x)
\end{equation*}
for a smooth spinor field $u$; more details concerning the structure of this scalar like transformation behaviour can be found in \cite{DP}. We use these facts in the proof of the following statement, which provides us with a time reversal argument in the upcoming analysis.
\begin{lem}\label{timeinvarinace}
Given a globally hyperbolic Lorentzian manifold $M$ with Cauchy temporal function $\timef$, a spinor bundle $\spinb(M)$, $K \subset M$ compact and $s \in \R$; the following are equivalent:
\begin{itemize}
\item[(a)] $u \in FE^s_\scomp(M,\timef,\spinb^{\pm}(M))$ solves the forward time Cauchy problem 
\begin{equation*}
D_{\pm}u=f \in FE^{s-1}_\scomp(M,\timef,\spinb^{\mp}(M)) \quad , \quad u\vert_{t}=: u_0 \in H^s_\loc(\spinb^{\pm}(\Sigma_t))
\end{equation*}
for the Dirac equation at fixed initial time $t \in \timef(M)$.
\item[(b)] $\mathcal{T}^\ast u \in FE^s_\scomp(\mathcal{\mathcal{T}}^{-1}(M),\mathcal{T}(\timef),(\mathcal{T}^{-1})^\ast\spinb^{\pm}(M))$ solves the backward time Cauchy problem 
\begin{equation*}
(\mathcal{T}^\ast\circ D_{\pm} \circ \mathcal{T}^\ast)u=\mathcal{T}^\ast f \quad , \quad (\mathcal{T}^\ast u)\vert_{t}= u_0 \in H^s_\loc(\spinb^{\pm}(\Sigma_t))
\end{equation*}
with $\mathcal{T}^\ast f \in FE^{s-1}_\scomp(\mathcal{\mathcal{T}}^{-1}(M),\mathcal{T}(\timef),(\mathcal{T}^{-1})^\ast\spinb^{\mp}(M))$ for the Dirac equation at fixed initial time $t \in \timef(M)$.
\end{itemize}
Moreover $(\mathcal{T}^\ast\circ D_{\pm} \circ \mathcal{T}^\ast)$ is the Dirac operator for reversed time orientation and takes the form
\begin{equation*}%\label{revtimediracpos}
\left. (\mathcal{T}^\ast \circ D_{\pm} \circ \mathcal{T}^\ast) v \right\vert_{\Sigma_t} = -\left.\widetilde{\upbeta}\left(\nabla_{\widetilde{\upnu}}\pm\Imag \widetilde{A}_t v-\frac{n}{2} \widetilde{H}_t\right) v\right\vert_{\Sigma_t} \quad,
\end{equation*}
where $\widetilde{\upnu}=\mathcal{T}_\ast \upnu$, $\widetilde{\upbeta}=\cliff{\widetilde{\upnu}}$, $\widetilde{H}_t$ is the mean curvature with respect to the normal vector $\widetilde{\upnu}$ and $\widetilde{A}_t$ the hypersurface Dirac operator, defined as in \cref{Dirachyp} with a Riemann-orthonormal tangent frame with respect to $\mathcal{T}^\ast \met_t$. 
\begin{comment}
Given a globally hyperbolic Lorentzian manifold $M$ with Cauchy temporal function $\timef$, a spinor bundle $\spinb(M)$, $K \subset M$ compact and $s \in \R$; suppose $u \in FE^s_\scomp(M,\timef,\spinb^{+}(M))$ is a solution of \cref{Diraceq} for an initial value in $H^s_\loc(\spinb^{+}(\Sigma_t))$ for all $t \in \timef(M)$ and inhomogeneity in $FE^{s-1}_\scomp(M,\timef,\spinb^{-}(M))$ with support
$\supp{u}$ in $\Jlight{}(K)$. Then $\mathcal{T}^{\ast}u$ solves the Dirac equation for time reversed orientation and data, where $\mathcal{T}$ is the \textit{time reversal map}, defined by $\mathcal{T}:M \,\rightarrow\,M $ via $(t,x) \,\mapsto\, (-t,x)$ and $\mathcal{T}^{\ast}u \in FE^s_\scomp(\mathcal{\mathcal{T}}^{-1}(M),\mathcal{T}(\timef),(\mathcal{T}^{-1})^\ast\spinb^{+}(M))$.
\end{comment}
\end{lem}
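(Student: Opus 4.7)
The plan is to exploit the involution property $\mathcal{T}^2 = \id_M$ in order to reduce the claimed equivalence (a)$\,\Leftrightarrow\,$(b) to a single intertwining computation: once I verify that $\mathcal{T}^*$ conjugates $D_\pm$ into the reversed-time Dirac operator stated in the lemma and that $\mathcal{T}^*$ is a topological isomorphism between the two finite-energy spaces, applying $\mathcal{T}^*$ to a forward-time solution produces a backward-time solution, and applying it a second time recovers the original one.

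First I would unpack the geometric content of $\mathcal{T}$. The pullback metric $\mathcal{T}^*\met$ retains the product form of Theorem \ref{theo22-1}, with lapse $N\circ\mathcal{T}$, slice metrics $\mathcal{T}^*\met_t$, and Cauchy temporal function $-\timef$. Since $\mathcal{T}$ reverses the time orientation, the past-directed unit normal $\upnu$ transforms into the new past-directed normal $\widetilde{\upnu}=\mathcal{T}_*\upnu$; the Weingarten map and the mean curvature $H_t$ pass to $\widetilde{H}_t$, while the spinor bundle $\spinb^{\pm}(M)$ pulls back to $(\mathcal{T}^{-1})^*\spinb^{\pm}(M)$, on which Clifford multiplication is induced by $\cliff{\widetilde{\cdot}}$ and the spinorial Levi-Civita connection is conjugated accordingly. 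All algebraic identities of Section \ref{chap:dirac} --- the Clifford anticommutation \cref{cliffmult}, the formal self-adjointness \cref{cliffmult2}, and the positive hypersurface inner product \cref{restspinbundmet} --- carry over verbatim because they are natural with respect to diffeomorphisms and bundle pullbacks.

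Next I would compute $\mathcal{T}^*\circ D_\pm\circ\mathcal{T}^*$ directly by substituting each ingredient of \cref{dirachyppos} with its tilde counterpart, giving the expression claimed. The equivalence then follows by applying $\mathcal{T}^*$ to both sides of $D_\pm u=f$ and inserting $\mathcal{T}^*\circ\mathcal{T}^*=\id$ between $D_\pm$ and $u$, which yields $(\mathcal{T}^*\circ D_\pm\circ\mathcal{T}^*)(\mathcal{T}^*u)=\mathcal{T}^*f$; the reverse direction is immediate by applying $\mathcal{T}^*$ once more. To close the argument I would verify that $\mathcal{T}^*$ is a topological isomorphism from $FE^s_{\scomp}(M,\timef,\spinb^{\pm}(M))$ onto $FE^s_{\scomp}(\mathcal{T}^{-1}(M),\mathcal{T}(\timef),(\mathcal{T}^{-1})^*\spinb^{\pm}(M))$. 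This reduces to the facts that $\mathcal{T}$ preserves Sobolev regularity slice-by-slice (so that the operators $\Lambda_t^s$ of \cref{laps} conjugate compatibly) and that it interchanges past and future light cones, sending spatially compact supports in $\Jlight{}(K)$ to spatially compact supports in the reversed foliation.

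The main obstacle will be careful sign and orientation bookkeeping: tracking which quantities change sign under $\mathcal{T}$ (namely $\upnu$, $\upbeta$, the Weingarten map, and the Clifford volume form) versus which remain invariant (the bundle metric and the Clifford bracket as an abstract identity). A subtler point is that the spin structure on $M$ depends on the time orientation, so the natural target bundle is $(\mathcal{T}^{-1})^*\spinb^{\pm}(M)$ rather than $\spinb^{\pm}(M)$ itself, and the chirality label $\pm$ survives only up to the convention by which the spinorial volume form is tied to the chosen time orientation.
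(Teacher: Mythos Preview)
Your proposal is correct and follows essentially the same strategy as the paper: exploit the involution $\mathcal{T}^2=\id_M$, insert $\mathcal{T}^*\mathcal{T}^*$ between $D_\pm$ and $u$, and compute the conjugated operator term by term along the hypersurfaces to obtain the tilde form. The only notable difference is that where you argue directly that $\mathcal{T}^*$ is a topological isomorphism between the two finite-energy spaces, the paper instead reduces to smooth compactly supported data via the well-posedness result of \cite{AndBaer} and then extends to distributional sections by duality (writing out the dual pairing with test cospinors and using Corollary~\ref{diracselfadchir} for the formal adjoint); both routes accomplish the same goal of justifying the formal manipulation at the required regularity level.
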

\begin{proof}
Both Dirac equations are formally the same, if one applies a pullback by $\mathcal{T}$ on both sides and uses the self-inverse property $\mathcal{T}^2=\id{M}$ between $D$ and the spinor $u$. The claim will be proven for smooth initial data and inhomogeneities. Because finite energy sections are embedded in the set of distributional sections, the claim follows from the proof of this reduction by duality: consider the dual pairing of any operator, applied on a spin-valued distribution $u \in C^{-\infty}(\spinb(M))$, with a smooth, compactly supported section $\phi \in C^\infty_\comp(\spinb^\ast(M))$, i.e. a smooth, compactly supported cospinor field. This is equivalent with the dual pairing of $u$ with the formal adjoint operator, now acting on $\phi$. Thus the forward and backward time Dirac equations for distributions are
\begin{equation*}  
\dpair{1}{\spinb(M)}{D_{\pm}u}{\phi}=-\dpair{1}{\spinb(M)}{u}{D_{\mp}\phi}\,\, \text{\&} \,\, \dpair{1}{\spinb(M)}{(\mathcal{T}^\ast\circ D_{\pm} \circ \mathcal{T}^\ast)u}{\phi}=-\dpair{1}{\spinb(M)}{u}{(\mathcal{T}^\ast\circ D_{\mp} \circ \mathcal{T}^\ast)u\phi}
\end{equation*}
with the formal self-adjointness of $\mathcal{T}$ and Corollary \ref{diracselfadchir}. Since the support of $u$ is contained in the future and past light cone, $\mathcal{T}$ only swaps these two cones, wherefore the support satisfies $\supp{\mathcal{T}^{\ast}u}\subset \Jlight{}(K)$. %Since $u$ as finite energy section has spatially compact support (i.e. it exsists a $K\Subset M$: $\supp{u}\subset \Jlight{}(K))$, it is enough to consider cospinors $\phi$ temporally compact support, i.e. both $\supp{u}\cap\Jlight{\pm}(p)$ are compact for all $p \in M$, and vice versa; see sections 2.3 and 2.4 in \cite{Baergreen} for details.\\\\
Suppose $u\vert_{\Sigma_t} \in C^\infty_\comp(\spinb^{\pm}(\Sigma_t))$ and $f \in C^\infty_\comp(\spinb^{\mp}(M))$. Theorem 4 in \cite{AndBaer} implies the existence of a unique section $u\in C^\infty(\spinb^{\pm}(M))$ with support $\supp{u}\subset \Jlight{}(K)$ for $K \subset M$ compact, solving $D_{\pm}u=f$ on $M$ with initial condition $u\vert_{\Sigma_t}$. $\mathcal{T}^{\ast}u$ and $\mathcal{T}^{\ast} f$ are defined and again smooth and the latter is compactly supported. The time reversal map doesn't change the initial value itself, because it is defined on a fixed time slice: $\left.\mathcal{T}^{\ast}u\right\vert_{\Sigma_\tau}=u\vert_{\Sigma_\tau}$ for any $\tau\in \timef(M)$ fixed. The same holds true for any initial value with Sobolev regularity, since only the metric is influenced by the time reversal, but different metrics leads to equivalent Sobolev norms. $v=\mathcal{T}^\ast u$ for a solution $u$ is defined and again smooth with $\supp{v}\subset \Jlight{}(K)$ by the same reasoning as above. The formal equivalence of the forward and backward time Dirac equation implies, that $v$ solves $\mathcal{T}^{*}D_{\pm}\mathcal{T}^{*}v=\mathcal{T}^{*}f$ if and only if $u$ solves $D_{\pm}u=f$. Thus one only needs to check, that $\mathcal{T}^{*}D\mathcal{T}^{*}$ along any hypersurface $\Sigma_t$ is also a Dirac operator, given as in \cref{dirachyppos}. The pulled back spin structure is determined by the pulled back metric, hence the pullback on any Clifford multiplication $\cliff{X}$ with respect to a vector field $X$ is the Clifford multiplication with respect to the pushforward $\mathcal{T}_{\ast}X$ at each point:
\begin{equation*}
\mathcal{T}^{\ast}\circ\cliff{X}=\cliff{\mathcal{T}_{\ast}X}\quad .
\end{equation*}
Applied to each component of \cref{dirachyppos}, the pullback of the Dirac operator along any spatial hypersurface is defined by the pullback-connection: given a Riemann-orthonormal tangent frame $\SET{e_j}_{j=1}^n$ with respect to $\met_t$ for each leaf, then $\SET{\mathcal{T}_\ast e_j}_{j=1}^n=\SET{\mathcal{T}^\ast e_j}_{j=1}^n$ is a Riemann-orthonormal tangent frame with respect to $\mathcal{T}^\ast \met_t$ for each leaf. Using all these ingredients, shows 
\begin{eqnarray*}
\mathcal{T}_\ast \upnu &=& - \frac{1}{N\circ\mathcal{T}}\mathcal{T}_\ast \partial_{t}=:\widetilde{\upnu}\\
\mathcal{T}^{\ast}\circ\left(\upbeta \nabla_{\upnu}\right)\circ\mathcal{T}^\ast v &=& \cliff{\mathcal{T}_{\ast}\upnu} \mathcal{T}^\ast \nabla_{\upnu} \left(\mathcal{T}^{\ast}v\right)=\cliff{\widetilde{\upnu}}\nabla_{\widetilde{\upnu}}\mathcal{T}^{\ast}\mathcal{T}^{\ast}v = \widetilde{\upbeta}\nabla_{\widetilde{\upnu}}v \\
\mathcal{T}^\ast \circ\left(\upbeta A_t \right)\circ\mathcal{T}^\ast v &=& \widetilde{\upbeta}\sum_{j=1}^n \cliff{\mathcal{T}_\ast e_j}\mathcal{T}^\ast \nabla_{e_j}\mathcal{T}^\ast v =  \widetilde{\upbeta}\sum_{j=1}^n \cliff{\mathcal{T}_\ast e_j} \nabla_{\mathcal{T}_\ast e_j} v = \widetilde{\upbeta} \widetilde{A}_t v \\
\mathcal{T}^\ast \circ \left(\upbeta H_t\right)\circ \mathcal{T}^\ast v &=& \widetilde{\upbeta}\sum_{j=1}^n\mathcal{T}^\ast \met_t\left(\wein(e_j),e_j\right) v %= -\widetilde{\upbeta}\sum_{j=1}^n \met_t\left(\mathcal{T}_\ast\nabla_{e_j} \upnu, \mathcal{T}_\ast e_j\right) v \\
%&=&-\widetilde{\upbeta}\sum_{j=1}^n \met_t\left(\left(\mathcal{T}^{-1}\right)^\ast \nabla_{e_j} \upnu,\mathcal{T}_{\ast} e_j\right) v=\widetilde{\upbeta}\sum_{j=1}^n \met_t\left(\widetilde{\wein}(\mathcal{T}_\ast e_j), \mathcal{T}_\ast e_j\right) v 
=  \widetilde{\upbeta} \widetilde{H}_t v \\
\Rightarrow\,\,(\mathcal{T}^\ast \circ D_{\pm} \circ \mathcal{T}^\ast) v &=& -\widetilde{\upbeta}\left(\nabla_{\widetilde{\upnu}}+\Imag \widetilde{A}_t v-\frac{n}{2} \widetilde{H}_t\right) v \quad,
\end{eqnarray*}
where the tilded quantities are Clifford multiplication, the Weingarten map and the mean curvature with respect to the future poining vector $\widetilde{\upnu}$, being orthonormal to each hypersurface as well. All quantities along a hypersurface have been lifted to the manifold $M$ before (without extra notation), in order to compute the pullback. %One $\mathcal{T}^\ast \circ D \circ \mathcal{T}^\ast$ is again a Dirac operator as given in \cref{dirachyppos}. Thus, if $u$ solves the forward-time Dirac equation, the pullback $\mathcal{T}^{\ast}u$ solves the backward-time Dirac equation. 
\end{proof}
A concrete calculation shows, that $\widetilde{A}_t^2=\mathcal{T}^\ast A_t^2 \mathcal{T}^\ast$ and thus similarly for $\mathcal{A}^2_t$ with reversed time orientation. The Lichnerowicz formula then shows, that the spinorial Laplacian in the time-reversed picture is given by the spinorial Laplacian in ordinary time orientation, composed with $\mathcal{T}^\ast$ before an after.
%$\mathcal{T}^\ast \circ \adNabla{\spinb(\Sigma_t)}{}\Nabla{\spinb(\Sigma_t)}{}\circ \mathcal{T}^\ast=\Nabla{\spinb(\Sigma_{\mathcal{T}(t)})}{}\Nabla{\spinb(\Sigma_{\mathcal{T}(t)})}{}$. 
In an analogous way as in the proof of Lemma \ref{lemenest1} (b) one yields 
\begin{equation}\label{timelambda}
\Lambda^s_{\mathcal{T}(t)}=\mathcal{T}^\ast\Lambda^s_{t}\mathcal{T}^\ast \quad\forall\,s\in \mathbb{R}\,\, ,t\in \timef(M)\,,
\end{equation}
where $\Lambda^s_t$ is given by \cref{laps} with the induced spin connection on each hypersurface $\Sigma_t$; ${\mathcal{T}(t)}$ denotes $t$ under reversed time orientation.

\subsection{Energy estimate}
\justifying
Roughly speaking one defines the energy along an initial spatial hypersurface as sum of the norms of all values, which has to be fixed on this leaf. %The restriction of spinors on each hypersurface needs to be considered. 
Suppose $M$ is spatially compact, then the Cauchy hypersurface $\Sigma$ is compact and one defines the \textit{s-energy along} $\Sigma$ of a sufficiently differentiable section $u$ of a vector bundle $E\rightarrow M$ as
\begin{equation*}
\mathcal{E}_s(u,\Sigma):= \norm{u\vert_\Sigma}{H^s(E\vert_\Sigma)}^2 \quad .
\end{equation*}   
For a non-compact hypersurface $\Sigma$ one needs to apply the presented doubling procedure in order to reduce to the above case. Let $(M,\met)$ be a globally hyperbolic Lorentzian spacetime and $E\rightarrow M$ a Riemannian or Hermitian vector bundle. Choose a connection, which is compatible with the bundle metric. Let $u$ be again a sufficiently differentiable section of this bundle, but has compact support $\supp{u} \subset \Jlight{}(K)$ with $K \Subset M$. Since the support is defined to be a closed set, one observes, that it is spatially compact by this assumption. As a consequence one obtains, that $\mathfrak{K}:=(\supp{u}\cap \Sigma) \subset \Sigma$ is compact for every Cauchy hypersurface. Without changing $\mathfrak{K}$ choose a compact subset $K_1$ as in the description of Sobolev spaces via the double and receive $\widetilde{\Sigma}$ as double of $K_1$, a corresponding extended vector bundle $\widetilde{E}$ of $E\vert_{K_1}$ and a zero-extended section $\tilde{u}$ of $u\vert_\Sigma$. This allows to consider the \textit{s-energy} along $\Sigma$ of a sufficiently differentiable section $u$ in a similar manner:
\begin{equation}\label{senergy}
\mathcal{E}_s(u,\Sigma):= \norm{\tilde{u}}{H^s(\widetilde{\Sigma},\widetilde{E})}^2
\end{equation}
for any $s \in \R$. The following statement is the pendant of Theorem 8 in \cite{BaerWafo} for the Dirac equation acting on spinor sections of positive chirality. The proof contains a similar argumentation, but since only one initial value is given and no constraint on the mean curvature is proposed, we had to show the equations in (b) and (c) of Lemma \ref{lemenest1} in addition.  
\begin{prop}\label{enesttheorem}
Let $I \subset \timef(M)$ be a closed intervall, $K\subset M$ compact and $s\in \R$; there exists a constant $C>0$, depending on $K$ and $s$, such that
\begin{equation}\label{enesttheoremform}
\mathcal{E}_s(u,\Sigma_{t_1}) \leq \mathcal{E}_s(u,\Sigma_{t_0})\expe{C(t_1-t_0)}+\int_{t_0}^{t_1}\expe{C(t_1-\tau)}\norm{Du\vert_{\Sigma_\tau}}{H^s_\loc(\spinb(\Sigma_\tau))}^2 \differ \tau
\end{equation}
applies for all $t_0,t_1 \in I$ with $t_0 < t_1$ and for all $u \in FE^{s+1}_{\scomp}(M,\timef,\spinb^{+}(M))$ with support $\supp{u}\subset \Jlight{}(K)$ and $Du \in FE^{s}_{\scomp}(M,\timef,\spinb^{-}(M))$. 
\end{prop}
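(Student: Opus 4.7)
The plan is to run the energy method as in \cite{BaerWafo}, reducing everything to a closed manifold by doubling and then establishing a differential inequality for $\mathcal{E}_s(u,\Sigma_t)$ which Gronwall's lemma converts to \cref{enesttheoremform}. Spatial compactness of $\supp{u}\subset\Jlight{}(K)$ together with compactness of $I$ ensures that $\supp{u}\cap\Sigma_t$ sits in a single compact $\mathfrak{K}\Subset\Sigma$ uniformly in $t\in I$, so a single relatively compact $K_1\supset\mathfrak{K}$ with smooth totally geodesic boundary (as in Section 2.3) suffices to perform the doubling $\widetilde{\Sigma}$. Extending $\spinb^{+}(M)\vert_{K_1}$ to a Hermitian bundle $\widetilde{\spinb^{+}}$, together with the Riemannian metrics $\met_t$ and the connection, and building $\widetilde{\Lambda}_t^s$ via \cref{laps}, yields on the \emph{closed} double a self-adjoint operator with positive spectrum depending smoothly on $t\in I$. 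With $v(t):=u\vert_{\Sigma_t}$ zero-extended, \cref{senergy} becomes $\mathcal{E}_s(u,\Sigma_t)=\|\widetilde{\Lambda}_t^s v(t)\|_{L^2(\widetilde{\Sigma_t})}^2$; tildes are suppressed below.

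First I differentiate using Lemma~\ref{lem2-2} with cutoff $\phi\equiv1$ on the compact support, obtaining
\[
\frac{d}{dt}\mathcal{E}_s(u,\Sigma_t)=\int_{\Sigma_t}\bigl(nH_t|\Lambda_t^s v|^2-\upnu|\Lambda_t^s v|^2\bigr)\,\dvol{\Sigma_t}.
\]
Expanding $\upnu|\Lambda_t^s v|^2=2\operatorname{Re}\langle\nabla_\upnu(\Lambda_t^s v),\Lambda_t^s v\rangle$ and invoking Lemma~\ref{lemenest1}(a)--(b) (so that $\Lambda_t^s$ commutes with $\upbeta$) together with the hypersurface form of the Dirac equation \cref{dirachyppos}, $\nabla_\upnu v=-B_{t,+}v-\upbeta(Du)\vert_{\Sigma_t}$, one derives
\[
\nabla_\upnu(\Lambda_t^s v)=-B_{t,+}(\Lambda_t^s v)-[\Lambda_t^s,B_{t,+}]\,v-\upbeta\Lambda_t^s(Du)\vert_{\Sigma_t}+R_t v,
\]
where $R_t:=[\nabla_\upnu,\Lambda_t^s]$. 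Pseudodifferential calculus on the closed double, combined with smooth $t$-dependence of $\met_t$, gives that $[\Lambda_t^s,B_{t,+}]$ and $R_t$ are of order $s$ with operator norms $H^s\to L^2$ bounded uniformly on $I$.

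The principal pairing $2\operatorname{Re}\langle B_{t,+}(\Lambda_t^s v),\Lambda_t^s v\rangle_{L^2}$ is then eliminated by the identity \cref{extendB}: it equals $-n\langle H_t\Lambda_t^s v,\Lambda_t^s v\rangle_{L^2}$ and thereby cancels the mean-curvature term produced above \emph{exactly}. Cauchy--Schwarz bounds the commutator and $R_t$ pairings by $C\,\mathcal{E}_s(u,\Sigma_t)$, while Young's inequality takes care of the inhomogeneity,
\[
2\bigl|\operatorname{Re}\langle\upbeta\Lambda_t^s(Du)\vert_{\Sigma_t},\Lambda_t^s v\rangle\bigr|\leq\|(Du)\vert_{\Sigma_t}\|_{H^s_\loc(\spinb(\Sigma_t))}^2+\mathcal{E}_s(u,\Sigma_t).
\]
Collecting the pieces gives $\tfrac{d}{dt}\mathcal{E}_s(u,\Sigma_t)\leq C\,\mathcal{E}_s(u,\Sigma_t)+\|(Du)\vert_{\Sigma_t}\|_{H^s_\loc(\spinb(\Sigma_t))}^2$ with $C=C(K,s)$, and the integrating-factor form of Gronwall's lemma applied on $[t_0,t_1]\subset I$ delivers precisely \cref{enesttheoremform}.

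The principal obstacle is controlling the $t$-dependence of $\Lambda_t^s$: one needs it to commute with $\upbeta$, to pair with $B_{t,+}$ via the mean-curvature identity, and to have only order-$s$ commutators with both $B_{t,+}$ and $\nabla_\upnu$ uniformly on $I$. The first two items are exactly what Lemma~\ref{lemenest1}(b)--(c) and the $L^2$-extension \cref{extendB} were prepared to deliver (completeness of each $\Sigma_t$ is essential there, so that the functional calculus defining $\Lambda_t^s$ is available), and the absence of any assumption on the mean curvature is precisely what forces this cancellation mechanism rather than an absorbing-term argument. The uniform order-$s$ commutator bounds, in turn, rest on the smooth $t$-dependence of the Riemannian data on the closed double $\widetilde{\Sigma}$, which is where the constant $C(K,s)$ in \cref{enesttheoremform} originates.
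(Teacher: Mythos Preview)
Your proof is correct and follows essentially the same route as the paper: reduce to the closed double, differentiate $\mathcal{E}_s$ via Lemma~\ref{lem2-2}, exploit Lemma~\ref{lemenest1}(b) to commute $\Lambda_t^s$ through $\upbeta$, cancel the mean-curvature term exactly against $2\operatorname{Re}\langle B_{t,+}\Lambda_t^s v,\Lambda_t^s v\rangle$ using \cref{extendB}, estimate the order-$s$ commutators $[\Lambda_t^s,B_{t,+}]$ and $[\nabla_\upnu,\Lambda_t^s]$ by $\Psi$DO calculus on the double, and close with Gronwall. The paper organizes the computation slightly differently (first isolating $[\nabla_\upnu,\Lambda_t^s]$ via a polarization identity, then substituting $\nabla_\upnu u$), and it spells out the preliminary step that $u\in C^1(\timef(M),H^s_\loc)$ so that $t\mapsto\mathcal{E}_s(u,\Sigma_t)$ is actually differentiable; otherwise the ingredients and the mechanism are identical.
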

\begin{proof}
W.l.o.g. assume $M$ to be spatially compact, i.e. every leaf $\Sigma_t$ is closed. % and thus as a result already complete by the Hopf-Rinow theorem.
Otherwise one applies the doubling procedure of each complete non-compact hypersurfaces and start with \cref{senergy} - the difference in the following calculation is only of notational nature. The Dirac operator is decomposed into a tangential and normal part with respect to the hypersurface $\Sigma_t$: $D=-\upbeta\left(\nabla_\upnu+B_t\right)$ with $B_t:=B_{t,+}$ as in Lemma \ref{lemenest1} (c). Rewriting this in terms of the covariant derivative $\nabla_{\partial_t}=-N_t \nabla_{\upnu}$ leads to
\begin{equation*}
\left.\nabla_{\partial_t} u\right\vert_{\Sigma_t} = \left. N_t\upbeta Du \right\vert_{\Sigma_t} + \left. N_t B_t u \right\vert_{\Sigma_t}\quad .
\end{equation*}
$B_t$ is a differential operator of order at most 1, acting in tangential direction; the preassumption $u \in FE^{s+1}_{\scomp}(M,\timef,\spinb^{+}(M))$ implies $u\vert_{\Sigma_t} \in H^{s+1}_{\loc}(\spinb^{+}(\Sigma_t))$ and thus $N_t B_t u\vert_{\Sigma_t} \in H^{s}_\loc(\spinb^{+}(\Sigma_t))$ implying $N_t B_t u \in FE^s_{\scomp}(M,\timef,\spinb^{+}(M))$. Since $Du \in FE^{s}_{\scomp}(M,\timef,\spinb^{-}(M))$ by preassumption the first part of the right hand side satisfies $\upbeta N_t Du \in FE^{s}_{\scomp}(M,\timef,\spinb^{+}(M))$. Thus the covariant derivative with respect to $\upnu$ along any hypersurface $\Sigma_t$ is a Sobolev section in $H^s_\loc(\spinb^{+}(\Sigma_t))$ and therefore $\nabla_{\partial_t} u \in C^0(\timef(M),H^s_\loc(\spinb^{+}(\Sigma_\bullet))$, implying $u \in C^1(\timef(M),H^s_\loc(\spinb^{+}(\Sigma_\bullet))$. This time-differentiability and the continuity of the norm shows, that the map $t \mapsto \mathcal{E}_s(u,\Sigma_t)$ is differentiable in temporal direction. Differentiation of $\mathcal{E}_s(u,\Sigma_t)$ with respect to $t$ and Lemma \ref{lem2-2} imply:
\begin{eqnarray*}
\frac{\differ}{\differ t}\mathcal{E}_s(u,\Sigma_{t}) &=&\int_{\Sigma_t} n H_t \dscal{1}{\spinb(\Sigma_t)}{\Lambda_t^s u}{\Lambda_t^s u}-\upnu \dscal{1}{\spinb(\Sigma_t)}{\Lambda_t^s u}{\Lambda_t^s u} \dvol{\Sigma_t} \\
&=& n\dscal{1}{L^2(\spinb(\Sigma_t))}{H_t \Lambda^s_t u}{\Lambda^s_t u}-\int_{\Sigma_t}\upnu \dscal{1}{\spinb(\Sigma_t)}{\Lambda_t^s u}{\Lambda_t^s u} \dvol{\Sigma_t} \,\, ,
\end{eqnarray*}
where $u$ is evaluated on the hypersurface $\Sigma_t$ and one has choosen $\phi=1$ in Lemma \ref{lem2-2}, since every hypersurface is an artificially closed submanifold. Choose the connection to be compatible with the bundle metric and one obtains
\begin{footnotesize}
\begin{eqnarray*}
\frac{\differ}{\differ t}\mathcal{E}_s(u,\Sigma_{t}) &=&  n\dscal{1}{L^2(\spinb(\Sigma_t))}{H_t \Lambda^s_t u}{\Lambda^s_t u}-2 \Re\mathfrak{e}\left\lbrace\dscal{1}{L^2(\spinb(\Sigma_t))}{\Lambda_t^s u}{\nabla_{\upnu}\Lambda_t^s u} \right\rbrace \\
&=& n\dscal{1}{L^2(\spinb(\Sigma_t))}{H_t \Lambda^s_t u}{\Lambda^s_t u}-2 \Re\mathfrak{e}\left\lbrace\dscal{1}{L^2(\spinb(\Sigma_t))}{\Lambda_t^s u}{[\nabla_{\upnu},\Lambda_t^s] u} \right\rbrace -2 \Re\mathfrak{e}\left\lbrace\dscal{1}{H^s(\spinb(\Sigma_t))}{u}{\nabla_{\upnu} u} \right\rbrace \\
&=& n\dscal{1}{L^2(\spinb(\Sigma_t))}{H_t \Lambda^s_t u}{\Lambda^s_t u}+\norm{u}{H^s(\spinb(\Sigma_t))}^2+\norm{[\nabla_{\upnu},\Lambda_t^s] u}{L^2(\spinb(\Sigma_t))}^2-\norm{(\Lambda^s_t+[\nabla_{\upnu},\Lambda_t^s]) u}{L^2(\spinb(\Sigma_t))}^2\\
&&-2 \Re\mathfrak{e}\left\lbrace\dscal{1}{H^s(\spinb(\Sigma_t))}{u}{\nabla_{\upnu} u} \right\rbrace \\
&\leq& n\dscal{1}{L^2(\spinb(\Sigma_t))}{H_t \Lambda^s_t u}{\Lambda^s_t u}+(1+c_1)\norm{u}{H^s(\spinb(\Sigma_t))}^2-2 \Re\mathfrak{e}\left\lbrace\dscal{1}{H^s(\spinb(\Sigma_t))}{u}{\nabla_{\upnu} u} \right\rbrace \\
&=& c_2 \norm{u}{H^s(\spinb(\Sigma_t))}^2+n\dscal{1}{L^2(\spinb(\Sigma_t))}{H_t \Lambda^s_t u}{\Lambda^s_t u} +2 \Re\mathfrak{e}\left\lbrace\dscal{1}{H^s(\spinb(\Sigma_t))}{u}{\upbeta D u}+\dscal{1}{H^s(\spinb(\Sigma_t))}{u}{B_t u} \right\rbrace \\
&\stackrel{(*)}{=}& c_2 \norm{u}{H^s(\spinb(\Sigma_t))}^2+n\dscal{1}{L^2(\spinb(\Sigma_t))}{H_t \Lambda^s_t u}{\Lambda^s_t u} \\
&&+2 \Re\mathfrak{e}\left\lbrace \dscal{1}{L^2(\spinb(\Sigma_t))}{\Lambda^s_t u}{B_t \Lambda^s_t u} + \dscal{1}{L^2(\spinb(\Sigma_t))}{\Lambda^s_t u}{[\Lambda^s_t,B_t] u} \right\rbrace +2 \Re\mathfrak{e}\left\lbrace\dscal{1}{L^2(\spinb(\Sigma_t))}{\Lambda^s_t u}{\upbeta \Lambda^s_t D u}\right\rbrace\\
&\leq & (c_2+2) \norm{u}{H^s(\spinb(\Sigma_t))}^2+\norm{\upbeta \Lambda^s_t D u}{L^2(\spinb(\Sigma_t))}^2+\norm{[\Lambda^s_t,B_t] u}{L^2(\spinb(\Sigma_t))}^2\\
&\leq& c \norm{u}{H^s(\spinb(\Sigma_t))}^2+\norm{D u}{H^s(\spinb(\Sigma_t))}^2= c \mathcal{E}_s(u,\Sigma_{t}) +\norm{D u}{H^s(\spinb(\Sigma_t))}^2 \quad, 
\end{eqnarray*}
\end{footnotesize}
where one has used polarization identities of the real parts and a Sobolev estimation for $\nabla_{\nu}$ as first order operator along the hypersurface $\Sigma_t$, which has led to the first inequality. The completeness of the hypersurface justifies the use of Lemma \ref{lemenest1} (b). \cref{extendB} and another polarization identity have been used after ($\ast$). In the last step \cref{isombeta} is used and that the remaining commutator acts as pseudo-differential operator of order $s$, which generates the last inequality by its Sobolev estimation. \\
\\
A closer look on the constant $c$, coming from the Sobolev estimates, needs to be taken before applying Grönwall's inequality: from the local theory it is known, that these constants depend on compact supersets of $\supp{u}$ (either for $\Sigma_t$ closed or $\mathfrak{K}$ for $\Sigma_t$ non-compact, depending on $K$ via the support of $u$ in the light cone) and compact subsets, appearing in the symbol estimation of the acting pseudo-differential operator. Further dependencies on the Sobolev regularity degree $s$ and on finitely many derivatives on the product of the section $u$ with the volume form prefactor are coming from partial integration (take $s$ to be in $\N_0$ first). By Jacobi's formula the derivatives on the volume form prefactor generate derivatives of the metric $\met_t$ under recreation of the volume form prefactor. This makes the constant $c$ time-dependent and since the product rule generates products of derivatives on the section and on the metric it depends smoothly on $t$, because $\met_t$ does. The derivatives on the section can be extracted, which generates the Sobolev norms in terms of \cref{sobnorm2} and the estimates in the end, which are used above. Since on a (here possibly auxiliary) closed manifold the space $H^{-s}$ is dual to $H^s$ for $s \in \N_0$, the estimate also holds for Sobolev degrees $s \in \Z$. Interpolating between $H^s$ and $H^{s\pm 1}$ for any of these degrees allows an extension to real valued orders. In summary the above estimation is precisely
\begin{equation*}  
\frac{\differ}{\differ t}\mathcal{E}_s(u,\Sigma_{t}) = c(\Vert\met_t\Vert_{\mathfrak{K}(K),m(s)})\mathcal{E}_s(u,\Sigma_{t}) +\norm{D u}{H^s(\spinb(\Sigma_t))}^2
\end{equation*}
with the (spatial) seminorm \cref{normveccomp} and $m(s) \in \N_0$, such that $\vert s\vert \leq m$. Now, Grönwall's Lemma gives
\begin{equation*}
\mathcal{E}_s(u,\Sigma_{t_1}) \leq \mathcal{E}_s(u,\Sigma_{t_0})\expe{\int_{t_0}^{t_1} c(\Vert\met_t\Vert_{\mathfrak{K}(K),m(s)}) \differ t}+\int_{t_0}^{t_1}\expe{\int_{\tau}^{t_1}c(\Vert\met_t\Vert_{\mathfrak{K}(K),m(s)})\differ t}\norm{Du\vert_{\Sigma_\tau}}{H^s(\spinb(\Sigma_\tau))}^2 \differ \tau \quad.
\end{equation*}
The extreme value theorem on closed (sub-) intervalls in $I$, applied on $c$, leads to the stated result, where $C=C(\Vert \met_\bullet \Vert_{\Jlight{}(K),m(s)})$ is the maximum on $I$. % and the norm is considered on the whole manifold.
Going back to non-compact hypersurfaces the same procedure can be applied, where the duality between $H^s_\loc$ and $H^{-s}_\comp$ has been used instead. This explains the norm for local Sobolev sections in the claim. 
\end{proof}
\noindent From this basic result one can conclude several consequences and technicalities for the next subsection.
\begin{cor}\label{corenest1}
Given $\intervallc{t_0}{t_1}{} \subset \timef(M)$, $\tau \in \timef(M)$, $K \subset M$ compact and $s \in \R$; there exists a $C>0$, depending on $K$ and $s$, such that
\begin{equation*}
\mathcal{E}_s(u,\Sigma_{t})\leq C \left(\mathcal{E}_s(u,\Sigma_{\tau})+ \norm{D u}{\intervallc{t_0}{t_1}{},\Jlight{}(K),s}^2\right)
\end{equation*}
is valid for all $t \in \intervallc{t_0}{t_1}{}$, for all $u \in FE^{s+1}_{\scomp}(M,\timef,D)$ with $Du \in FE^s_{\scomp}(M,\timef,\spinb^{-}(M))$ and $\supp{u}\subset \Jlight{}(K)$.
\end{cor}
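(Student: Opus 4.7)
The plan is to deduce the corollary from Proposition \ref{enesttheorem} by splitting into the two cases $t \geq \tau$ and $t < \tau$ (the case $t=\tau$ being trivial with $C\geq 1$). Throughout, the seminorm $\norm{Du}{\intervallc{t_0}{t_1}{},\Jlight{}(K),s}^2$ is read as the $L^2$-in-time expression $\int_{t_0}^{t_1}\norm{Du\vert_{\Sigma_\sigma}}{H^s_\loc(\spinb(\Sigma_\sigma))}^2 \differ \sigma$, using that $\supp(Du)\subset \Jlight{}(K)$ is spatially compact so the local Sobolev norms on each slice are finite.

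For the forward case $\tau \leq t$ (with both in $\intervallc{t_0}{t_1}{}$), I would apply Proposition \ref{enesttheorem} directly on the subinterval $\intervallc{\tau}{t}{}\subset\intervallc{t_0}{t_1}{}$, starting from $\Sigma_\tau$. Both factors $\expe{C'(t-\tau)}$ and $\expe{C'(t-\sigma)}$ occurring there are bounded by $\expe{C'(t_1-t_0)}$ on the whole interval, so setting $C_1 := \expe{C'(t_1-t_0)}\cdot \max(1,1)$ gives
\begin{equation*}
\mathcal{E}_s(u,\Sigma_t) \leq C_1\left(\mathcal{E}_s(u,\Sigma_\tau) + \int_\tau^t \norm{Du\vert_{\Sigma_\sigma}}{H^s_\loc(\spinb(\Sigma_\sigma))}^2 \differ \sigma \right) \leq C_1\left(\mathcal{E}_s(u,\Sigma_\tau)+\norm{Du}{\intervallc{t_0}{t_1}{},\Jlight{}(K),s}^2\right).
\end{equation*}

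For the backward case $t < \tau$, I would invoke the time reversal Lemma \ref{timeinvarinace}. Setting $v := \mathcal{T}^\ast u$, one has that $v$ solves the forward Cauchy problem for the time-reversed Dirac operator $\widetilde{D} := \mathcal{T}^\ast D \mathcal{T}^\ast$ on the reversed interval $\intervallc{-t_1}{-t_0}{}$, with initial datum on $\Sigma_{-\tau}$ (which, as noted in the lemma, equals $u\vert_{\Sigma_\tau}$ up to the pullback identification on the fixed slice) and inhomogeneity $\mathcal{T}^\ast (Du)$. Since $\widetilde{D}$ has precisely the same structural form as $D$ along the reversed foliation (tangential first-order operator plus normal covariant derivative, by the explicit expression in Lemma \ref{timeinvarinace}), Proposition \ref{enesttheorem} applies verbatim to $\widetilde{D}$ on $\intervallc{-\tau}{-t}{}$. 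Using \cref{timelambda} and the fact that the pullback is an isometry on $L^2$ (different metrics yield equivalent Sobolev norms), the energy is preserved, i.e.\ $\mathcal{E}_s(v,\Sigma_{-\sigma}) = \mathcal{E}_s(u,\Sigma_\sigma)$, and the integral on the right is again controlled by $\norm{Du}{\intervallc{t_0}{t_1}{},\Jlight{}(K),s}^2$ after the change of variable $\sigma \mapsto -\sigma$.

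Finally I would take $C := \max(C_1,C_2)$, where $C_2$ is the constant produced by the backward argument; both depend only on $s$, on $K$ (through the spatially compact set $\Jlight{}(K)$ on which metric seminorms of the form \cref{normveccomp} enter), and on $\intervallc{t_0}{t_1}{}$. The main bookkeeping obstacle is ensuring that the constant $C$ can genuinely be chosen independent of $\tau$ and $t$ within the interval; this follows because the constants in Proposition \ref{enesttheorem} are obtained from the extreme value theorem applied to a quantity depending continuously on the slice metrics $\met_\sigma$ over the compact set $\Jlight{}(K)\cap\intervallc{t_0}{t_1}{}$, so one uniform bound suffices for all $\tau,t\in\intervallc{t_0}{t_1}{}$.
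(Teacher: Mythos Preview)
Your proposal is correct and follows essentially the same route as the paper: split into the forward case $t\geq\tau$ (direct application of Proposition~\ref{enesttheorem} with the exponentials bounded uniformly on $[t_0,t_1]$) and the backward case $t<\tau$ (time reversal via Lemma~\ref{timeinvarinace}, then Proposition~\ref{enesttheorem} for $\mathcal{T}^\ast D\mathcal{T}^\ast$, together with the invariance of the $s$-energy and the $L^2$-in-time integral under $\mathcal{T}$ from \cref{timelambda}). The only cosmetic difference is that the paper first reduces to $\tau\in[t_0,t_1]$ by enlarging the interval if necessary, whereas you fold this into the uniformity discussion at the end.
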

\begin{proof}
Assume again, that each leaf is closed, otherwise one extends again everything to a suitable double. Choose $\tau \in \intervallc{t_0}{t_1}{}$, otherwise take $t_0,t_1 \in \timef(M)$ in such a way, that it is true. Suppose first $t \in \intervallc{\tau}{t_1}{}$; since $u\in FE^{s+1}_{\scomp}(M,\timef,D)\subset FE^{s+1}_{\scomp}(M,\timef,\spinb^{+}(M))$ all preassumptions from this corollary coincides with the one from Proposition \ref{enesttheorem}, such that \cref{enesttheoremform} holds. By assumption $Du \in L^2_{\loc,J(K)}(\timef,H^s(\spinb^{-}(\Sigma_{\bullet})))$ implies integrability of the map $\lambda \mapsto \norm{Du\vert_{\Sigma_{\lambda}}}{H^s(\spinb(\Sigma_{\lambda}))}$, such that
\begin{comment}
\begin{eqnarray*}
\mathcal{E}_s(u,\Sigma_{t})&\leq& c\mathcal{E}_s(u,\Sigma_\tau)\expe{c(t-\tau)}+\int_{\tau}^{t}\expe{c(t-\lambda)}\norm{Du\vert_{\Sigma_\lambda}}{H^s(\Sigma_\lambda,\spinb(\Sigma_{\lambda}))}^2 \differ \lambda \\
&\leq & c \mathcal{E}_s(u,\Sigma_\tau)+\int_{\tau}^{t}\norm{Du\vert_{\Sigma_\lambda}}{H^s(\Sigma_\lambda,\spinb(\Sigma_{\lambda}))}^2 \differ \lambda\\
&\leq & \max\{c,1\}\left(\mathcal{E}_s(u,\Sigma_\tau)+\int_{t_0}^{t_1}\norm{Du\vert_{\Sigma_\lambda}}{H^s(\Sigma_\lambda,\spinb(\Sigma_{\lambda}))}^2 \differ \lambda\right) \\
&\leq& C \left(\mathcal{E}_s(u,\Sigma_\tau)+\norm{D u}{\intervallc{t_0}{t_1}{},\Jlight{}(K),s}^2\right) \quad,
\end{eqnarray*}
\end{comment}
\begin{equation*}
\mathcal{E}_s(u,\Sigma_{t}) \leq C \left(\mathcal{E}_s(u,\Sigma_\tau)+\norm{D u}{\intervallc{t_0}{t_1}{},\Jlight{}(K),s}^2\right)\quad .
\end{equation*}
For $t \in \intervallc{t_0}{\tau}{}$ a time-reversed variable is used, denoted by $\mathcal{T}(t)$. Lemma \ref{timeinvarinace} ensures, that $\mathcal{T}^\ast u$ solves the backward time Dirac equation with time-reversed data, such that the proof of Proposition \ref{enesttheorem} can be repeated for this situation: one concludes
\begin{eqnarray*}
\mathcal{E}_s(\mathcal{T}^{\ast}u,\Sigma_{\mathcal{T}(t)})&\leq& c\mathcal{E}_s(\mathcal{T}^{\ast}u,\Sigma_{\mathcal{T}(\tau)})\expe{c(\mathcal{T}(t)-\mathcal{T}(\tau))}+\int_{\mathcal{T}(\tau)}^{\mathcal{T}(t)}\expe{c(\mathcal{T}(t)-\lambda)}\norm{(\widetilde{D}\mathcal{T}^\ast u)\vert_{\Sigma_\lambda}}{H^s(\spinb(\Sigma_{\lambda}))}^2 \differ \lambda \\
&\leq & C \left(\mathcal{E}_s(\mathcal{T}^{\ast}u,\Sigma_{\mathcal{T}(\tau)})+\int_{\mathcal{T}(\tau)}^{\mathcal{T}(t)}\norm{(\widetilde{D}\mathcal{T}^\ast u)\vert_{\Sigma_\lambda}}{H^s(\spinb(\Sigma_{\lambda}))}^2 \differ \lambda\right) \quad,
\end{eqnarray*}
where $\widetilde{D}=\mathcal{T}^\ast \circ D \circ \mathcal{T}^\ast$. Because $\mathcal{T}$ is an involution, the $s$-energy and the last integral over the inhomogeneity are invariant under this time-orientation reversion according to \cref{timelambda}, which finally proves the statement.
\end{proof}
The following conclusion can be interpreted as \textit{uniqueness of the Cauchy problem for the Dirac equation}:
\begin{cor}\label{corenest2}
$u \in FE^{s}_\scomp(M,\timef,D)$ is uniquely determined by the inhomogeneity $Du$ and the initial condition $u\vert_{\Sigma_t}$ on a hypersurface $\Sigma_t$ for any $t \in \timef(M)$.
\end{cor}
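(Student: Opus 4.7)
The plan is to reduce the assertion to a vanishing result for the homogeneous initial value problem and then apply the energy estimate of Corollary \ref{corenest1}. If $u_1,u_2 \in FE^s_\scomp(M,\timef,D)$ share an inhomogeneity $Du_1 = Du_2$ and an initial value $u_1\vert_{\Sigma_t} = u_2\vert_{\Sigma_t}$, then by linearity $w := u_1 - u_2$ again lies in $FE^s_\scomp(M,\timef,D)$ and solves $Dw = 0$ with $w\vert_{\Sigma_t} = 0$. It remains to deduce $w \equiv 0$ from these data, after which uniqueness follows.

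The main technical wrinkle is a one-order regularity gap: Corollary \ref{corenest1} at Sobolev order $s$ requires $w \in FE^{s+1}_\scomp(M,\timef,D)$, one derivative more than is available. I would sidestep this by invoking the same corollary at order $s-1$ in place of $s$. Its hypotheses then read $w \in FE^s_\scomp(M,\timef,D)$, which holds, and $Dw \in FE^{s-1}_\scomp(M,\timef,\spinb^-(M))$, which is vacuous because $Dw \equiv 0$ lies trivially in every such space. Since $w$ is spatially compactly supported, there exists a compact $K \subset M$ with $\supp{w} \subset \Jlight{}(K)$, so the corollary applies.

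For an arbitrary closed interval $I = [t_0,t_1] \subset \timef(M)$ containing $t$, taking $\tau = t$ in Corollary \ref{corenest1} at order $s-1$ yields
\begin{equation*}
\mathcal{E}_{s-1}(w,\Sigma_{t'}) \,\leq\, C\bigl(\mathcal{E}_{s-1}(w,\Sigma_t) + \norm{Dw}{I,\Jlight{}(K),s-1}^2\bigr) \,=\, 0
\end{equation*}
for every $t' \in I$, since both the initial energy and the inhomogeneity vanish. Thus $w\vert_{\Sigma_{t'}} = 0$ in $H^{s-1}_\loc(\spinb^+(\Sigma_{t'}))$ throughout $I$; exhausting $\timef(M)$ by such intervals gives $w = 0$ identically.

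The step I expect to require most care is the regularity drop itself: one is initially tempted to apply the estimate directly at order $s$ and must justify why order $s-1$ is admissible without loss. The resolution is that the homogeneity of the equation for $w$ renders the Sobolev-order condition on $Dw$ empty, so nothing is actually sacrificed by dropping one order; all other hypotheses (spatially compact support, membership of $w$ in $FE^s_\scomp(M,\timef,D)$) are preserved verbatim from the assumption on $u_1$ and $u_2$.
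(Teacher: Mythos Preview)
Your argument is correct and essentially identical to the paper's approach: take the difference $w = u_1 - u_2$, note it solves the homogeneous problem with zero initial data, and apply Corollary \ref{corenest1} at the shifted order $s-1$ to conclude $\mathcal{E}_{s-1}(w,\Sigma_{t'}) = 0$ for all $t'$, hence $w \equiv 0$. The paper handles the regularity gap by the same one-order drop you identified.
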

\begin{proof}
The argumentation carries over literally from the proof of Corollary 11 in \cite{BaerWafo}, where Corollary 10 in the same reference is replaced here by Corollary \ref{corenest1}.
\begin{comment}
Suppose there are two sections $u_1$ and $u_2$ in $FE^s_\scomp(M,\timef,D)$, which solves the Dirac equation with the same inhomogeneity $Du_1=f=Du_2$ and the same initial condition $u_1\vert_{\Sigma_t}=u_0=u_2\vert_{\Sigma_t}$ for all $t \in \timef(M)$. Since the space of finite energy spinors is a topological vector space and $D$ is linear, one observes that $u_1\pm u_2$ is a section in $FE^s_\scomp(M,\timef,D)$ as well. The difference $U=u_1-u_2$ then solves the homogeneous Dirac equation with initial condition $U\vert_{\Sigma_t}=0$ on all leaves. Using Corollary \ref{corenest1} with a shifted value $s$ and fixed initial condition on $\Sigma_\tau$ leads to
\begin{equation*}
\mathcal{E}_{s-1}(U,\Sigma_{t})\leq C \left(\mathcal{E}_{s-1}(U,\Sigma_{\tau})+ \norm{D U}{\intervallc{t_0}{t_1}{},\Jlight{}(K),s}^2\right) =0
\end{equation*}  
and by the squeezing argument $\mathcal{E}_{s-1}(U,\Sigma_{t})=0$ for all $t$. Since the $s$-energy is defined by a $s$-Sobolev norm (either for $\Sigma_t$ closed or interepreted on the double), one obtains $U\vert_{\Sigma_t}=0$ for any $t$. Thus the solution $U$ is identically vanishing. 
\end{comment}
\end{proof}

\subsection{Cauchy problem for $D$}
\justifying
After all preparations the well-posedness of the inhomogeneous Cauchy problem for the Dirac equation for spinor fields with positive chirality can be proven in the same way, Bär and Wafo have done for the wave equation in \cite{BaerWafo}. The proof from this reference carries over literally to the setting of our interest, where the used corollaries 10 and 11 in \cite{BaerWafo} are replaced by Corollary \ref{corenest1} as well as by the uniqueness of solutions of the Dirac equation in Corollary \ref{corenest2} and only the restricted spinor on the initial hypersurface contributes. But for the sake of completeness and of later use we repeat the argument. %$M$ is a globally hyperbolic spin manifold with non-compact, but complete Cauchy hypersurfaces.
\begin{theo}\label{inivpwell}
For a fixed $t \in \timef(M)$ and $s \in \R$ the map
\begin{equation}\label{inivpmap}
\mathsf{res}_t \oplus D \,\,:\,\, FE^s_\scomp(M,\timef,D) \,\,\rightarrow\,\, H^s_\comp(\spinb^{+}(\Sigma_t))\oplus L^2_{\loc,\scomp}(\timef(M),H^s_\loc(\spinb^{-}(\Sigma_\bullet)))
\end{equation}
is an isomorphism of topological vector spaces.
\end{theo}
\begin{proof}
One first checks the continuity of the map $\rest{t}\oplus D$, induced by the continuity of both summands: by definition $FE^s_\scomp(M,\timef,\spinb^{+}(M))$ is the union of all continuous functions from $\timef(M)$ to $H^s_\loc(\spinb^{+}(\Sigma_\bullet))$ with spatially compact support in $\mathcal{K} \subset M$. An intersection of $\mathcal{K}$ with any Cauchy hypersurface in the foliation of $M$ is a compact subset and since $H^s_\comp(\Sigma_{\bullet})$ is also defined as union over all compact subsets in any slice, it is enough to consider the restriction onto a fixed hypersurface $\Sigma_t$ as map between $C^0_\mathcal{K}(\timef(M),H^s_\loc(\spinb^{+}(\Sigma_\bullet)))$ and $H^s_{\mathcal{K}\cap\Sigma_t}(\spinb^{+}(\Sigma_t))$. The continuity follows immediatly from the estimation
\begin{equation*}
\norm{\rest{t}u}{H^s(\mathcal{K}\cap \Sigma_t ,\spinb(\Sigma_t))} \leq \max_{\tau \in \timef(M)}\SET{\norm{u\vert_{\Sigma_\tau}}{H^s(\mathcal{K}\cap \Sigma_{\tau},\spinb(\Sigma_t))}}=\norm{u}{\timef(M),\mathcal{K},0,s}
\end{equation*}
with the norm on $C^0_{\mathcal{K}}(\timef(M),H^s_\loc(\spinb^{+}(\Sigma_\bullet)))$ as defined in \cref{snclk}. The two inclusion mappings $H^s_{\mathcal{K}\cap\Sigma_\bullet}(\spinb^{+}(\Sigma_\bullet)) \hookrightarrow H^s_\comp(\spinb^{+}(\Sigma_\bullet))$ and $C^0_{\mathcal{K}}(\timef(M),H^s_\loc(\spinb^{+}(\Sigma_\bullet))) \hookrightarrow C^0_{\scomp}(\timef(M),H^s_\loc(\spinb^{+}(\Sigma_\bullet)))$ are continuous and the restriction map between $FE^s_{\scomp}(M,\timef,D)\subset FE^s_{\scomp}(M,\timef,\spinb^{+}(M))$ and $H^s_\comp(\spinb^{+}(\Sigma_t))$ for a fixed $\Sigma_t$ becomes continuous. $D$ as map from $FE^s_\scomp(M,\timef,D)$ into the codomain $L^2_{\loc,\scomp}(\timef(M),H^s_\loc(\spinb^{-}(\Sigma_\bullet)))$ is continuous on $FE^s_\scomp(M,\timef,D)$, implying the whole map to be continuous.\\
\\
Now one shows, that the map is bijective with continuous inverse. For this step one constructs the inverse map of \cref{inivpmap}. Take $K \subset \Sigma_t$ compact for $t \in 	\timef(M)$ fixed. The well-posedness of the Cauchy problem for the Dirac equation with smooth and compactly supported initial data on $\Sigma_t$ (see Theorem 4 in \cite{AndBaer}) states, that for given $u_0 \in C^\infty_K(\spinb^{+}(\Sigma_t))$ and $f \in C^\infty_{\Jlight{}(K)}(\spinb^{-}(M))$ there exists a solution $u\in C^\infty(\spinb^{+}(M))$ of the Dirac equation with inhomogeneity $f$ and initial value $u_0=u\vert_{\Sigma_t}$, which has support in $\Jlight{}(K)$ by finite propagation speed. Since $C^\infty(\spinb^{+}(M))\subset FE^s_\scomp(M,\timef,\spinb^{+}(M))$ for all $s\in \R$ one can apply Corollary \ref{corenest1} in order to estimate the norm of $C^0_\scomp(\timef(M),H^s_\loc(\spinb^{+}(\Sigma_\bullet)))$: let $I \subset \timef(M)$ be a subintervall and $t \in \timef(M)$ a fixed initial time, then
\begin{eqnarray*}
\norm{u}{I,\Jlight{}(K),0,s}^2 &=& \max_{\tau \in I}\SET{\norm{u\vert_{\Sigma_\tau}}{H^s_\loc(\spinb(\Sigma_t))}^2}= \max_{\tau \in I}\SET{\mathcal{E}(u,\Sigma_\tau)} \\
&\leq& C \max_{\tau \in I}\SET{\norm{u\vert_{\Sigma_t}}{H^s_\loc(\spinb(\Sigma_t))}^2+\norm{Du}{I,\Jlight{}(K),s}^2}\\
&=&C \left(\norm{u_0}{H^s(\spinb(\Sigma_t))}^2+\norm{f}{I,\Jlight{}(K),s}^2\right) \quad ,
\end{eqnarray*} 
where the estimation constant comes from the used corollary, thus it is not depending on the smooth data of the Cauchy problem. This result implies, that the continuous map $(u_0,f)\mapsto u$ from the Cauchy problem in \cite{AndBaer} can be extended to a continuous map 
\begin{equation*}
H^s_K(\spinb^{+}(\Sigma_t))\oplus L^2_{\loc,\Jlight{}(K)}(\timef(M),H^s_\loc(\spinb^{-}(\Sigma_\bullet)))\,\,\rightarrow\,\,C^0_{J(K)}(\timef(M),H^s_\loc(\spinb^{+}(\Sigma_\bullet)))
\end{equation*}
Since $\Jlight{\pm}(K)$ is closed for $K \subset \Sigma_t \subset M$ compact, also $\Jlight{}(K)$ is closed and thus spatially compact. Therefore the continuity of the map 
\begin{equation*}
H^s_\comp(\spinb^{+}(\Sigma_t))\oplus L^2_{\loc,\scomp}(\timef(M),H^s_\loc(\spinb^{-}(\Sigma_\bullet)))\,\,\rightarrow\,\,FE^s_\scomp(M,\timef,\spinb^{+}(M))
\end{equation*}
is proven, as the inclusions $H^s_K \hookrightarrow H^s_\comp$, $L^2_{\loc,\Jlight{}(K)}\hookrightarrow L^2_{\loc,\scomp}$ and $C^0_{\Jlight{}(K)}\hookrightarrow C^0_{\scomp}$ are continuous. The concrete inverse is the mapping
\begin{equation}\label{solmap}
H^s_\comp(\spinb^{+}(\Sigma_t))\oplus L^2_{\loc,\scomp}(\timef(M),H^s_\loc(\spinb^{-}(\Sigma_\bullet)))\,\,\rightarrow\,\,FE^s_\scomp(M,\timef,D) \quad.
\end{equation}
The composition \cref{inivpmap} after \cref{solmap} clearly gives the identity after fixing one hypersurface for the initial data. The converse composition starts with a solution, from which one extracts the initial data and the inhomogeneity and solves again by \cref{solmap}. From Corollary \ref{corenest2} the solution is unique and the result coincides with the input. This implies, that also this composition gives the identity and thus invertibility/ bijectivity of \cref{inivpmap}. The continuity of the inverse follows from the composition of $D$ with \cref{solmap}, which is a restriction on the second summand. Thus the composition is continuous and because $D$ is continuous on $FE^s_{\scomp}(M,\timef,D)$ the claim follows. Summarizing all results shows, that \cref{inivpmap} is indeed an isomorphism.  
\end{proof}
\newpage
\noindent The well-posedness of the homogeneous Cauchy problem for the Dirac equation follows immediately:
\begin{cor}\label{homivpwell}
For a fixed $t \in \timef(M)$ and $s \in \R$ the map
\begin{equation*}%\label{homivpmap}
\mathsf{res}_t  \,\,:\,\, FE^s_\scomp(M,\timef,\kernel{D}) \,\,\rightarrow\,\, H^s_\comp(\spinb^{+}(\Sigma_t))
\end{equation*}
is an isomorphism of topological vector spaces.
\end{cor}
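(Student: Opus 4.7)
The plan is to deduce the corollary directly from Theorem \ref{inivpwell} by restricting that isomorphism to the kernel of $D$. Observe that $FE^s_\scomp(M,\timef,\kernel{D})$ sits inside $FE^s_\scomp(M,\timef,D)$ as the closed subspace $\SET{u\,\vert\,Du=0}$, and the subspace topologies inherited from $FE^s_\scomp(M,\timef,\spinb^{+}(M))$ and from $FE^s_\scomp(M,\timef,D)$ agree on it, since the additional seminorms defining the latter involve $Du$ and therefore vanish identically on the kernel. Under the isomorphism $\mathsf{res}_t \oplus D$ this kernel maps precisely onto the closed subspace $H^s_\comp(\spinb^{+}(\Sigma_t)) \oplus \SET{0}$ of the codomain.

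Restricting the topological isomorphism from Theorem \ref{inivpwell} to these corresponding closed subspaces yields a topological isomorphism
\begin{equation*}
\mathsf{res}_t \oplus D \,:\, FE^s_\scomp(M,\timef,\kernel{D}) \,\,\rightarrow\,\, H^s_\comp(\spinb^{+}(\Sigma_t)) \oplus \SET{0} \quad.
\end{equation*}
Composing with the canonical projection onto the first factor, which is a homeomorphism between $H^s_\comp(\spinb^{+}(\Sigma_t)) \oplus \SET{0}$ and $H^s_\comp(\spinb^{+}(\Sigma_t))$, produces $\mathsf{res}_t$ and proves that it is an isomorphism of topological vector spaces. The inverse is concretely obtained by plugging initial datum $u_0 \in H^s_\comp(\spinb^{+}(\Sigma_t))$ and inhomogeneity $f=0$ into the solution map \cref{solmap} constructed in the proof of Theorem \ref{inivpwell}, with uniqueness of the resulting kernel solution guaranteed by Corollary \ref{corenest2}.

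There is no genuine obstacle here beyond the bookkeeping of topologies; the substantive analytic content (energy estimate, finite propagation speed, uniqueness) has already been absorbed into Theorem \ref{inivpwell}, and the present statement is exactly the restriction of that result to its homogeneous slice.
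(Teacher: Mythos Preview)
Your proof is correct and matches the paper's approach: the paper gives no explicit proof here, merely stating that the well-posedness of the homogeneous Cauchy problem ``follows immediately'' from Theorem~\ref{inivpwell}, which is precisely the restriction argument you have spelled out.
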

%\begin{proof}
%Follows by Theorem \ref{inivpmap} with $Du=0$.
%\end{proof}
These two results lead to the following consequences: %the corollary says, that smooth sections are dense in $FE^s_\scomp(M,\timef,\kernel{D})$ and $FE^s_\scomp(M,\timef,D)$:

\begin{cor}\label{corivp1}
For any $s\in \R$
\begin{itemize}
\item[(a)] $C^\infty_{\scomp}(\spinb^{+}(M)) \subset FE^s_\scomp(M,\timef,D)$ dense, 
\item[(b)] $C^\infty_{\scomp}(\spinb^{+}(M))\cap\kernel{D} \subset FE^s_\scomp(M,\timef,\kernel{D})$ dense.
\end{itemize}
\end{cor}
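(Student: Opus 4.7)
The plan is to transfer density from the initial-data side to the finite-energy side via the two isomorphisms already established. Because $\rest{t}\oplus D$ (from Theorem \ref{inivpwell}) and $\rest{t}$ (from Corollary \ref{homivpwell}) are topological isomorphisms with continuous inverses, it suffices to approximate the pair $(\rest{t}u, Du)$ (respectively the datum $\rest{t}u$) by smooth and compactly supported data, and then recognize that the associated smooth solutions already lie in $C^\infty_\scomp(\spinb^{+}(M))$.

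For (a), fix $t \in \timef(M)$, $u \in FE^s_\scomp(M,\timef,D)$, and a spatially compact $K \subset M$ with $\supp u \subset K$. Choose a spatially compact $K' \supset K$ with $K \subset \mathring{K'}$, whose slice $K'\cap \Sigma_t$ is compact with sufficiently regular boundary so that the doubling construction applies. Approximate $u_0 := \rest{t}u \in H^s_{K\cap\Sigma_t}(\spinb^{+}(\Sigma_t))$ by a sequence $u_0^n \in C^\infty_{K'\cap\Sigma_t}(\spinb^{+}(\Sigma_t))$ in the $H^s$-norm; this is possible by the very definition of $H^s_K$ as the norm-closure of $C^\infty_K$ via the doubled manifold. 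Approximate $f := Du$ (which lies in $L^2_{\loc,K}(\timef(M),H^s_\loc(\spinb^{-}(\Sigma_\bullet)))$) by $f^n \in C^\infty_{K'}(\spinb^{-}(M))$ in the $L^2_{\loc,K'}$-sense using Lemma 2 of \cite{BaerWafo}. Theorem 4 of \cite{AndBaer} then produces, for each $n$, a unique smooth solution $u^n \in C^\infty(\spinb^{+}(M))$ with $u^n\vert_{\Sigma_t} = u_0^n$ and $Du^n = f^n$, whose support is contained in $\Jlight{}(K')$ by finite propagation speed; in particular $u^n \in C^\infty_\scomp(\spinb^{+}(M))$. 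Continuity of the inverse of $\rest{t}\oplus D$ then yields $u^n \to u$ in $FE^s_\scomp(M,\timef,D)$.

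For (b), the same recipe is run with $f^n \equiv 0$ throughout: approximate $u_0 = \rest{t}u$ by $u_0^n \in C^\infty_{K'\cap\Sigma_t}(\spinb^{+}(\Sigma_t))$ as above, and produce smooth kernel solutions $u^n \in C^\infty_\scomp(\spinb^{+}(M))\cap \kernel{D}$ via Theorem 4 of \cite{AndBaer}. Corollary \ref{homivpwell} then gives $u^n \to u$ in $FE^s_\scomp(M,\timef,\kernel{D})$.

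The only subtle point worth flagging is the need to respect the strict inductive limit topologies on both sides of the isomorphisms: convergence in $H^s_\comp$, $L^2_{\loc,\scomp}$ and $FE^s_\scomp$ demands that the approximants live in common subspaces indexed by a single spatially compact reservoir, not merely that norms decrease. This is exactly what finite propagation speed provides here, by pinning every $\supp u^n$ inside $\Jlight{}(K')$ for a $K'$ chosen in advance; without this control the $H^s$- and $L^2_\loc$-convergence of the data would not transfer to convergence of the solutions in $FE^s_\scomp$.
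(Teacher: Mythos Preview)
Your proposal is correct and follows essentially the same approach as the paper, which defers to Corollary 15 in \cite{BaerWafo}: density of smooth data on the target side, smooth solvability via Theorem 4 of \cite{AndBaer}, and transfer back through the solution map. Your explicit handling of the strict inductive limit topology via a fixed spatially compact $K'$ and finite propagation speed is in fact more careful than what the paper spells out.
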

\begin{proof}
The proof can be taken from Corollary 15 in \cite{BaerWafo} with the only difference, that Theorem 13 and Corollary 14 in the reference are replaced by the well-posedness for the homogeneous and inhomogeneous Cauchy problem from above. 
\begin{comment}
: the initial value problem for the Dirac equation in \cite{AndBaer} says, that smooth Cauchy data $u_0 \in C^\infty_K(\spinb^{+}(\Sigma_t))$ and $f \in C^\infty_{\Jlight{}(K)}(M,\spinb^{-}(M))$ give smooth solutions ($\Sigma_t$ is the initial hypersurface and $K\subset \Sigma_t$ compact). Since $C^\infty_\comp(\spinb^{+}(\Sigma_t))$ is a dense subset in $H^s_\comp(\spinb^{+}(\Sigma_t))$ for all $s\in \R$ and $C^\infty_\scomp(M,\spinb^{+}(M))\subset L^2_{\loc,\scomp}(\timef(M),H^s(\spinb^{+}(\Sigma_\bullet)))$ is dense by Lemma \ref{lemcorinvp2}, the isomorphisms in either Theorem \ref{inivpmap} or Corollary \ref{homivpmap} map the smooth Cauchy data onto smooth data with support in the spatially compact set $\Jlight{}(K)$ by the map \cref{solmap}. Since this map is surjective and continuous, it maps the mentioned dense subsets to another dense subset in $FE^s_{\scomp}(M,\timef,D)$ for (a) and in $FE^s_{\scomp}(M,\timef,\kernel{D})$ for (b).
\end{comment} 
\end{proof}
A consequence is, that solutions of the Dirac equation have \textit{finite propagation speed}:
\begin{cor}%\label{corivp2}
A solution $u$ with $u\vert_{\Sigma_t}=u_0$ for any leaf $\Sigma_t$, $t \in \timef(M)$, and $Du=f$ satisfies \\
$\supp{u}\subset \Jlight{}(K)$, where $K \subset M$ is a compact subset, satisfying $\supp{u_0}\cup\supp{f} \subset K$.
\end{cor}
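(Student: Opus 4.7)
The plan is to transfer the smooth finite-propagation statement from Theorem 4 of \cite{AndBaer} to an arbitrary finite-energy solution via the continuous inverse of the isomorphism in Theorem \ref{inivpwell}. First I would fix a nested sequence of compact subsets $K_n \subset M$ satisfying $K \subset \mathring{K_{n+1}} \subset K_n$ and $\bigcap_n K_n = K$. By the very definition of $H^s_{K \cap \Sigma_t}(\spinb^{+}(\Sigma_t))$ as the $H^s$-completion of $C^\infty_{K \cap \Sigma_t}$, I can approximate the initial value by smooth sections $u_0^{(m)} \in C^\infty_{K \cap \Sigma_t}(\spinb^{+}(\Sigma_t))$. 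Using Lemma 2 of \cite{BaerWafo} (the density statement for $L^2_{\loc,\scomp}$ up to a mild enlargement of the spatial support), I can approximate $f$ by smooth sections $f^{(m,n)} \in C^\infty_{K_n}(\spinb^{-}(M))$ for each outer set $K_n$.

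Next I would invoke the smooth well-posedness of \cite{AndBaer}: for each pair $(m,n)$ there is a smooth solution $u^{(m,n)} \in C^\infty(\spinb^{+}(M))$ with $D u^{(m,n)} = f^{(m,n)}$ and $u^{(m,n)}\vert_{\Sigma_t} = u_0^{(m)}$, and crucially $\supp u^{(m,n)} \subset \Jlight{}(K_n)$ by the smooth finite propagation property. Since $(u_0^{(m)}, f^{(m,n)}) \to (u_0, f)$ in $H^s_{\comp}(\spinb^{+}(\Sigma_t)) \oplus L^2_{\loc,\scomp}(\timef(M), H^s_{\loc}(\spinb^{-}(\Sigma_\bullet)))$ as $m \to \infty$ for each fixed $n$, the continuity of the inverse from Theorem \ref{inivpwell} forces $u^{(m,n)} \to u$ in $FE^s_{\scomp}(M,\timef,D)$, hence also distributionally through the continuous embedding into $C^{-\infty}(M,\spinb^{+}(M))$. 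Because $\Jlight{}(K_n)$ is closed and the supports of all approximants lie in it, the distributional limit satisfies $\supp u \subset \Jlight{}(K_n)$ for every $n$.

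To conclude I would establish the outer-continuity identity $\bigcap_n \Jlight{}(K_n) = \Jlight{}(K)$. The inclusion $\supset$ is immediate. For $\subset$, any $p$ in the intersection admits, for each $n$, a point $q_n \in K_n$ with $p \in \Jlight{}(q_n)$; passing to a subsequence by compactness of $K_1$ gives $q_n \to q \in \bigcap_n K_n = K$, and the closedness of the causal relation on $M \times M$ in a globally hyperbolic spacetime (a standard consequence of Theorem \ref{theo2-2}, applied separately to $J^{+}$ and $J^{-}$ along appropriate subsequences) places $p$ in $\Jlight{}(q) \subset \Jlight{}(K)$.

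The main obstacle is matching supports tightly during approximation: density in $H^s_{K \cap \Sigma_t}$ is exact at the support level, but the available density result for $L^2_{\loc,\scomp}$ only delivers approximants on a mildly enlarged $K_n$, so support-tightness must be recovered only in the limit $n \to \infty$ via the outer-continuity of $\Jlight{}(\cdot)$ at $K$, which is precisely the place where global hyperbolicity enters. Every remaining step is a direct transfer through the continuity of the well-posedness isomorphism.
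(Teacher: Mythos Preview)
Your approach is correct and is essentially a detailed unpacking of the paper's one-sentence proof, which simply invokes the density of $C^\infty_{\scomp}$ in $FE^s_\scomp(M,\timef,D)$ (Corollary~\ref{corivp1}) together with the smooth finite-propagation result of \cite{AndBaer}. You make explicit what the paper leaves implicit, in particular the outer-continuity step $\bigcap_n \Jlight{}(K_n) = \Jlight{}(K)$, which is genuinely needed to pass from the enlarged supports of the smooth approximants to the sharp $\Jlight{}(K)$.

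One small slip: approximating $u_0$ by smooth sections supported in $K \cap \Sigma_t$ presupposes $u_0 \in H^s_{K\cap\Sigma_t}$, which does not follow from $\supp{u_0} \subset K\cap\Sigma_t$ alone when $K\cap\Sigma_t$ fails to be a ``regular'' compact set (think of $K\cap\Sigma_t$ with empty $\Sigma_t$-interior and $s<0$). The fix is immediate within your own framework: approximate $u_0$ in $C^\infty_{K_n\cap\Sigma_t}$ instead---this is legitimate since $\supp{u_0} \subset K\cap\Sigma_t \subset \mathring{K_n}\cap\Sigma_t$ lies in the $\Sigma_t$-interior of $K_n\cap\Sigma_t$---and it changes nothing downstream, because you only ever claim $\supp{u^{(m,n)}} \subset \Jlight{}(K_n)$.
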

\begin{proof}
Since $C^\infty_{\scomp}(\spinb^{+}(M))$ is dense in $FE^s_\scomp(M,\timef,D)$, the finiteness of propagation speed in Theorem 4 of \cite{AndBaer} can be extended to finite $s$-energy spinors.
\end{proof}
Another consequence of the Corollary \ref{corivp1} is an optimization of the assumed regularity in Proposition \ref{enesttheorem}, which can be stated as Corollary 17 in \cite{BaerWafo}.
\begin{comment}
\begin{cor}\label{corivp3}
The energy estimate \cref{enesttheoremform} already holds for all $t_0,t_1 \in I \subset \timef(M)$ with $t_0 < t_1$ and for all $u \in FE^s_\scomp(M,\timef,D)$ with support $\supp{u}\subset \Jlight{}(K)$. 
\end{cor}
\begin{proof}
Because of Corollary \ref{corivp1} (a) the energy estimate in Proposition \ref{enesttheorem} holds for all $u \in C^\infty_{\scomp}(M,\spinb^{+}(M))$, which has support in $\Jlight{}(K)$ with $K \subset M$ compact.
By continuity of the restriction map in the proof of Theorem \ref{inivpmap} the first term in \cref{enesttheoremform} can be expressed with the norm in $FE^s_\scomp(M,\timef,\spinb^{+}(M))\supset FE^s_\scomp(M,\timef,D)$. The second term can be estimated by a $L^2_{\loc,\scomp}(\timef(M),H^s(\spinb^{+}(\Sigma_\bullet)))$ seminorm, by which the estimation can be extended to $u \in FE^s_\scomp(M,\timef,D)$ with $\supp{u}\subset \Jlight{}(K)$. 
\end{proof}
\end{comment}
The last conclusion from the density of smooth sections in finite energy spinors is the independence of the Cauchy temporal function $\timef$ for those finite energy spinors, which are solutions of the homogeneous Dirac equation. 
\begin{cor}%\label{corivp4}
Given two Cauchy temporal functions $\timef$ and $\timef'$ on $M$, then for all $s \in \R$ $$FE^s_\scomp(M,\timef,\kernel{D})=FE^s_\scomp(M,\timef',\kernel{D})\quad.$$
\end{cor}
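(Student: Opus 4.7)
The plan is to exploit the density of smooth, spatially compactly supported kernel solutions in both spaces (Corollary \ref{corivp1}(b)), combined with the well-posedness isomorphism (Corollary \ref{homivpwell}) and the continuous embedding into $C^{-\infty}(M,\spinb^{+}(M))$. By symmetry it will suffice to prove the inclusion $FE^s_\scomp(M,\timef,\kernel{D}) \subseteq FE^s_\scomp(M,\timef',\kernel{D})$; the reverse follows by interchanging the roles of $\timef$ and $\timef'$.

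First I would take $u \in FE^s_\scomp(M,\timef,\kernel{D})$ and restrict to a $\timef$-slice $\Sigma_{t_0}$ to obtain $u_0 := u\vert_{\Sigma_{t_0}} \in H^s_\comp(\spinb^{+}(\Sigma_{t_0}))$. Approximating $u_0$ by smooth compactly supported initial data $u_{0,n}$ and feeding them into the smooth Cauchy problem (Theorem 4 of \cite{AndBaer}) produces $u_n \in C^\infty_\scomp(\spinb^{+}(M))\cap \kernel{D}$ with $u_n\vert_{\Sigma_{t_0}} = u_{0,n}$ and, via continuity of the inverse of $\rest{t_0}$ from Corollary \ref{homivpwell}, $u_n \to u$ in the $\timef$-topology. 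Each $u_n$ is a smooth, spatially compactly supported kernel section and therefore automatically lies in $FE^s_\scomp(M,\timef',\kernel{D})$; the remaining task is to show that $(u_n)$ is Cauchy in the $\timef'$-topology.

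Via the $\timef'$-version of Corollary \ref{homivpwell}, this Cauchy property is equivalent to $(u_n\vert_{\Sigma'_{t'_0}})$ being Cauchy in $H^s_\comp(\spinb^{+}(\Sigma'_{t'_0}))$ for a single $\timef'$-slice. To transfer the bound from $\Sigma_{t_0}$ to $\Sigma'_{t'_0}$ I would invoke an auxiliary Cauchy temporal function $\hat{\timef}$ admitting both hypersurfaces as level sets, supplied by the Bernal--S\'anchez theory. The energy estimate of Corollary \ref{corenest1} holds verbatim for any Cauchy temporal function since its proof uses no property specific to $\timef$. Applying it to $\hat{\timef}$ and to the smooth kernel solutions $u_n - u_m$, whose supports remain in a common $\Jlight{}(K)$ and which satisfy $D(u_n - u_m) = 0$, yields
\begin{equation*}
\mathcal{E}_s(u_n - u_m, \Sigma'_{t'_0}) \leq C\,\mathcal{E}_s(u_n - u_m, \Sigma_{t_0}),
\end{equation*}
and the right hand side tends to zero. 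By completeness of $FE^s_\scomp(M,\timef',\kernel{D})$ the sequence has a limit $\tilde u$ there, and the continuous embeddings of both finite energy spaces into $C^{-\infty}(M,\spinb^{+}(M))$ force $\tilde u = u$ as distributions, yielding $u \in FE^s_\scomp(M,\timef',\kernel{D})$.

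The main obstacle will be the existence of the interpolating temporal function $\hat{\timef}$ when $\Sigma_{t_0}$ and $\Sigma'_{t'_0}$ are not causally ordered, in particular when they intersect. In that case one sandwiches both between a common Cauchy hypersurface $\Sigma^{-}$ strictly in their chronological past and one $\Sigma^{+}$ strictly in their common future, builds two temporal functions $\hat{\timef}_{1}$ and $\hat{\timef}_{2}$ having the triples $(\Sigma^{-},\Sigma_{t_0},\Sigma^{+})$ and $(\Sigma^{-},\Sigma'_{t'_0},\Sigma^{+})$ as level sets respectively, and chains the two corresponding energy estimates (together with Lemma \ref{timeinvarinace} to handle the backward direction) to produce the bound displayed above.
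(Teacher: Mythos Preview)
Your proposal is correct and follows essentially the same route as the paper, which simply defers to Corollary~18 in \cite{BaerWafo} together with the uniqueness result Corollary~\ref{corenest2}; the argument there is precisely the density-plus-energy-estimate scheme you outline. Your treatment of the case where $\Sigma_{t_0}$ and $\Sigma'_{t'_0}$ may intersect, by passing through common past and future Cauchy hypersurfaces and chaining two energy estimates, is the appropriate fix and matches what is needed to make the Bernal--S\'anchez input work here.
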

\begin{proof}
The detailed proof can be taken from Corollary 18 in \cite{BaerWafo} with the uniqueness of the Cauchy problem for the Dirac equation in Corollary \ref{corenest2}.
\end{proof}
Thus we can simplify notation to $FE^s_\scomp(M,\kernel{D})$, in order to stress this independence of the temporal function.
\newpage

\subsection{Cauchy problem for $D_{-}$ and $\Dirac$}
\justifying
The proof of Theorem \ref{inivpmap} in detail provides a full proof of the main result, where only the dependencies on the chirality are left to check.
\begin{proof}
(Theorem \ref{maintheo}) If $(n+1)$ is even, the spaces $FE^s_\scomp(M,\timef,\spinb(M))$, $FE^s_\scomp(M,\timef,\spinb^{-}(M))$, \\ $FE^s_\scomp(M,\timef,D_{-})$, 
$FE^s_\scomp(M,\timef,\Dirac)$, $FE^s_\scomp(M,\timef,\kernel{D_{-}})$ and $FE^s_\scomp(M,\timef,\kernel{\Dirac})$ are defined as in \cref{finensec}, \cref{finensol} and \cref{finenkern}, where either one needs to swap the bundles $\spinb^{\pm}\rightarrow\spinb^{\mp}$ for $D_{-}$ and one replaces $\spinb^{\pm}(M)$ and $\spinb^{\pm}(\Sigma_\bullet)$ by $\spinb(M)$ and $\spinb(\Sigma_\bullet)^{\oplus 2}$ respectively. Recapitulating the proof of the energy estimate, the only influence of the chirality appears in expressing the spinorial covariant derivative with respect to $\upnu$ in terms of $D_{-}$ and operators on the hypersurfaces, where $B_t$ has to be replaced by $B_{t,-}$. But since the result of Lemma \ref{lemenest1} (b), (c) and the latters extension to the $L^2$-norm \cref{extendB} are independent of the chirality, the proof carries over for this case. Exchanging the chiralities in the assumptions then leads to 
\begin{prop}\label{enestdneg}
Let $I \subset \timef(M)$ be a closed intervall, $K\subset M$ compact and $s\in \R$; there exists a constant $C>0$, depending on $K$ and $s$, such that
\begin{equation*}
\mathcal{E}_s(u,\Sigma_{t_1}) \leq \mathcal{E}_s(u,\Sigma_{t_0})\expe{C(t_1-t_0)}+\int_{t_0}^{t_1}\expe{C(t_1-\tau)}\norm{D_{-}u\vert_{\Sigma_\tau}}{H^s_\loc(\spinb(\Sigma_\tau))}^2 \differ \tau
\end{equation*}
applies for all $t_0,t_1 \in I$ with $t_0 < t_1$ and for all $u \in FE^{s+1}_{\scomp}(M,\timef,\spinb^{-}(M))$ with support $\supp{u}\subset \Jlight{}(K)$ and $Du \in FE^{s}_{\scomp}(M,\timef,\spinb^{+}(M))$. 
\end{prop}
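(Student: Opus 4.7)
The plan is to mirror the proof of Proposition \ref{enesttheorem}, swapping $D_{+}$ for $D_{-}$ and positive for negative chirality spinors throughout. First I would decompose $D_{-}$ along each Cauchy hypersurface using the negative-chirality counterpart of \cref{dirachyppos}, namely $D_{-} u\vert_{\Sigma_t} = -\upbeta(\Nabla{\spinb(\Sigma_t)}{\upnu} + B_{t,-}) u\vert_{\Sigma_t}$, and solve this for the time derivative to get $\Nabla{\spinb(M)}{\partial_t} u\vert_{\Sigma_t} = N_t\upbeta D_{-} u\vert_{\Sigma_t} + N_t B_{t,-} u\vert_{\Sigma_t}$. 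The regularity bootstrap is identical to the one in the $D_{+}$-proof: since $B_{t,-}$ is of order at most one tangential to each $\Sigma_t$ and therefore maps $H^{s+1}_\loc$ continuously to $H^s_\loc$, the assumption $u \in FE^{s+1}_\scomp(M,\timef,\spinb^{-}(M))$ together with $D_{-}u \in FE^s_\scomp(M,\timef,\spinb^{+}(M))$ forces $u \in C^1(\timef(M), H^s_\loc(\spinb^{-}(\Sigma_\bullet)))$, which is precisely the time-regularity needed to differentiate $t \mapsto \mathcal{E}_s(u, \Sigma_t)$.

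Next I would differentiate $\mathcal{E}_s(u,\Sigma_t) = \norm{\Lambda^s_t u}{L^2(\spinb(\Sigma_t))}^2$ using the first variation formula of Lemma \ref{lem2-2} and reproduce the chain of estimates from the proof of Proposition \ref{enesttheorem} line by line. The chirality-sensitive ingredients are the commutation $\Lambda^s_t \upbeta = \upbeta \Lambda^s_t$ from Lemma \ref{lemenest1}(b), the $L^2$-extension \cref{extendB} of Lemma \ref{lemenest1}(c) expressing the formal adjoint defect of $B_{t,\pm}$, and the anticommutation of $A_t$ with $\upbeta$ in Lemma \ref{lemenest1}(a); all three are stated uniformly in the sign of chirality. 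Substituting $B_{t,-}$ for $B_{t,+}$ and $D_{-}$ for $D_{+}$ throughout therefore produces the same differential inequality
\[ \frac{\differ}{\differ t} \mathcal{E}_s(u,\Sigma_t) \leq c \cdot \mathcal{E}_s(u,\Sigma_t) + \norm{D_{-} u}{H^s_\loc(\spinb(\Sigma_t))}^2, \]
with a constant $c$ controlled by the same compact spatial seminorms of $\met_\bullet$ over $\Jlight{}(K)$ that appeared in the $D_{+}$-proof. Grönwall's inequality on the closed subinterval $[t_0, t_1] \subset I$, combined with the extreme value theorem applied to the smooth $t$-dependence of $c$, then yields the claimed exponential estimate with a uniform constant $C = C(K,s)$.

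The doubling construction handling non-compact Cauchy leaves is unaffected, as the Sobolev norms used to define $\mathcal{E}_s$ are insensitive to chirality and the closed double $\widetilde{\Sigma}$ can be constructed identically. There is essentially no new conceptual obstacle here: the argument is a verbatim rerun of the proof of Proposition \ref{enesttheorem}, once one observes that every auxiliary result in Lemma \ref{lemenest1} and its $L^2$-extension \cref{extendB} was already formulated symmetrically in the two chiralities. The only bookkeeping point worth flagging is the sign in $B_{t,-} = -\Imag A_t - \tfrac{n}{2}H_t$ versus $B_{t,+} = +\Imag A_t - \tfrac{n}{2}H_t$; this sign drops out of all quadratic estimates because the contribution of $\Imag A_t$ cancels with its adjoint in \cref{extendB} independently of chirality.
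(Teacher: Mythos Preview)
Your proposal is correct and follows exactly the approach the paper takes: the paper notes that the only chirality-dependent step in the proof of Proposition \ref{enesttheorem} is the replacement of $B_t$ by $B_{t,-}$ in the decomposition \cref{dirachyppos}, and that Lemma \ref{lemenest1}(b),(c) and its $L^2$-extension \cref{extendB} are stated uniformly in $\pm$, so the argument carries over verbatim. Your write-up fleshes this out in more detail than the paper does, but the route is identical.
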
 
%The $s$-energy along the hypersurface $\Sigma_\bullet$ is here defined by the $s$-Sobolev norm for sections of $\spinb^{-}(\Sigma_\bullet)$. 
A uniqueness result for the Dirac equation with respect to $D_{-}$ follows as conclusion.
\begin{comment}
\begin{cor}\label{cauchyhomDneg}
$u \in FE^{s}_\scomp(M,\timef,D_{-})$ is uniquely determined by the inhomogeneity $D_{-}u$ and the initial condition $u\vert_{\Sigma_t}$ on a hypersurface $\Sigma_t$ for any $t \in \timef(M)$.
\end{cor}
\end{comment}
One earns the well-posedness of the (in-) homogeneous Cauchy problem for the Dirac equation with respect to $D_{-}$ literally from the last subsection, where one only needs to swap the chiralities in the proof:
\begin{theo}\label{cauchyinvdneghom}%\label{cauchyinvdneginhom}
For a fixed $t \in \timef(M)$ and $s \in \R$ the maps
\begin{eqnarray*}
\mathsf{res}_t \oplus D_{-} &:& FE^s_\scomp(M,\timef,D_{-}) \,\,\rightarrow\,\, H^s_\comp(\spinb^{-}(\Sigma_t))\oplus L^2_{\loc,\scomp}(\timef(M),H^s_\loc(\spinb^{+}(\Sigma_\bullet)))\\
\text{and}\,\,\mathsf{res}_t &:& FE^s_\scomp(M,\timef,\kernel{D_{-}}) \,\,\rightarrow\,\, H^s_\comp(\spinb^{-}(\Sigma_t))
\end{eqnarray*}
are isomorphisms of topological vector spaces.
\end{theo}
\begin{comment}
and
\begin{cor}\label{cauchyinvdneghom}
For a fixed $t \in \timef(M)$ and $s \in \R$ the map
\begin{equation*}
\mathsf{res}_t  \,\,:\,\, FE^s_\scomp(M,\timef,\kernel{D_{-}}) \,\,\rightarrow\,\, H^s_\comp(\spinb^{-}(\Sigma_t))
\end{equation*}
is an isomorphism of topological vector spaces.
\end{cor}
\end{comment}
All following side results can be taken over as well, but they are not important here.\\
\\
The inhomogeneous Cauchy problem for the Dirac equation with respect to $\Dirac$ is well posed, if and only if the inhomogeneous Cauchy problems for the Dirac equations with respect to $D_{\pm}$ are well posed. The main result is thus proven by using the implication from right to left and replacing the bundles $\spinb^{\pm}(\Sigma_t)$ to $\spinb(M)\vert_{\Sigma_t}=\spinb(\Sigma_t)^{\oplus 2}$.\\
\\
If the number of spatial dimensions is even, there is nothing more to show, since %by virtue of the absence of a splitting into chiralities 
the whole Dirac operator $\Dirac$ along the hypersurface \cref{dirachypodd} is formally equivalent with $D$ along the hypersurface. %, but without interchanging the chirality. 
This then proves the main result for $n$ odd with $FE^s_\scomp(M,\timef,\Dirac)$, which formally equals $FE^s_\scomp(M,\timef,D)$ and $\spinb^{+}(\Sigma_t)$ replaced by $\spinb(M)\vert_{\Sigma_t}=\spinb(\Sigma_t)$.
\end{proof}
\begin{comment}
In the same way the well-posedness for the homogeneous Cauchy problem follows:
\begin{cor}
For a fixed $t \in \timef(M)$ and $s \in \R$ the map
\begin{equation*}
\mathsf{res}_t  \,\,:\,\, FE^s_\scomp(M,\timef,\kernel{\Dirac}) \,\,\rightarrow\,\, H^s_\comp(\spinb(\Sigma_t)\oplus\spinb(\Sigma_t))
\end{equation*}
is an isomorphism of topological vector spaces.
\end{cor}
\end{comment}

%\cleardoublepage

\addtocontents{toc}{\vspace{-3ex}}
\section{Wave evolution operator}\label{chap:feynman}
%\subsection{Wave-evolution operator \textcolor{red}{necessary?}}
%\justifying
The well-posedness of the homogeneous Cauchy problem in Corollary \ref{homivpwell} motivates the definition of a \textit{Dirac-wave evolution operator}.
\begin{defi}%\label{defevop}
For a globally hyperbolic manifold $M$ and $t_1,t_2 \in \timef(M)$ the (Dirac-) wave evolution operator is an isomorphism of topological vector spaces
\begin{equation*}%\label{defevopform}
Q(t_2,t_1)\,\,:\,\,H^s_\comp(\spinb^{+}(\Sigma_{1}))\,\,\rightarrow\,\,H^s_\comp(\spinb^{+}(\Sigma_{2}))\,\, ,
\end{equation*}
defined as $Q(t_2,t_2)=\rest{t_2}\circ(\rest{t_1})^{-1}$.
\begin{comment}
which closes the commutative diagram

\begin{figure}[htbp]
\centering
\includestandalone[width=0.5\textwidth]{Chap5/propdiagram}
\end{figure}

i.e. $Q(t_2,t_2)=\rest{t_2}\circ(\rest{t_1})^{-1}$.
\end{comment}
\end{defi} 

The same operator occurs in \cite{BaerStroh} for compact hypersurfaces and in \cite{BaerStroh2} for square-integrable sections on the non-compact hypersurfaces. The wave evolution operator in our setting acts between compactly supported Sobolev sections of any degree over non-compact, but complete hypersurfaces. The same properties of $Q$ as in \cite{BaerStroh} are given as well. Only the unitarity property is more involved, since the Cauchy boundaries in our setting aren't compact:
\begin{lem}\label{propsofprop}
For any $s \in \R$ and $t,t_1,t_2,t_3 \in \timef(M)$ the following hold
\begin{itemize}
\item[(a)] $Q(t_3,t_2)\circ Q(t_2,t_1)=Q(t_3,t_1)$,
\item[(b)] $Q(t,t)=\id{H^s_\comp(\spinb^{+}(M)\vert_{\Sigma_t})}$ and $Q(t_1,t_2)=Q^{-1}(t_2,t_1)$,
\item[(c)] $Q(t_2,t_1)$ is unitary for $s=0$, if $M$ is temporal compact.
\end{itemize}
\end{lem}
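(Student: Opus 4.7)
Parts (a) and (b) are algebraic identities that follow immediately from the definition $Q(t_2,t_1)=\rest{t_2}\circ(\rest{t_1})^{-1}$, using that each restriction $\rest{t}:FE^s_\scomp(M,\kernel{D})\to H^s_\comp(\spinb^{+}(\Sigma_t))$ is an isomorphism (Corollary \ref{homivpwell}). For (a), the inner two restrictions cancel. For (b), observing $\rest{t}\circ(\rest{t})^{-1}=\id$ on $H^s_\comp(\spinb^{+}(\Sigma_t))$ gives the first identity, and a direct composition check gives $Q(t_1,t_2)\circ Q(t_2,t_1)=\id$.

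The nontrivial statement is (c). The plan is first to prove it for smooth, compactly supported kernel sections and then extend by density. Let $\phi,\psi\in C^\infty_\comp(\spinb^{+}(\Sigma_1))$ and let $u,v\in C^\infty_\scomp(\spinb^{+}(M))\cap\kernel{D}$ be the unique solutions with $u\vert_{\Sigma_1}=\phi$, $v\vert_{\Sigma_1}=\psi$, so that $Q(t_2,t_1)\phi=u\vert_{\Sigma_2}$ and likewise for $\psi$. Since $u,v$ have positive chirality and lie in $\kernel{D}$, the block form \cref{directsumrep} shows $\Dirac u=0=\Dirac v$. Because $M$ is temporal compact, $\bound M=\Sigma_1\sqcup\Sigma_2$ and the supports of $u,v$ meet both boundary hypersurfaces. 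Proposition \ref{diracselfadprop} thus yields
\begin{equation*}
0=\int_M \idscal{1}{\spinb(M)}{\Dirac u}{v}+\idscal{1}{\spinb(M)}{u}{\Dirac v}\,\dvol{}
=\int_{\Sigma_2}\idscal{1}{\spinb(M)}{u}{\upbeta v}\,\dvol{\Sigma_2}-\int_{\Sigma_1}\idscal{1}{\spinb(M)}{u}{\upbeta v}\,\dvol{\Sigma_1}.
\end{equation*}
Using the formal self-adjointness of $\upbeta$ from \cref{cliffmult2} and the definition of the positive definite hypersurface inner product \cref{restspinbundmet}, the integrand on each boundary equals $\dscal{1}{\spinb(\Sigma_i)}{u}{v}$, so the identity above reads
\begin{equation*}
\dscal{1}{L^2(\spinb(\Sigma_2))}{Q(t_2,t_1)\phi}{Q(t_2,t_1)\psi}=\dscal{1}{L^2(\spinb(\Sigma_1))}{\phi}{\psi}.
\end{equation*}

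To conclude, I would extend this polarized isometry identity to all $\phi,\psi\in H^0_\comp(\spinb^{+}(\Sigma_1))=L^2_\comp(\spinb^{+}(\Sigma_1))$ by a density and continuity argument. Density of $C^\infty_\comp(\spinb^{+}(\Sigma_1))$ in $L^2_\comp(\spinb^{+}(\Sigma_1))$ and density of $C^\infty_\scomp\cap\kernel{D}$ in $FE^0_\scomp(M,\kernel{D})$ (Corollary \ref{corivp1}(b)), together with continuity of the restriction map $\rest{t}$ (proved in Theorem \ref{inivpwell}), show that approximations of $\phi,\psi$ lift to approximations of the corresponding finite $0$-energy solutions, whose restrictions to $\Sigma_2$ converge in $L^2$. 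Passing to the limit in both sides of the identity yields unitarity on all of $L^2_\comp$. The main technical point to verify carefully is that the supports of the approximating smooth kernel solutions remain uniformly controlled in $\Jlight{}(K)$ for some compact $K\subset\Sigma_1$, so that the boundary integrals on $\Sigma_1,\Sigma_2$ are genuine $L^2$-pairings with compactly supported sections; this follows from the finite propagation speed corollary established just after Corollary \ref{corivp1}.
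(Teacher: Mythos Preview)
Your proposal is correct and follows essentially the same route as the paper: parts (a) and (b) are dismissed as formal, and (c) is proved by applying the boundary identity of Proposition~\ref{diracselfadprop} to sufficiently regular kernel solutions and then extending by density. The only minor difference is in how regularity is obtained: the paper takes initial data in $H^s_\comp$ with $s>\tfrac{n}{2}+2$ and invokes Lemma~\ref{boots} to get $\psi\in C^1_\comp(\spinb^+(M))$, whereas you start directly with smooth compactly supported initial data and the smooth well-posedness result; both choices give the needed $C^1$-regularity for the divergence computation, and both conclude by density in $L^2_\comp$.
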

\begin{proof} 
(a) and (b) follow by the same reasoning as in \cite{BaerStroh}. For (c) the well-posedness of the homogeneous Cauchy problem implies, that any initial value $u \in H^s_\comp(\spinb^{+}(M)\vert_{\Sigma_t})$ for $t \in \timef(M)$ is uniquely related to a finite energy spinor $\psi \in FE^s_{\scomp}(M,\kernel{D})$, such that $\psi\vert_{\Sigma_t}=u$. Proposition \ref{boots} ensures, that for $s > \frac{n}{2}+2$ fixed one even has $\psi \in C^1_{\scomp}(\spinb^{+}(M))$. Temporal compactness of $M$ even implies $\psi \in C^1_{\comp}(\spinb^{+}(M))$ and Propostion \ref{diracselfadprop} leads to the claim: let $u=\psi\vert_{\Sigma_{t_1}}$ for $t_1,t_2 \in \timef(M)$, such that $\intervallc{t_1}{t_2}{}\subset \timef(M)$, then for $\psi\in C^1_{\comp}(\spinb^{+}(M))\cap\kernel{D})$
\begin{eqnarray*}
0&=& \int_M \idscal{1}{\spinb(M)}{\Dirac \psi}{\psi}+ \idscal{1}{\spinb(M)}{\psi}{\Dirac \psi} \dvol{} = \int_M \idscal{1}{\spinb(M)}{\Dirac \psi}{\psi}+ \idscal{1}{\spinb(M)}{\psi}{\Dirac \psi} \dvol{}\\
&\stackrel{\cref{diracselfad},\cref{restspinbundmet}}{=}& \int_{\Sigma_{2}} \dscal{1}{\spinb(\Sigma_2)}{\psi\vert_{\Sigma_{2}}}{\psi\vert_{\Sigma_{2}}} \dvol{\Sigma_{2}}-\int_{\Sigma_{1}} \dscal{1}{\spinb(\Sigma_{1})}{\psi\vert_{\Sigma_{1}}}{\psi\vert_{\Sigma_{1}}} \dvol{\Sigma_{1}} \\
&=& \int_{\Sigma_{2}} \dscal{1}{\spinb(\Sigma_{2})}{Q(t_2,t_1)u}{Q(t_2,t_1)u} \dvol{\Sigma_{2}}-\int_{\Sigma_{t_1}} \dscal{1}{\spinb(\Sigma_{1})}{u}{u} \dvol{\Sigma_{1}} \\
&=&\norm{Q(t_2,t_1)u}{L^2(\spinb^{+}(\Sigma_{2}))}^2-\norm{u}{L^2(\spinb^{+}(\Sigma_{1}))}^2 \quad,
\end{eqnarray*} 
where w.l.o.g. it is assumed, that the compact support of $\psi$ intersects both boundary hypersurfaces $\Sigma_{1}$ and $\Sigma_{2}$; otherwise one gets a trivial identification $\norm{Q(t_2,t_1)u}{L^2(\spinb^{+}(\Sigma_{2}))}=0=\norm{u}{L^2(\spinb^{+}(\Sigma_{1}))}$. 
\end{proof}

The next step is relating this operator to boundary conditions on the Cauchy hypersurfaces. The collar neighborhood theorem implies, that the manifold is diffeomorphic to a product structure near each Cauchy boundary. The metric near the boundary $\bound M= \Sigma_{1}\sqcup \Sigma_2$ can be deformed in such a way, that it becomes a \textit{ultra-static} metric $\met=-\differ t^{\otimes 2}+\met_{t_j}$ near $\Sigma_j$ for $j\in \SET{1,2}$ and each mean curvature $H_t$ of the spacelike boundary hypersurfaces is vanishing identically. Both Dirac operators are then given by
\begin{equation}\label{diracprodneigh}
D_{\pm,j}:=\upbeta \left(\partial_t \mp\Imag A_j \right)
\end{equation}
along $\Sigma_j$ with past directed timelike vector $\upnu=-\partial_t$, $\upbeta=\cliff{\upnu}$ and $A_j=A_{t_j}$. Since $\Sigma_1$ and $\Sigma_2$ are Cauchy boundaries and the neigborhoods of each hypersurface can be considered as globally hyperbolic manifold with an ultra-static metric, both slices as well as all other in the neighborhood are complete. A proof of this equivalence is given in \cite{Kay}. Theorem 1.17 in \cite{GroLaw} guarantees, that $A_{1}$ and $A_{2}$ are essentially self-adjoint. According to \cite{Baerspec2} and \cite{Baerspec} the spectrum of these unique self-adjoint extensions, still denoted as $A_{1}$ and $A_{2}$, decomposes disjointly into a point and continuous spectrum. The eigenvalues with their multiplicities in the point spectrum are real and their eigenspaces are orthogonal to each other respectively, but these spaces have in general infinitely many dimensions (multiplicites) and the point spectrum does not need to be discrete as for example on closed manifolds. The continuous spectrum is real valued as well and their eigensections are smooth, but not square-integrable. \\
\\
For \cref{diracprodneigh} boundary conditions are introduced in the same way Atiyah, Patodi and Singer did in \cite{APSI}: define as spectral projections $P_I(t_j):=\chi_I(A_j)$, where $I$ is a measurable intervall in $\R$, representing a part of the real spectrum of the self-adjoint extented boundary Dirac operator $A_{j}$. In order to see, that these operators are indeed projectors (denote for $A$ either $A_1$ or $A_2$) use
\begin{theo}[Browder-Garding; Theorem 2.2.1 in \cite{ramachandran} and Theorem 1 in \cite{browder}]
If $A\in \Diff{m}{}(\Sigma,E)$ is essentially self-adjoint, then there exist a sequence $(f_j)_j$ of measurable sections $f_j\,:\,\R\times \Sigma\,\rightarrow\,E$ with $f_j(\lambda,\bullet)\in C^\infty(\Sigma,E)$ and a corresponding sequence of measures $(\mu_j)_j$ in $\R$, such that
\begin{enumerate}
\item $Af_j(\lambda,p)=\lambda f_j(\lambda,p)$;
\item the maps
\begin{eqnarray*}
\mathscr{V}_j &:& C^\infty_\comp(\Sigma,E)\quad \rightarrow \quad \C \\
& & u \quad \mapsto \quad (\mathscr{V}_j u)(\lambda):=\dscal{1}{L^2(\Sigma,E)}{u}{f_j(\lambda,\bullet)}
\end{eqnarray*}
can be extended to an isometry $\mathscr{V}\,:\,L^2(\Sigma,E)\,\rightarrow\,\bigoplus_{j}L^2(\R,\mu_j)$,
\item $F(A)(\mathscr{V}u)(\lambda)=F(\lambda)(\mathscr{V}u)(\lambda)$ for any real valued Borel function $F$ on $\R$,
\item $\dom{}{F(A)}=\SET{u \in L^2(\Sigma,E)\,\Big\vert\,\sum_j \int_\R \absval{F(\lambda)}^2\absval{(\mathscr{V}_j u)(\lambda)}^2 \differ \mu_j(\lambda) < \infty}$,
\item $\norm{\mathscr{V} u}{\bigoplus_j L^2(\R,\mu_j)}^2=\sum_{j}\int_\R \absval{(\mathscr{V}_j u)(\lambda)}^2 \differ \mu_j(\lambda)=\norm{u}{L^2(\Sigma,E)}^2$ and
\item $u(p)=\sum_j \int_\R (\mathscr{V}_j u)(\lambda) f_j(\lambda,p) \differ \mu_j(\lambda)$.
\end{enumerate} 
\end{theo}
The action of the projector is described by the third and sixth property of this theorem, leading to the following characteristics: given two intervalls $I,J \subset \R$, then 
\begin{itemize}
\item \textit{Idempotence}: $(\chi_I^2(A)u)(p)=(\chi_I(A)u)(p)$;
\begin{comment}
$\chi_{I}(z)\chi_{J}(z)\neq 0$ for $I,J \subset \R$ only for $z \in I\cap J$, thus $\chi^2_{I}(z)=\chi_{I\cap I}(z)=\chi_{I}(z)$ and the same for any real eigenvalue of $A$. $P_{I}^2=P_{I}$ follow by using the action in terms of operators on generalized eigenfunctions: for $I \subset \R$
\begin{equation*}
\begin{split}
(\chi_I^2(A)u)(p)&= \sum_j \int_\R \chi_I^2(A) (\mathscr{V}_j u)(\lambda) f_j(\lambda,p) \differ \mu_j(\lambda)=\sum_j \int_\R  \chi_I(\lambda)(\mathscr{V}_j u)(\lambda) f_j(\lambda,p) \differ \mu_j(\lambda)\\
&=(\chi_I(A)u)(p)
\end{split}
\end{equation*}
\end{comment}
\item $\chi_{\R}(A)=\id{}$, $\chi_{\emptyset}(A)=0$ and $\chi_{I^\complement}(A)=\id{}-\chi_{I}(A)$;
\item \textit{Self-adjointness:} This follows from the essential self-adjointness of $A$ and that the characteristic function is real-valued: $(\chi_I^\ast(A)u)(p)=(\chi_I(A)u)(p)$;
\begin{comment}
\begin{equation*}
\begin{split}
(\chi_I^\ast(A)u)(p)&= \sum_j \int_\R \chi_I^\ast(A) (\mathscr{V}_j u)(\lambda) f_j(\lambda,p) \differ \mu_j(\lambda)=\sum_j \int_\R  \chi_I(\lambda)(\mathscr{V}_j u)(\lambda) f_j(\lambda,p) \differ \mu_j(\lambda)\\
&=(\chi_I(A)u)(p)\, .
\end{split}
\end{equation*}
\end{comment}
\item $\chi_{I \cap J}(A)=\chi_{I}(A)\circ\chi_{J}(A)$, $\chi_{I \cup J}(A)=\chi_{I}(A)+\chi_{J}(A)-\chi_{I \cap J}(A)$ %, $\chi_{I \sqcup J}(A)=\chi_{I}(A)+\chi_{J}(A)$
and for $J \subset I$ \\ $\chi_{I \setminus J}(A)=\chi_{I}(A)-\chi_{I}(A)\circ\chi_{J}(A)$. 
% \chi_{I \triangle J}=\vert \chi_{I}-\chi{J}\vert
\end{itemize}
\begin{lem}\label{L2projector}
$\chi_I(A)\in \mathscr{L}(L^2(\Sigma,E))$, i.e. $\chi_I(A)$ is a linear bounded operator.
\end{lem}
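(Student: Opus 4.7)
The plan is to invoke the Browder--Garding theorem directly, exploiting the fact that $\chi_I$ is a bounded real-valued Borel function on $\mathbb{R}$ with $\Vert \chi_I \Vert_\infty \leq 1$. Since the paper already gives the full functional calculus for any real valued Borel function $F$ on $\mathbb{R}$ via properties (3) and (4), all that is left is to run the corresponding estimate for the special choice $F = \chi_I$.

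First, I would show that $\chi_I(A)$ is defined on all of $L^2(\Sigma,E)$, i.e.\ $\dom(\chi_I(A)) = L^2(\Sigma,E)$. By property (4) of the Browder--Garding theorem, $u \in \dom(\chi_I(A))$ iff $\sum_j \int_\R \vert \chi_I(\lambda) \vert^2 \vert (\mathscr{V}_j u)(\lambda) \vert^2 \, d\mu_j(\lambda) < \infty$. Since $\vert \chi_I(\lambda) \vert \leq 1$ pointwise, this integral is bounded above by $\sum_j \int_\R \vert (\mathscr{V}_j u)(\lambda) \vert^2 \, d\mu_j(\lambda) = \Vert u \Vert_{L^2}^2 < \infty$ by property (5), so indeed every $u \in L^2$ lies in the domain.

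Second, the same bound gives the operator norm estimate. Using property (3) we have $\mathscr{V}_j(\chi_I(A) u)(\lambda) = \chi_I(\lambda)(\mathscr{V}_j u)(\lambda)$, and since $\mathscr{V}$ is an isometry by (5),
\begin{equation*}
\Vert \chi_I(A) u \Vert_{L^2(\Sigma,E)}^2 = \sum_j \int_\R \vert \chi_I(\lambda) \vert^2 \vert (\mathscr{V}_j u)(\lambda) \vert^2 \, d\mu_j(\lambda) \leq \Vert u \Vert_{L^2(\Sigma,E)}^2,
\end{equation*}
so $\Vert \chi_I(A) \Vert_{\mathrm{op}} \leq 1$. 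Linearity follows from the construction of the functional calculus itself: $\chi_I(A) u$ is the unique element whose image under the isometry $\mathscr{V}$ equals the pointwise product $\chi_I \cdot \mathscr{V} u$, and both $\mathscr{V}$ and multiplication by the function $\chi_I$ on $\bigoplus_j L^2(\mathbb{R},\mu_j)$ are linear, while the injectivity of $\mathscr{V}$ makes the resulting operator uniquely defined. Hence $\chi_I(A) \in \mathscr{L}(L^2(\Sigma,E))$.

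There is no real obstacle here: the substantial analytic work is encapsulated in the Browder--Garding spectral decomposition, and the lemma is essentially a one-line consequence of the bound $\Vert \chi_I \Vert_\infty \leq 1$. The only point deserving a line of care is confirming that $\chi_I(A)$, initially defined via property (3) on a domain characterized through property (4), actually extends to all of $L^2$ rather than just a dense subset, which is exactly what the estimate above gives.
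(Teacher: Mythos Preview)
Your proof is correct and takes essentially the same approach as the paper: the paper's proof simply states that the lemma is a consequence of the fourth and fifth properties in the Browder--Garding theorem, and your argument spells out exactly that computation (with property (3) invoked explicitly to pass through the isometry $\mathscr{V}$).
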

\begin{proof}
This is a consequence of the fourth and fifth property in the Browder-Garding theorem.
\end{proof}
The \textit{Atiyah-Patodi-Singer (APS) boundary conditions} are then defined as follows: 
\begin{equation}\label{apsbc}
\begin{array}{ccl}
P_{\intervallro{0}{\infty}{}}(t_1)(u\vert_{\Sigma_{1}}) &=& 0 \\
P_{\intervalllo{-\infty}{0}{}}(t_2)(u\vert_{\Sigma_{2}}) &=& 0 \\
\text{for positive chirality} && 
\end{array}
\,\,\quad\text{and}\,\,\quad
\begin{array}{ccl}
P_{\intervalllo{-\infty}{0}{}}(t_1)(u\vert_{\Sigma_{1}}) &=& 0 \\
P_{\intervallro{0}{\infty}{}}(t_2)(u\vert_{\Sigma_{2}}) &=& 0 \\
\text{for negative chirality} && \quad.
\end{array}
\end{equation}
Another set of boundary conditions are the \textit{anti Atiyah-Patodi-Singer (aAPS or anti-APS) boundary conditions}:
\begin{equation}\label{aapsbc}
\begin{array}{ccl}
P_{\intervallo{-\infty}{0}{}}(t_1)(u\vert_{\Sigma_{1}}) &=& 0 \\
P_{\intervallo{0}{\infty}{}}(t_2)(u\vert_{\Sigma_{2}}) &=& 0 \\
\text{for positive chirality} &&
\end{array}
\,\,\quad\text{and}\,\,\quad
\begin{array}{ccl}
P_{\intervallo{0}{\infty}{}}(t_1)(u\vert_{\Sigma_{1}}) &=& 0 \\
P_{\intervallo{-\infty}{0}{}}(t_2)(u\vert_{\Sigma_{2}}) &=& 0 \\
\text{for negative chirality} && \quad.
\end{array}
\end{equation}
Using 
\begin{equation*}
\id{L^2(\spinb^{\pm}(\Sigma_1))}=P_{\intervallro{0}{\infty}{}}(t_1)\oplus P_{\intervallo{-\infty}{0}{}}(t_1)\quad\text{and}\quad \id{L^2(\spinb^{\pm}(\Sigma_2))}=P_{\intervallo{0}{\infty}{}}(t_2)\oplus P_{\intervalllo{-\infty}{0}{}}(t_2) 
\end{equation*}
we obtain $L^2$-orthogonal splittings:
\begin{equation*}%\label{L2split}
\begin{array}{ccl}
L^2(\spinb^{\pm}(\Sigma_1))&=&L^2_{\intervallro{0}{\infty}{}}(\spinb^{\pm}(\Sigma_1))\oplus L^2_{\intervallo{-\infty}{0}{}}(\spinb^{\pm}(\Sigma_1)) \\
L^2(\spinb^{\pm}(\Sigma_2))&=&L^2_{\intervallo{0}{\infty}{}}(\spinb^{\pm}(\Sigma_2))\oplus L^2_{\intervalllo{-\infty}{0}{}}(\spinb^{\pm}(\Sigma_2))
\end{array}
\quad,
\end{equation*}
where $L^2_{I}(\spinb^{\pm}(\Sigma_j)):=P_I(t_j)\left(L^2(\spinb^{\pm}(\Sigma_j))\right)$. For those cases, where Sobolev spaces are Hilbert spaces (e.g. $\Sigma$ compact), a similar splitting can be obtained: 
\begin{equation*}
H^s_{I}(\spinb^{\pm}(\Sigma_j)):=P_I(t_j)\left(H^s_{\bullet}(\spinb^{\pm}(\Sigma_j))\right)\quad.
\end{equation*}
Note, that APS and aAPS boundary conditions are orthogonal to each other.
Applying this splitting on the wave evolution operator allows to rewrite the operator in term of a ($2\times 2$)-matrix:
\begin{equation}\label{Qmatrix}
Q(t_2,t_1)=\left(\begin{matrix}
Q_{++}(t_2,t_1) & Q_{+-}(t_2,t_1) \\
Q_{-+}(t_2,t_1) & Q_{--}(t_2,t_1)
\end{matrix}\right) \quad ,
\end{equation}
where the entries are
\begin{footnotesize}
\begin{equation*}
\begin{array}{cl}
Q_{++}(t_2,t_1) :=& P_{\intervallro{0}{\infty}{}}(t_2)\circ Q(t_2,t_1)\circ P_{\intervallo{0}{\infty}{}}(t_1) \\
Q_{--}(t_2,t_1) :=& P_{\intervalllo{-\infty}{0}{}}(t_2)\circ Q(t_2,t_1)\circ P_{\intervallo{-\infty}{0}{}}(t_1)
\end{array}
\begin{array}{cl}
Q_{+-}(t_2,t_1) :=& P_{\intervallro{0}{\infty}{}}(t_2)\circ Q(t_2,t_1)\circ P_{\intervallo{-\infty}{0}{}}(t_1) \\
Q_{-+}(t_2,t_1) :=& P_{\intervalllo{-\infty}{0}{}}(t_2)\circ Q(t_2,t_1)\circ P_{\intervallo{0}{\infty}{}}(t_1) \,\,
\end{array}
\end{equation*}
\end{footnotesize}
\noindent These matrix entries will be referred on as \textit{spectral entries}. The rest of this subsection is devoted in investigating the operational character and mapping properties of $Q(t_2,t_1)$ and these entries. The unitarity property on $L^2$-sections in Lemma \ref{propsofprop} (c) implies, that the off-diagonal entries are isomorphisms between the kernels of the diagonal entries and their adjoints.
\begin{lem}\label{kernelisoQ}
The operators $Q_{+-}(t_2,t_1)$ and $Q_{-+}(t_2,t_1)$ restrict to isomorphisms 
\begin{equation*}
\begin{array}{ccccc}
Q_{+-}(t_2,t_2)&:&\kernel{Q_{--}(t_2,t_1)}&\rightarrow& \kernel{(Q_{++}(t_2,t_1))^{\ast}} \\
Q_{-+}(t_2,t_2)&:&\kernel{Q_{++}(t_2,t_1)}&\rightarrow& \kernel{(Q_{--}(t_2,t_1))^{\ast}} 
\end{array}
\quad .
\end{equation*}
\end{lem}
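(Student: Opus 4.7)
The plan is to exploit the $L^{2}$-unitarity of $Q(t_{2},t_{1})$ from Lemma \ref{propsofprop} (c), together with the self-adjointness of the spectral projectors $P_{I}(t_{j})$ established in Lemma \ref{L2projector}. Writing $P_{\pm}^{j}$ for the projectors at $t_{j}$ appearing in \cref{Qmatrix}, this yields two workhorse identities: $Q(t_{2},t_{1})^{\ast}=Q(t_{2},t_{1})^{-1}=Q(t_{1},t_{2})$, and for any $u\in\ker Q_{--}(t_{2},t_{1})$ the defining conditions $u\in P_{-}^{1}L^{2}$ and $P_{-}^{2}Qu=0$ force $Qu=P_{+}^{2}Qu=Q_{+-}(t_{2},t_{1})u$.

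First I would verify the inclusion $Q_{+-}(t_{2},t_{1})(\ker Q_{--})\subset\ker (Q_{++})^{\ast}$. Taking the $L^{2}$-adjoint entrywise gives $(Q_{++}(t_{2},t_{1}))^{\ast}=P_{+}^{1}\,Q(t_{1},t_{2})\,P_{+}^{2}$, and applying this to $Q_{+-}(t_{2},t_{1})u=Qu$ produces $P_{+}^{1}\,Q^{-1}(Qu)=P_{+}^{1}u=0$, as required. Injectivity on $\ker Q_{--}$ is immediate, since $Q_{+-}u=0$ together with $Q_{--}u=0$ forces $Qu=0$ and hence $u=0$ by invertibility of $Q$. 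For surjectivity I would construct the inverse explicitly: given $v\in\ker(Q_{++})^{\ast}\subset P_{+}^{2}L^{2}$, set $u:=Q(t_{1},t_{2})v$. Then $(Q_{++})^{\ast}v=P_{+}^{1}\,Q(t_{1},t_{2})\,v=0$ shows $u\in P_{-}^{1}L^{2}$, while $Qu=v\in P_{+}^{2}L^{2}$ shows $P_{-}^{2}Qu=0$, so $u\in\ker Q_{--}$ and by construction $Q_{+-}(t_{2},t_{1})u=P_{+}^{2}Qu=v$. Continuity of the inverse follows from the boundedness of $Q(t_{2},t_{1})^{\pm 1}$ on $L^{2}$.

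The second isomorphism $Q_{-+}(t_{2},t_{1})\colon \ker Q_{++}\to \ker (Q_{--})^{\ast}$ is obtained by the symmetric argument, either by exchanging the roles of $+$ and $-$ throughout or, equivalently, by applying the first statement to $Q(t_{1},t_{2})=Q(t_{2},t_{1})^{-1}$ in place of $Q(t_{2},t_{1})$. I do not foresee any serious obstacle; the only point needing mild care is bookkeeping of the interval endpoints at $0$ in the definitions of $P_{\pm}^{j}$ in \cref{Qmatrix}, which differ between $t_{1}$ and $t_{2}$, but this is harmless for the computation above because it only uses the equivalence $P_{+}^{j}u=0\Leftrightarrow P_{-}^{j}u=u$ restricted to the subspaces that actually occur in the argument.
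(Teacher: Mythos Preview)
Your proposal is correct and follows essentially the same approach as the paper: both proofs are purely algebraic consequences of the $L^{2}$-unitarity of $Q(t_{2},t_{1})$ from Lemma~\ref{propsofprop}(c) together with the orthogonal block splitting via the spectral projectors. The paper (following \cite{BaerStroh}, Lemma~2.5) expands the identities $Q^{\ast}Q=\Iop{}$ and $QQ^{\ast}=\Iop{}$ into eight block equations \cref{Qsys1} and reads off that $(Q_{+-})^{\ast}$ is a two-sided inverse, whereas you work more directly by observing that on $\kernel{Q_{--}}$ one has $Q_{+-}u=Qu$ and construct the inverse as the restriction of $Q^{-1}$; since $Q^{-1}v=(Q_{+-})^{\ast}v$ for $v\in\kernel{(Q_{++})^{\ast}}$, these are the same map and the arguments are interchangeable.
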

\begin{proof}
The result is shown in \cite{BaerStroh} as Lemma 2.5, which can be taken without any further modifications, since the proof is purely algebraic as explained below. The matrix representation \cref{Qmatrix} is done with respect to sections in $L^2(\spinb^{+}(\Sigma_j))$, $j\in \SET{1,2}$, then the above Lemma \ref{propsofprop} can be applied for $(Q(t_2,t_1))^\ast Q(t_2,t_1)=\id{L^2(\spinb^{+}(\Sigma_{1}))}$ and $Q(t_2,t_1)(Q(t_2,t_1))^{\ast}=\id{L^2(\spinb^{+}(\Sigma_{2}))}$. Translating these relations in terms of the matrix entries leads to the system of equations:
\begin{equation}\label{Qsys1}
\begin{array}{ccl}
(Q_{++}(t_2,t_1))^\ast Q_{++}(t_2,t_1)+(Q_{-+}(t_2,t_1))^\ast Q_{-+}(t_2,t_1)&=& \id{}, \\
(Q_{+-}(t_2,t_1))^\ast Q_{+-}(t_2,t_1)+(Q_{--}(t_2,t_1))^\ast Q_{--}(t_2,t_1)&=& \id{}, \\
(Q_{++}(t_2,t_1))^\ast Q_{+-}(t_2,t_1)+(Q_{-+}(t_2,t_1))^\ast Q_{--}(t_2,t_1)&=& 0, \\
(Q_{+-}(t_2,t_1))^\ast Q_{++}(t_2,t_1)+(Q_{--}(t_2,t_1))^\ast Q_{-+}(t_2,t_1)&=& 0 
\end{array}
\end{equation}
and similar equations from the second unitarity property.
\end{proof}

In order to achieve regularity properties, one needs to reinterpret $Q$ as Fourier integral operator: $\FIO{m}$ denotes the set of Fourier integral operators of order $m \in \R$, i.e. the set of integral operators with Schwartz kernels, which are locally oscillatory integrals. A coordinate invariant description of these kernels is realized by conormal distributions of order $m$ with respect to a conic Lagrangian submanifold $\mathsf{C}'$. This is a submanifold in the cotangent bundle of the domain of the Schwartz kernel as symplectic manifold, on which the symplectic form vanishes and the zero section is excluded. $\mathsf{C}$ without the apostrophe denotes the related canonical relation. We refer to the \hyperref[chap:app2]{appendix} for details.
\begin{prop}\label{Qfourier} %\bnote{f15}
For all $s \in \R$ one has $Q\in \FIO{0}(\Sigma_{2};\mathsf{C}'_{1\rightarrow 2};\Hom(\spinb^{+}(\Sigma_1),\spinb^{+}(\Sigma_2)))$ properly supported, with canonical relation 
\begin{equation}\label{Qcanrel}
\begin{split}
\mathsf{C}_{1\rightarrow 2} &=\mathsf{C}_{1\rightarrow 2\vert +}\sqcup \mathsf{C}_{1\rightarrow 2\vert -}\,\,, \quad\text{where}\\
\mathsf{C}_{1\rightarrow 2\vert \pm} &=\SET{((x_{\pm},\xi_{\pm}),(y,\eta))\in \dot{T}^\ast \Sigma_2\times \dot{T}^\ast \Sigma_1\,\vert\, (x_{\pm},\xi_{\pm})\sim (y,\eta)} \quad, 
\end{split}
\end{equation} 
which is a canonical graph with respect to the lightlike (co-) geodesic flow; and principal symbol
\begin{equation}\label{Qprinsym}
\bm{\sigma}_{0}(Q)(x,\xi_{\pm};y,\eta)=\pm\frac{1}{2} (\norm{\eta}{\met_{t_1}}(y))^{-1}\left(\mp\norm{\xi_{\pm}}{\met_{t_2}}(x)\upbeta+\Cliff{t_2}{\xi^{\sharp}_{\pm}}\right)\circ \mathpzc{P}_{(x,\varsigma_{\pm})\leftarrow (y,\zeta_{\pm})} \circ \upbeta
\end{equation}
with $(y,\zeta_{\pm})\in T^\ast_{\Sigma_{1}}M$ and $(x,\varsigma_{\pm})\in T^\ast_{\Sigma_{2}}M$ restricting to $(y,\eta)$ and $(x,\xi_{\pm})$ respectively.
\end{prop}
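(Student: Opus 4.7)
The plan is to derive Proposition \ref{Qfourier} by factoring the Dirac evolution through the wave evolution of the normally hyperbolic operator $\Dirac^2$ and then invoking the standard Duistermaat--Hörmander theory for Cauchy problems of such operators. Recall from Section \ref{chap:dirac} that $\bm{\sigma}_2(\Dirac^2)(x,\xi)=-\met(\xi^\sharp,\xi^\sharp)\id{\spinb(M)}$, so $\Dirac^2$ is of real principal type with characteristic set equal to the null conormal bundle $\mathcal{N}=\SET{(x,\xi)\in \dot{T}^\ast M \,\vert\, \met^{-1}(\xi,\xi)=0}$, and its bicharacteristic flow is the lightlike cogeodesic flow on $\mathcal{N}$. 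On globally hyperbolic manifolds the Cauchy propagator for $\Dirac^2$ between two spacelike slices is a properly supported FIO whose canonical relation is generated by this flow; this is classical and was already the backbone of the parametrix construction in \cite{BaerStroh}. The present non-compact setting causes no structural change, because finite propagation speed (established as a consequence of Corollary \ref{corivp1}) combined with the compactness of $\Jlight{}(K)\cap \Sigma_j$ for $K\Subset \Sigma_i$ (Theorem \ref{theo2-2}) ensures proper support of every resulting operator.

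First I would realize $Q(t_2,t_1)$ as a composition of the $\Dirac^2$-propagator with the first-order factors coming from \cref{dirachyppos}. Concretely, one writes the Kirchhoff-type identity
\begin{equation*}
Q(t_2,t_1) = -\upbeta \, R_{2\leftarrow 1} \,\bigl(\Nabla{\spinb(M)}{\upnu}+B_{t_1,+}\bigr)\circ\upbeta
\end{equation*}
on the level of solutions, where $R_{2\leftarrow 1}$ denotes the retarded/advanced fundamental solution of $\Dirac^2$ restricted to the slice pair. By the calculus of FIOs, composing the order-$(-\frac{1}{2})$ wave propagator with the order-$1$ operators on the slices yields an element of $\FIO{0}$, and the resulting canonical relation is precisely the restriction of the null bicharacteristic relation on $\mathcal{N}$ to the endpoint slices, pulled back to $\dot{T}^\ast \Sigma_1\times \dot{T}^\ast \Sigma_2$ via the projection $T^\ast_{\Sigma_j}M \to T^\ast \Sigma_j$. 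The two components $\mathsf{C}_{1\to 2\vert \pm}$ arise from the two sheets of the light cone (future- and past-directed null covectors), and the canonical graph property is equivalent to global hyperbolicity: every spatial covector $\eta\in T^\ast_y\Sigma_1$ extends uniquely to a future/past-directed null covector $\zeta_\pm$, whose cogeodesic meets $\Sigma_2$ in exactly one point $(x,\varsigma_\pm)$, and this establishes the relation $(x,\xi_\pm)\sim (y,\eta)$ in \cref{Qcanrel}.

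The principal symbol \cref{Qprinsym} is then read off the composition. Parallel transport $\mathpzc{P}_{(x,\varsigma_\pm)\leftarrow(y,\zeta_\pm)}$ is the leading-order transport of the spinor wave along the null cogeodesic, which is the standard subsymbol of the $\Dirac^2$-propagator on spinors. The initial $\upbeta$ encodes the conversion of chirality data on $\Sigma_1$ into full spinor data for the wave equation, as dictated by $\spinb(M)\vert_{\Sigma_1}=\spinb^+(\Sigma_1)\oplus\spinb^-(\Sigma_1)$ and the normal Clifford multiplication. The factor $\bigl(\mp\norm{\xi_\pm}{\met_{t_2}}\upbeta+\Cliff{t_2}{\xi^\sharp_\pm}\bigr)$ is precisely $-\upbeta \cdot \bm{\sigma}_1(D)(x,\varsigma_\pm)$ split by chirality along $\Sigma_2$, accounting for the projection back onto $\spinb^+(\Sigma_2)$ and the differentiation across the slice. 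The scalar $\pm\tfrac{1}{2}(\norm{\eta}{\met_{t_1}})^{-1}$ comes from the Jacobian of the projection $\mathcal{N}\cap T^\ast_{\Sigma_1}M \to T^\ast\Sigma_1$, together with the $\tfrac{1}{2}$ arising from the decomposition of the identity into the retarded and advanced pieces of the wave propagator.

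The main obstacle is keeping control of proper support and the half-density/symbol calculus on non-compact slices, since the standard FIO statements are usually phrased on compact manifolds. For this I would argue slice-by-slice: for each $K\Subset \Sigma_1$ the support of $Q(t_2,t_1)u$ for $u\in C^\infty_K$ lies in the compact set $\Jlight{}(K)\cap\Sigma_2$ by finite propagation speed, and on each such compact piece all Sobolev-order, canonical-relation and principal-symbol computations are local and reduce to the compact Bär--Strohmaier setting. Taking inductive limits over compact exhaustions of $\Sigma_1$ and $\Sigma_2$ then upgrades the local FIO structure to the global statement and confirms that $Q$ is properly supported of order $0$ with the canonical relation and principal symbol as claimed.
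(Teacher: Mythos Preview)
Your overall strategy---factoring $Q$ through the Cauchy problem for the normally hyperbolic operator $\Dirac^2$ and invoking Duistermaat--Hörmander theory---is exactly the paper's. But your concrete factorization is not well-posed as written. In the expression $-\upbeta\, R_{2\leftarrow 1}\,(\Nabla{\spinb(M)}{\upnu}+B_{t_1,+})\circ\upbeta$ you apply a normal derivative $\Nabla{\spinb(M)}{\upnu}$ to a section $\upbeta u_0$ that is only defined on $\Sigma_1$, which is meaningless without first extending to a solution; and ``retarded/advanced fundamental solution restricted to the slice pair'' is ambiguous (fundamental solutions invert $\Dirac^2$ on $M$, they are not Cauchy evolution operators), while the asserted order $-\tfrac{1}{2}$ does not match any of the conventions in play here.

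The paper pins this down with the explicit formula
\[
Q(t_2,t_1)=\rest{\Sigma_2}\circ D_{-}\circ \mathcal{G}(t_1)\circ\upbeta,
\]
where $\mathcal{G}(t_1)=\mathcal{G}_1(t_1)\in\FIO{-5/4}(\Sigma_1,M;\mathcal{C}';\Hom(\spinb^{-}(\Sigma_1),\spinb^{-}(M)))$ is the Cauchy solution operator of Theorem~\ref{cauchynormhyphom} for $D_{+}D_{-}$ with vanishing Dirichlet datum. The trick is to set $v\vert_{\Sigma_1}=0$, $-\Nabla{\spinb(M)}{\upnu}v\vert_{\Sigma_1}=\upbeta u_0$ and then take $u=D_{-}v$, which recovers $u\vert_{\Sigma_1}=u_0$ and $D_{+}u=0$. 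The order count is then transparent: $\tfrac{1}{4}+1-\tfrac{5}{4}+0=0$.

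On proper support your instinct is right but the logic is inverted. You argue that $Q$ itself is properly supported by finite propagation speed and then localize. The paper instead checks \emph{each factor} separately---$\rest{\Sigma_2}$ and $D_{-}$ trivially, and $\mathcal{G}(t_1)$ via the support constraint $\supp{\mathcal{G}}\subset\{(p,x):x\in\Jlight{-}(p)\cap\Sigma_1\}$ together with the compactness of $\Jlight{-}(K)\cap\Sigma_1$---because that is precisely what is needed to invoke the FIO composition calculus (Lemma~\ref{fioprop}\,(b)) on non-compact manifolds in the first place. Once the composition is legitimate, the canonical relation and principal symbol are taken verbatim from \cite{BaerStroh}.
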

Here $\mathpzc{P}_{(x,\varsigma_{\pm})\leftarrow (y,\zeta_{\pm})}$ denotes parallel transport from $(y,\zeta^\sharp_{\pm})$ to $(x,\varsigma^\sharp_{\pm})$ and $\norm{\bullet}{\met_{t_j}}$ the norm of a covector with respect to the dual of the metric $\met_{t_j}$ on the hypersurface $\Sigma_j$. $T^\ast_{\Sigma_{j}}M$ is the restriction of $T^\ast M$ to $\Sigma_j$.
\begin{proof} 
The main features can be taken from Lemma 2.6 in \cite{BaerStroh}: in order to describe $Q$ as FIO one uses the fact, that $\Dirac^2$ and thus $D_{\pm}D_{\mp}$ are normally hyperbolic and Theorem \ref{cauchynormhyphom} in our appendix implies the existence of a solution operator with canonical relation $\mathcal{C}$ in \cref{canrelationnormhyp}: $\mathcal{G}(t_1)\in \FIO{-5/4}(M;\mathcal{C}';\Hom(\spinb^{-}(\Sigma_1),\spinb^{-}(M)))$. As shown in \cite{BaerStroh} the formal expression of $Q$ is
\begin{equation}\label{QasFIO}
Q(t_2,t_1)=\rest{\Sigma_2}\circ D_{-}\circ \mathcal{G}(t_1)\circ \upbeta \quad.
\end{equation}

In order to show, that \cref{QasFIO} is well defined on non-compact manifolds, the compositions of the two canonical relations $\mathsf{C}(\inclus^\ast)$ in \cref{relrest} \& $\mathcal{C}$ and of the operators are well defined. The latter one is the additional feature, we need to check on non-compact manifolds:
\begin{itemize}
\item[(1)] $\rest{\Sigma_2}$ is properly supported;
\item[(2)] $\mathcal{G}(t_1)$ is properly supported;
\item[(3)] $D_{-}\circ \mathcal{G}(t_1) \in \FIO{-1/4}(M;\mathcal{C}';\Hom(\spinb^{-}(\Sigma_1),\spinb^{+}(M)))$ and properly supported;
\item[(4)] $\mathsf{C}(\inclus^\ast)\circ \mathcal{C}$ transversal and proper.
\end{itemize}
(1) follows from the definition of the restriction and the corestriction operator \cref{restop} and \cref{corestop} in the appendix, where the claimed property is stated in Lemma \ref{restcorestfio}. \\
\\ 
(2) Theorem \ref{cauchynormhyphom} states, that $\mathcal{G}(t_1)$ is a continuous map between smooth sections. Pairing this with a compactly supported distribution shows, that the action of the adjoint $\mathcal{G}(t_1)^\dagger$ on a compactly supported distribution yields again an element of the dual space, hence is itself a compactly supported distribution, satisfying \cref{propsuppb}. Property \cref{propsuppa} can be shown directly: let $\pi_1:\,M\times\Sigma_1\rightarrow M$ be the projection on the first factor:
\begin{comment}
\begin{figure}[htbp]
\centering
\includestandalone[width=0.4\textwidth]{Chap5/projects}
\end{figure} 

\noindent 
\end{comment}
According to Theorem \ref{cauchynormhyphom} the support of (the Schwartz kernel of) $\mathcal{G}$ is contained in 
\begin{equation*}
S:=\SET{(p,x)\in M \times \Sigma_1\,\vert\, x \in \Jlight{-}(p)\cap \Sigma_1} \quad,
\end{equation*}
i.e. for every point $p$ in $M$ only those points on the initial hypersurface $\Sigma_1$ contribute, which are lying inside its causal past $\Jlight{-}(p)=\Jlight{-}(\SET{p})$. Since $\Jlight{-}(\SET{p})$ is itself spatially compact %($\SET{p}$ is compact and $\Jlight{-}(p)\subset \Jlight{}(p)$)%
and $\Sigma_1$ a Cauchy hypersurface, $\Jlight{-}(\SET{p})\cap \Sigma_1$ is compact and hence only points from this subset  contribute to the solution at the point $p$. Take a compact subset $K$ in $M$; its preimage under the first projection is $(\pi_1)^{-1}(K)=K \times \Sigma_1$. The intersection $S\cap (\pi_1)^{-1}(K_1)$ contains only points in $K \times (\Jlight{-}(K) \cap \Sigma_1)$. As $\Jlight{-}(K) \subset \Jlight{}(K)$ is again spatially compact by the same argument, it implies $\Jlight{-}(K)\cap\Sigma_1$ to be compact. Hence the intersection with $S$ is compact and thus $\pi_1$ restricted to the support is a compact subset. \\  %, i.e. there exists a compact set in $M$, which contains the support of $\mathcal{G}(t_1)u$ for a compactly supported section $u$.\\ 
\\
(3) Since $D_{-} \in \Diff{1}{}(M,\Hom(\spinb^{-}(M),\spinb^{+}(M)))$ is properly supported, one can take the composition with $\mathcal{G}(t_1)\in \FIO{-5/4}(\Sigma,M;\mathcal{C}';\Hom(\spinb^{-}(\Sigma_1),\spinb^{-}(M)))$: since differential operators on $M$ can be interpreted as FIO from $M$ to $M$ with the conormal bundle of the diagonal/the graph of the identity with respect to $M$ (see (d) in Lemma \ref{fioprop}), the composition $N^\ast \mathrm{diag}(M)\circ \mathcal{C}$ is proper and transversal and results in $\mathcal{C}$. Since both operators are properly supported part (c) from Lemma \ref{fioprop} then implies, that $(D_{-}\circ \mathcal{G}(t_1))\in \FIO{-1/4}(\Sigma,M;\mathcal{C}';\Hom(\spinb^{-}(\Sigma_1),\spinb^{+}(M)))$ . The composition of this two properly supported operators is again properly supported, since differential operators are local operators and do not increase the support.\\
\\
(4) The construction of the solution operators as FIO has been done in such a way, that the canonical relation $\mathsf{C}(\inclus^\ast)\circ\mathcal{C}$ as composition is transversal and proper. We refer to our appendix and to chapter 5 of \cite{duistfio}. The Dirac operator $D_{-}$ does not affect the argument, since its canonical relation corresponds to the conormal bundle of the diagonal, as explained in (3).\\
\\
Lemma \ref{fioprop} (b) implies, that the composition \cref{QasFIO} is well defined and thus $Q$ a FIO of order $0$. It is itself properly supported as composition of properly supported Fourier integral operators. %\bnote{f20}.
The intepretation of the canonical relation as canonical graph and the calculation of the principal symbol can be taken from \cite{BaerStroh} with our convention \cref{prinsymbdiff}.
\end{proof} 
\begin{rem}
The properly supportness of the Dirac-wave evolution operator implies, that it can be extended as map from local Sobolev sections to local Sobolev sections. As a topological isomorphism, its adjoint does as well.
\end{rem}

The same procedure can be applied with a Dirac-wave evolution operator $\tilde{Q}$, coming from the well-posedness of the homogeneous Cauchy problem in Theorem \ref{cauchyhomdneg}:
\begin{defi}\label{defevopneg}
For a globally hyperbolic manifold $M$ and $t_1,t_2 \in \timef(M)$ we define as (Dirac-) wave evolution operator (for negative chirality)
\begin{equation*}%\label{defevopformneg}
\tilde{Q}(t_2,t_1)\,\,:\,\,H^s_\comp(\spinb^{-}(\Sigma_{1}))\,\,\rightarrow\,\,H^s_\comp(\spinb^{-}(\Sigma_{2}))\,\,,
\end{equation*}
defined as $\tilde{Q}(t_2,t_2)=\rest{t_2}\circ(\rest{t_1})^{-1}$.
\begin{comment}
which closes the commutative diagram

\begin{figure}[htbp]
\centering
\includestandalone[width=0.5\textwidth]{Chap5/propdiagramneg}
\end{figure}

i.e. $\tilde{Q}(t_2,t_2)=\rest{t_2}\circ(\rest{t_1})^{-1}$.
\end{comment}
\end{defi} 

\noindent The spectral entries $\tilde{Q}_{\pm\pm}(t_2,t_1)$ and $\tilde{Q}_{\pm\mp}(t_2,t_1)$ for $\tilde{Q}(t_2,t_1)$ are defined as in \cref{Qmatrix} and forthcoming. All properties are collected in the following result. The proofs and calculations just differs in notation.  
\begin{prop}
\noindent\label{colpropqneg1}
\begin{itemize}
\item[(a)] $\tilde{Q}(t_3,t_2)\circ \tilde{Q}(t_2,t_1)=\tilde{Q}(t_3,t_1)$,
\item[(b)] $\tilde{Q}(t,t)=\id{H^s_\comp(\spinb^{-}(M)\vert_{\Sigma_t})}$ and $\tilde{Q}(t_1,t_2)=\tilde{Q}^{-1}(t_2,t_1)$,
\item[(c)] $\tilde{Q}(t_2,t_1)$ is unitary for $s=0$, if $M$ is temporal compact.
\item[(d)] The operators $\tilde{Q}_{+-}(t_2,t_1)$ and $\tilde{Q}_{-+}(t_2,t_1)$ restrict to isomorphisms 
\begin{equation*}
\begin{array}{ccccc}
\tilde{Q}_{+-}(t_2,t_2)&:&\kernel{\tilde{Q}_{--}(t_2,t_1)}&\rightarrow& \kernel{(\tilde{Q}_{++}(t_2,t_1))^{\ast}} \\
\tilde{Q}_{-+}(t_2,t_2)&:&\kernel{\tilde{Q}_{++}(t_2,t_1)}&\rightarrow& \kernel{(\tilde{Q}_{--}(t_2,t_1))^{\ast}} 
\end{array}
\end{equation*}
\end{itemize}
\end{prop}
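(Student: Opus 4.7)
The plan is to mirror the proof strategy from Lemma \ref{propsofprop} and Lemma \ref{kernelisoQ} verbatim, using Theorem \ref{cauchyinvdneghom} in place of Corollary \ref{homivpwell} and the chirality-swapped auxiliary statements (Proposition \ref{enestdneg} and the counterpart of Lemma \ref{boots}) developed earlier for $D_{-}$. Parts (a) and (b) are immediate from the definition $\tilde{Q}(t_2,t_1)=\rest{t_2}\circ(\rest{t_1})^{-1}$: Theorem \ref{cauchyinvdneghom} identifies $\rest{t}$ as an isomorphism of topological vector spaces between $FE^s_\scomp(M,\timef,\kernel{D_{-}})$ and $H^s_\comp(\spinb^{-}(\Sigma_t))$, so composing these identifications over three times $t_1,t_2,t_3$ gives the cocycle identity (a), and setting $t_1=t_2=t$ yields the identity and invertibility in (b).

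For (c), I would first prove the analog of Lemma \ref{boots}: if $u\in FE^s_\scomp(M,\timef,\kernel{D_{-}})$ with $s>\frac{n}{2}+2$, then $u\in C^1_\scomp(\spinb^{-}(M))$. The argument is identical, reading off $\Nabla{\spinb(M)}{\partial_t}u\vert_{\Sigma_t}=N\bigl(-\Imag A_t-\frac{n}{2}H_t\bigr)u\vert_{\Sigma_t}$ from \cref{dirachyppos} with the minus-sign convention, which is a tangential first-order operator, followed by Sobolev embedding. Temporal compactness of $M$ then upgrades this to $C^1_\comp$. Given initial data $u_0\in H^s_\comp(\spinb^{-}(\Sigma_1))$ (first for $s$ large, then by density and continuity of $\tilde{Q}$ for general $s=0$), let $\psi$ be the unique finite $s$-energy kernel solution. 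Applying Proposition \ref{diracselfadprop} to $\psi$ as a section of $\spinb(M)$ with $\Dirac\psi=0$, and using $\dscal{1}{\spinb(\Sigma_t)}{\cdot}{\cdot}=\idscal{1}{\spinb(M)}{\upbeta\cdot}{\cdot}$ from \cref{restspinbundmet}, reduces \cref{diracselfad} to $\Vert\tilde{Q}(t_2,t_1)u_0\Vert_{L^2(\spinb^{-}(\Sigma_2))}^2=\Vert u_0\Vert_{L^2(\spinb^{-}(\Sigma_1))}^2$. Density of high-regularity data in $H^0_\comp=L^2_\comp$ and continuity of $\tilde{Q}(t_2,t_1)$ extends the equality to all $L^2$-initial data, giving the claimed unitarity.

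Finally, (d) is purely algebraic and copies the proof of Lemma \ref{kernelisoQ}: writing $\tilde{Q}$ in matrix form \cref{Qmatrix} relative to the spectral splittings of $L^2(\spinb^{-}(\Sigma_j))$ by the self-adjoint boundary Dirac operators $A_j$, the unitarity relations $\tilde{Q}^\ast\tilde{Q}=\id{}$ and $\tilde{Q}\tilde{Q}^\ast=\id{}$ translate into the system \cref{Qsys1} with each $Q_{\pm\pm}$ replaced by $\tilde{Q}_{\pm\pm}$; the off-diagonal entries $\tilde{Q}_{+-}$ and $\tilde{Q}_{-+}$ carry $\kernel{\tilde{Q}_{\mp\mp}}$ into $\kernel{(\tilde{Q}_{\pm\pm})^\ast}$ and the remaining diagonal equations yield $(\tilde{Q}_{+-})^\ast\tilde{Q}_{+-}=\id{}$ and $\tilde{Q}_{+-}(\tilde{Q}_{+-})^\ast=\id{}$ on these kernels (and analogously for $\tilde{Q}_{-+}$), giving the isomorphisms. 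The main obstacle I anticipate is part (c): one must verify that the bootstrap and the boundary integral in Proposition \ref{diracselfadprop} behave correctly under the opposite chirality and the opposite sign in $B_{t,-}$, so that the signs in the boundary term match the sign convention in \cref{restspinbundmet} and the $L^2$-inner product on $\spinb^{-}(\Sigma_j)$ — everything else is formally identical to the positive-chirality case.
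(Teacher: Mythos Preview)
Your proposal is correct and follows essentially the same approach as the paper, which explicitly states that the proofs ``just differ in notation'' from those of Lemma~\ref{propsofprop} and Lemma~\ref{kernelisoQ}. Your added density argument in part (c) to pass from high-regularity data to all of $L^2$ makes explicit a step the paper leaves implicit, and your anticipated sign worry is harmless since Proposition~\ref{diracselfadprop} and the relation \cref{restspinbundmet} are formulated for the full bundle $\spinb(M)$ and do not depend on chirality.
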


From the proof of the last assertion, one gets the important formulas
\begin{equation}\label{Qnegsys1}
\begin{array}{ccl}
(\tilde{Q}_{++}(t_2,t_1))^\ast \tilde{Q}_{++}(t_2,t_1)+(\tilde{Q}_{-+}(t_2,t_1))^\ast \tilde{Q}_{-+}(t_2,t_1)&=& \id{} \\
\text{and} && \\
(\tilde{Q}_{+-}(t_2,t_1))^\ast \tilde{Q}_{+-}(t_2,t_1)+(\tilde{Q}_{--}(t_2,t_1))^\ast \tilde{Q}_{--}(t_2,t_1)&=& \id{}\,\, . \\
\end{array}
\end{equation}
\newpage
\noindent A representation as Fourier integral opertor is possible and substantiated with the same proof:
\begin{prop}\label{Qnegfourier} 
For all $s \in \R$ one has $\tilde{Q}\in \FIO{0}(\Sigma_{2};\mathsf{C}'_{1\rightarrow 2};\Hom(\spinb^{-}(\Sigma_1),\spinb^{-}(\Sigma_2)))$ properly supported, with canonical relation \cref{Qcanrel} as in Proposition \ref{Qfourier} and principal symbol
\begin{equation*}%\label{Qprinsymother}
\bm{\sigma}_{0}(\tilde{Q})(x,\xi_{\pm};y,\eta)=\pm\frac{1}{2} (\norm{\eta}{\met_{t_1}}(y))^{-1}\left(\mp\norm{\xi_{\pm}}{\met_{t_2}}(x)\upbeta-\Cliff{t_2}{\xi^{\sharp}_{\pm}}\right)\circ \mathpzc{P}_{(x,\varsigma_{\pm})\leftarrow (y,\zeta_{\pm})} \circ \upbeta
\end{equation*}
with $(y,\zeta_{\pm})\in T^\ast_{\Sigma_{1}}M$ and $(x,\varsigma_{\pm})\in T^\ast_{\Sigma_{2}}M$ restricting to $(y,\eta)$ and $(x,\xi_{\pm})$ respectively.
\end{prop}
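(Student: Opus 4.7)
The plan is to follow the identical Fourier-integral-operator strategy of Proposition \ref{Qfourier}, exchanging the roles of the two chirality operators. Concretely, since $D_{-}D_{+}$ is normally hyperbolic with the same principal symbol as $\Dirac^2$, Theorem \ref{cauchynormhyphom} supplies a solution operator
$$\tilde{\mathcal{G}}(t_1)\in\FIO{-5/4}\bigl(\Sigma,M;\mathcal{C}';\Hom(\spinb^{+}(\Sigma_1),\spinb^{+}(M))\bigr)$$
for the Cauchy problem of $D_{-}D_{+}$ on positive-chirality spinors, with the same canonical relation $\mathcal{C}$ as in the original proof. Imitating the derivation of \cref{QasFIO} one checks that
$$\tilde{Q}(t_2,t_1)=\rest{\Sigma_2}\circ D_{+}\circ \tilde{\mathcal{G}}(t_1)\circ\upbeta$$
agrees with the Dirac-wave evolution operator of Definition \ref{defevopneg}: the factor $\upbeta$ sends a negative-chirality initial datum on $\Sigma_1$ to a positive-chirality one, $\tilde{\mathcal{G}}(t_1)$ lifts it to the solution of $D_{-}D_{+}v=0$ on $M$ with vanishing trace on $\Sigma_1$ and prescribed normal-derivative trace, $D_{+}$ restores negative chirality, and restriction to $\Sigma_2$ produces the evaluation; uniqueness in Corollary \ref{corenest2} (applied to $D_{-}$) identifies this section with $\tilde{Q}(t_2,t_1)$ applied to the initial datum.

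The four technical points (1)--(4) from the proof of Proposition \ref{Qfourier} now transfer verbatim: $\rest{\Sigma_2}$ is properly supported by its very definition (\cref{restop}); $\tilde{\mathcal{G}}(t_1)$ is properly supported because finite propagation speed for $D_{-}D_{+}$ coincides with that of $\Dirac^2$, so the Schwartz-kernel support is again contained in the set where $x\in \Jlight{-}(p)\cap\Sigma_1$, and the projection arguments of part (2) in the original proof go through unchanged; the composition with the properly supported differential operator $D_{+}\in\Diff{1}{}(M,\Hom(\spinb^{+}(M),\spinb^{-}(M)))$ produces a properly supported element of $\FIO{-1/4}$ via Lemma \ref{fioprop}(c); finally $\mathsf{C}(\inclus^\ast)\circ\mathcal{C}$ is transversal and proper for the same reasons as before, since only the principal symbol of the normally hyperbolic operator enters. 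Lemma \ref{fioprop}(b) then places $\tilde{Q}$ in $\FIO{0}(\Sigma_2;\mathsf{C}'_{1\to 2};\Hom(\spinb^{-}(\Sigma_1),\spinb^{-}(\Sigma_2)))$ with canonical relation exactly \cref{Qcanrel}, and proper supportness is preserved by composition.

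The principal-symbol computation is the only spot where a genuine sign change occurs. Because $\tilde{\mathcal{G}}(t_1)$ solves the same normally hyperbolic Cauchy problem as $\mathcal{G}(t_1)$, only on a different spinor subbundle, its restricted principal symbol on $\Sigma_1$ is again
$$\bm{\sigma}(\tilde{\mathcal{G}}(t_1))(x,\varsigma_{\pm};y,\eta)=\tfrac{1}{2}\,\zeta_\pm(\upnu)^{-1}\,\mathpzc{P}_{(x,\varsigma_{\pm})\leftarrow(y,\zeta_{\pm})},$$
with the same lightlike normalisation $\zeta_\pm(\upnu)=\mp\|\eta\|_{\met_{t_1}}(y)$ and $\varsigma_\pm(\upnu)=\mp\|\xi_\pm\|_{\met_{t_2}}(x)$. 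The sole modification compared with Proposition \ref{Qfourier} is the replacement of $D_{-}$ by $D_{+}$, which on the hypersurface flips the Clifford term:
$$\bm{\sigma}_1(D_{+})\vert_{\Sigma_2}(x,\xi_\pm)=-\varsigma_\pm(\upnu)\,\upbeta+\Cliff{t_2}{\xi^\sharp_\pm}.$$
Multiplying
$$\bm{\sigma}_0(\tilde{Q})=\bm{\sigma}_1(D_{+})\vert_{\Sigma_2}\cdot\bm{\sigma}(\tilde{\mathcal{G}}(t_1))\vert_{\Sigma_2}\circ\upbeta$$
and using the normalisations above produces the claimed formula verbatim, with the single sign $-\Cliff{t_2}{\xi^\sharp_\pm}$ in place of $+\Cliff{t_2}{\xi^\sharp_\pm}$.

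No essential obstacle is expected: every ingredient of the proof for $Q$ transfers mechanically once the underlying normally hyperbolic wave operator is taken to be $D_{-}D_{+}$ on positive-chirality spinors (rather than $D_{+}D_{-}$ on negative-chirality spinors), and the only new bookkeeping is the flipped sign of the tangential Clifford contribution to the principal symbol of $D_{+}$ compared with $D_{-}$.
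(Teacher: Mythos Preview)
Your proposal is correct and follows exactly the approach the paper intends: the paper simply states that the proof is ``substantiated with the same proof'' as Proposition \ref{Qfourier}, and you have written out precisely that argument with the chiralities exchanged, using $D_{-}D_{+}$ as the normally hyperbolic operator and $\tilde{Q}=\rest{\Sigma_2}\circ D_{+}\circ\tilde{\mathcal{G}}(t_1)\circ\upbeta$. Your identification of the single sign flip in the hypersurface Clifford term of $\bm{\sigma}_1(D_{+})$ versus $\bm{\sigma}_1(D_{-})$ is exactly the mechanism producing the $-\Cliff{t_2}{\xi^\sharp_\pm}$ in the stated principal symbol.
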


%\subsection{Feynman parametrix under APS boundary conditions}
%\justifying
%\input{Chap5/feynmanproppar}

%\cleardoublepage

\addtocontents{toc}{\vspace{-3ex}}
\section{Galois coverings}\label{chap:galois}
From now on the spatial non-compactness of $M$ is specified in terms of equipping hypersurfaces with a special kind of bounded geometry\bnote{f28}: the non-compact Cauchy hypersurface $\Sigma$ is a Galois covering of a closed hypersuface $\widetilde{\Sigma}$ with respect to a Galois group, which we will denote by $\Upgamma$. This holds true for all other slices in the foliation, too. After introducing the differential geometry of coverings, some functional analytic notations and concepts with respect to this setting are fixed in order to introduce the concept of $\Upgamma$-Fredholmness. The content of this chapter heavily refers to \cite{shub} and in particular to \cite{atiyahellvn} and \cite{lee}, chapter 7 and 21.

\subsection{$\Upgamma$-manifolds}
\justifying
Consider a smooth and non-compact manifold $M$ and a vector bundle $E\rightarrow M$. Let $\Upgamma$ be a discrete group of automorphisms/deck transformations, such that the left action $(\upgamma,p)\,\mapsto\, \upgamma p$ on $M$ are smooth, freely and cocompact for all $\upgamma \in \Upgamma$. The unit element is denoted by $\upepsilon$.
\begin{comment}
Let $\Upgamma$ be a group, such that the action on $M$ is 
\begin{itemize}
\item[(1)] \textit{discrete}, i.e. $\Upgamma$ is a discrete group of automorphisms/deck transformations, such that the \textit{left-} or \textit{right action}
\begin{align*}
\Upgamma\times M &\,\rightarrow\,M \quad & & \quad M \times \Upgamma \,\rightarrow\,M \\
(\upgamma,p)&\,\mapsto\, \upgamma p & & \quad (p,\upgamma) \,\mapsto\, p\upgamma
\end{align*} 
are smooth for all $\upgamma \in \Upgamma$;
\item[(2)] \textit{freely}, i.e. if $\upgamma_1,\upgamma_2 \in \Upgamma$ and there exists a $p \in M$, such that $\upgamma_1 p = \upgamma_2 p$, then $\upgamma_1 = \upgamma_2$ or equivalently the stabilizer of each $p \in M$ contains only the identity $\upepsilon$;
\item[(3)] \textit{cocompact}, i.e. the quotient $\quotspace{M}{\Upgamma}$ as set of all orbits under the (left or right action) of $\Upgamma$ is compact.
\end{itemize}
The convention of \cite{shub} is followed and we concentrate on left actions with
\begin{equation*}
\upepsilon p = p \quad \text{and}\quad \upgamma_1(\upgamma_2 p)=(\upgamma_1\upgamma_2)p \quad \forall\,p\in M,\,\upgamma_1,\upgamma_2\in \Upgamma;
\end{equation*}
\end{comment}
%This substructure makes $M$ a $\Upgamma$-space. 
Right actions can be converted to left actions by taking the inverse group element. This makes the left action a diffeomorphism on $M$. 
%\begin{rem}
%Following \cite{shub} the condition
%\begin{equation*}
%\forall\,p \in M\,\exists\,\mathcal{U}_p\quad\text{neighborhood of}\quad p\quad\text{with}\quad \upgamma_1 \mathcal{U}_p \cap \upgamma_2\mathcal{U}_p=\emptyset \quad \forall\,\upgamma_1,\upgamma_2\in\Upgamma\,:\, \upgamma_1\neq\upgamma_2
%\end{equation*}
%is an equivalent expression for a freely acting group.
%\end{rem} 
%Since $M$ is a manifold, the discrete action is also \textit{properly discontinuous}, i.e. for all $K \Subset M$ the set $\SET{\upgamma\in\Upgamma\,\vert\, K\cap\upgamma K\neq \emptyset}$ is finite, making the map $(\upgamma,p)\mapsto(\upgamma p,p)$ proper. 
Since discrete groups of automorphisms are discrete Lie groups and act proper in addition, the quotient manifold theorem states, that $\quotspace{M}{\Upgamma}$ is a smooth manifold and the covering map 
\begin{equation}\label{canproj}
\mcyrl_\Upgamma\,:\, M \,\rightarrow\, \quotspace{M}{\Upgamma}
\end{equation}  
is a smooth submersion and \textit{Galois}, i.e. the cover is a principal bundle with respect to the deck transformation group. In this case, $\Upgamma$ acts transitively\bnote{f17}: for a pair of points $p,q \in M$ there exists a $\upgamma\in\Upgamma$, such that $q=\upgamma p$. If the orbit space with respect to $\Upgamma$ with above properties is again a smooth, but compact manifold, they are referred on as $\Upgamma$\textit{-manifolds}. If $M$ is equipped with a smooth and complex vector bundle $\pi\,:\,E\rightarrow\,M$, one can define an action $\pi_\Upgamma\vert_p\,:\,E_p\,\rightarrow\,E_{\upgamma p}$ for all $p \in M, \upgamma \in \Upgamma$, which is a linear isomorphism. If the projection $\pi$ is $\Upgamma$-\textit{equivariant} (i.e. $\pi(\upgamma p,v_{\upgamma p})=\upgamma \pi(p,v_p)$ for all $p,\upgamma$ and $v_p$ vector at $p$), then $\quotspace{E}{\Upgamma}$ is again a smooth vector bundle over $\quotspace{M}{\Upgamma}$. The linear isomorphism as action on $E$ than covers the action on $M$, for which reason they are called $\Upgamma$-\textit{vector bundle}. 
\begin{comment}
The group $\Upgamma$ comes with a notion of \textit{left and right translation operators}, defined by 
\begin{equation}\label{leftrighttrans}
(L_{\upgamma_1} u)(\upgamma_2)=u(\upgamma_1^{-1}\upgamma_2)\quad \text{and} \quad (R_{\upgamma_1} u)(\upgamma_2)=u(\upgamma_2\upgamma_1)
\end{equation}
for $\upgamma_1,\upgamma_2 \in \Upgamma$ and $u$ a function $\Upgamma \rightarrow \C$, satisfying the following properties:
\begin{equation}\label{leftrighttransprop}
\begin{split}
(L_\upgamma)^{-1}&=L_{\upgamma^{-1}} \quad ,\quad  (R_\upgamma)^{-1}=R_{\upgamma^{-1}}\\
L_{\upgamma_1\upgamma_2}&=L_{\upgamma_1}L_{\upgamma_2} \quad \quad  R{\upgamma_1\upgamma_2}=R_{\upgamma_1}R_{\upgamma_2} \quad.
\end{split}
\end{equation}
\end{comment}
The group $\Upgamma$ comes with a notion of \textit{left translation operators}: 
\begin{eqnarray}
(L_{\upgamma_1} u)(\upgamma_2)&=&u(\upgamma_1^{-1}\upgamma_2)\label{leftrighttrans}\\
\text{with} \quad (L_\upgamma)^{-1}=L_{\upgamma^{-1}} & \text{and}& \quad L_{\upgamma_1\upgamma_2}=L_{\upgamma_1}L_{\upgamma_2} \,\, .\label{leftrighttransprop}
\end{eqnarray}
The corresponding action of $\Upgamma$ on a section $u$ of a $\Upgamma$-vector bundle is then described by $(L_\upgamma u)(p)=\pi_\Upgamma u(\upgamma^{-1}p)$. Any section, which is invariant under the left action, will be called $\Upgamma$-\textit{invariant}: $(L_\upgamma u)(x)=u(x)$. Two objects of this kind are a smooth positive measure $\mu$ on $M$ or a metric of $M$, inducing this smooth positive measure. Both can be obtained by lifting a smooth positive measure on the orbit space to $M$ by pullback with \cref{canproj} as local diffeomorphism. In a similar way one can define any bundle metric on a vector bundle $E\rightarrow M$ by lifting the Hermitian inner product of any fiber $\quotspace{E}{\Upgamma}$. For this $\Upgamma$ should act by isometries, such that $\pi_\Upgamma$ becomes an isometry on its own. %$\Upgamma$ also acts properly discontinuous, since for discrete groups properness corresponds to properly discontinuity (wiki/Group action) or because covering space actions (i.e. free and proper action, see Lee) are equivalent to properly discontinuous actions (nlab). Proposition 4 of \cite{devar} claims, that if $\Upgamma$ acts by isometries on a metric space, the orbit of a point is discrete and has finite isotropy group, then $\Upgamma$ acts properly discontinuous on this point and on the whole space, if this is true for all points in the metric space. 
Another way of defining a $\Upgamma$-vector bundle is by lifting the $\Upgamma$-action from $M$ to the tangent bundle, making it a real $\Upgamma$-vector bundle with induced action of the group. Any complexified tensor bundle over $M$ becomes a $\Upgamma$-vector bundle and a $\Upgamma$-invariant metric induces a Hermitian inner product on it. Fixing a $\Upgamma$-invariant smooth density (or metric), and if necessary a $\Upgamma$-invariant bundle metric, one can define square-integrable sections of $E$ with respect to this $\Upgamma$-invariant density. We denote these spaces by $L^2_\Upgamma(M,E)$. \\    
\\
The \textit{fundamental domain of} $\Upgamma$ is an open subset $\mathcal{F}\subset M$, which satisfies
\begin{align*}
\text{(1)} & \quad(\upgamma_1\mathcal{F})\cap(\upgamma_2\mathcal{F})=\emptyset\quad\forall\,\upgamma_1,\upgamma_2\in \Upgamma \quad\Rightarrow\quad \upgamma_1\neq\upgamma_2& \\
\text{(2)} & \quad M=\bigcup_{\upgamma \in \Upgamma}\upgamma \overline{\mathcal{F}} &\text{(3)} \quad \overline{\mathcal{F}}\setminus\mathcal{F}\quad\text{is a null set.}
\end{align*}
\begin{comment}
\begin{itemize}
\item[(1)] $\upgamma_1\mathcal{F}\cap\upgamma_2\mathcal{F}=\emptyset\quad\forall\,\upgamma_1,\upgamma_2\in \Upgamma\quad\Rightarrow\quad \upgamma_1\neq\upgamma_2$,
\item[(2)] $M=\bigcup_{\upgamma \in \Upgamma}\upgamma \overline{\mathcal{F}}$ and 
\item[(3)] $\overline{\mathcal{F}}\setminus\mathcal{F}$ is a null set.
\end{itemize}
\end{comment}
$\Upgamma$-invariant locally finite coverings and a $\Upgamma$-invariant partition of union subordinated to this covering are defined as follows: since $\quotspace{M}{\Upgamma}$ is compact, one can take a finite covering by open balls $\mathring{\mathbb{B}}_j$, $j \in J$ finite index set, and lift them to $M$. The covering then takes the form
\begin{equation*}%\label{gammainvcover}
M=\bigcup_{\substack{j \in J \\ \upgamma \in \Upgamma}}\upgamma \mathring{\mathbb{B}}_j \quad .
\end{equation*}
A subordinated partition of unity $(\upgamma \mathring{\mathbb{B}}_j, \phi_{j,\upgamma})_{\substack{j \in J\\\upgamma \in \Upgamma}}$ can be constructed by taking compactly supported functions $\phi_{j,\upgamma}\in C^\infty_\comp(\upgamma \mathring{\mathbb{B}}_j,\R_{\geq 0})$, satisfying $\phi_{j,\upgamma}(p)=\phi_{j,\upepsilon}(\upgamma^{-1}p)=(L_\upgamma \phi_{j,\upepsilon})(p)$ as well as
\begin{equation}\label{gammainvpartition}
\sum_{\substack{j \in J \\\upgamma \in \Upgamma}} \phi_{j,\upgamma}=1 \quad .
\end{equation}
For $M$ being a globally hyperbolic manifold, consider a $\Upgamma$-invariant Lorentzian metric and bundle metrices of $\Upgamma$-vector bundles. As $M$ is diffeomorphic to $\timef(M)\times \Sigma$ with Cauchy hypersurface $\Sigma$ and time intervall $\timef(M) \subset \R$, one could consider $\Upgamma$-actions on either the whole manifold or on the temporal or the spatial part. 
%A \textit{temporal} $\Upgamma$-\textit{manifold} is a $\Upgamma$-manifold, where the action is induced by a group $\Upgamma$ acting on the intervall $I$ as $\Upgamma$-manifold, such that $\quotspace{I}{\Upgamma}$ is compact. Analogously 
We define as \textit{spatial} $\Upgamma$-\textit{manifold}  a $\Upgamma$-manifold, where the action is induced by a group $\Upgamma$, acting on the hypersurface as $\Upgamma$-manifold. In that case $\quotspace{\Sigma}{\Upgamma}$ is compact. The corresponding canonical projection is
\begin{comment}
\begin{equation}\label{canprojglobhyp}
\mcyrl_{\Upgamma,t}\,:\, M \,\rightarrow\, \quotspace{I}{\Upgamma}\times \Sigma \quad \text{and} \quad \mcyrl_{\Upgamma,s}\,:\, M \,\rightarrow\, I\times\quotspace{\Sigma}{\Upgamma} \quad.
\end{equation}
\end{comment} 
\begin{equation}\label{canprojglobhyp}
\mcyrl_{\Upgamma,s}\,:\, M \,\rightarrow\, \timef(M)\times\quotspace{\Sigma}{\Upgamma} \quad.
\end{equation} 
The covering $\mcyrl_{\Upgamma,s}$ is of interest in this paper, where the Cauchy hypersurfaces $\Sigma$ are given, such that the orbit space with respect to $\Upgamma$ is compact without boundary. Corollary 12 in chapter 7 of \cite{ONeill} ensures, that for a group $\Upgamma$ as introduced above $\mcyrl_{\Upgamma,s}$ is indeed a covering between Lorentzian manifolds and that $\Upgamma$ is a deck transformation group, if $M$ is connected. This has been used to show, that $M$ is globally hyperbolic if and only if $\quotspace{M}{\Upgamma}$ is globally hyperbolic. The proof is given in \cite{harris} as Proposition 1.4, relating the proof of Lemma 4.1 in \cite{garhar}. The covering of interest has been taken into account, since an intermediate step in the references shows, that 
\begin{center}
$\Sigma$ is Cauchy hypersurface in $M$ $\quad \Leftrightarrow \quad$ $\quotspace{\Sigma}{\Upgamma}$ is Cauchy hypersurface in $\quotspace{M}{\Upgamma}$ .
\end{center}
Thus $\mcyrl_{\Upgamma,s}$ in \cref{canprojglobhyp} is indeed a Galois covering of globally hyperbolic manifolds by globally hyperbolic, compact quotients.

\subsection{von Neumann algebra and Fredholm theory according to a $\Upgamma$-action}%\label{subsecvonfred}
\justifying
On the space of $L^2$-functions over $\Upgamma$
\begin{equation*}
L^2(\Upgamma):=\SET{u\,:\,\Upgamma\,\rightarrow\,\C\,\bigg\vert\,\sum_{\upgamma\in\Upgamma}\absval{u(\upgamma)}^2 <\infty}
\end{equation*}
the translation operator \cref{leftrighttrans} is a unitary operator with respect to the inner product in the above space, which can be seen by applying \cref{leftrighttransprop}: $(L_\upgamma)^\ast =L_{\upgamma^{-1}}$.
\begin{comment}
 of the translation operators with $\upepsilon=\upgamma^{-1}\upgamma=\upgamma\upgamma^{-1}$ implies, that 
\begin{equation*}
(L_\upgamma)^\ast =L_{\upgamma^{-1}} \quad \text{and}\quad (R_\upgamma)^\ast =R_{\upgamma^{-1}}\quad ,
\end{equation*}
\end{comment}
The map $\upgamma\,\mapsto\,L_\upgamma$ %and $\upgamma\,\mapsto\,R_\upgamma$ 
becomes a unitary representation of the discrete group in this Hilbert space and spans a (group-) von Neumann algebra $\mathcal{N}_r(\Upgamma)$ in $L^2(\Upgamma)$, thus a $\ast$-subalgebra of the space of bounded operators acting between $L^2$-function $\mathscr{L}(L^2(\Upgamma))$. %They contain the identity and satisfy the von Neumann Double Commutant theorem in terms of $(\mathcal{N}')'=\mathcal{N}$, where the commutant is defined as
%\begin{equation*} 
%\mathcal{N}':=\SET{A\in\mathscr{L}(L^2(\Upgamma))\,\vert\,AB=BA \quad \forall\,B\in \mathcal{N}}\quad.
%\end{equation*}
%The von Neumann algebras generated by the right and the left translation operators is denoted by $\mathcal{N}_r(\Upgamma)$ and $\mathcal{N}_l(\Upgamma)$ respectively.
Since $L^2(\Upgamma)$ is a Hilbert space one can introduce an orthonormal basis, which is conventionally denoted by $\SET{\delta_\upgamma}_{\upgamma \in \Upgamma}$, satisfying $\delta_{\upgamma_1}(\upgamma_2)=1$ if $\upgamma_1=\upgamma_2$ and zero otherwise. Applying the left translation operator on this basis leads to $L_{\upgamma_1}\delta_{\upgamma_2}=\delta_{\upgamma_1\upgamma_2}$ (Kronecker delta). 
\begin{comment}
Applying the left and right translation operators on this basis leads to
\begin{equation*}
L_{\upgamma_1}\delta_{\upgamma_2}=\delta_{\upgamma_1\upgamma_2}\quad \text{and}\quad R_{\upgamma_1}\delta_{\upgamma_2}=\delta_{\upgamma_2\upgamma_1^{-1}} \quad, 
\end{equation*}
which can be checked by acting the left handsides on another element of $\Upgamma$ and use the Orthonormality. 
\end{comment}
Each operator $A\in \mathscr{L}(L^2(\Upgamma))$ is then related to a matrix element: $A_{\upgamma_1,\upgamma_2}=\dscal{1}{L^2(\Upgamma)}{A \delta_{\upgamma_1}}{\delta_{\upgamma_2}}$ for $\upgamma_1,\upgamma_2 \in \Upgamma$. Following section 2.10 in \cite{shub} one can express the von Neumann algebra in terms of invariance of this matrix elements: 
\begin{equation*}%\label{vNalr}
\mathcal{N}_l(\Upgamma) :=\SET{A\in \mathscr{L}(L^2(\Upgamma))\,\vert\,\forall\,a,b,\upgamma \in \Upgamma\quad A_{a\upgamma,b\upgamma}=A_{a,b}} \quad .
\end{equation*}
\begin{comment}
\begin{equation}\label{vNalr}
\begin{split}
\mathcal{N}_l(\Upgamma) &:=\SET{A\in \mathscr{L}(L^2(\Upgamma))\,\vert\, A_{a\upgamma,b\upgamma}=A_{a,b}\quad\forall\,a,b,\upgamma \in \Upgamma} \\
\mathcal{N}_r(\Upgamma) &:=\SET{A\in \mathscr{L}(L^2(\Upgamma))\,\vert\, A_{\upgamma a,\upgamma b}=A_{a,b}\quad\forall\,a,b,\upgamma \in \Upgamma}\quad,
\end{split}
\end{equation}
with $(\mathcal{N}_l(\Upgamma))'=\mathcal{N}_r(\Upgamma)$ and $(\mathcal{N}_r(\Upgamma))'=\mathcal{N}_l(\Upgamma)$. For an operator $A$ in either $\mathcal{N}_l(\Upgamma)$ or $\mathcal{N}_l(\Upgamma)$ 
\end{comment}
For an operator $A$ in $\mathcal{N}_l(\Upgamma)$ a \textit{trace} is defined as
\begin{equation*}%\label{tracevN}
\uptau_\Upgamma(A):=\dscal{1}{L^2(\Upgamma)}{A \delta_{\upgamma}}{\delta_{\upgamma}}=\dscal{1}{L^2(\Upgamma)}{A \delta_{\upepsilon}}{\delta_{\upepsilon}} 
\end{equation*}
for $\upgamma,\upepsilon \in \Upgamma$; the definition is independent of the choice of $\upgamma$, which explains the second equivalence. Let $\mathcal{H}$ be another Hilbert space and consider the tensor product $L^2(\Upgamma)\otimes\mathcal{H}$. The orthonormal basis in $L^2(\Upgamma)$ induces an orthogonal decomposition 
\begin{equation*}
L^2(\Upgamma)\otimes\mathcal{H}=\bigotimes_{\upgamma \in \Upgamma}(\delta_\upgamma \otimes \mathcal{H}) \quad,
\end{equation*} 
which implies, that any operator $A \in \mathscr{L}(L^2(\Upgamma)\otimes\mathcal{H})$ has matrix entries $A_{\upgamma_1,\upgamma_2} \in \mathscr{L}(\mathcal{H})$ for $\upgamma_1,\upgamma_2\in \Upgamma$. Tensor products of von Neumann algebras are again von Neumann algebras: 
\begin{equation*}%\label{vNlrtensor}
\mathcal{N}_l(\Upgamma)\otimes \mathscr{L}(\mathcal{H}) :=\SET{A\in \mathscr{L}(L^2(\Upgamma)\otimes\mathcal{H})\,\vert\,\forall\,a,b,\upgamma \in \Upgamma\quad A_{a\upgamma,b\upgamma}=A_{a,b}} \quad.
\end{equation*}
\begin{comment}
\begin{equation}\label{vNlrtensor}
\begin{split}
\mathcal{N}_l(\Upgamma)\otimes \mathscr{L}(\mathcal{H}) &:=\SET{A\in \mathscr{L}(L^2(\Upgamma)\otimes\mathcal{H})\,\vert\, A_{a\upgamma,b\upgamma}=A_{a,b}\quad\forall\,a,b,\upgamma \in \Upgamma} \\
\mathcal{N}_r(\Upgamma)\otimes \mathscr{L}(\mathcal{H}) &:=\SET{A\in \mathscr{L}(L^2(\Upgamma)\otimes\mathcal{H})\,\vert\, A_{\upgamma a,\upgamma b}=A_{a,b}\quad\forall\,a,b,\upgamma \in \Upgamma}\quad.
\end{split}
\end{equation}
\end{comment}
The trace on this tensor product of algebras is the tensor product of the traces:
\begin{equation*}%\label{tracevNtensor}
(\uptau_\Upgamma \otimes \mathrm{Tr})(A)=\Tr{}{A_{\upepsilon,\upepsilon}} \quad
\end{equation*}
with $\Tr{}{A}:=\Tr{\mathscr{L}(\mathcal{H})}{A}=\sum_{j \in J}\dscal{1}{\mathcal{H}}{A e_j}{e_j}$, where $\SET{e_j}_{j \in J}$ is an orthonormal basis of $\mathcal{H}$. These tensor product occurs as an important concept for the analysis in this paper:
\begin{defi}
Let $\mathscr{H}$ and $\mathcal{H}$ be Hilbert spaces;
\begin{itemize}
\item[a)] $\mathscr{H}$ is a \textit{general Hilbert} $\Upgamma$-\textit{module}, if it carries a left unitary action of $\Upgamma$.
\item[b)] A general Hilbert $\Upgamma$-module $\mathscr{H}$ is called \textit{free}, if it is unitarily isomorphic to a $\Upgamma$-module $L^2(\Upgamma)\otimes \mathcal{H}$ and the representation of $\Upgamma$ is given by $\upgamma \,\mapsto\,L_\upgamma\otimes\id{\mathcal{H}}$ for $\upgamma \in \Upgamma$.
\item[c)] A general Hilbert $\Upgamma$-module $\mathscr{H}$ is defined to be \textit{projective}, if it is unitarily isomorphic to a closed $\Upgamma$-invariant subspace in $L^2(\Upgamma)\otimes \mathcal{H}$.
\end{itemize}
\end{defi}
The unitary isomorphisms are understood as unitary maps, commuting with the action of $\Upgamma$. The focus will rely on projective Hilbert $\Upgamma$-modules, which we will shortly refer on as Hilbert $\Upgamma$-modules, unless not otherwise indicated. Having such a Hilbert $\Upgamma$-module $\mathscr{H}$ one can define a von Neumann algebra, which consists of those operators in $\mathscr{L}(\mathscr{H})$, which commute with the left translation $L_\upgamma$ for all $\upgamma \in \Upgamma$ in $\mathscr{H}$:
\begin{equation}\label{vNcomm}
\mathscr{L}_\Upgamma(\mathscr{H}):=\SET{A \in \mathscr{L}(\mathscr{H})\,\vert\,AL_\upgamma=L_\upgamma A \quad \forall\, \upgamma \in \Upgamma}\quad.
\end{equation} 
The trace on this von Neumann algebra is $\Tr{\Upgamma}{\bullet}:=(\uptau_\Upgamma \otimes \mathrm{Tr})(\bullet)$. The orthogonal projection $P_{\mathscr{H}}$ commutes with the unitary representation in $L^2(\Upgamma)\otimes\mathcal{H}$, where $\mathscr{H}\subset L^2(\Upgamma)\otimes\mathcal{H}$ is a projective Hilbert $\Upgamma$-submodule. The important concept of the $\Upgamma$-\textit{dimension} is then well-defined by this trace:
\begin{equation*}
\dim_\Upgamma(\mathscr{H}):=\Tr{\Upgamma}{P_\mathscr{H}}\quad. 
\end{equation*}
It has the following properties:
\begin{lem}[section 2.17 in \cite{shub}]
\noindent\label{propgammadim}
\begin{itemize} 
\item[(1)] $\dim_\Upgamma(\mathscr{H})$ is independent of the inclusion $\mathscr{H}\subset L^2(\Upgamma)\otimes\mathcal{H}$;
\item[(2)] $\dim_\Upgamma(L^2(\Upgamma))=1$, $\dim_\Upgamma(\SET{0})=0$ and $\dim_\Upgamma(L^2(\Upgamma)\otimes\mathcal{H})=\dim_\C(\mathcal{H})$;
\item[(3)] $\dim_\Upgamma(\mathscr{H}_1\oplus\mathscr{H}_2)=\dim_\Upgamma(\mathscr{H}_1)+\dim_\Upgamma(\mathscr{H}_2)$ for two Hilbert $\Upgamma$-modules $\mathscr{H}_1,\mathscr{H}_2$;
\item[(4)] $\mathscr{H}_1\subset\mathscr{H}_2\,\Rightarrow\,\dim_\Upgamma(\mathscr{H}_1)\leq\dim_\Upgamma(\mathscr{H}_2)$ and $\dim_\Upgamma(\mathscr{H}_1)=\dim_\Upgamma(\mathscr{H}_2)\,\Leftrightarrow\,\mathscr{H}_1=\mathscr{H}_2$;
\item[(5)] $\dim_\Upgamma(\mathscr{H}_1)=\dim_\Upgamma(\mathscr{H}_2)\,\Leftrightarrow$ $\mathscr{H}_1$ and $\mathscr{H}_2$ are unitarily isomorphic to each other. 
\end{itemize}
\end{lem}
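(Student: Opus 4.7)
The plan is to verify each of the five properties directly from the formula $\dim_\Upgamma(\mathscr{H}) = \Tr{\Upgamma}{P_\mathscr{H}}$, leveraging the standard features of the normal, faithful, semifinite trace $\Tr{\Upgamma}{\bullet} = (\uptau_\Upgamma \otimes \mathrm{Tr})(\bullet)$ on $\mathscr{L}_\Upgamma(L^2(\Upgamma) \otimes \mathcal{H})$, together with the basic fact that $P_\mathscr{H}$ lies in $\mathscr{L}_\Upgamma(L^2(\Upgamma) \otimes \mathcal{H})$ because $\mathscr{H}$ is $\Upgamma$-invariant.

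First I would dispatch (2) by a direct computation: the projection onto $L^2(\Upgamma)$ is $\id{}$, hence $\dim_\Upgamma(L^2(\Upgamma)) = \uptau_\Upgamma(\id{}) = \dscal{1}{L^2(\Upgamma)}{\delta_\upepsilon}{\delta_\upepsilon} = 1$; the projection onto the trivial subspace is zero; and the tensor factorisation yields $\Tr{\Upgamma}{\id{L^2(\Upgamma)\otimes\mathcal{H}}} = \uptau_\Upgamma(\id{}) \cdot \mathrm{Tr}(\id{\mathcal{H}}) = \dim_{\C}(\mathcal{H})$. For (3) I would embed $\mathscr{H}_1 \oplus \mathscr{H}_2$ block-diagonally into $L^2(\Upgamma) \otimes (\mathcal{H}_1 \oplus \mathcal{H}_2)$; the associated projection then splits as $P_{\mathscr{H}_1} \oplus P_{\mathscr{H}_2}$ and additivity follows from linearity of $\Tr{\Upgamma}{\bullet}$, provided (1) is already available to ensure that the value is independent of the choice of ambient free module. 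Claim (4) reduces to the observation that $\mathscr{H}_1 \subset \mathscr{H}_2$ yields $0 \leq P_{\mathscr{H}_2} - P_{\mathscr{H}_1}$ as positive operators; positivity of the trace produces the inequality, while its faithfulness on positive elements turns the equality case into $P_{\mathscr{H}_1} = P_{\mathscr{H}_2}$, hence $\mathscr{H}_1 = \mathscr{H}_2$.

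The substantive content sits in (1) and (5), which together assert that $\dim_\Upgamma$ is a complete $\Upgamma$-unitary invariant. The forward direction of (5) together with (1) both reduce to one observation: if $V \in \mathscr{L}_\Upgamma(L^2(\Upgamma) \otimes \mathcal{H})$ is a partial isometry with $V^\ast V = P$ and $V V^\ast = Q$, then $\Tr{\Upgamma}{P} = \Tr{\Upgamma}{V^\ast V} = \Tr{\Upgamma}{V V^\ast} = \Tr{\Upgamma}{Q}$ by the tracial property, so $\Upgamma$-equivariantly unitarily isomorphic submodules carry the same $\Upgamma$-dimension. The main obstacle is the converse direction of (5): given two $\Upgamma$-invariant projections in a common sufficiently large ambient free module sharing the same $\Upgamma$-trace, one has to produce the partial isometry $V$ in the commutant realising their equivalence. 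This is precisely the Murray--von Neumann comparison theorem for projections, applied to the finite von Neumann algebra $\mathcal{N}_l(\Upgamma) \otimes \mathscr{L}(\mathcal{H})$, whose proof crucially exploits the finiteness of $\uptau_\Upgamma$ on $\mathcal{N}_l(\Upgamma)$; for the detailed execution of this step I refer to section 2.17 of \cite{shub}.
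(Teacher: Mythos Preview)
The paper does not supply its own proof of this lemma: it is stated with the attribution ``section 2.17 in \cite{shub}'' and no argument is given. Your sketch is correct and is exactly the standard route taken in that reference---direct trace computations for (2)--(4), the tracial identity $\Tr{\Upgamma}{V^\ast V}=\Tr{\Upgamma}{VV^\ast}$ for the forward direction of (5) and for (1), and Murray--von Neumann comparison of projections in the finite von Neumann algebra for the converse of (5). There is nothing to compare beyond noting that you have spelled out what the paper merely cites.
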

Linear operators between Hilbert $\Upgamma$-modules $\mathscr{H}_1$ and $\mathscr{H}_2$ can be introduced as well: 
\begin{defi}
Given a linear, in general unbounded operator $A\,:\,\mathscr{H}_1\supset \dom{}{A}\,\rightarrow\,\mathscr{H}_2$;
\begin{itemize}
\item[a)] $A$ is called $\Upgamma$-\textit{operator}, if $AL_\upgamma=L_\upgamma A$ for all $\upgamma \in \Upgamma$.
\item[b)] $A$ is called $\Upgamma$-\textit{morphism}, if $A$ is a $\Upgamma$-operator and $A \in \mathscr{L}(\mathscr{H}_1,\mathscr{H}_2)$.
\end{itemize}
\end{defi}
Part a) is equivalent by saying, that the domain $\dom{}{A}$ is $\Upgamma$-invariant. The second part in the definition contains operators in \cref{vNcomm} for $\mathscr{H}_1=\mathscr{H}_2=\mathscr{H}$, wherefore we will denote the space of $\Upgamma$-morphisms by $\mathscr{L}_\Upgamma(\mathscr{H}_1,\mathscr{H}_2)$. 
\\
\\
In order to define $\Upgamma$-Fredholmness for operators in $\mathscr{L}_\Upgamma(\mathscr{H}_1,\mathscr{H}_2)$, %(either already bounded on $\mathscr{H}_1$ or $A$ is closed densely defined with $\mathscr{H}_1=(\dom{}{A},\norm{\bullet}{\Graph{A}})$), 
further characterisations of operators are needed, in order to have finite $\Upgamma$-traces for finite $\Upgamma$-dimensions.
\begin{defi}
Let $\mathscr{H}_1,\mathscr{H}_2$ be Hilbert $\Upgamma$-modules and $A \in \mathscr{L}_\Upgamma(\mathscr{H}_1,\mathscr{H}_2)$;
\begin{itemize}
\item[a)] $A$ is a $\Upgamma$-\textit{Hilbert-Schmidt operator}, denoted by $A \in \mathscr{HS}_\Upgamma(\mathscr{H}_1,\mathscr{H}_2)$, if $\Tr{\Upgamma}{A^\ast A} < \infty$.
\item[b)] $A$ is a $\Upgamma$-\textit{trace-class operator}, denoted by $A \in \mathscr{TC}_\Upgamma(\mathscr{H}_1,\mathscr{H}_2)$, if $\Tr{\Upgamma}{\absval{A}} < \infty$, where $\absval{A}=\sqrt{A^\ast A}$ is defined by the polar decomposition. %$A=US$, where $S\in \mathscr{L}_\Upgamma(\mathscr{H}_1)$ positive and self adjoint and $U$ is an isometry from $\kernel{U}^\perp$ to $\range{U}$; then $\absval{A}=S$.
\item[c)] $A$ is a $\Upgamma$-\textit{compact operator}, denoted by $A \in \mathscr{K}_\Upgamma(\mathscr{H}_1,\mathscr{H}_2)$, if $A$ lies in the norm closure of $\mathscr{TC}_\Upgamma(\mathscr{H}_1,\mathscr{H}_2)$ in $\mathscr{L}(\mathscr{H}_1,\mathscr{H}_2)$.
\item[d)] $A$ is a $\Upgamma$-\textit{Fredholm operator}, denoted by $A \in \mathscr{F}_\Upgamma(\mathscr{H}_1,\mathscr{H}_2)$, if there exists a $B\in \mathscr{L}_\Upgamma(\mathscr{H}_2,\mathscr{H}_1)$, such that $(\Iop{\mathscr{H}_1}-BA) \in \mathscr{TC}_\Upgamma(\mathscr{H}_1)$ and $(\Iop{\mathscr{H}_2}-AB) \in \mathscr{TC}_\Upgamma(\mathscr{H}_2)$; this $B$ is the $\Upgamma$-\textit{Fredholm parametrix} and the Atiyah-Bott formula defines the $\Upgamma$-\textit{index}:
\begin{equation*}%\label{vNindex}
\Index_\Upgamma(A):=\Tr{\Upgamma}{\Iop{\mathscr{H}_1}-BA}-\Tr{\Upgamma}{\Iop{\mathscr{H}_2}-AB}\quad .
\end{equation*}
\end{itemize} 
\end{defi}
The trace class and Hilbert-Schmidt operators in the $\Upgamma$-setting carry the same properties as in the usual case. Moreover c) implies $\mathscr{TC}_\Upgamma \subset \mathscr{K}_\Upgamma$. %, e.g. given $A\in \mathscr{L}_\Upgamma(\mathscr{H}_1,\mathscr{H}_2)$ and $B \in \mathscr{L}_\Upgamma(\mathscr{H}_2,\mathscr{H}_1)$, if either one is $\Upgamma$-trace class or both $\Upgamma$-Hilbert-Schmidt, $AB$ and $BA$ are $\Upgamma$-trace class operators and $\Tr{\Upgamma}{AB}=\Tr{\Upgamma}{BA}$ holds. 
As in ordinary Hilbert space calculus it is also possible to define invertible and unitary elements in $\mathscr{L}_\Upgamma(\mathscr{H}_1,\mathscr{H}_2)$ as in the sense of any (unital) $\mathrm{C}^\ast$-algebra. These are special $\Upgamma$-Fredholm operators with $\Upgamma$-index 0. From general Hilbert module theory it is also possible to think of $\Upgamma$-compact operators as closure of $\Upgamma$-\textit{finite rank} operators in $\mathscr{L}_\Upgamma$, where the latter one are defined as for Hilbert spaces via finitely many elements in the module. As for maps between Hilbert spaces the $\Upgamma$-finite rank, $\Upgamma$-compact, $\Upgamma$-Hilbert Schmidt and $\Upgamma$-trace-class operators are two-sided ideals in $\mathscr{L}_\Upgamma$; see \cite{blackadar} chapter I,II and III for several details. In order to show, that $\Upgamma$-morphism is $\Upgamma$-Fredholm, it is sufficient to find two $B_1, B_2\in \mathscr{L}_\Upgamma(\mathscr{H}_2,\mathscr{H}_1)$, so that $(\Iop{\mathscr{H}_1}-B_1A) \in \mathscr{TC}_\Upgamma(\mathscr{H}_1)$, $(\Iop{\mathscr{H}_2}-AB_2) \in \mathscr{TC}_\Upgamma(\mathscr{H}_2)$ and 
\begin{equation*}%\label{vNindex}
\Index_\Upgamma(A):=\Tr{\Upgamma}{\Iop{\mathscr{H}_1}-B_1A}-\Tr{\Upgamma}{\Iop{\mathscr{H}_2}-AB_2}\quad .
\end{equation*}
The original definition follows from the fact, that these parametrices differ by a $\Upgamma$-trace-class operator: let $R_1=\Iop{}-B_1 A$ and $R_2=\Iop{}-AB_2$, then
\begin{equation*}
B_1-B_2=B_1 A B_2 + B_1 R_2 - B_2 = B_1 R_2 - R_1 B_2 \in \mathscr{TC}_\Upgamma(\mathscr{H}_1,\mathscr{H}_2) \quad.
\end{equation*}
\begin{comment}
\textcolor{red}{A more handy version}\bnote{f24} of $\Upgamma$-compactness is given as follows: the unitary isomorphisms $\mathscr{H}_i \stackrel{\sim}{\rightarrow}L^2(\Upgamma)\otimes \mathcal{H}_i$ of Hilbert $\Upgamma$-modules $\mathscr{H}_i$ and Hilbert spaces $\mathcal{H}_i$, either defined on the fundamental domain or on the compact base of the covering, induce an isomorphism of the von Neumann algebra
\begin{equation*}
\mathscr{L}_\Upgamma(\mathscr{H}_1,\mathscr{H}_2) \quad \stackrel{\sim}{\longrightarrow} \quad \mathcal{N}_{r}(\Upgamma)\otimes\mathscr{L}(\mathcal{H}_1,\mathcal{H}_2) \quad.
\end{equation*}
As ideal in this von Neumann algebra the $\Upgamma$-compact operators on $\Upgamma$-manifolds are preserved under this isomorphism and hence give the same ideal in the codomain, i.e.
\begin{equation*}
\mathscr{K}_\Upgamma(\mathscr{H}_1,\mathscr{H}_2) \quad \stackrel{\sim}{\longrightarrow} \quad \mathcal{N}_{r}(\Upgamma)\otimes\mathscr{K}(\mathcal{H}_1,\mathcal{H}_2) \quad.
\end{equation*}
Under this identification an operator $A \in \mathscr{K}_\Upgamma(\mathscr{H}_1,\mathscr{H}_2)$ corresponds to an operator $\tilde{A}$, mapping as compact operator between the Hilbert spaces $\mathcal{H}_1$ and $\mathcal{H}_2$, i.e. $(\id{}\otimes\tilde{A}) \in \mathcal{N}_{r}(\Upgamma)\otimes\mathscr{K}(\mathcal{H}_1,\mathcal{H}_2)$. 
:% i.e. $\tilde{A}$ maps as compact operator between the Hilbert spaces $\mathcal{H}_1$ and $\mathcal{H}_2$:

\begin{figure}[htbp]
\centering
\includestandalone[width=0.3\textwidth]{Chap6/gamcompdia}
\end{figure}
\end{comment}
Some properties of $\Upgamma$-Fredholm operators and their $\Upgamma$-indices are listed in the statement below, from which it becomes clear, that this change neither affects Fredholmness nor the index:
\begin{lem}[section 3.10 of \cite{shub}]
\noindent\label{propgammafred}
\begin{itemize}
\item[(1)] $\Index_\Upgamma(A)\,:\,\mathscr{F}_\Upgamma(\mathscr{H}_1,\mathscr{H}_2)\,\rightarrow\,\R$ is locally constant.
\item[(2)] If $A \in \mathscr{F}_\Upgamma(\mathscr{H}_1,\mathscr{H}_2)$, its $\Upgamma$-Fredholm parametrix $B$ is in $\mathscr{F}_\Upgamma(\mathscr{H}_2,\mathscr{H}_1)$ with \\$\Index_\Upgamma(B)=-\Index_\Upgamma(A)$.
\item[(3)] If $A \in \mathscr{F}_\Upgamma(\mathscr{H}_1,\mathscr{H}_2)$ and $B \in \mathscr{F}_\Upgamma(\mathscr{H}_2,\mathscr{H}_3)$, then $BA \in \mathscr{F}_\Upgamma(\mathscr{H}_1,\mathscr{H}_3)$ and \\$\Index_\Upgamma(BA)=\Index_\Upgamma(B)+\Index_\Upgamma(A)$.
\item[(4)] $A \in \mathscr{F}_\Upgamma(\mathscr{H}_1,\mathscr{H}_2)$ if and only if there exists a $B\in \mathscr{L}_\Upgamma(\mathscr{H}_2,\mathscr{H}_1)$, such that \\ $(\Iop{\mathscr{H}_1}-BA) \in \mathscr{K}_\Upgamma(\mathscr{H}_1)$ and $(\Iop{\mathscr{H}_2}-AB) \in \mathscr{K}_\Upgamma(\mathscr{H}_2)$.
\item[(5)] If $A \in \mathscr{F}_\Upgamma(\mathscr{H}_1,\mathscr{H}_2)$ and $C \in \mathscr{K}_\Upgamma(\mathscr{H}_1,\mathscr{H}_2)$, then $\Index_\Upgamma(A+C)=\Index_\Upgamma(A)$.
\item[(6)] If $A \in \mathscr{F}_\Upgamma(\mathscr{H}_1,\mathscr{H}_2)$, then $A^\ast \in \mathscr{F}_\Upgamma(\mathscr{H}_2,\mathscr{H}_1)$ with $\Index_\Upgamma(A^\ast)=-\Index_\Upgamma(A)$.
\item[(7)] If $A \in \mathscr{F}_\Upgamma(\mathscr{H}_1,\mathscr{H}_2)$, then $\dim_\Upgamma(\kernel{A}) <\infty$, $\dim_\Upgamma(\kernel{A^\ast}) <\infty$ and 
\begin{equation*}
\Index_\Upgamma(A)=\dim_\Upgamma(\kernel{A})-\dim_\Upgamma(\kernel{A^\ast})\quad.
\end{equation*}
\item[(8)] $A \in \mathscr{F}_\Upgamma(\mathscr{H}_1,\mathscr{H}_2)$ if and only if $\dim_\Upgamma(\kernel{A}) <\infty$ and there exists a closed set $W \in \mathscr{H}_2$, such that $W \subset \range{A}$ and $\codim_\Upgamma(W):=\dim_\Upgamma\left(\quotspace{\mathscr{H}_2}{W}\right)<\infty$.
\end{itemize} 
\end{lem}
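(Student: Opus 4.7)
The plan is to follow the von-Neumann-algebraic analogue of classical Fredholm theory, as developed in Section 3.10 of \cite{shub}, and merely to verify that each standard manipulation stays inside the ideals $\mathscr{TC}_\Upgamma$, $\mathscr{HS}_\Upgamma$ and $\mathscr{K}_\Upgamma$ introduced above. The ingredients we will use throughout are: (i) the two-sided ideal property of $\mathscr{TC}_\Upgamma$ and $\mathscr{K}_\Upgamma$ inside $\mathscr{L}_\Upgamma$; (ii) the continuity and tracial properties of $\Tr{\Upgamma}{\cdot}$; (iii) the comparison $\Tr{\Upgamma}{ST}=\Tr{\Upgamma}{TS}$ whenever $S,T$ make both sides meaningful; and (iv) Lemma~\ref{propgammadim} for the $\Upgamma$-dimension.

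First I would treat (2), (3), (5), (6) because they follow formally from the definition. For (2), the relations $I_{\mathscr{H}_1}-BA\in\mathscr{TC}_\Upgamma(\mathscr{H}_1)$ and $I_{\mathscr{H}_2}-AB\in\mathscr{TC}_\Upgamma(\mathscr{H}_2)$ are symmetric in the roles of $A$ and $B$, which gives $B\in\mathscr{F}_\Upgamma$ and flips the sign of the index. For (3) I would take parametrices $A',B'$ and observe that $A'B'$ is a parametrix for $BA$ modulo the two trace-class remainders; the additivity of the index reduces to the identity
\begin{equation*}
\Tr{\Upgamma}{I-A'B'BA}-\Tr{\Upgamma}{I-BAA'B'}=\bigl[\Tr{\Upgamma}{I-A'A}-\Tr{\Upgamma}{I-AA'}\bigr]+\bigl[\Tr{\Upgamma}{I-B'B}-\Tr{\Upgamma}{I-BB'}\bigr],
\end{equation*}
which in turn follows from $\Tr{\Upgamma}{ST}=\Tr{\Upgamma}{TS}$ applied to the cross terms. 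For (6) one just takes adjoints of the parametrix equations. For (5), given $A\in\mathscr{F}_\Upgamma$ with parametrix $B$ and $C\in\mathscr{K}_\Upgamma$, the same $B$ is a parametrix for $A+C$ modulo $\mathscr{K}_\Upgamma$ (because $BC,CB\in\mathscr{K}_\Upgamma$ by the ideal property), so $A+C\in\mathscr{F}_\Upgamma$ by (4); the path $t\mapsto A+tC$ then yields index invariance via (1).

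The genuine work lies in (1), (4), (7) and (8), and I expect (4) and (7) to be the main obstacles. For (4), I would start from a merely compact remainder $R=I-BA\in\mathscr{K}_\Upgamma$ and approximate $R$ in operator norm by a $\Upgamma$-trace-class operator $R_n$ (using that $\mathscr{TC}_\Upgamma$ is norm-dense in $\mathscr{K}_\Upgamma$ by definition); for $n$ large, $\|R-R_n\|<1$ so $I-(R-R_n)$ is invertible in $\mathscr{L}_\Upgamma$ by Neumann series, and I would correct $B$ by composing with this inverse to obtain a trace-class remainder on the $\mathscr{H}_1$-side, then repeat on the $\mathscr{H}_2$-side. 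For (1), I would fix $A_0\in\mathscr{F}_\Upgamma$ with parametrix $B_0$ and show that for $\|A-A_0\|$ small enough, $B_0$ is still a parametrix for $A$ with remainders depending continuously on $A$; continuity of $\Tr{\Upgamma}{\cdot}$ on $\mathscr{TC}_\Upgamma$ then gives local constancy (the remainders lie in a bounded set of $\mathscr{TC}_\Upgamma$ and their traces vary continuously, while the index is integer-like insofar as it is stable under trace-class perturbations).

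The hard part will be (7): one must show $\dim_\Upgamma\kernel{A}<\infty$ and produce the index formula. I would note that on $\kernel{A}$ the operator $I-BA$ acts as the identity; since $I-BA\in\mathscr{TC}_\Upgamma$, the orthogonal projection $P_{\kernel A}$ satisfies $P_{\kernel A}=P_{\kernel A}(I-BA)P_{\kernel A}\in\mathscr{TC}_\Upgamma$, hence $\dim_\Upgamma\kernel{A}=\Tr{\Upgamma}{P_{\kernel A}}<\infty$; the same argument applied to $A^\ast$ (via (6)) gives $\dim_\Upgamma\kernel{A^\ast}<\infty$. For the index formula itself, I would decompose $\mathscr{H}_1=\kernel{A}\oplus(\kernel A)^\perp$ and $\mathscr{H}_2=\overline{\range A}\oplus\kernel{A^\ast}$, replace $B$ by the partial inverse $B_0$ that vanishes on $\kernel{A^\ast}$ and inverts $A$ on $(\kernel A)^\perp$ (which differs from $B$ by a trace-class, hence does not affect the trace-difference), and then compute directly $I-B_0A=P_{\kernel A}$ and $I-AB_0=P_{\kernel{A^\ast}}$, so the Atiyah--Bott formula reduces to $\Index_\Upgamma A=\dim_\Upgamma\kernel A-\dim_\Upgamma\kernel A^\ast$. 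Finally (8) follows from (7) in one direction (take $W=\range A$, which is closed since $A$ has closed range on its Fredholm complement) and from a parametrix construction in the other: given a closed $\Upgamma$-invariant $W\subset\range A$ with $\codim_\Upgamma W<\infty$, one inverts $A$ on $(\kernel A)^\perp\cap A^{-1}(W)$ and extends by zero to build the required $\Upgamma$-Fredholm parametrix. The main technical subtlety throughout is to ensure that all projections and partial inverses produced lie in $\mathscr{L}_\Upgamma$, which is automatic here since $A$ is a $\Upgamma$-morphism so $\kernel A$, $\overline{\range A}$ and $\kernel A^\ast$ are $\Upgamma$-invariant closed subspaces.
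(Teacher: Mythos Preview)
The paper does not prove this lemma; it is stated with the attribution ``[section 3.10 of \cite{shub}]'' and the results are simply quoted from Shubin and used as a black box. So there is no paper-proof to compare against, only the question of whether your sketch stands on its own.

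Your treatment of (2), (3), (4), (5), (6) and (1) follows the standard Breuer--Shubin line and is fine at the level of a sketch. The genuine gap is in (7) and (8), and it stems from one point: in the $\Upgamma$-setting a $\Upgamma$-Fredholm operator need \emph{not} have closed range. This is a well-known departure from classical Fredholm theory (and is precisely why (8) is phrased with an auxiliary closed $W\subset\range A$ rather than with $\range A$ itself). In your argument for (7) you write ``replace $B$ by the partial inverse $B_0$ that vanishes on $\kernel{A^\ast}$ and inverts $A$ on $(\kernel A)^\perp$'': if $\range A$ is not closed, then $A\vert_{(\kernel A)^\perp}$ is injective with dense range in $\overline{\range A}$ but not surjective, so no bounded $B_0$ with $I-AB_0=P_{\kernel A^\ast}$ exists, and the claim that $B-B_0\in\mathscr{TC}_\Upgamma$ is unsupported. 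Likewise in (8) you propose $W=\range A$ for the forward direction, which fails for the same reason.

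The standard repair (this is what Shubin does) is to use spectral calculus of the positive self-adjoint $\Upgamma$-operator $A^\ast A$: from $I-BA\in\mathscr{TC}_\Upgamma$ one shows that the spectral projection $E_\epsilon:=\chi_{[0,\epsilon]}(A^\ast A)$ has $\Tr{\Upgamma}{E_\epsilon}<\infty$ for every $\epsilon>0$, and one builds approximate partial inverses $B_\epsilon$ via functional calculus (e.g.\ $B_\epsilon=f_\epsilon(A^\ast A)A^\ast$ with $f_\epsilon(t)=t^{-1}\chi_{(\epsilon,\infty)}(t)$). These satisfy $I-B_\epsilon A=E_\epsilon$ and $I-AB_\epsilon=\chi_{[0,\epsilon]}(AA^\ast)$ exactly, and letting $\epsilon\to0$ recovers $\Index_\Upgamma(A)=\dim_\Upgamma\kernel A-\dim_\Upgamma\kernel A^\ast$. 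For (8), the closed $W$ in the forward direction is obtained as $W=A\bigl((\kernel A)^\perp\cap\range(I-E_\epsilon)\bigr)$ for suitable $\epsilon$, which is closed because $A$ is bounded below there. Your finiteness argument $P_{\kernel A}=P_{\kernel A}(I-BA)P_{\kernel A}\in\mathscr{TC}_\Upgamma$ is correct and is exactly the first step of this approach; it is only the subsequent partial-inverse construction that needs to be replaced by the spectral-cutoff version.
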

Note, that in (7) the reversed implication is in general wrong; see \cite{shub} 3.10 Excercise 1. % apart from that every property is the same as in the non-$\Upgamma$-setting. 
This subsection closes with two valuable lemmas for the further progress:
\begin{lem}\label{helpinglemma}
Let $\mathscr{H},\mathscr{H}_1,\mathscr{H}_2$ be Hilbert $\Upgamma$-modules and $A \in \mathscr{L}_\Upgamma(\mathscr{H}_1,\mathscr{H}_2)$, then
\begin{itemize}
\item[(1)]
$\kernel{A}$ is a projective Hilbert $\Upgamma$-submodule of $\mathscr{H}_1$; 
\item[(2)]
If $\range{A}$ is closed, then it is a projective Hilbert $\Upgamma$-submodule of $\mathscr{H}_2$; 
\item[(3)] $\mathscr{H}_1\oplus \mathscr{H}_2$ and $\left(\quotspace{\mathscr{H}}{W}\right)$ for any closed, $\Upgamma$-invariant subspace $W\subset \mathscr{H}$ are projective Hilbert $\Upgamma$-(sub)modules.
\end{itemize}
\end{lem}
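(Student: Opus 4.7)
The plan is to leverage two facts repeatedly: first, that $A$ is continuous and commutes with the unitary representation, so kernels, ranges, and preimages are automatically closed (when one expects closedness) and $\Upgamma$-invariant; second, that a closed $\Upgamma$-invariant subspace of a projective Hilbert $\Upgamma$-module is again projective by transitivity of the ambient embedding. I will fix unitary identifications $\mathscr{H}_i \cong \mathcal{V}_i \subset L^2(\Upgamma)\otimes\mathcal{H}_i$ for $i \in \{1,2\}$, where $\mathcal{V}_i$ is a closed $\Upgamma$-invariant subspace and the $\Upgamma$-representation is the restriction of $L_{\bullet}\otimes\id{\mathcal{H}_i}$.

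For (1), I would check closedness of $\kernel{A}$ from continuity of $A \in \mathscr{L}(\mathscr{H}_1,\mathscr{H}_2)$, and $\Upgamma$-invariance from $A L_\upgamma u = L_\upgamma A u = 0$ for $u \in \kernel{A}$, $\upgamma \in \Upgamma$. Under the identification above, $\kernel{A}\subset \mathcal{V}_1 \subset L^2(\Upgamma)\otimes\mathcal{H}_1$ becomes a closed $\Upgamma$-invariant subspace, so projective by definition. Part (2) proceeds identically: closedness is by assumption, and $L_\upgamma \range{A} = \range{A L_\upgamma^{-1}\cdot L_\upgamma}= \range{A}$ gives $\Upgamma$-invariance; then $\range{A}\subset \mathcal{V}_2 \subset L^2(\Upgamma)\otimes\mathcal{H}_2$ supplies the required embedding.

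For the direct sum in (3), I would use the canonical unitary isomorphism
\begin{equation*}
(L^2(\Upgamma)\otimes\mathcal{H}_1)\oplus(L^2(\Upgamma)\otimes\mathcal{H}_2) \;\cong\; L^2(\Upgamma)\otimes(\mathcal{H}_1\oplus\mathcal{H}_2),
\end{equation*}
intertwining $(L_\upgamma\otimes\id{})\oplus(L_\upgamma\otimes\id{})$ with $L_\upgamma\otimes\id{}$. Restricting this isomorphism to $\mathcal{V}_1\oplus\mathcal{V}_2$ exhibits $\mathscr{H}_1\oplus\mathscr{H}_2$ as a closed $\Upgamma$-invariant subspace of $L^2(\Upgamma)\otimes(\mathcal{H}_1\oplus\mathcal{H}_2)$. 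For the quotient, I would identify $\mathscr{H}/W$ unitarily with the orthogonal complement $W^{\perp}\subset\mathscr{H}$ via the canonical map, the point being that because the $\Upgamma$-action on $\mathscr{H}$ is \emph{unitary}, the orthogonal complement of the closed $\Upgamma$-invariant subspace $W$ is itself closed and $\Upgamma$-invariant, and the induced $\Upgamma$-action on $\mathscr{H}/W$ corresponds under the isomorphism to the restriction of the $\Upgamma$-action on $\mathscr{H}$ to $W^{\perp}$. Then part (2) (applied to the orthogonal projection onto $W^{\perp}$, which is a $\Upgamma$-morphism) shows $W^{\perp}$ is projective, hence so is $\mathscr{H}/W$.

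The only conceptual obstacle I anticipate is making sure the quotient step is done cleanly: one must verify that the unitary $\mathscr{H}/W\cong W^{\perp}$ is a morphism of Hilbert $\Upgamma$-modules, which relies crucially on $\Upgamma$ acting by unitaries so that $W^{\perp}$ is $\Upgamma$-invariant. Everything else is a direct unpacking of definitions together with the property of projectivity being inherited by closed $\Upgamma$-invariant subspaces.
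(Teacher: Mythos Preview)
Your proposal is correct and follows essentially the same approach as the paper: both verify $\Upgamma$-invariance of kernel and range via the commutation $AL_\upgamma=L_\upgamma A$, use the canonical isomorphism $(L^2(\Upgamma)\otimes\mathcal{H}_1)\oplus(L^2(\Upgamma)\otimes\mathcal{H}_2)\cong L^2(\Upgamma)\otimes(\mathcal{H}_1\oplus\mathcal{H}_2)$ for the direct sum, and handle the quotient by identifying $\mathscr{H}/W$ with $W^\perp$ after checking that unitarity of the $\Upgamma$-action makes $W^\perp$ itself $\Upgamma$-invariant. The only cosmetic difference is that you invoke part (2) applied to the orthogonal projection to conclude projectivity of $W^\perp$, whereas the paper argues this directly and cites \cite{shub}, Proposition 2.16, for the $\Upgamma$-module isomorphism $W^\perp\cong\mathscr{H}/W$.
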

\begin{proof}
Any closed subspace of a Hilbert space, the direct sum of two Hilbert spaces and the quotient of a Hilbert space with one of its closed subspaces are themselve Hilbert spaces. One checks, whether $(L_\Upgamma)(X)\subseteq X$, where $L_\Upgamma$ is the left unitary action of a (general) Hilbert $\Upgamma$-module (without specifying the concrete element) and $X$ a Hilbert (sub-) space.
Since $A$ is a $\Upgamma$-morphism, it commutes with the unitary action of $\Upgamma$. 
\begin{itemize}
\item[(1)]
Suppose $u \in \kernel{A}$, then the commuting of $A$ and $L_\Upgamma$ shows, that also $L_\Upgamma u \in \kernel{A}$:
\begin{equation*}
Au=0 \quad \Rightarrow \quad AL_\Upgamma u = L_\Upgamma A u =0 \quad \Rightarrow \quad L_\Upgamma(\kernel{A})\subseteq\kernel{A}\quad.
\end{equation*}
\item[(2)]
If $v \in \range{A}$, then there exists a $u \in \mathscr{H}_1$, such that $v=Au$. Applying $L_\Upgamma$ from the left and using the commutation property of $A$ gives
\begin{equation*}
L_\Upgamma v = L_\Upgamma A u = A L_\Upgamma u \quad \Rightarrow \quad L_\Upgamma(\range{A})\subseteq \SET{v \in \mathscr{H}_2\,\vert\, \exists\,w\in \range{L_\Upgamma}\,:\,v=Aw}\quad.
\end{equation*} 
But since $\mathscr{H}_1$ is already a Hilbert $\Upgamma$-module, the unitary representation of $\Upgamma$ has range (in a dense subset of) $\mathscr{H_1}$, such that
\begin{equation*}
L_\Upgamma(\range{A})\subseteq \SET{v \in \mathscr{H}_2\,\vert\, \exists\,u\in \mathscr{H}_1\,:\,v=Au}=\range{A}\quad.
\end{equation*} 
\item[(3)] Since $\mathscr{H}_1$ and $\mathscr{H}_2$ are both Hilbert $\Upgamma$-modules, the ranges of $L_\Upgamma$ are (dense subsets in) $\mathscr{H}_1$ or $\mathscr{H}_2$. The diagonal action of $\Upgamma$ on the direct sum implies
\begin{equation*}
L_{\Upgamma\times\Upgamma}(\mathscr{H}_1\oplus\mathscr{H}_2)=L_\Upgamma(\mathscr{H}_1)\oplus L_\Upgamma(\mathscr{H}_2)\subseteq \mathscr{H}_1\oplus\mathscr{H}_2 \quad.
\end{equation*}
The quotient Hilbert space consists of equivalence classes for each element in $\mathscr{H}$, where two Hilbert vectors are equivalent, if the difference is an element in $W$. First check, that this equivalence relation is also true for transformed Hilbert vectors: as $\mathscr{H}$ is a Hilbert $\Upgamma$-module the elements $L_\Upgamma v_1, L_\Upgamma v_2$ are in $L_\Upgamma(\mathscr{H})\subseteq \mathscr{H}$ for $v_1,v_2 \in \mathscr{H}$ and $L_\Upgamma(v_1-v_2)\in L_\Upgamma(W)\subseteq W$, since $W$ is a $\Upgamma$-submodule. The underlying vector space structure yields a linear group action, such that
\begin{equation*}
L_\Upgamma (v_1-v_2)=\left(L_\Upgamma v_1-L_\Upgamma v_2\right) \in W\quad. 
\end{equation*} 
Thus each equivalence class is $\Upgamma$-invariant: $L_\Upgamma\left(\quotspace{\mathscr{H}}{W}\right)\subseteq \left(\quotspace{\mathscr{H}}{W}\right)$. 
\end{itemize}
Hence all Hilbert subspaces from (1) to (3) are $\Upgamma$-invariant and thus carry the structure of a Hilbert $\Upgamma$-module. It is left to check, that they are projective: let $\mathcal{H},\mathcal{H}_1$ and $\mathcal{H}_2$ be Hilbert spaces. If $\mathscr{H},\mathscr{H}_1$ and $\mathscr{H}_2$ are projective, then they are unitarily isomorphic to a closed $\Upgamma$-submodule of $L^2(\Upgamma)\otimes\mathcal{H}, L^2(\Upgamma)\otimes\mathcal{H}_1$ and $L^2(\Upgamma)\otimes\mathcal{H}_2$ respectively. In (3) the direct sum of the two unitary isomorphisms implies a unitary isomorphism on the direct sum of the closed $\Upgamma$-submodules, which is again a closed $\Upgamma$-submodule of 
\begin{equation*}
(L^2(\Upgamma)\otimes\mathcal{H}_1)\oplus(L^2(\Upgamma)\otimes\mathcal{H}_2)\cong L^2(\Upgamma)\otimes(\mathcal{H}_1\oplus\mathcal{H}_2)\quad.
\end{equation*}
%, where the latter is again a free Hilbert $\Upgamma$-module with respect to the diagonal action. 
Restricting the unitary isomorphisms on the kernel and on the closed range leads to closed subspaces in $L^2(\Upgamma)\otimes\mathcal{H}_1$ and $L^2(\Upgamma)\otimes\mathcal{H}_2$ respectively. %because isomorphisms preserve closedness. 
Since these isomorphisms commute with the action of $\Upgamma$, they are again $\Upgamma$-submodules, hence the kernel and a closed range are indeed projective Hilbert $\Upgamma$-submodules.\\
\\
In order to show, that the quotient is projective, consider the orthogonal complement $W^\perp$ of a closed Hilbert $\Upgamma$-submodule. Since $W^\perp \subset \mathscr{H}$ is always closed, it is itself a Hilbert space. Furthermore, it is $\Upgamma$-invariant: suppose $v \in W^\perp$, i.e. $v\in \mathscr{H}$, such that $\dscal{1}{\mathscr{H}}{v}{u}=0$ for all $u \in W$. Then $L_\Upgamma v \in W^\perp$, since the action of $\Upgamma$ is unitary:
\begin{equation*}
\dscal{1}{\mathscr{H}}{L_\Upgamma v}{L_{\upepsilon}u}=\dscal{1}{\mathscr{H}}{L_\Upgamma v}{L_\Upgamma (L_\Upgamma)^{-1}u}=\dscal{1}{\mathscr{H}}{v}{(L_\Upgamma)^{-1}u}=0\quad \forall\,u\in W\quad.
\end{equation*}
%because $(L_\Upgamma)^{-1}$ corresponds to $L_{\upgamma^{-1}}$ for all $\upgamma \in \Upgamma$ and therefore $(L_\Upgamma)^{-1}u \in M$, since it is a $\Upgamma$-submodule. 
One concludes, that $L_\Upgamma(W^\perp)\subseteq W^\perp$, implying $W^\perp$ to be a general Hilbert $\Upgamma$-submodule. Projectivity follows from restricting the unitary isomorphism on $W^\perp$, which again maps to a closed subspace in $L^2(\Upgamma)\otimes \mathcal{H}$. The commuting of the isomorphisms with the group action implies $\Upgamma$-invariance of this closed subspace, such that $W^\perp$ becomes a projective Hilbert $\Upgamma$-submodule. Since $W^\perp \cong \left(\quotspace{\mathscr{H}}{W}\right)$, this isomorphism induces a unitary isomorphism between the orthogonal complement and the quotient space; see Proposition 2.16 in \cite{shub}. The composition of this and the former unitary isomorphism gives again a unitary isomorphism from the quotient space to a closed subspace in $L^2(\Upgamma)\otimes \mathcal{H}$, implying $\left(\quotspace{\mathscr{H}}{W}\right)$ to be a projective Hilbert $\Upgamma$-submodule. 
\end{proof}

With this a $\Upgamma$-version of Lemma A.1 in \cite{BaerBall} can be proven, which was used in the proof of the Fredholmness in \cite{BaerStroh} for compact Cauchy boundary. 
\begin{lem}\label{funcanagammalemma}
Let $\mathscr{H},\mathscr{H}_1,\mathscr{H}_2$ be Hilbert $\Upgamma$-modules and $A \in \mathscr{L}_\Upgamma(\mathscr{H},\mathscr{H}_1)$ and $B \in \mathscr{L}_\Upgamma(\mathscr{H},\mathscr{H}_1)$, which is onto; define $C=A\vert_{\kernel{B}}\oplus \Iop{\mathscr{H}_2}$, then
\begin{itemize}
\item[(1)] $\dim_\Upgamma(\kernel{C})=\dim_\Upgamma(\kernel{A\oplus B})\,\,;$
\item[(2)] $\range{C}$ is closed if and only if $\range{A\oplus B}$ is closed and 
\begin{equation*}
\codim_{\Upgamma}(\range{C})=\codim_{\Upgamma}(\range{A\oplus B})\quad;
\end{equation*} 
\item[(3)] $C$ is $\Upgamma$-Fredholm if and only if $A\oplus B$ is $\Upgamma$-Fredholm and $\Index_{\Upgamma}(C)=\Index_{\Upgamma}(A\oplus B)$.
\end{itemize}
\end{lem}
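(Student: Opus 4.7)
The plan is to reduce the comparison between $A\oplus B$ and $C$ to a factorization through two $\Upgamma$-isomorphisms, exploiting the surjectivity of $B$ to decompose $\mathscr{H}$ orthogonally.

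First I would use that $B$ is onto to split $\mathscr{H}=\kernel{B}\oplus(\kernel{B})^{\perp}$. Both summands are projective Hilbert $\Upgamma$-submodules: $\kernel{B}$ by Lemma \ref{helpinglemma}(1), and $(\kernel{B})^{\perp}$ by the orthogonal-complement argument already employed in the proof of that lemma. Since $B$ is continuous and surjective, the open mapping theorem gives that the restriction $B\vert_{(\kernel{B})^\perp}\colon(\kernel{B})^\perp\to\mathscr{H}_{2}$ is a topological isomorphism; call its inverse $S\colon\mathscr{H}_{2}\to(\kernel{B})^{\perp}\subset\mathscr{H}$. Because $B$ and the orthogonal projection $P_{\kernel B}$ both commute with the $\Upgamma$-action (here $\Upgamma$-invariance of $\kernel B$ implies that of $(\kernel B)^\perp$), so do $S$ and $P_{\kernel B}$, so all four are $\Upgamma$-morphisms.

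Next I would define the two candidate $\Upgamma$-isomorphisms
\begin{equation*}
\Phi\colon\kernel{B}\oplus\mathscr{H}_{2}\,\longrightarrow\,\mathscr{H},\quad\Phi(x,y)=x+Sy,
\end{equation*}
\begin{equation*}
T\colon\mathscr{H}_{1}\oplus\mathscr{H}_{2}\,\longrightarrow\,\mathscr{H}_{1}\oplus\mathscr{H}_{2},\quad T(u,v)=(u+ASv,v),
\end{equation*}
and verify that they are indeed topological $\Upgamma$-isomorphisms: $\Phi^{-1}(x)=(P_{\kernel B}x,Bx)$ and $T^{-1}(u,v)=(u-ASv,v)$ are manifestly bounded $\Upgamma$-operators. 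The key identity I would then check is
\begin{equation*}
A\oplus B \,=\, T\circ C\circ\Phi^{-1},
\end{equation*}
which holds pointwise: for $x\in\mathscr{H}$ one has $S(Bx)=x-P_{\kernel B}x$ because $S\circ B$ is the projection onto $(\kernel B)^\perp$, hence $T\circ C\circ\Phi^{-1}(x)=T(AP_{\kernel B}x,Bx)=(AP_{\kernel B}x+AS(Bx),Bx)=(Ax,Bx)$.

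Once this factorization is in hand, all three assertions follow mechanically. Claims (1) and (2) reduce to the fact that $\Phi$ and $T$ are unitary isomorphisms of Hilbert $\Upgamma$-modules (up to a bounded $\Upgamma$-automorphism, which does not affect $\dim_{\Upgamma}$ or closedness of ranges, by Lemma \ref{propgammadim}(5) combined with Lemma \ref{helpinglemma}); concretely $\kernel{(A\oplus B)}=\Phi(\kernel C)$ and $\range{(A\oplus B)}=T(\range C)$, and quotients by the latter are related by a $\Upgamma$-isomorphism as well. Claim (3) then comes from multiplicativity of the $\Upgamma$-Fredholm index (Lemma \ref{propgammafred}(3)) applied to the factorization, together with $\Index_{\Upgamma}(T)=\Index_{\Upgamma}(\Phi^{-1})=0$ for $\Upgamma$-isomorphisms. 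I do not foresee a serious obstacle: the only subtle points are checking that $(\kernel{B})^\perp$ is projective and that $S$ is a $\Upgamma$-morphism, both of which are immediate from the structure already developed in Lemma \ref{helpinglemma} and the $\Upgamma$-invariance of $\kernel B$.
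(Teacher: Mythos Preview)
Your proposal is correct and follows essentially the same route as the paper: both build the factorization $A\oplus B=T\circ C\circ\Phi^{-1}$ (equivalently $C=\mathcal{J}\circ(A\oplus B)\circ\mathcal{I}$ in the paper's notation, where your $\Phi$ is the paper's $\mathcal{I}$ and your $T$ is $\mathcal{J}^{-1}$) and then read off the kernel, range, and index identities from the fact that $\Phi$ and $T$ are $\Upgamma$-isomorphisms. The only minor difference is in part~(3): the paper deduces the index equality from the already-established equalities of $\Upgamma$-dimensions of kernels and cokernels via Lemma~\ref{propgammafred}(7), whereas you invoke multiplicativity of the $\Upgamma$-index (Lemma~\ref{propgammafred}(3)) directly on the factorization, which is slightly more direct.
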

\begin{proof}
Since $A$ and $B$ are bounded, also $A\vert_{\kernel{B}}$, $C$ and $A\oplus B$ are bounded %\bnote{f16}
and the $\Upgamma$-invariance follows trivially in each summand: $C, (A \oplus B) \in \mathscr{L}_\Upgamma(\mathscr{H}^{\oplus 2},\mathscr{H}_1\oplus \mathscr{H}_2)$. 
\begin{itemize}
\item[(1)] From the proof in \cite{BaerBall} one concludes in the same way, that there exist two isomorphisms $\mathcal{I}\in \mathscr{L}(\mathscr{H}^{\oplus 2})$ and $\mathcal{J}\in \mathscr{L}(\mathscr{H}_1\oplus \mathscr{H}_2)$,
\begin{equation*}
\mathcal{I}=\Iop{\kernel{B}}\oplus \left(B\vert_{\kernel{B}^\perp}\right)^{-1} \quad \text{and} \quad \mathcal{J}=\left(
\begin{matrix}
\Iop{\mathscr{H}_1} & -A\vert_{\kernel{B}^\perp}\circ \left(B\vert_{\kernel{B}^\perp}\right)^{-1} \\
\zerop{} & \Iop{\mathscr{H}_2}
\end{matrix}
\right) \quad,
\end{equation*}
such that
\begin{equation}\label{operatorisomorph}
C=\mathcal{J}\circ (A\oplus B)\circ \mathcal{I} \quad .
\end{equation}
The surjectivity of $B$ has been used to guarantee, that $B$ is bijective on $\kernel{B}^{\perp}$. Thus the null spaces of both $\Upgamma$-invariant operators are the same and since both subspaces are projective $\Upgamma$-submodules according to the previous lemma, they have the same $\Upgamma$-dimension according to (4) in Lemma \ref{propgammadim}.
\begin{equation*}
\dim_\Upgamma(\kernel{C})=\dim_\Upgamma(\kernel{\mathcal{J}\circ (A\oplus B)\circ \mathcal{I}}) \quad .
\end{equation*}
Observe, that $\mathcal{J}$ and $\mathcal{I}^{-1}$ restrict to isomorphisms between the kernels as projective $\Upgamma$-submodules.
\begin{comment}
: if $u \in \kernel{A\oplus B}$, then $\mathcal{I}^{-1}u \in \kernel{(A\oplus B)\circ \mathcal{I}}$. If on the other hand $v \in \kernel{(A\oplus B)\circ \mathcal{I}}$, then $\mathcal{I}v \in \kernel{(A\oplus B)}$, whence one concludes
\begin{equation*}
\mathcal{I}^{-1}\,\,:\,\, \kernel{A\oplus B}\,\,\leftrightarrow\,\,\kernel{(A\oplus B)\circ \mathcal{I}} \quad.
\end{equation*}
Since $\mathcal{J}$ and $\mathcal{J}^{-1}$ are isomorphisms, their kernels are $\kernel{\mathcal{J}}=\kernel{\mathcal{J}^{-1}}=\SET{\zero_{\mathscr{H}_1\oplus\mathscr{H}_2}}$ and $\mathcal{J}$ maps $\kernel{(A\oplus B)\circ \mathcal{I}}$ to $\kernel{\mathcal{J}\circ(A\oplus B)\circ \mathcal{I}}$ isomorphically. 
\end{comment}
Thus the composition is a Hilbert space isomorphism:
\begin{equation*}
\mathcal{J}\circ\mathcal{I}^{-1}\,\,:\,\, \kernel{A\oplus B}\,\,\leftrightarrow\,\,\kernel{\mathcal{J}\circ(A\oplus B)\circ \mathcal{I}} \quad.
\end{equation*}
According to Proposition of 2.16 in \cite{shub} every linear topological isomorphism of projective $\Upgamma$-submodules implies the existence of a unitary isomorphism between the same $\Upgamma$-modules, %such that $\kernel{A\oplus B}\cong\kernel{\mathcal{J}\circ(A\oplus B)\circ \mathcal{I}}$ unitary. This
which proves the claim.
\item[(2)] Since closedness does not involve any property with respect to the group $\Upgamma$, this claim follows as in Lemma A.1 in \cite{BaerBall} from the closed range theorem.
\begin{comment}
We repeat the argument: suppose $\range{A\oplus B}$ is closed. By the closed range theorem this is equivalent with a closed range of the adjoint of $A\oplus B$ and its range equals the annihilator of the kernel of $A\oplus B$, which corresponds to the orthogonal complement in the Hilbert space setting: $\range{(A\oplus B)^\ast}=\kernel{A\oplus B}^\perp$. Because the constructed isomorphism maps between the kernels of $A\oplus B$ and $C$, it induces another isomorphism between the orthogonal complements, since the kernels and their orthogonal complements are closed and decompose the same Hilbert space:

\begin{figure}[htbp]
\centering
\includestandalone[width=0.35\textwidth]{Chap6/freddiagram}
\end{figure}

The orthogonal complement of the kernel with respect to $C$ is isomorphic to the dual of the quotient space $\quotspace{\mathscr{H}^{\oplus 2}}{\kernel{C}}$ and since it is itself a Hilbert space, it is isomorphic to the quotient. The homomorphism theorem then entails
\begin{equation*}
\range{(A\oplus B)^\ast}\cong \kernel{C}^\perp \cong \quotspace{\mathscr{H}^{\oplus 2}}{\kernel{C}} \cong \range{C} \quad. 
\end{equation*}
The closedness of $\range{(A\oplus B)^\ast}$ is preserved under isomorphisms, such that $\range{C}$ is closed. Starting with $\range{C}$ being closed, the same argumentation backwards shows, that $\range{(A\oplus B)^\ast}$ is closed. The closed range theorem finally implies the closedness of $\range{A\oplus B}$.
\end{comment}
\\
\\
The previous lemma and closedness implies, that 
\begin{equation*}
\quotspace{\left(\mathscr{H}_1\oplus\mathscr{H}_2\right)}{\range{C}}\quad \text{and} \quad \quotspace{\left(\mathscr{H}_1\oplus\mathscr{H}_2\right)}{\range{A\oplus B}}
\end{equation*}  
are projective Hilbert $\Upgamma$-submodules. As in (1) a unitary isomorphism between these two spaces needs to be found. Since both quotient spaces are again Hilbert spaces, they are isomorphic to their dual spaces by the Frechet-Riesz theorem. The dual of a quotient of a Hilbert space and a closed subset is isomorphic to the orthogonal complement of the closed subset:
\begin{equation*}
\quotspace{\left(\mathscr{H}_1\oplus\mathscr{H}_2\right)}{\range{C}} \cong \left(\quotspace{\left(\mathscr{H}_1\oplus\mathscr{H}_2\right)}{\range{C}}\right)' \cong \range{C}^{\perp}
\end{equation*}  
and analogous for the other quotient. The closed range theorem implies, that the orthogonal complement of the ranges are equal to the null space of their adjoint operators:
\begin{equation*}
\quotspace{\left(\mathscr{H}_1\oplus\mathscr{H}_2\right)}{\range{C}} \cong \kernel{C^\ast}\quad \text{and}\quad \quotspace{\left(\mathscr{H}_1\oplus\mathscr{H}_2\right)}{\range{A\oplus B}}\cong \kernel{(A\oplus B)^\ast}\quad.
\end{equation*}  
It is left to show, that the kernels of the adjoint operators are isomorphic to each other: by the closed range theorem the image equals the inverse annihilator of the kernel of the adjoint operator. In the Hilbert space setting this annihilator coincides the orthogonal complement and because it is closed, its orthogonal complement is again the null space:
\begin{equation*}
\range{C}^\perp=\kernel{C^\ast}\quad\text{and}\quad \range{A\oplus B}^\perp=\kernel{(A\oplus B)^\ast} \quad. 
\end{equation*} 
Adjoining \cref{operatorisomorph} one gets
\begin{equation*}
C^\ast=\mathcal{I}^\ast\circ (A\oplus B)^\ast \circ \mathcal{J}^\ast\quad,
\end{equation*}  
where $I^\ast \in \mathscr{L}(\mathscr{H}^{\oplus 2})$ and $J^\ast \in \mathscr{L}(\mathscr{H}_1\oplus\mathscr{H}_2)$ are again isomorphisms with inverses, given by the adjoints of the inverses of $\mathcal{I}$ and $\mathcal{J}$. Following a similar procedure as in (1) one can show, that both kernels are indeed isomorphic to each other. %, where the isomorphism is now given by $\mathcal{I}^\ast \circ (\mathcal{J}^{-1})^\ast$. 
Thus the composition with the other isomorphisms, used to reduce the quotients, implies again a topological isomorphism between projective $\Upgamma$-submodules, where the same referred result as above finally leads to the claim:
\begin{align*}
\codim_\Upgamma(\range{C})=\dim_\Upgamma(\cokernel{C})=\dim_\Upgamma\left(\quotspace{\mathscr{H}_1\oplus \mathscr{H}_2}{\range{C}}\right)&=\dim_\Upgamma(\kernel{C^\ast} \\
& \quad\quad\quad \| \\
\codim_\Upgamma(\range{A\oplus B})=\dim_\Upgamma(\cokernel{A\oplus B})&=\dim_\Upgamma(\kernel{(A\oplus B)^\ast}
\end{align*}
\item[(3)] $\Upgamma$-Fredholmness of $C$ means $\dim_\Upgamma(\kernel{C})<\infty$, $\range{C}$ is closed and $\codim_\Upgamma(\range{C})<\infty$. The previous claims imply $\dim_\Upgamma(\kernel{A\oplus B})<\infty$, $\range{A\oplus B}$ is closed and $\codim_\Upgamma(\range{A\oplus B})<\infty$ and thus $\Upgamma$-Fredholmness of $A\oplus B$. The equivalence of the $\Upgamma$-indices follows from the equivalence of the $\Upgamma$-dimensions and -codimensions and (7) of Lemma \ref{propgammafred}.
\end{itemize}
\end{proof}
\begin{comment}
:
\begin{eqnarray*}
\Index_\Upgamma(C)&=&\dim_\Upgamma(\kernel{C})-\codim_\Upgamma(\kernel{C})=\dim_\Upgamma(\kernel{A\oplus B})-\codim_\Upgamma(\kernel{A\oplus B})\\
&=&\Index_\Upgamma(A\oplus B)\quad.
\end{eqnarray*}
\end{comment}
Another consequence of Lemma \ref{helpinglemma} (3) is
\begin{cor}\label{helpingcorollary}
Given Hilbert $\Upgamma$-modules $\mathscr{H}_j$ and $\mathscr{H}_j^{'}$, $j \in \SET{1,2}$, and two $\Upgamma$-morphisms \\ $A_j:\,\mathscr{H}_j\rightarrow \mathscr{H}_j^{'}$, then
\begin{equation*}
(A_1\oplus A_2)\in \mathscr{F}_\Upgamma(\mathscr{H}_1\oplus\mathscr{H}_2,\mathscr{H}_1^{'}\oplus\mathscr{H}_2^{'})\quad\text{if and only if}\quad A_j\in \mathscr{F}_\Upgamma(\mathscr{H}_j,\mathscr{H}_j^{'})
\end{equation*} 
and the index is
\begin{equation*}%\label{directsumindex}
\Index_\Upgamma(A_1\oplus A_2)=\Index_\Upgamma(A_1)+\Index_\Upgamma(A_2)\quad .
\end{equation*}
\end{cor}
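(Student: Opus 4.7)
The plan is to verify each of the three defining properties of $\Upgamma$-Fredholmness directly, exploiting the fact that every operation in the definition (kernels, ranges, traces, adjoints, $\Upgamma$-dimensions) respects orthogonal direct sums. The main obstacle is purely bookkeeping: one must confirm that the direct sum of two projective Hilbert $\Upgamma$-submodules is again projective (already supplied by Lemma \ref{helpinglemma} (3)) and that closedness of $\range{A_1} \oplus \range{A_2}$ is equivalent to closedness of each summand (which follows from continuity of the canonical projections onto each factor of the Hilbert direct sum).

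First I would handle the implication from individual $\Upgamma$-Fredholmness to joint $\Upgamma$-Fredholmness. Given parametrices $B_j \in \mathscr{L}_\Upgamma(\mathscr{H}_j',\mathscr{H}_j)$ for $A_j$, the operator $B := B_1 \oplus B_2$ lies in $\mathscr{L}_\Upgamma(\mathscr{H}_1'\oplus\mathscr{H}_2',\mathscr{H}_1\oplus\mathscr{H}_2)$ and satisfies
\begin{equation*}
\Iop{\mathscr{H}_1\oplus\mathscr{H}_2} - B(A_1\oplus A_2) = (\Iop{\mathscr{H}_1}-B_1 A_1)\oplus(\Iop{\mathscr{H}_2}-B_2 A_2),
\end{equation*}
and analogously for the composition on the right. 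The right-hand side is a direct sum of $\Upgamma$-trace-class operators, hence $\Upgamma$-trace-class, and its $\Upgamma$-trace equals the sum of the individual $\Upgamma$-traces (since in the defining identification $\mathscr{H}_j \hookrightarrow L^2(\Upgamma)\otimes\mathcal{H}_j$ one may take $\mathcal{H}=\mathcal{H}_1\oplus\mathcal{H}_2$ and the matrix entries at $(\upepsilon,\upepsilon)$ split block-diagonally). This gives $\Upgamma$-Fredholmness of $A_1\oplus A_2$ together with the additive index formula.

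For the converse I would argue through kernels. Clearly $\kernel{A_1\oplus A_2} = \kernel{A_1}\oplus\kernel{A_2}$ and $(A_1\oplus A_2)^\ast = A_1^\ast \oplus A_2^\ast$, so also $\kernel{(A_1\oplus A_2)^\ast} = \kernel{A_1^\ast}\oplus\kernel{A_2^\ast}$; by Lemma \ref{helpinglemma} (1) and (3) all of these are projective Hilbert $\Upgamma$-submodules. Additivity of $\dim_\Upgamma$ on direct sums (Lemma \ref{propgammadim} (3)) together with non-negativity and monotonicity then forces each individual $\Upgamma$-dimension to be finite. Closedness of $\range{A_1\oplus A_2} = \range{A_1}\oplus\range{A_2}$ in $\mathscr{H}_1'\oplus\mathscr{H}_2'$ transfers to each summand via the continuous coordinate projections, so each $\range{A_j}$ is closed. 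Combining these facts with Lemma \ref{propgammafred} (7) shows that each $A_j$ is $\Upgamma$-Fredholm, and the index formula is reobtained from
\begin{equation*}
\Index_\Upgamma(A_1\oplus A_2) = \dim_\Upgamma(\kernel{A_1\oplus A_2}) - \dim_\Upgamma(\kernel{(A_1\oplus A_2)^\ast}) = \Index_\Upgamma(A_1) + \Index_\Upgamma(A_2),
\end{equation*}
consistent with the trace computation above.
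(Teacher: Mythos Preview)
Your forward direction is correct: the direct sum of parametrices works, and the additivity of the $\Upgamma$-trace on block-diagonal operators gives the index formula.

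The converse direction, however, has a genuine gap. You assert that $\range{A_1\oplus A_2}$ is closed and then deduce closedness of each $\range{A_j}$. But in the $\Upgamma$-setting (Breuer--Fredholm theory) a $\Upgamma$-Fredholm operator need \emph{not} have closed range: Lemma~\ref{propgammafred}~(8) only guarantees a closed subspace $W\subset\range{A}$ of finite $\Upgamma$-codimension, and the paper explicitly warns after item~(7) that the naive converse fails. So your argument breaks at the point where you invoke closedness of the range, and your appeal to (7) to conclude $\Upgamma$-Fredholmness is exactly the reversed implication that does not hold.

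The cleanest repair is to stay with parametrices, mirroring your forward argument. If $B\in\mathscr{L}_\Upgamma(\mathscr{H}_1'\oplus\mathscr{H}_2',\mathscr{H}_1\oplus\mathscr{H}_2)$ satisfies $\Iop{}-B(A_1\oplus A_2)\in\mathscr{TC}_\Upgamma$, write $B=(B_{ij})_{i,j}$ in block form with respect to the direct sum. The diagonal block of the remainder is
\begin{equation*}
\Iop{\mathscr{H}_j}-B_{jj}A_j \;=\; P_j\bigl(\Iop{}-B(A_1\oplus A_2)\bigr)P_j,
\end{equation*}
where $P_j$ is the orthogonal projection onto the $j$-th summand. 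Since $\mathscr{TC}_\Upgamma$ is a two-sided ideal in $\mathscr{L}_\Upgamma$ and the $P_j$ are $\Upgamma$-morphisms, this compression is again $\Upgamma$-trace-class, so $B_{jj}$ is a left $\Upgamma$-parametrix for $A_j$. The same reasoning applied to $\Iop{}-(A_1\oplus A_2)B$ produces a right parametrix, and hence each $A_j$ is $\Upgamma$-Fredholm. The index formula then follows either from the trace computation in your forward direction or from Lemma~\ref{propgammafred}~(7) (now used only in the legitimate forward direction) together with the kernel splitting you already observed.
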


\subsection{Operators and Sobolev spaces on $\Upgamma$-manifolds}%\label{6sec3}
\justifying
Let $E\rightarrow M$ be a $\Upgamma$-vector bundle over a $\Upgamma$-manifold $M$, equipped with a $\Upgamma$-invariant smooth density $\differ \mu$, which is either defined by a $\Upgamma$-invariant Lorentzian or Riemannian metric or otherwise, and with a $\Upgamma$-invariant inner product $\idscal{1}{E_{p}}{\cdot}{\cdot}$ on each fiber $E_p$, i.e. 
\begin{equation*}
\differ \mu (\upgamma p)=\differ \mu(p)\quad \text{and} \quad \idscal{1}{E_{\upgamma p}}{\cdot}{\cdot}=\idscal{1}{E_{p}}{\cdot}{\cdot}\quad \forall\,\upgamma\in\Upgamma\quad.
\end{equation*} 
These properties ensure, that %the representation of $\Upgamma$ by $\upgamma \mapsto L_\upgamma$ according to \cref{leftrighttransprop} and following is unitary in $L^2_\Upgamma(M,E)$, i.e. 
\begin{comment}
:
\begin{eqnarray*}
\idscal{1}{L^2_\Upgamma(M,E)}{L_\upgamma u}{v}&=& \int_M \idscal{1}{E_{p}}{(L_\upgamma u)(p)}{v(p)} \differ \mu(p) = \int_M \idscal{1}{E_{p}}{\pi_\Upgamma u(\upgamma^{-1}p)}{v(p)} \differ \mu(p) \\
&=& \int_M \idscal{1}{E_{p}}{\pi_\Upgamma u(\upgamma^{-1}p)}{\pi_\Upgamma\pi_\Upgamma^{-1}v(p)} \differ \mu(p)= \int_M \idscal{1}{E_{p}}{u(\upgamma^{-1}p)}{\pi_\Upgamma^{-1}v(p)} \differ \mu(p) \\
&=& \int_{\upgamma^{-1}M} \idscal{1}{E_{\upgamma q}}{u(q)}{\pi_\Upgamma^{-1}v(\upgamma q)} \differ \mu(\upgamma q) = \int_{M} \idscal{1}{E_{q}}{u(q)}{\pi_\Upgamma^{-1}v(\upgamma q)} \differ \mu(q)\\
&=& \idscal{1}{L^2_\Upgamma(M,E)}{u}{(L_\upgamma)^{-1}v}\quad,
\end{eqnarray*}
where the invariances under the $\Upgamma$-action and the isometry property of $\pi_\Upgamma$ has been used. Thus one observes $(L_\upgamma)^{\ast}=(L_\upgamma)^{-1}$ for each element in the group, where the inverse of the left translation operator is defined via the inverse of the isometry. Thus,
\end{comment}
$\Upgamma$ has a unitary representation in $L^2_\Upgamma(M,E)$. Choosing a fundamental domain $\mathcal{F}\opset M$ allows to decompose $M$ isomorphically into $\Upgamma\times \mathcal{F}$, such that the action of the group on the $\Upgamma$-manifold becomes an action on $\Upgamma\times \mathcal{F}$ : $\upgamma_2 (\upgamma_1,p)=\upgamma_2 (\upgamma_2\upgamma_1,p)$ for each $p \in \mathcal{F}$ and $\upgamma_1,\upgamma_2 \in \Upgamma$, where $(\upgamma_1,p)\mapsto\upgamma_1 p$ is an isomorphism. This induces a unitary isomorphism between $L^2$-sections:
\begin{equation*}
L^2_\Upgamma(M,E) \quad\stackrel{\sim}{\longleftrightarrow}\quad L^2(\Upgamma)\otimes L^2(\mathcal{F},E) \quad,
\end{equation*}
where the $\Upgamma$-vector bundle is restricted to the fundamental domain. If both $L^2$-spaces are identical and the action of $\Upgamma$ is given by $L_\upgamma \otimes \id{\mathcal{F}}$, one observes, that $L^2_\Upgamma(M,E)$ and any closed $\Upgamma$-invariant subset \textit{are Hilbert} $\Upgamma$-\textit{modules}! %In fact: any closed $\Upgamma$-invariant subset of $L^2_\Upgamma(M,E)$ is a Hilbert $\Upgamma$-module. 
The space $\mathscr{L}_\Upgamma(L^2_\Upgamma(M,E))$ of operators in $\mathscr{L}(L^2_\Upgamma(M,E))$, which commute with the left translation operator, is a von Neumann algebra with trace $\mathrm{Tr}_\Upgamma:=\uptau_\Upgamma\otimes\mathrm{Tr}$, which is independent of the choice of the fundamental domain, see Theorem in section 2.14 of \cite{shub}. The $\Upgamma$-trace can be expressed in terms of Schwartz kernels, see subsections 2.21 and 2.22 in \cite{shub}. To identify an operator to be $\Upgamma$-Hilbert-Schmidt or $\Upgamma$-trace-class in terms of their Schwartz kernels the reader should refer to subsection 2.23 in \cite{shub} and paragraph 4 of \cite{atiyahellvn}. Speaking about Schwartz kernels, for an operator $A \in \mathscr{L}_\Upgamma(L^2_\Upgamma(M,E))$ the $\Upgamma$-invariance of its kernel function up isometries is implied as follows:  

\begin{lem}[cf. Lemma in 2.24 in \cite{shub} for the scalar case]\label{lemgammakernel}
Let $A \in \mathscr{L}_\Upgamma(L^2_\Upgamma(M,E))$ with Schwartz kernel $K_A$, then
\begin{equation*}
K_A(\upgamma p,\upgamma q)=\pi_\Upgamma(p)K_A(p,q)\pi_\Upgamma^{-1}(q)
\end{equation*}
for all $\upgamma \in \Upgamma$ and $(p,q)\in M \times M$.
\end{lem}

This implies, that any element in the von Neumann algebra $\mathscr{L}_\Upgamma(L^2_\Upgamma(M,E))$ has a Schwartz kernel, which is a distribution on the compact orbit space $\quotspace{\left(M \times M\right)}{\Upgamma}$ under the diagonal action of $\Upgamma$. Moreover
\begin{cor} Under the same preassumptions as in Lemma \ref{lemgammakernel} one has
\begin{equation*}
\tr{E_{\upgamma p}}{K_A(\upgamma p,\upgamma p)}=\tr{E_p}{K_A(p,p)}\quad.
\end{equation*}
\end{cor}

The operators of interest are differential operators and their extensions to pseudo-differential and Fourier integral operators. Since these operators are a priori unbounded, we refer to them as $\Upgamma$-operators, if they commute with the left translation. We denote by
\begin{equation*}
\begin{split}
\Diff{m}{\Upgamma}(M,\Hom(E,F))&:=\SET{A \in \Diff{m}{}(M,\Hom(E,F))\,\vert\, AL_\upgamma=L_\upgamma A \quad \forall\, \upgamma \in \Upgamma}\quad; \\
\ydo{s}{\Upgamma}(M,\Hom(E,F))&:=\SET{A \in \ydo{s}{}(M,\Hom(E,F))\,\vert\, AL_\upgamma=L_\upgamma A \quad \forall\, \upgamma \in \Upgamma}\quad; \\
\FIO{s}_{\Upgamma}(M,N;\mathsf{C}';\Hom(E,G))&:=\SET{A \in \FIO{s}(M,N;\mathsf{\Lambda};\Hom(E,G))\,\vert\, AL_\upgamma=L_\upgamma A \quad \forall\, \upgamma \in \Upgamma} 
\end{split}
\end{equation*} 
the sets of $\Upgamma$-differential, $\Upgamma$-pseudo-differential and $\Upgamma$-Fourier integral operators, where $M,N$ are $\Upgamma$-manifolds, $E,F\rightarrow M$ and $G\rightarrow N$ are $\Upgamma$-vector bundles, $\mathsf{\Lambda}$ a Lagrangian submanifold and $m\in \N_0$, $s\in\R$. If an operator in one of these operator spaces enjoys the property of being properly supported, its Schwartz kernel is compactly supported in $\quotspace{\left(M\times M\right)}{\Upgamma}$ and $\quotspace{\left(M\times N\right)}{\Upgamma}$ respectively. Thus, any $\Upgamma$-invariant differential operator has compactly supported Schwartz kernel on the orbit space. The function spaces of interest next to $L^2_\Upgamma(M,E)$ are Sobolev spaces. From \cite{shub} section 3.9 they are defined by an appropriate norm with the $\Upgamma$-invariant partition of unity \cref{gammainvpartition}:
\begin{equation*}%\label{soboloevgammanorm}
\norm{u}{H^s_\Upgamma(M,E)}^2:=\sum_{\substack{j \in J\\ \upgamma \in \Upgamma}}\norm{\phi_{j,\upgamma}u}{H^s(\supp{\phi_{j,\upgamma}},E)}^2\quad.
\end{equation*} 
The corresponding $\Upgamma$-Sobolev spaces are then defined for any $s\in \R$ via
\begin{equation*}%\label{sobolevgamma}
H^s_\Upgamma(M,E):=\SET{u\in H^s_\loc(M,E)\,\vert\,\norm{u}{H^s_\Upgamma(M,E)} < \infty} \quad.
\end{equation*} 
\begin{comment}
An equivalent description for $m=s\in \N_0$ can be made in terms of $\Upgamma$-invariant differential operators:
\begin{equation*}
H^m_\Upgamma(M,E):=\SET{u\in L^2_\Upgamma(M,E)\,\vert\, Au \in L^2_\Upgamma(M,E)\quad\text{for any}\,A\in\Diff{m}{\Upgamma}(M,\End(E))}\quad.
\end{equation*}
The spaces $H^{-m}_\Upgamma(M,E)$ are the duals of $H^{m}_\Upgamma(M,E)$ as Banach spaces in a $\Upgamma$-invariant way. They can be formulated in terms of distributions and $\Upgamma$-invariant differential operators as well:
\begin{equation*}
H^{-m}_\Upgamma(M,E):=\SET{u\in C^{-\infty}(M,E)\,\Bigg\vert\, u=\sum_{\substack{j\\\text{finite}}}A_jf_j\quad\text{for}\,\ f_j\in L^2_\Upgamma(M,E)\,\,\text{and}\,\,A_j\in\Diff{m}{\Upgamma}(M,\End(E))}\quad.
\end{equation*}
\end{comment}
A certain subclass of $\Upgamma$-invariant pseudo-differential operators are \textit{classical} operators: 
\begin{equation*}
\begin{split}
A \in \ydo{m}{\mathsf{cl},\Upgamma}(M,\Hom(E,F))\quad :\Leftrightarrow\quad &A\in \ydo{m}{\Upgamma}(M,\Hom(E,F))\,:\, A=\hat{A}+R\,,\,\text{where}\\ 
&\hat{A}\in\ydo{m}{\Upgamma}(M,\Hom(E,F))\,\,\text{has}\,\,\text{classical symbol}\\
&\text{and}\,R\in\ydo{-\infty}{\Upgamma}(M,\Hom(E,F))
\end{split}
\end{equation*}
for $M$ a $\Upgamma$-manifold and $E,F$ $\Upgamma$-vector bundles over $M$. %Theorem 2 of subsection 3.9. in \cite{shub} ensures, that any elliptic and properly supported element in $\ydo{m}{\mathsf{cl},\Upgamma}(M,\Hom(E,F))$ allows uniformy elliptic regularity, i.e. if $u\in H^{-l}_\Upgamma(M,E)$ for some $l\in \R$ and $Au\in H^{s-m}_\Upgamma(M,F)$, then $u \in H^s_\Upgamma(M,E)$. in the same subsection on the other hand
Theorem 1 of subsection 3.9 in \cite{shub} ensures, that any properly supported element in $\ydo{m}{\Upgamma}(M,\Hom(E,F))$ extends to an operator in $\mathscr{L}(H^s_\Upgamma(M,E),H^{s-m}_\Upgamma(M,F))$, which commutes with the action in the Sobolev spaces. Together with uniform elliptic regularity (Theorem 2 in subsection 3.9 of \cite{shub}) they imply, that $H^s_\Upgamma(M,E)$ are \textit{Hilbert} $\Upgamma$-\textit{modules for any} $s\in \R$. As a consequnce any properly supported operator in $\ydo{m}{\Upgamma}(M,\Hom(E,F))$ is part of the von Neumann algebra $\mathscr{L}_\Upgamma(H^s_\Upgamma(M,E),H^{s-m}_\Upgamma(M,F))$ (see Corollary 2 of these two results).% A deeper analysis shows, that the $\Upgamma$-Sobolev spaces are in particular \textit{free Hilbert} $\Upgamma$-\textit{modules}.
\begin{rems*}
\noindent %\label{remarkselliptic}
\begin{itemize}
\item[(a)] Suppose $A \in \ydo{m}{\mathsf{cl},\Upgamma}(M,E)$ is elliptic, properly supported with $m >0$; if $A=A^\ast$ in $L^2_\Upgamma(M,E)$, then it is essentially self-adjoint with $\dom{}{\overline{A}}=H^m_\Upgamma(M,E)$, where $\overline{A}$ is the closure/self-adjoint extension of $A$.
\item[(b)] According to Corollary 2 in section 3.11 of \cite{shub} the $\Upgamma$-Sobolev spaces are in particular \textit{free Hilbert} $\Upgamma$-\textit{modules}.
%Let $s_1,s_2 \in \R\cup\SET{\infty}$ and $E_1\rightarrow M_1$ and $E_2\rightarrow M_2$ $\Upgamma$-invariant vector bundles over $\Upgamma$-manifolds $M_1,M_2$; if $A$ is a properly supported $\Upgamma$-invariant operator with smooth Schwartz kernel $K_A$, i.e. $K_A\in C^\infty_\comp\left(\quotspace{M_1 \times M_2}{\Upgamma},\Hom(E_1,E_2)\right)$, then $A \in \mathscr{TC}_\Upgamma(H^{s_1}_\Upgamma(M_1,E_1),H^{s_2}_\Upgamma(M_2,E_2))$.  
\end{itemize}
\end{rems*}

\noindent The requirement of being properly supported in order to have all nice properties, is always fullfilled for differential operators, but too restrictive for a general class of operators. This motivates to extend the class of properly supported $\Upgamma$-operators to a wider class in such a way, that smoothing operators have smooth kernels and act appropiately between Sobolev spaces, such that all results for properly supported $\Upgamma$-operators can be recovered, without insisting on this feature.
\begin{defi}\label{ssmoothing}
Given $E_1\rightarrow M_1$ and $E_2\rightarrow M_2$ $\Upgamma$-vector bundles over $\Upgamma$-manifolds $M_1,M_2$ and let $A\,:\,C^\infty_\comp(M_1,E_1)\,\rightarrow\,C^{-\infty}(M_2,E_2)$ be a $\Upgamma$-operator; $A$ is said to be \textit{s-smoothing}, if it extends to a continuous linear operator between $\Upgamma$-Sobolev spaces for any orders $r,p$: 
\begin{equation*}
A\,:\,H^r_\Upgamma(M_1,E_1)\,\rightarrow\,H^p_\Upgamma(M_2,E_2)\quad.
\end{equation*} 
\end{defi}

\noindent This is a slight generalization of s-smoothing operators, introduced in \cite{shub} subsection 3.11; if the operator is known to be a pseudo-differential operator, we write $S\ydo{-\infty}{\Upgamma}(M,\Hom(E_1,E_2))$ (for $M_1=M=M_2)$. Note, that in this case $S\ydo{-\infty}{\Upgamma}(M,\Hom(E_1,E_2)) \subset \ydo{-\infty}{\Upgamma}(M,\Hom(E_1,E_2))$. Especially for $\Upgamma$-invariant pseudo-differential operators one can define a class of operators, which differ from a properly supported $\Upgamma$-invariant pseudo-differential operator by a s-smoothing pseudo-differential operator\bnote{f29}:
\begin{defi}[Definition 2 in \cite{shub} 3.11]
Let $E,F$ be $\Upgamma$-vector bundles over the $\Upgamma$-manifold $M$; an operator $A \in \ydo{m}{\Upgamma}(M,\Hom(E,F))$ is called \textit{s-regular}, if there is a properly supported $\hat{A}\in \ydo{m}{\Upgamma}(M,\Hom(E,F))$, such that $(A-\hat{A}) \in S\ydo{-\infty}{\Upgamma}$. If $\hat{A}\in \ydo{m}{\mathsf{cl},\Upgamma}(M,\Hom(E,F))$, then they are defined as \textit{classical s-regular}.
\end{defi}

\noindent As in \cite{shub} we denote both operator spaces with $S\ydo{m}{\Upgamma}(M,\Hom(E,F))$ and respectively \\ $S\ydo{m}{\mathsf{cl},\Upgamma}(M,\Hom(E,F))$. Properties of these classes of operators are listed below:
\begin{rems} 
\noindent \label{remarkssmoothing}
\begin{itemize}
\item[(1)] If $A$ is properly supported $\Upgamma$-operator with smooth Schwartz kernel, then $A$ is s-smoothing.
\item[(2)] If $A$ is s-smoothing, then it has a smooth Schwartz kernel and is a $\Upgamma$-trace-class operator.
\item[(3)] If $A \in S\ydo{m}{\Upgamma}(M,\Hom(E,F))$, then $A\in \mathscr{L}_\Upgamma(H^s_\Upgamma(M,E),H^{s-m}_\Upgamma(M,F))$ for any $s\in \R$.
\item[(4)] If $A \in S\ydo{m}{\Upgamma}(M,\Hom(E,F))$ elliptic with $m>0$ and formally self-adjoint, then $A$ is essentially self-adjoint and the spectral projections $\chi_I(A)\in S\ydo{-\infty}{\Upgamma}(M,\Hom(E,F))$ for any bounded Borel set $I \subset \R$; see also Proposition 3.1 in \cite{atiyahellvn}.
\item[(5)] If $A \in S\ydo{m}{\Upgamma}(M,\Hom(E,F))$ is elliptic with $m>0$ and $P_{\kernel{A}}=\chi_{\SET{0}}$ an orthogonal projection, then $P_{\kernel{A}}\in S\ydo{-\infty}{\Upgamma}(M,\End(E))$.
\end{itemize}
\end{rems}
A generalization of property (4) to unbounded intervalls in $\R$ for $A$ being an elliptic $\Upgamma$-invariant differential operator is needed. Denote by
\begin{equation}\label{basicspecproj}
P_{+}:=P_{\intervallo{0}{\infty}{}}=\chi_{\intervallo{0}{\infty}{}}(A)\quad \text{and}\quad P_{-}:=P_{\intervallo{-\infty}{0}{}}=\chi_{\intervallo{-\infty}{0}{}}(A)=\chi_{\R\setminus\intervallo{0}{\infty}{}}(A)
\end{equation}
the projectors to the positive and negative half line. We want a description as a $\Upgamma$-invariant operator. The idea is to rewrite the characteristic function as a sum of the identity and the signum of $A$, which can be realized by the inverse of the positive square root $\vert A \vert := \sqrt{A^\ast A}$ in composition with $A$ itself. Roughly speaking
\begin{equation}\label{apsprojectors}
P_{\pm} =\frac{1}{2}\left(\Iop{}\pm\vert A \vert^{-1}\circ A \right) \quad .
\end{equation} 
For compact manifolds Seeley's theorem for complex powers of elliptic differential operators implies, that $\vert A \vert^{-1}$ is defined and an elliptic $\Psi$DO of order 1. In order to extend this idea to differential operators on $\Upgamma$-manifolds one needs to take a detour to the $\Psi$DO-theory on bounded geometry. We follow the reference \cite{kordyu} and refer to \cite{shubinspec} or \cite{kordyu2} for more details: suppose $M$ is a manifold of bounded geometry and $E\rightarrow M$ a vector bundle of bounded geometry. An element $A \in B\ydo{m}{\mathsf{prop}}(M,E)$ is a properly supported pseudo-differential operator of order $m \in \R$ with uniformly bounded symbol.
\begin{comment}
i.e. the following requirements are satisfied:
\begin{itemize}
\item In any coordinate system the operator is of the form $A=\mathsf{Op}(a)+R$, where $\mathsf{Op}(a)$ is the "quantization" of the symbol $a$, satisfying certain bounds, and $R$ is a smoothing and uniformly bounded operator.
\begin{enumerate}
\item 
\begin{equation*}
(\mathsf{Op}(a)u)(x):=\iint \expe{\Imag \dscal{1}{}{x-y}{\xi}}a(x,\xi)u(y)\differ y \text{\dj}\xi
\end{equation*}
for any $u \in C_{\comp}^\infty(\oball{n}{}{0})$, such that $a(x,\xi)\in C^\infty(\oball{n}{}{0}\times \R^n)$ is a complete symbol, such that there exists a constant $C_{\bm{\alpha},\bm{\beta}}>0$, such that
\begin{equation*}
\absval{\partial_x^{\bm{\alpha}}\partial_\xi^{\bm{\beta}} a(x,\xi)}\leq C_{\bm{\alpha},\bm{\beta}}(1+\absval{\xi})^{m-\absval{\bm{\beta}}}
\end{equation*}
for $x \in \oball{n}{}{0}$, $\xi \in \R^n$ and all $\bm{\alpha}$, $\bm{\beta} \in \N_0^n$.
\item $R$ is a smoothing and uniformly bounded operator on $\oball{n}{}{0}$.
\end{enumerate}

\item ($c$-locality): it exists a constant $c >0$, such that the Schwartz kernel $K_A$ of $A$ satisfies $K_A(x,y)=0$, if the geodesic distant satisfies $\rho(x,y)>c$.
\item For any $\epsilon > 0$ all covariant derivatives of the Schwartz kernel are uniformly bounded on $(M\times M)\setminus \mathcal{U}_\epsilon$, where $\mathcal{U}_\epsilon$ is a $\epsilon$-neighborhood of the diagonal in $M$.
\end{itemize}
\end{comment}
If the complete symbol moreover admits an uniformly asymptotic expansion $a \sim \sum_{j=0}^\infty a_{m-j}$, where each $a_{m-j}$ is a complete symbol and homogeneous of degree $(m-j)$ away from the origin in $\R^n$, one calls these pseudo-differential operators \textit{classical} and denotes it by $B\ydo{m}{\mathsf{prop,cl}}(M,E)$. Operators with complete uniformly bounded symbol, which map between $C^\infty_\comp(M,E)$ and $C^\infty(M,E)$ and are continuous mappings between $H^s(M,E)$ and $H^\infty(M,E)=\bigcap_{t \in \R}H^t(M,E)$ for any $s\in \R$ (i.e. they map between any two Sobolev spaces), will be denoted by $B\ydo{-\infty}{}(M,E)$. As in classical $\Psi$DO-theory any uniformly bounded pseudo-differential operator of order $m \in \R$ can be represented as a sum of uniformly bounded pseudo-differential operators, where one is properly supported with the same order and the other is smoothing: for $A \in B\ydo{m}{}(M,E)$ exists a $\bar{A}\in B\ydo{m}{\mathsf{prop}}(M,E)$, such that
\begin{equation*}
(A-\bar{A}) \in  B\ydo{-\infty}{}(M,E)\quad;
\end{equation*}
if furthermore $\bar{A} \in B\ydo{m}{\mathsf{prop,cl}}(M,E)$, write $A \in B\ydo{m}{\mathsf{cl}}(M,E)$. Let $A \in B\ydo{m}{\mathsf{prop,cl}}(M,E)$ be formally self-adjoint and uniformly elliptic, it is shown in \cite{kordyu}, that the resolvent $(A-\lambda \id{})$ for certain $\lambda \in \C$ is defined and an element in $B\ydo{m}{\mathsf{cl}}(M,E)$, such that for each function $f$, extendable as entire function on the whole complex plane, the operator function $f(A)$ is defined by
\begin{equation}\label{cauchyoperator}
f(A):=\frac{\Imag}{2\uppi}\int_\gamma f(\lambda)(A-\lambda \id{})^{-1} \differ \lambda \quad,
\end{equation}
where $\gamma$ is a Hankel contour; see formula (6) in section 2 of \cite{kordyu}. As a consequence complex powers of $A$ become well defined: let $A \in B\ydo{m}{\mathsf{prop,cl}}(M,E)$ be formally self-adjoint and uniformly elliptic. For any $z \in \C$, such that $\Rep{z}<0$, the operator $A^z$ is defined by \cref{cauchyoperator} with $f(\lambda)=\lambda^z$ with the branch chosen in such a way, that $\lambda^z=\expe{z\log{\lambda}}$ for $\lambda >0$. With $A^z=A^{z-k}\circ A^k$ for $k \in \N_0$, such that $\Rep{z}<k$, one can extend this to all complex powers. The corollary after Proposition 1 in the \cite{kordyu} ensures, that $A^z \in B\ydo{m\Rep{z}}{\mathsf{cl}}(M,E)$. 

\newpage
For the purpose of this detour any $\Upgamma$-invariant (pseudo-) differential operator corresponds to a uniformly bounded (pseudo-) differential operator on a $\Upgamma$-manifold, which acts on a $\Upgamma$-vector bundle and commutes with the left representation of $\Upgamma$. Under this point of view the space $B\ydo{-\infty}{}(M,E)$ corresponds to s-smoothing operators $S\ydo{-\infty}{\Upgamma}(M,E)$ and $B\ydo{m}{(\mathsf{cl})}(M,E)$ to (classical) s-regular operators $S\ydo{m}{\Upgamma,(\mathrm{cl})}(M,E)$. A formally self-adjoint and elliptic element $A \in \Diff{m}{\Upgamma}(M,E)\subset \ydo{m}{\Upgamma,\mathsf{prop,cl}}(M,E)$, $m \in \N_0$, satisfies $A^\ast\circ A=A^2 \in \ydo{2m}{\Upgamma,\mathsf{prop,cl}}(M,E)$. Since $(A^2-\lambda \id{})$ commutes with the left representation for all $\lambda \in \C$, the resolvent does as well.
\begin{comment}
:
\begin{equation*}
\begin{split}
(A^2-\lambda \id{})^{-1}L_\upgamma &=(A^2-\lambda \id{})^{-1}L_\upgamma (A^2-\lambda \id{})(A^2-\lambda \id{})^{-1}=(A^2-\lambda \id{})^{-1}(A^2-\lambda \id{})L_\upgamma(A^2-\lambda \id{})^{-1}\\
&=L_\upgamma(A^2-\lambda \id{})^{-1}
\end{split}
\end{equation*}
for all $\upgamma \in \Upgamma$ with the resolvent identity. 
\end{comment}
Thus according to \cref{cauchyoperator} $f(A^2)$ commutes with $L_\Upgamma$. Interpreting $A^2$ as operator in $B\ydo{2m}{\mathsf{prop,cl}}(M,E)$ allows to identify $\absval{A}^{-1}$ with $(A^\ast A)^{-\frac{1}{2}}=(A^2)^{-\frac{1}{2}}$, such that $\absval{A}^{-1}$ commutes with the left representation. It becomes an element in $B\ydo{-1}{\mathsf{cl}}(M,E)$ and thus corresponds to a s-regular pseudo-differential operator on a $\Upgamma$-manifold. Herewith the following assertion is shown:
\begin{lem}\label{gammaapsprojectors}
Let $A \in \Diff{m}{\Upgamma}(M,E)$ be formally self-adjoint and elliptic on a $\Upgamma$-manifold $M$ with $\Upgamma$-vector bundle $E$; the projectors \cref{basicspecproj} are s-regular pseudo-differential operators of order 0, i.e.
\begin{equation*}
P_{\pm} \in S\ydo{0}{\Upgamma}(M,E) \quad.
\end{equation*} 
\end{lem}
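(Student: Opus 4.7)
The strategy is to realise the spectral projector formula \cref{apsprojectors} within the s-regular $\Upgamma$-invariant pseudo-differential calculus. The functional calculus gives $P_\pm = \tfrac{1}{2}((I - P_{\kernel A}) \pm \mathrm{sgn}(A))$, where $\mathrm{sgn}(A) = |A|^{-1} A$ on $(\kernel A)^\perp$ and $0$ on $\kernel A$. Since $P_{\kernel A} \in S\ydo{-\infty}{\Upgamma}(M,E) \subset S\ydo{0}{\Upgamma}(M,E)$ by Remarks \ref{remarkssmoothing} (5), everything reduces to proving $\mathrm{sgn}(A) \in S\ydo{0}{\Upgamma}(M,E)$.

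To define $|A|^{-1}$ in the s-regular calculus I will replace $A^2$ by a spectral-gap perturbation $B := A^2 + \chi_{(-\epsilon,\epsilon)}(A)$ for a small fixed $\epsilon > 0$. By Remarks \ref{remarkssmoothing} (4) the added term is s-smoothing, so $B$ coincides with $A^2$ modulo $S\ydo{-\infty}{\Upgamma}$; in particular $B$ is a formally self-adjoint, uniformly elliptic, classical s-regular $\Psi$DO of order $2m$ whose spectrum lies in $\intervalllo{\min(1,\epsilon^2)}{\infty}{}$. The Kordyukov holomorphic functional calculus then applies: the Cauchy contour \cref{cauchyoperator} with $f(\lambda) = \lambda^{-1/2}$ produces $B^{-1/2} \in B\ydo{-m}{\mathsf{cl}}(M,E)$. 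Because every resolvent $(B - \lambda\id{})^{-1}$ commutes with $L_\upgamma$ (as $B$ does), so does the Cauchy integral; the translation between the bounded-geometry calculus and the s-regular $\Upgamma$-invariant calculus recalled just before the lemma then yields $B^{-1/2} \in S\ydo{-m}{\mathsf{cl},\Upgamma}(M,E)$.

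Next I will compose with $A$. Since $A \in \Diff{m}{\Upgamma}(M,E) \subset S\ydo{m}{\mathsf{cl},\Upgamma}(M,E)$, the composition $B^{-1/2} A$ belongs to $S\ydo{0}{\mathsf{cl},\Upgamma}(M,E)$. On the range of $I - \chi_{(-\epsilon,\epsilon)}(A)$ one has $B = A^2$, hence $B^{-1/2} A = |A|^{-1} A = \mathrm{sgn}(A)$ there; on the complementary range of $\chi_{(-\epsilon,\epsilon)}(A)$ both $B^{-1/2} A$ and $\mathrm{sgn}(A)$ are bounded. Their difference thus factors through the s-smoothing projector $\chi_{(-\epsilon,\epsilon)}(A)$ and is itself s-smoothing, which gives $\mathrm{sgn}(A) \in S\ydo{0}{\mathsf{cl},\Upgamma}(M,E)$ and completes the argument.

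The delicate point is the behaviour of $A$ near zero: on a non-compact $\Upgamma$-manifold the spectrum of $A$ can accumulate at $0$, so the formal identity $|A|^{-1} = (A^2)^{-1/2}$ is not directly covered by Kordyukov's holomorphic calculus. The perturbation by $\chi_{(-\epsilon,\epsilon)}(A)$ is the key technical device: Remarks \ref{remarkssmoothing} (4)--(5) guarantee that it is s-smoothing and therefore does not affect the pseudo-differential class, while it restores the spectral gap that licenses the Cauchy-integral construction and the subsequent composition calculus.
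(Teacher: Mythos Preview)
Your proof is correct and follows essentially the same route as the paper: both use Kordyukov's bounded-geometry complex-power calculus to realise $|A|^{-1}$ (or a perturbed version) as an s-regular $\Upgamma$-invariant $\Psi$DO of order $-m$, compose with $A$, and invoke the formula \cref{apsprojectors}. Your treatment is in fact more careful than the paper's on one point: the paper applies Kordyukov's calculus directly to $A^2$ without discussing the possibility that $0$ lies in its spectrum, whereas your spectral-gap perturbation $B=A^2+\chi_{(-\epsilon,\epsilon)}(A)$ makes the applicability of the Hankel-contour construction explicit and then corrects by an s-smoothing term.
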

The properly supportness of $A$ ensures, that the composition is well-defined and a classical pseudo-differential operator\bnote{f19} of order $0$.

\begin{rems}
\begin{itemize}
\item[]
\item[a)] One has $P_{\pm}=p_{\pm}+r_{\pm}$, where $p_{\pm} \in \ydo{m}{\Upgamma,\mathsf{prop,cl}}(M,E)$ and $r_{\pm}$ is s-smoothing. As the principal symbol is defined modulo smoothing terms, one has $\bm{\sigma}_m(P_{\pm})=\bm{\sigma}_m(p_{\pm})$.
\item[b)] In order to introduce (anti-) Atiyah-Patodi-Singer boundary conditions on the hypersurfaces one might need to include the eigenvalue zero. But since the indicator function of a disjoint union is the sum of indicator functions for each component and $P_{\kernel{A}}=\chi_{\SET{0}}(A)$ is s-smoothing, $P_{+}$ and $P_{\intervallro{0}{\infty}{}}$ as well as $P_{-}$ and $P_{\intervalllo{-\infty}{0}{}}$ differ by a s-smoothing operator, which can be considered as part of $r_{\pm}$.
\item[c)] Since $A$ has been chosen to be self-adjoint, the complex power $z$ in the construction of $A^z$ does not depend on the angle of the rays in the Hankel contour. The same treatment could be applied to non-self-adjoint operators $A$, since the formally self-adjoint operator $A^\ast A$ occurs in the construction above, but now each complex power $z$ depends on the choice of the ray by fixing an angle via $\mathrm{arg}(z)$. This has been used in \cite{BaerStroh2} for the index theorem with compact Cauchy boundaries, but a-priori non-self-adjoint Dirac operators.
\end{itemize}
\end{rems}

%\cleardoublepage

\addtocontents{toc}{\vspace{-3ex}}
\section{Fredholmness on Galois coverings}\label{chap:atiyah}
From now on $M$ is a temporal compact, globally hyperbolic, spatial $\Upgamma$-manifold with spin structure, $\spinb(M)$ a $\Upgamma$-spin bundle and $\Dirac$ is interpreted as $\Upgamma$-invariant lift of a Dirac operator $\widetilde{\Dirac}$ from the compact orbit space $\quotspace{M}{\Upgamma}$. For $n$ odd, the chiralitiy decomposition then implies, that $\spinb^{\pm}(M)$ are also $\Upgamma$-spin bundles $D_{\pm}$ are $\Upgamma$-invariant lifts from the compact orbit space. $M$ has two Cauchy boundaries $\Sigma_1$ and $\Sigma_2$, such that $\quotspace{M}{\Upgamma}$ has two Cauchy boundaries $\quotspace{\Sigma_1}{\Upgamma}$ and $\quotspace{\Sigma_2}{\Upgamma}$, which are assumed to be closed submanifolds.

\subsection{Preparations for the $\Upgamma$-Fredholmness}
\justifying
If $\timef(M)$ is compact, any spatially compact subset of $M$ is itself compact, since it is a closed subset in $M$ and $\Jlight{}(K)\subset M$ for any $K\Subset M$ becomes compact. Consequently any section with spatially compact support becomes a section with compact support, i.e.
\begin{eqnarray*}
C^l_\scomp(\timef(M),H^s_\loc(\spinb^{\pm}(\Sigma_\bullet))) &\longrightarrow& C^l(\timef(M),H^s_\loc(\spinb^{\pm}(\Sigma_\bullet))) \\
L^2_{\loc,\scomp}(\timef(M),H^s_\loc(\spinb^{\pm}(\Sigma_\bullet))) &\longrightarrow& L^2(\timef(M),H^s_\loc(\spinb^{\pm}(\Sigma_\bullet)))
\end{eqnarray*} 
for $l \in \N_0$ and $s\in \R$. The first correspondence leads to the notion of \textit{(compactly supported) finite} $s$\textit{-energy spinors} for $l=0$:
\begin{equation*}%\label{compsuppfes}
FE^s_\comp(M,\timef,\spinb^{\pm}(M)):= C^0(\timef(M),H^s_\loc(\spinb^{\pm}(\Sigma_\bullet))) \quad.
\end{equation*}
The definition of the subspaces $FE^s_\comp(M,\timef,D)$ and $FE^s_\comp(M,\kernel{D})$ are clear from the general case. Theorem \ref{inivpwell} and Corollary \ref{homivpwell} are still valid in this setting, since the isomorphism, restricted to the subset $FE^s_\comp(M,\timef,D)\subset FE^s_\scomp(M,\timef,D)$, maps isomorphically to $L^2(\timef(M),H^s_\loc(\spinb^{\pm}(\Sigma_\bullet)))$. Taking $M$ to be a spatial $\Upgamma$-manifold, we define the following spaces for $H^s_\Upgamma \subset H^s_\loc$ and any $s \in \R$:
\begin{equation*}%\label{upgammafesections}
\begin{split}
\text{a)}\quad & FE^s_\Upgamma(M,\timef,\spinb^{\pm}(M)):= C^0(\timef(M),H^s_\Upgamma(\spinb^{\pm}(\Sigma_\bullet))) \quad; \\
\text{b)}\quad & FE^s_\Upgamma(M,\timef,D_{\pm}):=\SET{u \in FE^s_\Upgamma(M,\timef,\spinb^{\pm}(M))\,\vert\,D_{\pm}u\in L^2(\timef(M),H^s_\Upgamma(\spinb^{\mp}(\Sigma_\bullet)))} \\
\text{c)}\quad & FE^s_\Upgamma(M,\kernel{D_{\pm}}):=\SET{u \in FE^s_\Upgamma(M,\timef,\spinb^{\pm}(M))\,\vert\,D_{\pm}u=0} \quad .
\end{split}
\end{equation*}
The seminorm on $FE^s_\Upgamma(M,\timef,\spinb^{\pm}(M))$ is defined as in \cref{snclk}, where the seminorm of local Sobolev sections is replaced by the norm for $\Upgamma$-Sobolev spaces:
\begin{equation}\label{snclkgamma}
\Vert u \Vert_{I,K,l,s,\Upgamma}:= \max_{k \in [0,l]\cap \N_0} \max_{t \in I} \Vert (\nabla_t)^{k} u\Vert_{H^s_\Upgamma(E\vert_{\Sigma_t})}
\end{equation}
for any vector bundle $E$, such that $E\vert_{\Sigma_t}$ is a $\Upgamma$-vector bundle. The fact, that the hypersurfaces are Galois coverings, implies, that the left unitary action on a section of a $\Upgamma$-vector bundle is given by
\begin{equation*}
(L_\upgamma u)(t,x):= \pi_\Upgamma u(t,\upgamma^{-1}x)\quad,
\end{equation*}
where $\pi_\Upgamma\vert_{(t,p)}\,:\,E_{(t,p)}\rightarrow E_{(t,\upgamma p)}$. This induces a left unitary action on $L^2(\timef(M),H^s_\loc(\spinb^{\pm}(\Sigma_\bullet)))$ and this Hilbert bundle over Hilbert $\Upgamma$-modules becomes itself a free Hilbert $\Upgamma$-module. Note, that
\begin{equation*}
L^2(\timef(M),H^0_\Upgamma(\spinb^{\pm}(\Sigma_\bullet)))=L^2(\timef(M),L^2_\Upgamma(\spinb^{\pm}(\Sigma_\bullet)))=L^2_\Upgamma(\spinb^{\pm}(M)) \quad.
\end{equation*}
%by $M \,\rightarrow\, \quotspace{M}{\Upgamma}=\timef(M)\times \quotspace{\Sigma}{\Upgamma}$. 
After this clarification a well-posedness result for the inhomogeneous Dirac equation for an $\Upgamma$-invariant Dirac operator $D$ can be proven.
\begin{prop}\label{inivpwellgamma}
For a fixed $t \in \timef(M)$ with $M$ a temporal compact globally hyperbolic spatial $\Upgamma$-manifold with spin structure and any $s\in \R$ the map 
\begin{equation*}%\label{inivpmapq}
\mathsf{res}_t \oplus D \,\,:\,\, FE^s_\Upgamma(M,\timef,D) \,\,\rightarrow\,\, H^s_\Upgamma(\spinb^{+}(\Sigma_t))\oplus L^2(\timef(M),H^s_\Upgamma(\spinb^{-}(\Sigma_\bullet)))
\end{equation*}
is an isomorphism of Hilbert $\Upgamma$-modules.
\end{prop}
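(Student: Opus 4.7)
The plan is to mirror the proof of Theorem \ref{inivpwell} while systematically replacing the spatially-compact/local Sobolev pair $(H^s_\comp,H^s_\loc)$ by the $\Upgamma$-Sobolev spaces $H^s_\Upgamma$. The $\Upgamma$-equivariance of every ingredient, namely the Lorentzian metric, the spin structure, $D$, the hypersurface operators $A_t,B_t$, the extensions $\Lambda_t^s$ and the restriction $\rest{t}$, together with temporal compactness of $\timef(M)$ (which collapses the $L^2_{\loc,\scomp}$-spaces to honest $L^2$-spaces), makes this transposition natural.

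First I will verify that $\rest{t} \oplus D$ is a continuous $\Upgamma$-morphism between the stated Hilbert $\Upgamma$-modules. The $\Upgamma$-equivariance of both summands is immediate, since both commute with $L_\upgamma$ for every $\upgamma\in\Upgamma$. Continuity of the restriction component follows from the evident bound $\norm{\rest{t}u}{H^s_\Upgamma(\spinb^{+}(\Sigma_t))}\leq \sup_{\tau\in \timef(M)} \norm{u\vert_{\Sigma_\tau}}{H^s_\Upgamma(\spinb^{+}(\Sigma_\tau))}$, which is the seminorm \cref{snclkgamma} for $l=0$, and continuity of $D$ is already built into the definition of $FE^s_\Upgamma(M,\timef,D)$.

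The central step is to prove a $\Upgamma$-equivariant analogue of the energy estimate of Proposition \ref{enesttheorem}: there exists a constant $C>0$, independent of $\upgamma$, such that for every $u\in FE^{s+1}_\Upgamma(M,\timef,\spinb^{+}(M))$ with $Du\in L^2(\timef(M),H^s_\Upgamma(\spinb^{-}(\Sigma_\bullet)))$,
\begin{equation*}
\norm{u\vert_{\Sigma_{t_1}}}{H^s_\Upgamma}^2 \leq e^{C(t_1-t_0)}\norm{u\vert_{\Sigma_{t_0}}}{H^s_\Upgamma}^2 + \int_{t_0}^{t_1}e^{C(t_1-\tau)}\norm{Du\vert_{\Sigma_\tau}}{H^s_\Upgamma}^2 \differ\tau .
\end{equation*}
The derivation of Proposition \ref{enesttheorem} reproduces line by line in the $\Upgamma$-Sobolev norm provided the commutators $[\nabla_\upnu,\Lambda_t^s]$ and $[\Lambda_t^s,B_t]$ are s-regular $\Upgamma$-invariant pseudo-differential operators of the expected orders, hence bounded on $H^s_\Upgamma$; this is covered by the remarks following Lemma \ref{gammaapsprojectors}. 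Lemma \ref{lemenest1}(b),(c) and the extension \cref{extendB} pass verbatim to the $\Upgamma$-Sobolev inner product by the same $\Upgamma$-invariance, and Grönwall closes the argument.

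Bijectivity then follows as in the proof of Theorem \ref{inivpwell}: density of smooth compactly supported sections in $H^s_\Upgamma$, together with Theorem \ref{inivpwell}, produces for smooth compactly supported data an actual smooth solution, and the $\Upgamma$-energy estimate extends the correspondence $(u_0,f)\mapsto u$ continuously to a map $H^s_\Upgamma(\spinb^{+}(\Sigma_t))\oplus L^2(\timef(M),H^s_\Upgamma(\spinb^{-}(\Sigma_\bullet)))\rightarrow FE^s_\Upgamma(M,\timef,D)$ which inverts $\rest{t}\oplus D$. Uniqueness is obtained by applying the estimate to differences of two solutions, paralleling Corollary \ref{corenest2}. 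The main obstacle is guaranteeing the $\upgamma$-uniformity of the constant $C$ in the energy estimate, i.e.\ the uniform boundedness on $H^s_\Upgamma(\spinb^{+}(\Sigma_t))$ of $\Lambda_t^s$, $B_t$ and their commutators as operators on the Galois cover; this is precisely where the s-regular $\Upgamma$-invariant pseudo-differential character of these operators, together with the finite geometry of the compact quotient $\quotspace{\Sigma}{\Upgamma}$, becomes indispensable. Once the uniform bound is secured, the rest of the argument is a direct transposition of the spatially-compact proof.
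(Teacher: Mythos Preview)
Your proposal is correct and follows the same overall architecture as the paper's proof, but the implementation of the key energy estimate differs. The paper does \emph{not} rerun Proposition~\ref{enesttheorem} directly in the global $H^s_\Upgamma$-norm; instead it defines $\mathcal{E}_{s,\Upgamma}(u,\Sigma_t)=\sum_{i,\upgamma}\mathcal{E}_s(\phi_{i,\upgamma}u,\Sigma_t)$ via a time-independent $\Upgamma$-invariant partition of unity, applies the local computation to each localized piece $\tilde u=\phi_{i,\upgamma}u$, and then sums. The resulting term $\sum_{i,\upgamma}\|\phi_{i,\upgamma}B_t u\|_{H^s}^2$ is estimated back to $\mathcal{E}_{s,\Upgamma}$ using local boundedness of $B_t$, and the crucial support-independence of the Gr\"onwall constant $C$ is read off from the fact that the estimate only sees the projection to the compact base $\Sigma/\Upgamma$. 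Your route---working globally with $\Lambda_t^s$ and invoking that $[\nabla_\upnu,\Lambda_t^s]$, $[\Lambda_t^s,B_t]$ are s-regular $\Upgamma$-invariant $\Psi$DOs, hence uniformly bounded on $H^s_\Upgamma$---is legitimate and arguably cleaner, but it relies on identifying the partition-of-unity $H^s_\Upgamma$-norm with the $\Lambda_t^s$-norm and on the $\Upgamma$-invariant $\Psi$DO calculus of \cite{shub}; the paper's localization avoids this and reuses the compact-support machinery verbatim. One point you pass over lightly: the statement asserts an isomorphism \emph{of Hilbert $\Upgamma$-modules}, so you must still argue that $FE^s_\Upgamma(M,\timef,D)$ is a (projective) Hilbert $\Upgamma$-module; the paper does this explicitly at the end by transporting the structure back from the codomain via the established topological isomorphism and Proposition~2.16 of \cite{shub}.
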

\begin{proof}
The continuity of $D$ as map from $FE^s_\Upgamma(M,\timef,D)$ to $L^2(\timef(M),H^s_\Upgamma(\spinb^{-}(\Sigma_\bullet)))$ follows by construction of the domain. The continuity of the restriction is given by the following modified argument: For all $t\in \timef(M)$ the diffeomorphism between $M$ and the product manifold $\timef(M) \times \Sigma$ by Theorem \ref{theo22-1} implies, that a time independent $\Upgamma$-invariant partition of unity $\SET{\phi_{i,\upgamma}}_{\substack{i \in I \\ \upgamma \in \Upgamma}}$, subordinated to a covering of $\Sigma$, for each $\Upgamma$-hypersurface can be chosen. Thus every slice in $\SET{\Sigma_t}_{t \in I}$ has the same partition of unity. As in the the proof of Theorem \ref{inivpwell} with $\mathcal{K} \cap \Sigma_t$ replaced by $K(t,i,\upgamma)=\mathcal{K} \cap \Sigma_t \cap \supp{\phi_{i,\upgamma}}$ for each $i \in I$, $\upgamma \in \Upgamma$ and $\mathcal{K}$ spatially compact we obtain 
\begin{eqnarray*}
\norm{\rest{t}u}{H^s_\Upgamma(\spinb^{+}(\Sigma_t))}^2 %&=& \sum_{\substack{i\in I \\ \upgamma \in \Upgamma}}\norm{\phi_{i,\upgamma}\rest{t}u}{H^s_{K(t,i,\upgamma)}(\spinb^{+}(\Sigma_t))}^2 \\
&\leq& \max_{\tau \in \timef(M)}\sum_{\substack{i\in I \\ \upgamma \in \Upgamma}}\norm{\phi_{i,\upgamma}\rest{\tau}u}{H^s(K(\tau,i,\upgamma),\spinb^{+}(\Sigma_\tau))}^2 \\
&=& \max_{\tau \in \timef(M)}\SET{\norm{\rest{\tau}u}{H^s_{\Upgamma}(\spinb^{+}(\Sigma_\tau))}^2} \stackrel{\cref{snclkgamma}}{\leq } \Vert u \Vert_{\timef(M),\mathcal{K},0,s,\Upgamma}^2\quad.
%\Leftrightarrow\,\,\norm{\rest{t}u}{H^s_\Upgamma(\spinb^{+}(\Sigma_t))} &\leq& \Vert u \Vert_{\timef(M),\mathcal{K},0,s,\Upgamma} \quad.
\end{eqnarray*}
The continuous inclusion of $C^0_{\mathcal{K}}$ into $C^0_\comp$ then leads to the wanted feature. The second estimate in the proof of Theorem \ref{inivpwell} can be modified after introducing a suitable energy in the $\Upgamma$-setting: define the $\Upgamma$\textit{-s-energy} as square of the $\Upgamma$-Sobolev norm:
\begin{equation*}
\mathcal{E}_{s,\Upgamma}(u,\Sigma_t) = \norm{u}{H^s_\Upgamma(\spinb(\Sigma_t))}^2=\sum_{\substack{i \in I\\\upgamma \in \Upgamma}}\norm{\phi_{i,\upgamma} u}{H^s(K(t,i,\upgamma),\spinb(\Sigma_t))}^2= \sum_{\substack{i \in I\\\upgamma \in \Upgamma}} \mathcal{E}_{s}(\phi_{i,\upgamma}u,\Sigma_t)\quad.
\end{equation*} 
 %One could apply Corollary \ref{corenest1} with given $K \Subset M$, such that $\supp{\phi_{i,\upgamma}u}\subset \Jlight{}(K)$, but we repeat t
Let $\phi:=\phi_{i,\upgamma}$ and $\tilde{u}=\phi u$ as abbreviations. The estimation of the energy $\mathcal{E}_{s}(\phi_{i,\upgamma}u,\Sigma_t)$ in Proposition \ref{enesttheoremform} can be performed up to the fourth line, where one makes use of the time independence of the partition of unity: 
\begin{eqnarray*}
\frac{\differ}{\differ t}\mathcal{E}_s(\tilde{u},\Sigma_{t}) 
&\leq& n\dscal{1}{L^2(\spinb(\Sigma_t))}{H_t \Lambda^s_t \tilde{u}}{\Lambda^s_t \tilde{u}}+c_2\norm{\tilde{u}}{H^s(\spinb(\Sigma_t))}^2-2 \Re\mathfrak{e}\left\lbrace\dscal{1}{H^s(\spinb(\Sigma_t))}{\tilde{u}}{\phi \nabla_{\upnu} u} \right\rbrace \\
&=& n\dscal{1}{L^2(\spinb(\Sigma_t))}{H_t \Lambda^s_t \tilde{u}}{\Lambda^s_t \tilde{u}}+c_2\norm{\tilde{u}}{H^s(\spinb(\Sigma_t))}^2\\
&+&2 \Re\mathfrak{e}\left\lbrace\dscal{1}{H^s(\spinb(\Sigma_t))}{\tilde{u}}{\upbeta \phi D u} \right\rbrace + 2 \Re\mathfrak{e}\left\lbrace\dscal{1}{H^s(\spinb(\Sigma_t))}{\tilde{u}}{\phi B_t u} \right\rbrace \\
&\stackrel{(\ast)}{\leq}& n\absval{\dscal{1}{L^2(\spinb(\Sigma_t))}{H_t \Lambda^s_t \tilde{u}}{\Lambda^s_t \tilde{u}}}{}+c_2\norm{\tilde{u}}{H^s(\spinb(\Sigma_t))}^2\\
&+&2 \Re\mathfrak{e}\left\lbrace\dscal{1}{L^2(\spinb(\Sigma_t))}{\Lambda^s_t\tilde{u}}{\upbeta \Lambda^s_t \phi D u} \right\rbrace + 2 \Re\mathfrak{e}\left\lbrace\dscal{1}{L^2(\spinb(\Sigma_t))}{\Lambda^s_t \tilde{u}}{ \Lambda^s_t \phi B_t u} \right\rbrace \\
&\stackrel{(\ast\ast)}{\leq}&  n\norm{H_t \Lambda^s_t \tilde{u}}{L^2(\spinb(\Sigma_t))}^2+c_3\norm{\tilde{u}}{H^s(\spinb(\Sigma_t))}^2\\
&+& \norm{\upbeta \Lambda^s_t \phi D u}{L^2(\spinb(\Sigma_t))}^2 + \norm{\Lambda^s_t \phi B_t u}{L^2(\spinb(\Sigma_t))}^2 \\
&\leq & c_4\norm{\tilde{u}}{H^s(\spinb(\Sigma_t))}^2+\norm{\phi D u}{H^s(\spinb(\Sigma_t))}^2 + \norm{\phi B_t u}{H^s(\spinb(\Sigma_t))}^2\quad,
\end{eqnarray*}
where in $(\ast)$ Lemma \ref{lemenest1} (b) and in $(\ast\ast)$ the Cauchy-Schwarz inequality and polarization estimation has been used. The last step contains a $\Psi$DO-estimation with $H_t \Lambda^s_t \in \ydo{s}{}(\Sigma_t,\End(\spinb^{+}(\Sigma_t)))$ and the isometry of $\upbeta$ in \cref{isombeta}. Since $B_t$ is properly supported, $B_t u\vert_{\Sigma_t} \in H^s_\loc(\spinb^{+}(\Sigma_t))$ for $u\vert_{\Sigma_t} \in H^{s+1}_\loc(\spinb^{+}(\Sigma_t))$. Summing over all covering balls and $\Upgamma$-actions gives
\newpage
\begin{equation*}
%\norm{B_t u}{H^s(\spinb(\Sigma_t))}^2 = 
\sum_{\substack{i \in I\\\upgamma \in \Upgamma}} \norm{\phi_{i,\upgamma} B_t u}{H^s(\spinb(\Sigma_t))}^2 \leq c \sum_{\substack{i \in I\\\upgamma \in \Upgamma}} \norm{\phi_{i,\upgamma} u}{H^{s+1}(\spinb(\Sigma_t))}^2 \leq \tilde{c} \mathcal{E}_{s,\Upgamma}(u,\Sigma_{t}) \quad.
\end{equation*}
The first inequality comes from the local boundness of PDO/$\Psi$DO (see for instance Theorem 1 in 3.9 of \cite{shub}) and the second inequality from the continuous inclusion of Sobolev spaces. One finally yields
\begin{equation*}
\frac{\differ}{\differ t}\mathcal{E}_s(\tilde{u},\Sigma_{t}) \leq c_5\mathcal{E}_{s,\Upgamma}(u,\Sigma_t)+\norm{D u}{H^s_\Upgamma(\spinb(\Sigma_t))}^2
\end{equation*}
and repeating all further steps from the proof as well as from Corollary \ref{corenest1} shows, that for $\tau \in \timef(M)$, $K \subset M$ compact and $s \in \R$ there exists a $C>0$, such that
\begin{equation}\label{enestgamma}
\mathcal{E}_{s,\Upgamma}(u,\Sigma_{t})\leq C \left(\mathcal{E}_{s,\Upgamma}(u,\Sigma_{\tau})+ \norm{D u}{\timef(M),\Jlight{}(K),s,\Upgamma}^2\right) 
\end{equation}
is valid for all $t \in \timef(M)$ (temporal compactness) and for all $u \in FE^{s+1}_{\Upgamma}(M,\timef,D)$, such that $Du \in FE^s_{\Upgamma}(M,\timef,\spinb^{-}(M))$ and $\supp{u}\subset \Jlight{}(K)$. In comparison to the statements in Proposition \ref{enesttheoremform} and Corollary \ref{corenest1} the spatial compact support has been ensured by the $\Upgamma$-invariant partition of unity, which was used in order to carry over the proof up to these modifications. Hence the constant $C$ depends on the projection of the support onto $\quotspace{\Sigma}{\Upgamma}$, but since this base is compact by our general preassumption, the constant $C$ is now independent of the support of $u$. A $\Upgamma$-version of Corollary \ref{corenest1} and a uniqueness like in Corollary \ref{corenest2} then follows with identical arguments and can be used for the well-posedness of the Cauchy problem on $\Upgamma$-manifolds.\\
\\
As in the referred proof any finite energy section in $FE^s_\Upgamma(M,\timef(M),D)$ can be estimated with an initial value $u_0 \in H^s_\Upgamma(\Sigma_1,\spinb^{+}(\Sigma_1))$ and inhomogeneity $f=Du \in L^2_\Upgamma(\timef(M),H^s_\Upgamma(\spinb^{-}(\Sigma_\bullet)))$ by using the above shown energy estimate \cref{enestgamma} %like $\Upgamma$-version of Corollary \ref{corenest1} 
with $t=t_1$ as initial time:
\begin{equation*}
\Vert u \Vert_{\timef(M),\Jlight{}(K),0,s,\Upgamma}^2 \leq \max_{\tau \in \timef(M)}\SET{\mathcal{E}_{s,\Upgamma}(u,\Sigma_{\tau})} \leq C \left(\mathcal{E}_{s,\Upgamma}(u_0,\Sigma_{1})+ \norm{f}{\timef(M),\Jlight{}(K),s,\Upgamma}^2\right) \quad .
\end{equation*}
The rest of the proof works analogously, such that one yields an isomorphism between the topological vector spaces $FE^s_\Upgamma(M,\timef,D)$ and $H^s_\Upgamma(\spinb^{+}(\Sigma_t))\oplus L^2_\Upgamma(\timef(M),H^s_\Upgamma(\spinb^{-}(\Sigma_\bullet)))$. Since the latter carries a Hilbert space structure, $FE^s_\Upgamma(M,\timef,D)$ consequently does as well by this isomorphism. The direct sum of Hilbert $\Upgamma$-modules is again a Hilbert $\Upgamma$-module by Lemma \ref{helpinglemma}. $FE^s_\Upgamma(M,\timef,D)$ inherits the same left unitary action as subset of $L^2(\timef(M),H^s_\Upgamma(\spinb^{+}(M))$, hence it is a general Hilbert $\Upgamma$-module. Proposition and Corollary 1 in subsection 2.16 of \cite{shub} ensure, that the Hilbert space isomorphism is a unitary isomorphism between these general Hilbert $\Upgamma$-modules and consequently $FE^s_\Upgamma(M,\timef,D)$ a (projective) Hilbert $\Upgamma$-module.
\end{proof}

As a corollary one deduces, that also $FE^s_\Upgamma(M,\kernel{D})$ is a Hilbert $\Upgamma$-module.
\begin{cor}\label{homivpwellgamma}
For a fixed $t \in \timef(M)$ with $M$ a temporal compact globally hyperbolic spatial $\Upgamma$-manifold and any $s\in \R$ the map
\begin{equation*}%\label{homivpmapq}
\mathsf{res}_t  \,\,:\,\, FE^s_\Upgamma(M,\kernel{D}) \,\,\rightarrow\,\, H^s_\Upgamma(\spinb^{+}(\Sigma_t))
\end{equation*}
is an isomorphism of Hilbert $\Upgamma$-modules.
\end{cor}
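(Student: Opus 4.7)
The plan is to derive this corollary directly from Proposition \ref{inivpwellgamma} by restricting the isomorphism there to the preimage of the zero inhomogeneity. First I would observe that $FE^s_\Upgamma(M,\kernel{D})$ is precisely the kernel of the continuous $\Upgamma$-linear map $D\,:\,FE^s_\Upgamma(M,\timef,D) \rightarrow L^2(\timef(M),H^s_\Upgamma(\spinb^{-}(\Sigma_\bullet)))$, hence it is a closed, $\Upgamma$-invariant subspace of the Hilbert $\Upgamma$-module $FE^s_\Upgamma(M,\timef,D)$ established in Proposition \ref{inivpwellgamma}. By Lemma \ref{helpinglemma}(1) applied to $D$ (viewed as a $\Upgamma$-morphism into the Hilbert $\Upgamma$-module $L^2(\timef(M),H^s_\Upgamma(\spinb^{-}(\Sigma_\bullet)))$), this kernel inherits the structure of a projective Hilbert $\Upgamma$-submodule.

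Next I would restrict the topological isomorphism
\begin{equation*}
\mathsf{res}_t \oplus D \,:\, FE^s_\Upgamma(M,\timef,D) \,\rightarrow\, H^s_\Upgamma(\spinb^{+}(\Sigma_t))\oplus L^2(\timef(M),H^s_\Upgamma(\spinb^{-}(\Sigma_\bullet)))
\end{equation*}
to the closed subspace $FE^s_\Upgamma(M,\kernel{D})$. On this subspace $D$ acts as zero, so the image lands in the closed subspace $H^s_\Upgamma(\spinb^{+}(\Sigma_t)) \oplus \{0\}$, which is canonically identified with $H^s_\Upgamma(\spinb^{+}(\Sigma_t))$ as Hilbert $\Upgamma$-module. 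Bijectivity in both directions follows immediately: injectivity is inherited from Proposition \ref{inivpwellgamma}, while surjectivity amounts to noting that for any $u_0 \in H^s_\Upgamma(\spinb^{+}(\Sigma_t))$ the inverse map from Proposition \ref{inivpwellgamma} applied to $(u_0, 0)$ produces an element of $FE^s_\Upgamma(M,\timef,D)$ with vanishing $D$-image, i.e. an element of $FE^s_\Upgamma(M,\kernel{D})$.

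Since the restriction of a topological isomorphism to a closed subspace and its image is again a topological isomorphism of Banach spaces, and since both sides carry compatible $\Upgamma$-actions that commute with $\mathsf{res}_t$ (the restriction operator is $\Upgamma$-equivariant because the $\Upgamma$-action is spatial and preserves each slice $\Sigma_t$), we conclude that the restricted map is a $\Upgamma$-equivariant topological isomorphism of Hilbert $\Upgamma$-modules. Applying the cited Proposition and Corollary 1 in subsection 2.16 of \cite{shub} then upgrades this to a unitary isomorphism, completing the proof.

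The only subtlety worth checking carefully — and likely the sole technical obstacle — is verifying that the induced norm on $FE^s_\Upgamma(M,\kernel{D})$ (inherited as a closed subspace) matches the expected one, and that $\Upgamma$-equivariance of $\mathsf{res}_t$ holds in the promoted Hilbert $\Upgamma$-module sense; both reduce to the observation that the $\Upgamma$-action on $M$ commutes with the Cauchy temporal function $\timef$, so left translation leaves each leaf $\Sigma_t$ invariant and intertwines $\mathsf{res}_t$ with the restricted action on $H^s_\Upgamma(\spinb^{+}(\Sigma_t))$.
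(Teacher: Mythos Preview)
Your proposal is correct and follows exactly the approach the paper intends: the corollary is stated without proof immediately after Proposition \ref{inivpwellgamma} (with only the remark that $FE^s_\Upgamma(M,\kernel{D})$ is thereby a Hilbert $\Upgamma$-module), and your argument---restricting the isomorphism $\mathsf{res}_t\oplus D$ to the kernel of the second summand and invoking Lemma \ref{helpinglemma}(1) together with the result from \cite{shub} section 2.16---spells out precisely the details the paper leaves implicit. This mirrors how the non-$\Upgamma$ version (Corollary \ref{homivpwell}) is deduced from Theorem \ref{inivpwell} without further comment.
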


\subsection{$\Upgamma$-Fredholmness of $Q$ and its spectral entries}
\justifying
The $\Upgamma$-Fredholmness of the Dirac operator $D$ under (anti-) APS boundary conditions is going to be related to the $\Upgamma$-Fredholmness of the spectral parts of the evolution operator $Q$. This agrees with the procedure, used in \cite{BaerStroh} for closed spacelike Cauchy boundaries. 
\begin{lem}%\label{Qgammalemma}
\begin{itemize}
\item[]
\item[a)] $A_t \in \Diff{1}{\Upgamma}(\Sigma_t,\spinb^{\pm}(\Sigma_t))$ for each fixed $t \in \timef(M)$.
\item[b)] $Q \in \FIO{0}_\Upgamma(\Sigma_{1},\Sigma_{2};\mathsf{C}'_{1\rightarrow 2};\Hom(\spinb^{+}(\Sigma_1),\spinb^{+}(\Sigma_2)))$ properly supported as in Proposition \ref{Qfourier} and maps  $H^s_\Upgamma(\spinb^{+}(\Sigma_1))$ to $H^s_\Upgamma(\spinb^{+}(\Sigma_2))$ isomorphically.
\item[c)] $Q \in \mathscr{F}_\Upgamma(L^2_\Upgamma(\spinb^{+}(\Sigma_1)),L^2_\Upgamma(\spinb^{+}(\Sigma_2)))$ with $\Index_\Upgamma(Q)=0$.
\end{itemize}
\end{lem}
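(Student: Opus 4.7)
For (a), the plan is to unpack the definition \cref{Dirachyp} of $A_t$ as $\sum_{j=1}^n \Cliff{t}{e_j}\Nabla{\spinb(\Sigma_t)}{e_j}$ and read off $\Upgamma$-invariance from each building block. The slice $\Sigma_t$ is a Galois covering by hypothesis and $\spinb^{\pm}(\Sigma_t)$ inherits the $\Upgamma$-action from the $\Upgamma$-spin bundle $\spinb^{\pm}(M)\vert_{\Sigma_t}$. The induced metric $\met_t$ is $\Upgamma$-invariant and $\upnu$ is preserved by the spatial action, so the Levi-Civita connection on $\Sigma_t$, its spin lift $\Nabla{\spinb(\Sigma_t)}{}$, and the hypersurface Clifford multiplication $\Cliff{t}{\cdot}=\Imag\upbeta\cliff{\cdot}$ are all $\Upgamma$-equivariant. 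Choosing the local Riemann-orthonormal frame $e_1,\dots,e_n$ on a fundamental domain and lifting by the $\Upgamma$-action then shows $A_t L_\upgamma=L_\upgamma A_t$ for all $\upgamma\in\Upgamma$, i.e.\ $A_t\in\Diff{1}{\Upgamma}(\Sigma_t,\spinb^{\pm}(\Sigma_t))$.

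For (b), I will use the representation $Q(t_2,t_1)=\rest{\Sigma_2}\circ D_{-}\circ \mathcal{G}(t_1)\circ \upbeta$ from \cref{QasFIO}. Each factor commutes with the left $\Upgamma$-action: $\upbeta=\cliff{\upnu}$ is a $\Upgamma$-invariant bundle endomorphism by the same reasoning as in (a); $D_{-}$ is $\Upgamma$-invariant because it is the lift of a Dirac operator on $\quotspace{M}{\Upgamma}$ by our standing assumption; and $\rest{\Sigma_2}$ clearly intertwines the spatial $\Upgamma$-actions. The delicate point is $\mathcal{G}(t_1)$, which is not a local operator; here I would argue by uniqueness: given $u\in C^\infty_{\mathrm{c}}(\spinb^{-}(\Sigma_1))$, both $\mathcal{G}(t_1)(L_\upgamma u)$ and $L_\upgamma\mathcal{G}(t_1)u$ solve the same normally hyperbolic Cauchy problem from Theorem \ref{cauchynormhyphom} with data $L_\upgamma u$, so they must coincide, extending by density to all initial data. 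Assembling, $Q\in\FIO{0}_{\Upgamma}$, and the canonical-relation and proper-support statements are inherited from Proposition \ref{Qfourier}. The isomorphism claim at Sobolev level $s$ is then immediate: by Corollary \ref{homivpwellgamma}, $\rest{t}:FE^s_\Upgamma(M,\kernel{D})\to H^s_\Upgamma(\spinb^{+}(\Sigma_t))$ is a topological isomorphism of Hilbert $\Upgamma$-modules for every $t\in\timef(M)$, hence so is $Q(t_2,t_1)=\rest{t_2}\circ(\rest{t_1})^{-1}$.

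For (c), the plan is to specialise (b) to $s=0$ and simply use $Q^{-1}$ itself as $\Upgamma$-Fredholm parametrix. Invertibility of $Q$ as a $\Upgamma$-morphism $L^2_\Upgamma(\spinb^{+}(\Sigma_1))\to L^2_\Upgamma(\spinb^{+}(\Sigma_2))$ gives $Q^{-1}\in\mathscr{L}_\Upgamma(L^2_\Upgamma(\spinb^{+}(\Sigma_2)),L^2_\Upgamma(\spinb^{+}(\Sigma_1)))$ with
\begin{equation*}
\Iop{L^2_\Upgamma(\spinb^{+}(\Sigma_1))}-Q^{-1}Q=0\quad\text{and}\quad \Iop{L^2_\Upgamma(\spinb^{+}(\Sigma_2))}-QQ^{-1}=0.
\end{equation*}
Since the zero operator is trivially $\Upgamma$-trace class, $Q\in\mathscr{F}_\Upgamma$. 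Equivalently, $\kernel{Q}=\SET{0}=\kernel{Q^\ast}$ (the latter because $Q^\ast$ is itself invertible), so by Lemma \ref{propgammadim} both kernels have $\Upgamma$-dimension zero, and Lemma \ref{propgammafred}\,(7) gives $\Index_\Upgamma(Q)=0$.

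No step here is technically demanding; the only minor subtlety is verifying $\Upgamma$-invariance of the non-local factor $\mathcal{G}(t_1)$, which is resolved by the uniqueness part of the normally hyperbolic Cauchy theory.
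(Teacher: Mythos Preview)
Your proposal is correct and close to the paper's argument, but part (b) is organized differently. The paper establishes $\Upgamma$-invariance of $Q$ directly from the definition $Q(t_2,t_1)=\rest{t_2}\circ(\rest{t_1})^{-1}$ together with Corollary \ref{homivpwellgamma}: since $D$ commutes with $L_\upgamma$, the kernel $FE^s_\Upgamma(M,\kernel{D})$ is $\Upgamma$-invariant, and the chain $L_\upgamma Q\,\rest{t_1}u=L_\upgamma\rest{t_2}u=\rest{t_2}L_\upgamma u=Q\,\rest{t_1}L_\upgamma u=Q L_\upgamma\rest{t_1}u$ gives $L_\upgamma Q=Q L_\upgamma$ immediately. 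You instead factor $Q=\rest{\Sigma_2}\circ D_{-}\circ\mathcal{G}(t_1)\circ\upbeta$ and argue $\Upgamma$-invariance of the non-local piece $\mathcal{G}(t_1)$ via uniqueness for the normally hyperbolic Cauchy problem. Both are valid; the paper's route is shorter because it never leaves the Dirac setting, while yours has the advantage of exhibiting $\Upgamma$-invariance at the level of the FIO representation itself. Parts (a) and (c) match the paper's reasoning essentially verbatim (the paper phrases (c) via unitarity rather than bare invertibility, but that is the same observation).
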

\begin{proof}
Claim a) is clear from \cref{dirachyppos} and the preassumption $L_\upgamma\Dirac=\Dirac L_\upgamma$. The last statement is based on the following observation: as in section \ref{chap:feynman} the well-posedness of the homogeneous Dirac equation on a $\Upgamma$-manifold motivates to define a Dirac-wave evolution operator as isomorphism
\begin{comment}
\begin{figure}[htbp]
\centering
\includestandalone[width=0.5\textwidth]{Chap7/gampropdiagram}
\end{figure}
\end{comment}
\begin{equation*}
\hat{Q}(t_2,t_1)\,:\,H^s_\Upgamma(\spinb^{+}(\Sigma_1))\,\,\rightarrow\,\,H^s_\Upgamma(\spinb^{+}(\Sigma_2))\quad,
\end{equation*}
defined by $\hat{Q}(t_2,t_1):=\rest{t_2}\circ(\rest{t_1})^{-1}$ . If $u \in FE^s_{\Upgamma}(M,\kernel{D})$, then $L_\upgamma u \in FE^s_{\Upgamma}(M,\kernel{D})$, since $D$ commutes with the left action for all $\upgamma \in \Upgamma$ and $FE^s_{\Upgamma}(M,\kernel{D})$ is a Hilbert $\Upgamma$-module according to Corollary \ref{homivpwellgamma}. The bijective restriction operator commutes with $L_\upgamma$ for all times, such that 
\begin{equation*}
%\begin{split}
L_\upgamma \hat{Q} \rest{t_1}u=L_\upgamma \rest{t_2}u = \rest{t_2}L_\upgamma u= \hat{Q}\rest{t_1}L_\upgamma u=\hat{Q} L_\upgamma \rest{t_1} u \quad.
%\end{split}
\end{equation*}
\begin{comment}
, by which the transport of $\rest{\Sigma_{1}}(L_\upgamma u)$ to $\rest{\Sigma_{2}}(L_\upgamma u)$ under the Dirac-wave evolution operator $\hat{Q}(t_2,t_1)$ is the same as the transport of $L_\upgamma\rest{\Sigma_{1}}(u)$ to $L_\upgamma\rest{\Sigma_{2}}(u)$ under $\hat{Q}(t_2,t_1)$ for any $u \in FE^s_{\Upgamma}(M,\kernel{D})$, any two $t_1,t_2 \in \timef(M)$ and all $\upgamma \in \Upgamma$. Hence $L_\upgamma \hat{Q}(t_2,t_1)=\hat{Q}(t_2,t_1)L_\upgamma$ for all $\upgamma \in \Upgamma$. 
\end{comment}
The proofs of Lemma \ref{propsofprop} c) and Proposition \ref{Qfourier} carry over literally, such that $\hat{Q}(t_2,t_1)$ %\in \FIO{0}(\Sigma_{1},\Sigma_{2};\mathsf{C}'_{1\rightarrow 2};\Hom(\spinb^{+}(\Sigma_1),\spinb^{+}(\Sigma_2)))$
becomes a properly supported zeroth order Fourier integral operator with stated canonical relation and is unitary for $s=0$. The commuting with $L_\upgamma$ concludes the proof of b). Unitarity of $\hat{Q}$ implies $\Upgamma$-Fredholmness with index $\Index_{\Upgamma}(\hat{Q})=0$. Since all other properties are shown to be valid as well, we do not discern between this operator and the one in section \ref{chap:feynman} and identify them. 
\end{proof}
The (anti-) APS boundary conditions in the $\Upgamma$-setting induce $L^2$-splittings as well: 
\begin{equation*}%\label{L2splitgamma}
\begin{array}{ccl}
L^2_\Upgamma(\spinb^{\pm}(\Sigma_1))&=&L^2_{\Upgamma,\intervallro{0}{\infty}{}}(\spinb^{\pm}(\Sigma_1))\oplus L^2_{\Upgamma,\intervallo{-\infty}{0}{}}(\spinb^{\pm}(\Sigma_1)) \\
L^2_\Upgamma(\spinb^{\pm}(\Sigma_2))&=&L^2_{\Upgamma,\intervallo{0}{\infty}{}}(\spinb^{\pm}(\Sigma_2))\oplus L^2_{\Upgamma,\intervalllo{-\infty}{0}{}}(\spinb^{\pm}(\Sigma_2))
\end{array}
\end{equation*} 
with $L^2_{\Upgamma,I}(\spinb^{\pm}(\Sigma_j)):=P_I(t_j)\left(L^2_\Upgamma(\spinb^{\pm}(\Sigma_j))\right)$. A similar splitting is induced for $\Upgamma$-Sobolev spaces: $H^s_{\Upgamma,I}(\spinb^{\pm}(\Sigma_j)):=P_I(t_j)\left(H^s_{\Upgamma}(\spinb^{\pm}(\Sigma_j))\right)$. The essential self-adjointness of each $A_j$ for $j \in \SET{1,2}$ is given by Property (4) in Remarks \ref{remarkssmoothing}. The projectors are well defined and s-regular pseudo-differential operators of order 0 by Lemma \ref{gammaapsprojectors}. According to \cref{L2projector} for positive chirality these spaces are closed, since the projectors are continuous as maps between $L^2$-spaces, hence the range satisfies $\range{P_I(t_j)}=\kernel{\Iop{}-P_I(t_j)}$ and is closed for all $j\in\SET{1,2}$ and $I \subset \R$. Because closed $\Upgamma$-invariant subspaces of Hilber $\Upgamma$-modules are itself preojective Hilbert $\Upgamma$-submodules, all parts in these orthogonal splittings do as well. \\
\\
The spectral decomposition of $Q$ can be seen as sum of a $\Upgamma$-invariant Fourier integral operator and a s-smoothing operator. The spectral entries satisfy the following important properties.
\begin{prop} 
\noindent \label{propQfred} %With $\mathsf{C}_{2\rightarrow 1}=(\mathsf{C}_{2\rightarrow 1})^{-1}$ one has
\begin{itemize}
\item[a)] $Q_{\pm\pm} \in \FIO{0}_\Upgamma(\Sigma_1,\Sigma_2;\mathsf{C}'_{1\rightarrow 2};\Hom(\spinb^{+}(\Sigma_1),\spinb^{+}(\Sigma_2)))$ ; %and $Q^\ast_{\pm\pm} \in \FIO{0}_\Upgamma(\Sigma_2;\mathsf{C}'_{2\rightarrow 1};\Hom(\spinb^{+}(\Sigma_2),\spinb^{+}(\Sigma_1)))$;
\item[b)]  $Q_{\pm\mp} \in \FIO{-1}_\Upgamma(\Sigma_1,\Sigma_2;\mathsf{C}'_{1\rightarrow 2};\Hom(\spinb^{+}(\Sigma_1),\spinb^{+}(\Sigma_2)))$ ;% and $Q^\ast_{\pm\mp} \in \FIO{-1}_\Upgamma(\Sigma_2;\mathsf{C}'_{2\rightarrow 1};\Hom(\spinb^{+}(\Sigma_2),\spinb^{+}(\Sigma_1)))$;
%$(Q_{\pm\mp} \in \FIO{-1}_\Upgamma(\Sigma_1;\mathsf{C}'_{2\rightarrow 1};\Hom(\spinb^{+}(\Sigma_2),\spinb^{+}(\Sigma_1)))$ with \\
%$\mathsf{C}_{2\rightarrow 1}=(\mathsf{C}_{2\rightarrow 1})^{-1}$;
%\item[c)] $(Q_{\pm\mp}(t_2,t_1))^\ast\circ Q_{\pm\mp}(t_2,t_1) \in $\,%:L^2_{\Upgamma}(\spinb^{+}(\Sigma_1))\rightarrow L^2_{\Upgamma}(\spinb^{+}(\Sigma_2))$ are $\Upgamma$-compact;
\item[c)] $Q_{\pm\pm} \in \mathscr{F}_\Upgamma(L^2_\Upgamma(\spinb^{+}(\Sigma_1)),L^2_\Upgamma(\spinb^{+}(\Sigma_2)))$ with indices $\Index_\Upgamma(Q_{++})=-\Index_\Upgamma(Q_{--})$.
\end{itemize}
\end{prop}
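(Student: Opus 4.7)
The plan is to treat each spectral entry $Q_{I_2 I_1} := P_{I_2}(t_2)\circ Q \circ P_{I_1}(t_1)$ as a composition of properly supported $\Upgamma$-invariant operators. By Lemma \ref{gammaapsprojectors} the spectral projectors $P_\pm(t_j)$ lie in $S\ydo{0}{\mathsf{cl},\Upgamma}$, while $Q \in \FIO{0}_\Upgamma$ with canonical relation $\mathsf{C}'_{1\to 2} = \mathsf{C}'_{1\to 2|+} \sqcup \mathsf{C}'_{1\to 2|-}$. Since $\Psi$DOs act as FIOs associated to the conormal bundle of the diagonal, the composition formulas from Lemma \ref{fioprop} preserve $\mathsf{C}'_{1\to 2}$ and keep the result properly supported and $\Upgamma$-invariant. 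Thus a priori each $Q_{I_2 I_1}$ lies in $\FIO{0}_\Upgamma(\Sigma_1,\Sigma_2;\mathsf{C}'_{1\to 2};\Hom(\spinb^+(\Sigma_1),\spinb^+(\Sigma_2)))$, yielding (a) immediately.

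For (b) the crucial input is that the principal symbol drops by one order in the off-diagonal case. Using the principal symbol of $A_j$ expressible as a multiple of $\Cliff{t_j}{\eta^\sharp}$ (following \cref{Dirachyp}), the symbol of $\absval{A_j}^{-1}A_j$ is a fibrewise self-adjoint involution $S_j(y,\eta)$ on $\spinb^+(\Sigma_j)_y$, so that $\bm{\sigma}_0(P_\pm(t_j))(y,\eta) = \tfrac{1}{2}(\Iop \pm S_j(y,\eta))$ by Lemma \ref{gammaapsprojectors}. Using the explicit principal symbol of $Q$ in \cref{Qprinsym}, a direct computation shows that on each component $\mathsf{C}'_{1\to 2|\pm}$ the operator $\bm{\sigma}_0(Q)$ intertwines $S_1$ and $S_2$ consistently, with a relative sign flip between the two components coming from the sign of $\varsigma_\pm(\upnu)$. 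Consequently $\bm{\sigma}_0(P_{I_2}(t_2)) \cdot \bm{\sigma}_0(Q) \cdot \bm{\sigma}_0(P_{I_1}(t_1))$ vanishes on both components whenever $I_1$ and $I_2$ correspond to opposite half-lines, pushing $Q_{\pm\mp}$ into $\FIO{-1}_\Upgamma$, while the diagonal entries retain their full order 0 principal symbol on one component each.

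For (c) I use the $L^2_\Upgamma$-unitarity of $Q$, which is inherited from Lemma \ref{propsofprop}(c) under $\Upgamma$-invariance, together with the $L^2_\Upgamma$-version of the matrix identities \cref{Qsys1} and their adjoint companions. These give
\[
(Q_{\pm\pm})^\ast Q_{\pm\pm} = \Iop - (Q_{\mp\pm})^\ast Q_{\mp\pm}, \qquad Q_{\pm\pm}(Q_{\pm\pm})^\ast = \Iop - Q_{\pm\mp}(Q_{\pm\mp})^\ast.
\]
The off-diagonal terms $Q_{\pm\mp} \in \FIO{-1}_\Upgamma$ are properly supported and $\Upgamma$-invariant, so they descend to the compact orbit space; hence their compositions $Q_{\pm\mp}(Q_{\pm\mp})^\ast$ and $(Q_{\mp\pm})^\ast Q_{\mp\pm}$ are operators of order $-2$ whose Schwartz kernels have, modulo the diagonal $\Upgamma$-action, continuous representatives on $\quotspace{(\Sigma_1\times\Sigma_2)}{\Upgamma}$, and are therefore $\Upgamma$-trace class (subsection 2.23 of \cite{shub}). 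This exhibits $(Q_{\pm\pm})^\ast$ as a $\Upgamma$-Fredholm parametrix of $Q_{\pm\pm}$, proving $\Upgamma$-Fredholmness. The index identity then follows from Lemma \ref{kernelisoQ}, which supplies $\Upgamma$-equivariant isomorphisms of the relevant kernels as projective Hilbert $\Upgamma$-submodules; combining (5) of Lemma \ref{propgammadim} with (7) of Lemma \ref{propgammafred} yields $\Index_\Upgamma(Q_{++}) = -\Index_\Upgamma(Q_{--})$.

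The main obstacle will be tracking signs in the principal symbol computation for (b), since \cref{Qprinsym} carries $\pm$ signs on both components that must be matched against the spectral projections of $A_1$ and $A_2$ at distinct base points connected by the null cogeodesic flow. A secondary subtlety is the verification of $\Upgamma$-trace class for the order $-2$ off-diagonal compositions on the non-compact hypersurfaces — this relies on combining $\Upgamma$-invariance with proper support to identify Schwartz kernels as compactly supported distributions on the compact orbit space, so that the kernel regularity inherited from negative order yields finite $\Upgamma$-trace rather than merely finite local trace.
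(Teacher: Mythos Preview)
Your approach to parts (a) and (b) matches the paper's: decompose the spectral projectors into properly supported plus s-smoothing parts, compose, and check that the principal symbol of the off-diagonal pieces vanishes on both components of $\mathsf{C}_{1\to 2}$. One technical point you gloss over is that the $P_\pm(t_j)$ are only s-regular, not properly supported, so the composition must first be split as $q_{\pm\mp} + R_{\pm\mp}$ with $q_{\pm\mp}$ properly supported and $R_{\pm\mp}$ s-smoothing before invoking the FIO calculus; the paper does this explicitly.

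Part (c) has a genuine gap. You claim that the compositions $(Q_{\mp\pm})^\ast Q_{\mp\pm}$ and $Q_{\pm\mp}(Q_{\pm\mp})^\ast$, being of order $-2$, have continuous Schwartz kernels on the compact orbit space and are therefore $\Upgamma$-trace class. This is false in general: a $\Psi$DO of order $m$ on an $n$-dimensional manifold has a continuous kernel only when $m < -n$, and is trace class only under the same condition. For $n = \dim(\Sigma) \geq 2$, order $-2$ is insufficient, so $(Q_{\pm\pm})^\ast$ is \emph{not} a $\Upgamma$-Fredholm parametrix in the sense of Definition~6.5(d). The paper closes this gap by a Neumann-series improvement: writing $\mathcal{Q}_{\pm\mp} := -(Q_{\pm\mp})^\ast Q_{\pm\mp} \in S\ydo{-1}{\Upgamma}$, it sets
\[
\mathcal{P}_{\pm\mp} := \left(\sum_{l=0}^{N-1}(-1)^l \mathcal{Q}_{\pm\mp}^{\,l} + \mathcal{Q}_N\right)(Q_{\mp\mp})^\ast
\]
and checks that $\mathcal{P}_{\pm\mp}Q_{\mp\mp} - \Iop \in S\ydo{-N}{\Upgamma}$. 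Choosing $N > \dim(\Sigma)$ then makes the remainder $\Upgamma$-trace class by Corollary~1 in section~3.11 of \cite{shub}, and the adjoint identities supply the right parametrix analogously. Your argument for the index relation via Lemma~\ref{kernelisoQ} and Lemma~\ref{propgammadim}(5) is correct once Fredholmness is established.
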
 
\begin{proof}
%Taking the remarks after Lemma \ref{gammaapsprojectors} into account, the spectral components can be decomposed into a properly supported part and a s-smoothing part: 
Let $P_{\pm}(t_j)$ be either $P_{\intervallo{0}{\infty}{}}(t_j)$ for the plus case or $P_{\intervallo{-\infty}{0}{}}(t_j)$ for the minus case; from Lemma \ref{gammaapsprojectors} these projectors are elements in $S\ydo{0}{\Upgamma}(\Sigma_j,\spinb^{+}(\Sigma_j))$, thus they can be decomposed as $P_{\pm}(t_j)=p_{\pm}(t_j)+r_{\pm}(t_j)$, where $r_{\pm}(t_j)\in S\ydo{-\infty}{\Upgamma}(\Sigma_j,\spinb^{+}(\Sigma_j))$ and $p_{\pm}(t_j) \in \ydo{0}{\Upgamma,\mathsf{cl}}(\Sigma_j,\spinb^{+}(\Sigma_j))$ are properly supported. The other projectors in the APS boundary conditions can be decomposed as well:
\begin{equation*}
P_{\pm}(t_j)=p_{\pm}(t_j)+\tilde{r}_{\pm}(t_j)\quad,
\end{equation*}
where $\tilde{r}_{\pm}(t_j)\in S\ydo{-\infty}{\Upgamma}(\Sigma_j,\spinb^{+}(\Sigma_j))$ is the sum of $r_{\pm}(t_j)$ and $\chi_{\SET{0}}(A_j)$.\\
\\
Regarding this the spectral entries of $Q=Q(t_2,t_1)$ in \cref{Qmatrix} and following can be split up into a sum of a properly supported $\Upgamma$-invariant FIO and a s-smoothing operator:  
\begin{eqnarray*}
Q_{\pm\pm}(t_2,t_1)&=& q_{\pm\pm}(t_2,t_1)+ R_{\pm\pm}(t_2,t_1)\quad \text{with} \quad q_{\pm\pm}(t_2,t_1):=p_{\pm}(t_2)\circ Q\circ p_{\pm}(t_1) \\
\text{and}&& R_{\pm\pm}(t_2,t_1):=\tilde{r}_{\pm}(t_2)\circ Q \circ p_{\pm}(t_1)+p_{\pm}\circ Q\circ r_{\pm}(t_1)+\tilde{r}_{\pm}(t_2)\circ Q \circ r_{\pm}(t_1)\quad; \\
Q_{\pm\mp}(t_2,t_1)&=& q_{\pm\mp}(t_2,t_1)+ R_{\pm\mp}(t_2,t_1)\quad \text{with} \quad q_{\pm\mp}(t_2,t_1):=p_{\pm}(t_2)\circ Q\circ p_{\mp}(t_1) \\
\text{and} && R_{\pm\mp}(t_2,t_1):=\tilde{r}_{\pm}(t_2)\circ Q \circ p_{\mp}(t_1)+p_{\pm}\circ Q\circ r_{\mp}(t_1)+\tilde{r}_{\pm}(t_2)\circ Q \circ r_{\mp}(t_1)\quad.
\end{eqnarray*} 
The triple compositions $q_{\pm\pm}$ and $q_{\pm\mp}$ of properly supported operators are well defined and in terms of FIO compositions again properly supported and of order 0. %: \\ $q_{\pm\pm}(t_2,t_1), q_{\pm\mp}(t_2,t_1) \in \FIO{0}_\Upgamma(\Sigma_2;\mathsf{C}'_{1\rightarrow 2};\Hom(\spinb^{+}(\Sigma_1),\spinb^{+}(\Sigma_2)))$ properly supported. 
The remainders $R_{\pm\pm}(t_2,t_1)$ and $R_{\pm\mp}(t_2,t_1)$ are sums of compositions between s-smoothing pseudo-differential operators and a properly supported FIO. The first two summands are s-smoothing Fourier integral operators. Since $r$ is s-smoothing, it maps between any two $\Upgamma$-Sobolev spaces according to its Definition \ref{ssmoothing}: $r:H^s_\Upgamma(\spinb^{+}(\Sigma_2)) \,\rightarrow \,H^r_\Upgamma(\spinb^{+}(\Sigma_1))$ for any two $s,r \in \R$. $Q$ maps $H^r_\Upgamma(\spinb^{+}(\Sigma_1))$ to $H^r_\Upgamma(\spinb^{+}(\Sigma_2))$, such that the second s-smoothing operator maps $H^r_\Upgamma(\spinb^{+}(\Sigma_2))$ to $H^l_\Upgamma(\spinb^{+}(\Sigma_2))$ for any $r,l \in \R$, thus $\tilde{r}\circ Q \circ r$ maps between any $\Upgamma$-Sobolev spaces, wherefore it becomes s-smoothing. Thus all three summands in $R_{\pm\pm}$ and $R_{\pm \mp}$ are s-smoothing. %, making them themselve s-smoothing.\\
The commuting with $L_\upgamma$ for all $\upgamma \in \Upgamma$ is clear, since every part in the composition is a $\Upgamma$-invariant operator on its own right. Summing up, this shows, that $Q_{\pm\pm}$ and $Q_{\pm\mp}$ are sums of properly supported $\Upgamma$-invariant FIO and s-smoothing FIO, thus they are themselve elements of $\FIO{0}_\Upgamma(\Sigma_1,\Sigma_2;\mathsf{C}'_{1\rightarrow 2};\Hom(\spinb^{+}(\Sigma_1),\spinb^{+}(\Sigma_2)))$. Note, that the composition of the canonical relation $\mathsf{C}_{1\rightarrow 2}$ of $Q$ and the conormal bundle of the diagonal from the pseudo-differential operators gives back $\mathsf{C}_{1\rightarrow 2}$. %, because pseudo-differential operators has the conormal bundle of the graph of the identity map as Lagrangian submanifold. 
The statements for the adjoints follows from property (a) in Lemma \ref{fioprop}. %The adjoints of the properly supported part $q_{\pm\pm}$ and $q_{\pm\mp}$ and the s-smoothing part $R_{\pm\pm}$ and $R_{\pm\mp}$ are again properly supported and respectively s-smoothing.
\\
\\
For b) it is left to show, that the order of the FIO is -1: consider the properly supported part of $Q_{\pm\mp}$; the principal symbols of $q_{\pm\mp}(t_2,t_1)$ are the same as the principal symbols of $Q_{\pm\mp}(t_2,t_1)$ up to smooth contributions. Thus one has
\begin{equation*}
\begin{split}
\mathpzc{q}_{\pm\mp}(x,\xi_{\pm};y,\eta)&:=\bm{\sigma}_0(q_{\pm\mp})(x,\xi_{\pm};y,\eta)=\bm{\sigma}_0(Q_{\pm\mp})(x,\xi_{\pm};y,\eta)\\
&=\bm{\sigma}_0(P_{\pm})(x,\xi_{\pm})\circ \bm{\sigma}_0(Q)(x,\xi_{\pm};y,\eta) \circ \bm{\sigma}_0(P_{\mp})(y,\eta)
\end{split}
\end{equation*} 
with $(x,\xi_{\pm})\in T^\ast_x \Sigma_2$ and $(y,\eta) \in T^\ast_y\Sigma_2$. The principal symbol of $Q$ is given as in \cref{Qprinsym} of Proposition \ref{Qfourier}. The principal symbols of the projectors follows from the one of $A_j$ for $j\in \SET{1,2}$ and from \cref{apsprojectors}.
The same calculations as in Lemma 2.6 of \cite{BaerStroh} shows, that the principal symbols of $q_{\pm\mp}(t_2,t_1)$ are vanishing. The exact sequence property in Lemma \ref{fioprop} implies, that the order is -1.
Following the proof of Theorem 25.3.1 in \cite{hoerm4} (see also e) in Lemma \ref{fioprop}) $Q^\ast_{\pm\mp}\circ Q_{\pm\mp}$ becomes a $\Upgamma$-invariant pseudo-differential operator of order 0. A precise analysis of the composition $Q^\ast_{\pm\mp}\circ Q_{\pm\mp}$ with idempotence and self-adjointness of the projections shows:
\begin{eqnarray*}
Q^\ast_{\pm\mp}\circ Q_{\pm\mp}&=& p_{\mp}(t_1)\circ Q^\ast(t_1,t_2)\circ q_{\pm\mp}(t_2,t_1) + \tilde{r}_{\mp}(t_1)\circ Q^\ast(t_1,t_2)\circ q_{\pm\mp}(t_2,t_1) \\
&& +\,\, \quad p_{\mp}(t_1)\circ Q^\ast(t_1,t_2)\circ R_{\pm\mp}(t_2,t_1) + \tilde{r}_{\mp}(t_1) \circ Q^\ast(t_1,t_2)\circ R_{\pm\mp}(t_2,t_1) \quad.
\end{eqnarray*}  
The first triple composition is properly supported, since it is a composition of properly supported operators; the three other terms are s-smoothing by the same argument as mentioned in the calculation for $Q_{\pm\pm}$ and $Q_{\pm\mp}$. Hence $Q^\ast_{\pm\mp}\circ Q_{\pm\mp} \in S\ydo{0}{\Upgamma}(\Hom(\spinb^{+}(\Sigma_{1}),\spinb^{+}(\Sigma_{1})))$. Proposition 2 in section 3.11 of \cite{shub} shows, that $Q^\ast_{\pm\mp}\circ Q_{\pm\mp}$ becomes a bounded linear map in $H^s_\Upgamma(\spinb^{+}(\Sigma_1))$ for all $s \in \R$. Moreover the principal symbol of $(Q_{\pm\mp})^\ast(t_1,t_2)\circ Q_{\pm\mp}(t_2,t_1)$ is vanishing as well, because
\begin{equation*} 
\bm{\sigma}_0((Q_{\pm\mp})^\ast(t_1,t_2)\circ Q_{\pm\mp}(t_2,t_1))=\bm{\sigma}_0((Q_{\pm\mp})^\ast(t_1,t_2))\circ \bm{\sigma}_0(Q_{\pm\mp}(t_2,t_1)))=0 \quad.
\end{equation*}
Hence $Q^\ast_{\pm\mp}\circ Q_{\pm\mp} \in S\ydo{-1}{\Upgamma}(\Hom(\spinb^{+}(\Sigma_{1}),\spinb^{+}(\Sigma_{1})))$. The first two equations in \cref{Qsys1} show, $Q^\ast_{\pm\pm}$ are initial parametrices of $Q_{\pm\pm}$ with remainders $Q^\ast_{\mp\pm}\circ Q_{\mp\pm}$ respectively. As for elliptic equations we can construct an even better parametrix, so that the order of the error becomes sufficiently negative. Ellipticity would become an important property in constructing an initial parametrix, but becomes irrelevant as unitarity of $Q$ replaced this step. Define $\mathcal{Q}_{\pm\mp}:=-Q^\ast_{\pm\mp}\circ Q_{\pm\mp}$ and the two operators (upper and lower signs refer to one of them separately)
\begin{equation*}
\mathcal{P}_{\pm\mp}:=\left(\sum_{l=0}^{N-1} (-1)^l \mathcal{Q}^l_{\pm\mp}+\mathcal{Q}_N\right)Q^\ast_{\mp\mp}
\end{equation*} 
for an $N \in \N$, $\mathcal{Q}_N \in S\ydo{-N}{\Upgamma}(\Hom(\spinb^{+}(\Sigma_{1}),\spinb^{+}(\Sigma_{1})))$ and $\mathcal{Q}^l_{\pm\mp}\in S\ydo{-l}{\Upgamma}(\Hom(\spinb^{+}(\Sigma_{1}),\spinb^{+}(\Sigma_{1})))$ for $l \in \SET{0,...,N-1}$. %; the upper signs assign a better parametrix for $Q_{--}$, whereas the lower signs of $\mathcal{P}$ assign a better parametrix for $Q_{++}$.
Applying on $Q_{\mp\mp}$ shows with the first two equations in \cref{Qsys1}:
\begin{eqnarray*}
\mathcal{P}_{\pm\mp} Q_{\mp\mp} &=& \left(\sum_{l=0}^{N-1} (-1)^l \mathcal{Q}^l_{\pm\mp}+\mathcal{Q}_N\right)Q^\ast_{\mp\mp}Q_{\mp\mp} = \left(\sum_{l=0}^{N-1} (-1)^l \mathcal{Q}^l_{\pm\mp}+\mathcal{Q}_N\right)\left(\Iop{}+\mathcal{Q}_{\pm\mp}\right)\\
&=&\sum_{l=0}^{N-1} (-1)^l \mathcal{Q}^l_{\pm\mp}+\mathcal{Q}_N + \mathcal{Q}_N\left(\Iop{}+\mathcal{Q}_{\pm\mp}\right) + \sum_{l=0}^{N-1} (-1)^l \mathcal{Q}^{l+1}_{\pm\mp} \\
&=& \Iop{} + (-1)^N \mathcal{Q}^N_{\pm\mp}+\mathcal{Q}_N\left(\Iop{}+\mathcal{Q}_{\pm\mp}\right)\quad,  \\
%&& \Leftrightarrow \\
%&& \left(\mathcal{P}_{\pm\mp} Q_{\mp\mp}-\Iop{}\right) \in S\ydo{-N}{\Upgamma}(\Hom(\spinb^{+}(\Sigma_{1}),\spinb^{+}(\Sigma_{1})))
\end{eqnarray*}
hence
\begin{equation} \label{Qdiaggammatraceclass}
\left(\mathcal{P}_{+-} Q_{--}-\Iop{}\right),\left(\mathcal{P}_{-+} Q_{++}-\Iop{}\right) \in S\ydo{-N}{\Upgamma}(\Hom(\spinb^{+}(\Sigma_{1}),\spinb^{+}(\Sigma_{1}))) \quad .
\end{equation}
\newpage
If we choose $N > \dim(\Sigma)$ and recall the above shown $L^2_\Upgamma$-boundness, Corollary 1 in section 3.11 in \cite{shub} implies, that $\left(\mathcal{P}_{\pm\mp} Q_{\mp\mp}-\Iop{}\right)$ are $\Upgamma$-trace-class operators and thus $\mathcal{P}_{\pm\mp}$ suitable left parametrices. The adjoint of the first two equations in \cref{Qsys1} imply, that 
\begin{equation*}
\mirror{\mathcal{P}}_{\pm\mp}:=Q^\ast_{\mp\mp}\left(\sum_{l=0}^{N-1} (-1)^l (\mathcal{Q}^\ast_{\pm\mp})^l+\mathcal{Q}^\ast_N\right)
\end{equation*}
are suitable right parametrices for $Q_{\mp\mp}$. The relation between the $\Upgamma$-indices is a consequence of Lemma \ref{kernelisoQ}, carried over to our setting without modification. As in the proof of Lemma \ref{funcanagammalemma} we can find for any topological isomorphism between $\Upgamma$-modules like the null spaces a unitary isomorphism between these two spaces, implying
\begin{equation*}
\dim_\Upgamma\kernel{Q_{\pm\pm}}=\dim_\Upgamma\kernel{Q^\ast_{\mp\mp}}
\end{equation*}
and the index formula in Lemma \ref{propgammafred} (7) completes the proof. 
\end{proof}

\begin{rem}
The used fact, that $Q^\ast_{\pm\mp}\circ Q_{\pm\mp} \in S\ydo{0}{\Upgamma}(\Hom(\spinb^{+}(\Sigma_{1}),\spinb^{+}(\Sigma_{1})))$ implies \\ $Q^\ast_{\pm\mp}\circ Q_{\pm\mp} \in \mathscr{L}_\Upgamma(H^s_\Upgamma(\spinb^{+}(\Sigma_1),H^s_\Upgamma(\spinb^{+}(\Sigma_2))$ for all $s \in \R$ can be used to show
\begin{equation}\label{qoffdiagbound}
Q_{\pm\mp} \in \mathscr{L}_\Upgamma(L^2_\Upgamma(\spinb^{+}(\Sigma_1),L^2_\Upgamma(\spinb^{+}(\Sigma_2))\quad.
\end{equation}
On one hand, we have 
\begin{equation*}
\dscal{1}{L^2_\Upgamma(\spinb(\Sigma_1))}{Q_{\pm\mp}^\ast \circ Q_{\pm\mp}u}{u} \leq C \norm{u}{L^2_\Upgamma(\spinb(\Sigma_1))}^2
\end{equation*}
by $L^2_\Upgamma$-boundness; on the other hand the defintion of the formal adjoint operator implies
\begin{equation*}
\norm{Q_{\pm\mp} u}{L^2_\Upgamma(\spinb(\Sigma_2))}^2=\dscal{1}{L^2_\Upgamma(\spinb(\Sigma_2))}{Q_{\pm\mp}u}{Q_{\pm\mp} u} \leq C \norm{u}{L^2_\Upgamma(\spinb(\Sigma_1))}^2
\end{equation*}
and the claim is shown.
\end{rem}

\subsection{$\Upgamma$-Fredholmness of $D_{\mathrm{APS}}$ and $D_{\mathrm{aAPS}}$}
\justifying
We define finite energy spinors of the Dirac equation, which satisfy either APS or aAPS boundary conditions in the setting of non-compact manifolds coming from a Galois covering:
\begin{eqnarray*}
FE^s_{\Upgamma,\mathrm{APS}}(M,\timef,D)&:=&\SET{u \in FE^s_{\Upgamma}(M,\timef,D)\,\vert\,P_{\intervallro{0}{\infty}{}}(t_1)u\vert_{\Sigma_1}=0=P_{\intervalllo{-\infty}{0}{}}(t_2)u\vert_{\Sigma_2}}\\
FE^s_{\Upgamma,\mathrm{aAPS}}(M,\timef,D)&:=&\SET{u \in FE^s_{\Upgamma}(M,\timef,D)\,\vert\,P_{\intervallo{0}{\infty}{}}(t_2)u\vert_{\Sigma_2}=0=P_{\intervallo{-\infty}{0}{}}(t_1)u\vert_{\Sigma_1}} \quad.
\end{eqnarray*}
These spaces are $\Upgamma$-invariant, since the unitary left action commutes with both the restriction and the spectral projector. % as s-regular pseudo-differential operator. Hence if $u \in FE^s_{\Upgamma,\mathrm{(a)APS}}(M,\timef(M),D)$, $L_\upgamma u$ does as well respectively for all $\upgamma \in \Upgamma$. \textcolor{red}{In addition these spaces are closed subsets of $FE^s_{\Upgamma}(M,\timef(M),D)$ and consequently projective Hilbert $\Upgamma$-submodules on its own right, because the restriction map from $FE^s_{\Upgamma}(M,\timef(M),D)$ to $H^s_\Upgamma(\Sigma_j,\spinb^{+}(\Sigma_j))$ is an isomorphism according to Proposition \ref{inivpwellgamma} and the spectral projections as zeroth order s-regular pseudo-differential operator maps the $\Upgamma$-Sobolev space to $H^s_{\Upgamma,I}$ as bounded operator for any $I \subset \R$ according to Remarks \ref{remarkssmoothing}. Therefore the composition is continuous and since $H^s_{\Upgamma,I}$ is closed, the preimage $FE^s_{\Upgamma,\mathrm{(a)APS}}(M,\timef(M),D)$ does as well.} 
Proposition \ref{inivpwellgamma} and (3) in Remarks \ref{remarkssmoothing} imply in addition, that these spaces are closed subsets of $FE^s_{\Upgamma}(M,\timef,D)$ and consequently projective Hilbert $\Upgamma$-submodules on its own right. For the $\Upgamma$-Fredholmness we focus on $s=0$. The \textit{Dirac operator under (anti-) APS boundary conditions} is
\begin{eqnarray*}
D_{\mathrm{(a)APS}}\,:\,FE^0_{\Upgamma,\mathrm{(a)APS}}(M,\timef,D)\,\rightarrow L^2_\Upgamma(\spinb^{-}(M))\quad.
\end{eqnarray*}
The main piece in proving Theorem \ref{maintheoII} is given as follows:
\begin{theo}\label{indexDaAPS}
Let $M$ be a temporal compact, globally hyperbolic spatial $\Upgamma$-manifold with spin structure, $\spinb^{\pm}(M)\rightarrow M$ $\Upgamma$-spin bundles of positive/negative chirality; the $\Upgamma$-invariant Dirac operators $D_{\mathrm{APS}}$ under APS and $D_{\mathrm{aAPS}}$ under aAPS boundary conditions on the Cauchy boundary $\Upgamma$-hypersurfaces $\Sigma_{1}=\Sigma_{t_1}$ and $\Sigma_{2}=\Sigma_{t_2}$ with closed base are $\Upgamma$-Fredholm with indices
\begin{align*}
\Index_{\Upgamma}(D_{\mathrm{APS}})&=\Index_{\Upgamma}\left(\left[(P_{\intervallro{0}{\infty}{}}(t_1)\circ\rest{{t_1}})\oplus (P_{\intervalllo{-\infty}{0}{}}(t_2)\circ\rest{{t_2}})\right]\oplus D\right)=\Index_{\Upgamma}(Q_{--}(t_2,t_1)) \nonumber\\
\text{and} & \\ %\label{indexDaAPSform}
\Index_{\Upgamma}(D_{\mathrm{aAPS}})&=\Index_{\Upgamma}\left(\left[(P_{\intervallo{-\infty}{0}{}}(t_1)\circ\rest{{t_1}})\oplus (P_{\intervallo{0}{\infty}{}}(t_2)\circ\rest{{t_2}})\right]\oplus D\right)=\Index_{\Upgamma}(Q_{++}(t_2,t_1)) \quad. \nonumber
\end{align*}
\end{theo}
\begin{proof}
The steps presented here are the same as in \cite{BaerStroh}, but modified for our setting: we show, that
\begin{eqnarray*}
\mathbb{P}_{+}\oplus D &:=& \left[(P_{\intervallro{0}{\infty}{}}(t_1)\circ\rest{{t_1}})\oplus (P_{\intervalllo{-\infty}{0}{}}(t_2)\circ\rest{{t_2}})\right]\oplus D \,: \\
&& FE^0_{\Upgamma,\mathrm{APS}}(M,\timef,D)\,\rightarrow \,\left[L^2_{\Upgamma,\intervallro{0}{\infty}{}}(\spinb^{+}(\Sigma_1))\oplus L^2_{\Upgamma,\intervalllo{-\infty}{0}{}}(\spinb^{+}(\Sigma_2))\right]\oplus L^2_\Upgamma(\spinb^{-}(M)) \\
&&\text{and} \\
\mathbb{P}_{-}\oplus D &:=&\left[(P_{\intervallo{-\infty}{0}{}}(t_1)\circ\rest{{t_1}})\oplus (P_{\intervalllo{0}{\infty}{}}(t_2)\circ\rest{{t_2}})\right]\oplus D \,: \\
&& FE^0_{\Upgamma,\mathrm{aAPS}}(\timef(M),D)\,\rightarrow \,\left[L^2_{\Upgamma,\intervallo{-\infty}{0}{}}(\spinb^{+}(\Sigma_1))\oplus L^2_{\Upgamma,\intervallo{0}{\infty}{}}(\spinb^{+}(\Sigma_2))\right]\oplus L^2_\Upgamma(\spinb^{-}(M))
\end{eqnarray*}
are $\Upgamma$-Fredholm with claimed indices. To do so, Lemma \ref{funcanagammalemma} will be applied for 
\begin{align*}
\mathscr{H}&=FE^0_{\Upgamma}(M,\timef,D) \quad &\mathscr{H}_2=L^2_\Upgamma(\spinb^{-}(M)) \quad \mathscr{H}_1&=L^2_{\Upgamma,\intervallro{0}{\infty}{}}(\spinb^{+}(\Sigma_1))\oplus L^2_{\Upgamma,\intervalllo{-\infty}{0}{}}(\spinb^{+}(\Sigma_2)) \\
B&=D=D_{+} & \quad A&=\mathbb{P}_{+} 
\end{align*}
in order to prove $\Upgamma$-Fredholmness of $\mathbb{P}_{+}\oplus D$ by checking, that $C=A\vert_{\kernel{B}}=\mathbb{P}_{+}\vert_{\kernel{D}}$ is $\Upgamma$-Fredholm. This will be done by relating the analysis of $\Upgamma$-Fredholmness to the known $\Upgamma$-Fredholmness of $Q_{--}(t_2,t_1)$ from Proposition \ref{propQfred}. The use is legitimated since $\mathscr{H}_1$ and $\mathscr{H}_2$ are Hilbert $\Upgamma$-modules and $\mathscr{H}$ is a Hilbert $\Upgamma$-module by Proposition \ref{inivpwellgamma}. The boundness and surjectivity of $D_{+}$ follow from Theorem \ref{inivpwellgamma}. For $\mathbb{P}_{\pm}$ surjectivity and boundness follow from Corollary \ref{homivpwellgamma} and the facts, that projections are surjective as well as bounded, if the kernel and the range of the projection are closed $L^2_\Upgamma$-subspaces.\\
\\
The needed algebraic manipulations and arguments from Theorem 3.2 in \cite{BaerStroh} can be applied here as well, where one needs to express everything in terms of projective Hilbert $\Upgamma$-submodules, $\Upgamma$-dimensions as well as $\Upgamma$-compact and -Fredholm operators. Thus one gets
\begin{itemize}
\item $\kernel{C}\cong \kernel{Q_{--}(t_2,t_1)}\oplus \SET{0}$,
\item $\range{C}=\SET{((u_1,Q_{-+}u_1+Q_{--}w),f) \in \mathscr{H}_1\oplus\mathscr{H}_2 \,\vert\,w \in  L^2_{\Upgamma,\intervallo{-\infty}{0}{}}(\spinb^{+}(\Sigma_1)) \, , f \in \mathscr{H}_2}$\\
is closed and 
\item $\cokernel{C} \cong \kernel{Q_{--}^\ast(t_2,t_1)}\oplus\SET{0}$ .
\end{itemize}
Closedness of the range follows as in \cite{BaerStroh} with \cref{qoffdiagbound} and $Q_{--}$ being $\Upgamma$-Fredholm. Here and in the following every appearing topological isomorphism between Hilbert $\Upgamma$-modules can be considered as unitary isomorphism; see Proposition in section 2.16 of \cite{shub}. Lemma \ref{propgammadim} (5) implies, that the $\Upgamma$-dimensions coincide, such that
\begin{equation*}
\begin{split}
&\dim_\Upgamma(\kernel{C})=\dim_\Upgamma(\kernel{Q_{--}(t_2,t_1)}\oplus \SET{0}) < \infty \\
\text{and}\,\,\quad &\dim_\Upgamma(\cokernel{C})=\dim_\Upgamma(\kernel{Q_{--}^\ast(t_2,t_1)}\oplus\SET{0}) < \infty \quad,
\end{split}
\end{equation*}
since $Q_{--}(t_2,t_1)$ is $\Upgamma$-Fredholm, so $Q_{--}^\ast(t_2,t_1)$ does as well. $C$ and $A\oplus B=\mathbb{P}_{+}\oplus D$ are $\Upgamma$-Fredholm with claimed index.
\begin{comment}
\begin{eqnarray*}
\Index_\Upgamma(\mathbb{P}_{+}\oplus D)&=&\Index_\Upgamma(C)=\dim_\Upgamma(\kernel{C})-\dim_\Upgamma(\cokernel{C})\\
&=&\dim_\Upgamma(\kernel{Q_{--}(t_2,t_1)}) -\dim_\Upgamma(\kernel{Q_{--}^\ast(t_2,t_1)}) = \Index_\Upgamma(Q_{--}(t_2,t_1)) \quad.
\end{eqnarray*}
\end{comment}
The second statement follows in the same way by choosing
\begin{equation*}
\mathscr{H}_1=L^2_{\Upgamma,\intervallo{-\infty}{0}{}}(\spinb^{+}(\Sigma_1))\oplus L^2_{\Upgamma,\intervallo{0}{\infty}{}}(\spinb^{+}(\Sigma_2))\quad\text{and}\quad A=\mathbb{P}_{-}
\end{equation*}
and $\mathscr{H}$, $\mathscr{H}_2$ and $B$ as before. The conditions are still the same to legimitate the use of Lemma \ref{funcanagammalemma}. Again one relates the kernel, the image and the cokernel to the properties of a spectral entry of $Q(t_2,t_1)$; the $\Upgamma$-Fredholm-property follows from the spectral part $Q_{++}(t_2,t_1)$ up to an s-smoothing operator: the kernel of $C=\mathbb{P}_{-}\vert_{\kernel{D}}\oplus \Iop{\mathscr{H}_2}$ consists of those sections $u \in FE^0_\Upgamma(M,\kernel{D})$, such that the restriction onto the initial hypersurface $\Sigma_1$ is a section in $L^2_{\Upgamma,\intervallro{0}{\infty}{}}(\spinb^{+}(\Sigma_1))$ and its restriction onto $\Sigma_2$, denoted by $v=Qu\vert_1$ yields
\begin{eqnarray*}
0&=&P_{\intervallo{0}{\infty}{}}(t_2)(v)=(P_{\intervallro{0}{\infty}{}}(t_2)-P_0(t_2))\circ Q(t_2,t_1)\circ (P_{\intervallo{0}{\infty}{}}(t_1)+P_0(t_1))u\vert_{\Sigma_1} \\
&=& \left[Q_{++}(t_2,t_1)+\mathpzc{R}_{\,0+}(t_2,t_1)\right]u\vert_{\Sigma_1}\quad \text{with} \\
\mathpzc{R}_{\,0+}(t_2,t_1)&=& P_{\intervallro{0}{\infty}{}}(t_2)\circ Q(t_2,t_1) \circ P_0(t_1) - P_0(t_2)\circ Q(t_2,t_1) \circ P_{\intervallo{0}{\infty}{}}(t_1)\\
&& - P_0(t_2)\circ Q(t_2,t_1)\circ P_0(t_1) \quad,
\end{eqnarray*}
which is again a s-smoothing operator since the spectral projection on the null space of the hypersurface Dirac operators $P_0(t_j)$ are s-smoothing and the projectors of the boundary conditions are a sum of a properly supported pseudo-differential operator and a s-smoothing operator. $\mathpzc{R}_{\,0+}(t_2,t_1)$ is s-smoothing, because it is a sum of triple compositions of either two properly supported operators with one s-smoothing operator or vice versa. Hence
\begin{itemize}
\item $\kernel{C}\cong \kernel{Q_{++}(t_2,t_1)+\mathpzc{R}_{\,0+}(t_2,t_1)}\oplus \SET{0}$,
\item $\range{C}$ is closed and
\item $\cokernel{C} \cong \kernel{(Q_{++}(t_2,t_1)+\mathpzc{R}_{\,0+}(t_2,t_1))^\ast}\oplus\SET{0}$ .
\end{itemize}
Since $Q_{++}(t_2,t_1)$ is $\Upgamma$-Fredholm and $\mathpzc{R}_{\,0,+}(t_2,t_1) \in \mathscr{TC}_\Upgamma$ by s-smoothness, their sum is $\Upgamma$-Fredholm and
\begin{equation*}
\begin{split}
\dim_\Upgamma(\kernel{C})&=\dim_\Upgamma(\kernel{Q_{++}(t_2,t_1)+\mathpzc{R}_{\,0+}(t_2,t_1)})+\dim_\Upgamma(\SET{0}) < \infty\\
\text{and}\,\,\dim_\Upgamma(\cokernel{C})&=\dim_\Upgamma(\kernel{(Q_{++}(t_2,t_1)+\mathpzc{R}_{\,0+}(t_2,t_1))^\ast})+\dim_\Upgamma(\SET{0}) < \infty \quad.
\end{split}
\end{equation*}
$\Upgamma$-Fredholmness of $\mathbb{P}_{-}\vert_{\kernel{D}}$ follows as conclusion. The invariance of the $\Upgamma$-index under $\Upgamma$-compact pertubations shows, that $\mathbb{P}_{-}\oplus D$ is $\Upgamma$-Fredholm with claimed index.
%Hence $\mathbb{P}_{-}\oplus D$ is $\Upgamma$-Fredholm with claimed index, where the invariance of the $\Upgamma$-index under $\Upgamma$-compact pertubations has been used.
\begin{comment}
\begin{eqnarray*}
\Index_\Upgamma(\mathbb{P}_{-}\oplus D)&=& \Index_\Upgamma(C) = \dim_{\Upgamma}(\kernel{Q_{++}(t_2,t_1)+\mathpzc{R}_{\,0+}(t_2,t_1)})\\
&&-\dim_\Upgamma(\kernel{(Q_{++}(t_2,t_1)+\mathpzc{R}_{\,0+}(t_2,t_1))^\ast})= \Index_\Upgamma(Q_{++}(t_2,t_1)+\mathpzc{R}_{\,0+}(t_2,t_1))\\
&=&\Index_\Upgamma(Q_{++}(t_2,t_1)) \quad.
\end{eqnarray*}
The invariance of the $\Upgamma$-index under $\Upgamma$-compact pertubations, here with a s-smmothing operator belonging to the $\Upgamma$-trace class, explains the last step.
\end{comment}
\\
\\ 
%%%%%%%%%%%%%%%%%%%%%%%%%%%%%%%%%%%%%%%%%%%%%%%%%%%%%%%%%%%%%%%%%%%%%%%%%%%%%%%%%%%%%%%%
The claim thus follows by applying (3) in Lemma \ref{funcanagammalemma} with 
\begin{align*}
\mathscr{H}&=FE^0_{\Upgamma}(M,\timef,D) \quad &\mathscr{H}_1=L^2_\Upgamma(\spinb^{-}(M)) \quad \mathscr{H}_2&=L^2_{\Upgamma,\intervallo{-\infty}{0}{}}(\spinb^{+}(\Sigma_1))\oplus L^2_{\Upgamma,\intervallo{0}{\infty}{}}(\spinb^{+}(\Sigma_2)) \\
A&=D=D_{+} & \quad B&=\mathbb{P}_{+} \quad,
\end{align*}
such that $D_{\mathrm{APS}}$ is $\Upgamma$-Fredholm with $\Index_{\Upgamma}(Q_{--}(t_2,t_1))$, and applying the same Lemma with
\begin{align*}
\mathscr{H}&=FE^0_{\Upgamma}(M,\timef,D) \quad &\mathscr{H}_1=L^2_\Upgamma(\spinb^{-}(M)) \quad \mathscr{H}_2&=L^2_{\Upgamma,\intervallo{-\infty}{0}{}}(\spinb^{+}(\Sigma_1))\oplus L^2_{\Upgamma,\intervallo{0}{\infty}{}}(\spinb^{+}(\Sigma_2)) \\
A&=D=D_{+} & \quad B&=\mathbb{P}_{-} \quad,
\end{align*}
proves the $\Upgamma$-Fredholmness of $D_{\mathrm{aAPS}}$ with the stated $\Upgamma$-index.
\end{proof}
So far we assumed, that $n$ has been odd. In case of $n$ even, the $\Upgamma$-Fredholmness $\Dirac$ follows as in Theorem \ref{indexDaAPS}, because the full Dirac operator basically coincides with $D$. Thus Theorem \ref{maintheoII} is proven for $n$ even.

\subsection{$\Upgamma$-Fredholmness of $\tilde{D}_{\mathrm{APS}}$ and $\tilde{D}_{\mathrm{aAPS}}$}
\justifying
This subsection is dedicated to the $\Upgamma$-Fredholm property of $\tilde{D}=D_{-}$ under (anti-) APS boundary conditions from \cref{apsbc} and \cref{aapsbc} on a temporal compact globally hyperbolic spatial $\Upgamma$-manifold $M$ with spin structure, which acts on spinors with negative chirality. We will basically collect all results and explain the differences in the proofs. %In all of them the written down chiralities have to be replaced with the other one, since $D_{-}$ maps negative chirality into positive chirality.
\\
\\
Proposition \ref{inivpwellgamma} has the following pendant for the Dirac operator $\tilde{D}=D_{-}$:
\begin{prop}%\label{inivpwellgammaneg}
For a fixed $t \in \timef(M)$ with $M$ a temporal compact globally hyperbolic spatial $\Upgamma$-manifold with spin structure and any $s\in \R$ the map
\begin{equation*}%\label{inivpmapqneggamma}
\mathsf{res}_t \oplus D_{-} \,\,:\,\, FE^s_\Upgamma(M,\timef,D_{-}) \,\,\rightarrow\,\, H^s_\Upgamma(\spinb^{-}(\Sigma_t))\oplus L^2(\timef(M),H^s_\Upgamma(\spinb^{+}(\Sigma_\bullet)))
\end{equation*}
is an isomorphism of Hilbert $\Upgamma$-modules.
\end{prop}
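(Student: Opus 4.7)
The plan is to mirror the proof of Proposition \ref{inivpwellgamma} with chiralities swapped throughout, so that essentially no new geometric input is needed beyond what was already developed for $D$. First I would verify continuity of $\mathsf{res}_t \oplus D_{-}$: since $M$ is diffeomorphic to $\timef(M)\times \Sigma$ by Theorem \ref{theo22-1}, I pick a time-independent $\Upgamma$-invariant partition of unity $\{\phi_{i,\upgamma}\}_{i\in I,\upgamma \in \Upgamma}$ subordinated to a covering of $\Sigma$, common to all slices. Continuity of the restriction then follows from the very same estimate as in Proposition \ref{inivpwellgamma}, with $\spinb^+$ replaced by $\spinb^-$, yielding $\|\mathsf{res}_t u\|_{H^s_\Upgamma(\spinb^{-}(\Sigma_t))}\leq \|u\|_{\timef(M),\mathcal{K},0,s,\Upgamma}$, and continuity of $D_{-}$ is built into the codomain.

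The heart of the argument is the $\Upgamma$-energy estimate for $D_{-}$. Defining $\mathcal{E}_{s,\Upgamma}(u,\Sigma_t):=\|u\|^2_{H^s_\Upgamma(\spinb^{-}(\Sigma_t))}$ and decomposing via the $\Upgamma$-invariant partition of unity, I would rerun the computation of Proposition \ref{inivpwellgamma} verbatim, but starting from Proposition \ref{enestdneg} in place of Proposition \ref{enesttheorem}. The crucial observation is that Lemma \ref{lemenest1}(b), (c) and its $L^2$-extension \cref{extendB} were already formulated for both signs of $B_{t,\pm}=\pm \Imag A_t - \tfrac{n}{2}H_t$: only the sign in front of $\Imag A_t$ differs, and this does not disturb the estimate since $\Imag A_t$ is formally anti-self-adjoint in either chirality while the mean-curvature term is chirality-independent. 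Together with the local $\Psi$DO boundedness of $B_{t,-}$ on each $\supp{\phi_{i,\upgamma}}$, summing over $i$ and $\upgamma$ produces
\begin{equation*}
\mathcal{E}_{s,\Upgamma}(u,\Sigma_t)\leq C\bigl(\mathcal{E}_{s,\Upgamma}(u,\Sigma_\tau)+\|D_{-}u\|^2_{\timef(M),\Jlight{}(K),s,\Upgamma}\bigr),
\end{equation*}
with $C$ depending only on the compact quotient $\quotspace{\Sigma}{\Upgamma}$ and not on the support of $u$.

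From here, uniqueness for $D_{-}$ (the chirality-swapped analogue of Corollary \ref{corenest2}) follows by feeding the homogeneous data into the estimate, and the inverse map is constructed as in Proposition \ref{inivpwellgamma}: smooth Cauchy data $u_0 \in C^\infty_K(\spinb^{-}(\Sigma_t))$ and $f \in C^\infty_{\Jlight{}(K)}(\spinb^{+}(M))$ yield a smooth solution via Theorem 4 of \cite{AndBaer} (which applies to both $D_{\pm}$), and the $\Upgamma$-energy estimate extends this assignment continuously to $H^s_\Upgamma(\spinb^{-}(\Sigma_t))\oplus L^2(\timef(M),H^s_\Upgamma(\spinb^{+}(\Sigma_\bullet)))$. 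The two compositions reduce to the identity by the usual density and uniqueness argument.

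Finally, the Hilbert $\Upgamma$-module structure transfers automatically: the codomain is a Hilbert $\Upgamma$-module as a direct sum of such (by Lemma \ref{helpinglemma}(3)), $FE^s_\Upgamma(M,\timef,D_{-})$ inherits the $\Upgamma$-action from $L^2(\timef(M),H^s_\Upgamma(\spinb^{-}(\Sigma_\bullet)))$, and the topological isomorphism is upgraded to a unitary isomorphism of projective Hilbert $\Upgamma$-modules by Proposition and Corollary 1 of subsection 2.16 in \cite{shub}. I do not expect a substantive obstacle here; the only point demanding care is to track the sign in the rewriting $\nabla_{\partial_t}u = N\upbeta D_{-}u + N B_{t,-} u$ so that the operator $B_{t,-}$ — not $B_{t,+}$ — is the one appearing in the pseudo-differential estimate, and to note that all structural ingredients ($\Upgamma$-invariance of $D_{-}$, essential self-adjointness of $A_t$ on complete hypersurfaces, local boundedness of tangential operators of order one) are chirality-symmetric.
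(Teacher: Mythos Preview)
Your proposal is correct and follows precisely the approach the paper takes: the paper's proof is a single sentence stating that the argument of Proposition \ref{inivpwellgamma} carries over literally after swapping chiralities, replacing $D$ by $D_{-}$, and invoking Proposition \ref{enestdneg} in place of the positive-chirality energy estimate. You have simply unpacked this in detail, correctly identifying that Lemma \ref{lemenest1}(b),(c) and \cref{extendB} are already chirality-symmetric and that the Hilbert $\Upgamma$-module structure transfers via the same references to \cite{shub}.
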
 
The proof is literally the same by changing the chiralities, replacing $D$ by $D_{-}$ and using Proposition \ref{enestdneg} for the estimates of the $\Upgamma$-s-energy. One concludes, that $FE^s_\Upgamma(M,\timef,D_{-})$ and $FE^s_\Upgamma(M,\kernel{D_{-}})$ are Hilbert $\Upgamma$-modules for all $s\in \R$. The related Dirac-wave evolution operator $\tilde{Q}(t_2,t_1)$ and its spectral entries has been defined in Definition \ref{defevopneg} and forthcoming. We collect all necessary results:
\begin{prop}
\noindent\label{Qneggammalemma}
\begin{itemize}
\item[(a)] $\tilde{Q} \in \FIO{0}_\Upgamma(\Sigma_{1},\Sigma_{2};\mathsf{C}'_{1\rightarrow 2};\Hom(\spinb^{-}(\Sigma_1),\spinb^{-}(\Sigma_2)))$ properly supported as in Proposition \ref{Qnegfourier} is an isomorphism $H^s_\Upgamma(\spinb^{-}(\Sigma_1))\,\rightarrow\,H^s_\Upgamma(\spinb^{-}(\Sigma_2))$ for all $s\in \R$ and unitary for $s=0$, hence $\Upgamma$-Fredholm with $\Index_\Upgamma(\tilde{Q})=0$.
\item[(b)] $\tilde{Q}_{\pm\pm} \in \FIO{0}_\Upgamma(\Sigma_1,\Sigma_2;\mathsf{C}'_{1\rightarrow 2};\Hom(\spinb^{-}(\Sigma_1),\spinb^{-}(\Sigma_2)))$; % and $\tilde{Q}^\ast_{\pm\pm} \in \FIO{0}_\Upgamma(\Sigma_2;\mathsf{C}'_{2\rightarrow 1};\Hom(\spinb^{-}(\Sigma_2),\spinb^{-}(\Sigma_1)))$;
\item[(c)] $\tilde{Q}_{\pm\mp} \in \FIO{-1}_\Upgamma(\Sigma_1,\Sigma_2;\mathsf{C}'_{1\rightarrow 2};\Hom(\spinb^{-}(\Sigma_1),\spinb^{-}(\Sigma_2)))$; % and $\tilde{Q}^\ast_{\pm\mp} \in \FIO{-1}_\Upgamma(\Sigma_2;\mathsf{C}'_{2\rightarrow 1};\Hom(\spinb^{-}(\Sigma_2),\spinb^{-}(\Sigma_1)))$;
\item[(d)] $\tilde{Q}_{\pm\pm}\in \mathscr{F}_\Upgamma(L^2_\Upgamma(\spinb^{-}(\Sigma_1)),L^2_\Upgamma(\spinb^{-}(\Sigma_2)))$ with indices $\Index_\Upgamma(\tilde{Q}_{++})=-\Index_\Upgamma(\tilde{Q}_{--})$.
\end{itemize}
\end{prop}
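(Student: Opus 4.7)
The plan is to follow the same strategy as in Proposition \ref{propQfred}, exploiting that the well-posedness result in Theorem \ref{cauchyinvdneghom} together with the energy estimate Proposition \ref{enestdneg} already carries over to the $\Upgamma$-setting by the same modifications used for $D$ in Proposition \ref{inivpwellgamma}. This makes $FE^s_\Upgamma(M,\kernel{D_{-}})$ a Hilbert $\Upgamma$-module and $\tilde{Q}(t_2,t_1)=\rest{t_2}\circ(\rest{t_1})^{-1}$ a $\Upgamma$-invariant topological isomorphism on $H^s_\Upgamma(\spinb^{-}(\Sigma_{\bullet}))$; $\Upgamma$-invariance follows word-for-word as in the opening lemma of subsection on $\Upgamma$-Fredholmness of $Q$, since the restriction and the solution assignment both commute with $L_\upgamma$.

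For part (a), I would express $\tilde{Q}$ as the composition $\rest{\Sigma_{2}}\circ D_{+}\circ \widetilde{\mathcal{G}}(t_1)\circ \upbeta$, where $\widetilde{\mathcal{G}}(t_1)$ is the FIO solution operator associated with the normally hyperbolic $D_{+}D_{-}$, following exactly the argument of Proposition \ref{Qfourier}. Properly supportness, the canonical relation $\mathsf{C}_{1\rightarrow 2}$, and its interpretation as canonical graph under the lightlike cogeodesic flow are unchanged, because these properties depend only on the normal-hyperbolicity of the squared operator and on the support structure of $\widetilde{\mathcal{G}}(t_1)$, neither of which sees the chirality. Unitarity for $s=0$ and hence $\Upgamma$-Fredholmness with trivial index follow from Proposition \ref{colpropqneg1} (c), as in the $Q$-case.

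For (b) and (c), I would split each spectral projector into a properly supported classical $\Upgamma$-$\Psi$DO and an s-smoothing remainder via Lemma \ref{gammaapsprojectors}, and then decompose $\tilde{Q}_{\pm\pm}$ and $\tilde{Q}_{\pm\mp}$ into a properly supported triple composition plus three s-smoothing pieces, exactly as in Proposition \ref{propQfred}. The resulting operators lie in $\FIO{0}_\Upgamma$. The crucial step for (c) is the principal-symbol computation: using Proposition \ref{Qnegfourier} for the principal symbol of $\tilde Q$ together with the symbols of $P_{\pm}(t_j)$, one mimics the algebraic calculation in Lemma 2.6 of \cite{BaerStroh}. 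The sign flip in front of $\Cliff{t_2}{\xi^\sharp_\pm}$ in $\bm{\sigma}_0(\tilde Q)$ (compared to $\bm{\sigma}_0(Q)$) is harmonious with the analogous sign flip introduced by the negative-chirality convention, so the principal symbol of $\tilde{q}_{\pm\mp}$ still vanishes on the canonical graph by the same Clifford identity $\cliff{\varsigma_{\pm}}\cliff{\varsigma_{\pm}}=-\met(\varsigma_{\pm},\varsigma_{\pm})=0$ combined with the parallel transport identity \cref{commcliffmult}. Then the exact sequence property in Lemma \ref{fioprop} lowers the order to $-1$.

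Part (d) uses the unitarity relations \cref{Qnegsys1}, which play the exact role of \cref{Qsys1} in the positive-chirality case: they show that $\tilde{Q}_{\mp\mp}^\ast$ is an initial parametrix for $\tilde{Q}_{\pm\pm}$ with error $-\tilde{Q}_{\mp\pm}^\ast\circ\tilde{Q}_{\mp\pm} \in S\ydo{-1}{\Upgamma}$. A finite Neumann iteration of the form
\[
\widetilde{\mathcal{P}}_{\pm\mp}:=\Big(\sum_{l=0}^{N-1}(-1)^l\widetilde{\mathcal{Q}}_{\pm\mp}^{\,l}+\widetilde{\mathcal{Q}}_{N}\Big)\tilde{Q}_{\mp\mp}^\ast
\]
with $N>\dim(\Sigma)$, as in the $Q$-case, yields a parametrix with a $\Upgamma$-trace-class error, which proves $\Upgamma$-Fredholmness of $\tilde{Q}_{\pm\pm}$. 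The index identity $\Index_\Upgamma(\tilde{Q}_{++})=-\Index_\Upgamma(\tilde{Q}_{--})$ comes from Proposition \ref{colpropqneg1} (d), which provides topological (hence unitary, by Proposition 2.16 in \cite{shub}) isomorphisms between $\kernel{\tilde{Q}_{\pm\pm}}$ and $\kernel{\tilde{Q}_{\mp\mp}^\ast}$, so the two $\Upgamma$-dimensions agree pairwise and the stated index relation follows from Lemma \ref{propgammafred} (7). The main obstacle in this plan is merely bookkeeping: one must check that the sign conventions in the principal symbol of $D_{+}$ acting on $\spinb^{-}$ (as opposed to $D_{-}$ acting on $\spinb^{+}$) still lead to a vanishing off-diagonal principal symbol, but this is the same algebraic identity, just with the roles of the chirality subbundles interchanged.
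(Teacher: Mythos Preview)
Your proposal is correct and follows essentially the same route as the paper: both argue that all claims for $\tilde{Q}$ are obtained from those for $Q$ by interchanging chiralities, with the only genuine check being that the sign flip in $\bm{\sigma}_0(\tilde{Q})$ (and in the hypersurface Clifford multiplication/parallel-transport relation) still forces the off-diagonal principal symbols to vanish, after which the Neumann-type parametrix construction and the kernel isomorphisms from Proposition \ref{colpropqneg1} (d) give $\Upgamma$-Fredholmness and the index relation. One small slip: in your text the initial parametrix for $\tilde{Q}_{\pm\pm}$ should be $\tilde{Q}_{\pm\pm}^\ast$ (not $\tilde{Q}_{\mp\mp}^\ast$), though your displayed Neumann formula is consistent with the paper and yields a parametrix for $\tilde{Q}_{\mp\mp}$ as intended.
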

\begin{proof}
The proof of all claims are the same up to notations. Assertion (d) follows with the same argument, but the negative chirality induces another commutation relation of parallel transport and another restricted Clifford multiplication has been used in the calculation of the principal symbols of $\hat{Q}_{\pm\mp}$ and their vanishing. From this one concludes analogously from the first equations in \cref{Qnegsys1}, that $\tilde{Q}^\ast_{\pm\pm}$ are initial parametrices for $\tilde{Q}_{\pm\pm}$ with error terms $\tilde{\mathcal{Q}}_{\mp\pm}:=-\tilde{Q}^\ast_{\mp\pm}\tilde{Q}_{\mp\pm}$ respectively. Left parametrices $\tilde{\mathcal{P}}_{\mp\pm}$ with $\Upgamma$-trace-class errors as in the proof of Proposition \ref{propQfred} can be constructed by replacing notationaly any $Q$ with $\tilde{Q}$:
\begin{equation} \label{Qnegdiaggammatraceclass}
\begin{split}
\left(\tilde{\mathcal{P}}_{+-} \tilde{Q}_{--}-\Iop{}\right) &\in S\ydo{-N}{}(\Hom(\spinb^{-}(\Sigma_{1}),\spinb^{-}(\Sigma_{1}))) \\ 
&\text{and} \\
\left(\tilde{\mathcal{P}}_{-+} \tilde{Q}_{++}-\Iop{}\right) &\in S\ydo{-N}{}(\Hom(\spinb^{-}(\Sigma_{1}),\spinb^{-}(\Sigma_{1}))) \quad .
\end{split}
\end{equation} 
Right parametrices can be constructed from the adjoint of the first two equations in \cref{Qnegsys1}. Choosing $N$ in both constructions sufficiently large shows the assertion.
\end{proof}

The $\Upgamma$-Fredholmness can be related with the $\Upgamma$-Fredholm operators $\tilde{Q}_{\pm\pm}$ and $\tilde{Q}^\ast_{\pm\pm}$. Replacing all chiralities, $D$ by $\tilde{D}=D_{-}$ and $Q(t_2,t_1)$ by $\tilde{Q}(t_2,t_1)$ everywhere leads to 
\begin{theo}\label{indexDaAPSneg}
Let $M$ be a temporal compact, globally hyperbolic spatial $\Upgamma$-manifold with spin structure, $\spinb^{\pm}(M)\rightarrow M$ the $\Upgamma$-spin bundles of positive/negative chirality, then the $\Upgamma$-invariant Dirac operators $\tilde{D}_{\mathrm{APS}}$ under APS and $\tilde{D}_{\mathrm{aAPS}}$ aAPS boundary conditions on the Cauchy boundary $\Upgamma$-hypersurfaces $\Sigma_1=\Sigma_{t_1}$ and $\Sigma_2=\Sigma_{t_2}$ with closed base are $\Upgamma$-Fredholm with indices
\begin{align*}
\Index_{\Upgamma}(\tilde{D}_{\mathrm{APS}})&=\Index_{\Upgamma}\left(\left[(P_{\intervalllo{-\infty}{0}{}}(t_1)\circ\rest{t_1})\oplus (P_{\intervallro{0}{\infty}{}}(t_2)\circ\rest{{t_2}})\right]\oplus \tilde{D}\right)=\Index_{\Upgamma}(\tilde{Q}_{++}(t_2,t_1)) \nonumber \\
\text{and} & \\%\label{indexDnegaAps}
\Index_{\Upgamma}(\tilde{D}_{\mathrm{aAPS}})&=\Index_{\Upgamma}\left(\left[(P_{\intervallo{0}{\infty}{}}(t_1)\circ\rest{t_1})\oplus (P_{\intervallo{-\infty}{0}{}}(t_2)\circ\rest{{t_2}})\right]\oplus \tilde{D}\right)=\Index_{\Upgamma}(\tilde{Q}_{--}(t_2,t_1)) \quad.\nonumber
\end{align*}
\end{theo}
The proof is the same, but $\tilde{D}_{\mathrm{(a)APS}}$ now maps from $FE^0_{\Upgamma,\mathrm{(a)APS}}(M,\timef,\tilde{D})$ to $L^2_\Upgamma(\spinb^{+}(M))$ and the reversed boundary conditions lead to reversed spectral entries in the indices.\\
\\
This result and Theorem \ref{indexDaAPS} and finally concludes the Fredholmness in Theorem \ref{maintheoII} for $n$ odd. The $\Upgamma$-index is a consequence of Corollary \ref{helpingcorollary} and the fact, that 
\begin{equation*}
\left( \begin{matrix}
0 & \Iop{} \\
\Iop{} & 0
\end{matrix}\right)
\end{equation*}
is an invertible element in $\mathscr{L}_\Upgamma$:
\begin{eqnarray*}
\Index_\Upgamma(\Dirac\vert_{\mathrm{(a)APS}}) &=& \Index_\Upgamma\left(\left( \begin{matrix}
0 & \Iop{} \\
\Iop{} & 0
\end{matrix}\right) (D_{+}\vert_{\mathrm{(a)APS}}\oplus D_{-}\vert_{\mathrm{(a)APS}})\right)\\
&=&\Index_\Upgamma\left(\left( \begin{matrix}
0 & \Iop{} \\
\Iop{} & 0
\end{matrix}\right)\right) + \Index_\Upgamma\left(D_{+}\vert_{\mathrm{(a)APS}}\right) + \Index_\Upgamma\left(D_{-}\vert_{\mathrm{(a)APS}}\right) \\
%&=& \Index_\Upgamma\left(D_{+}\vert_{\mathrm{(a)APS}}\right) + \Index_\Upgamma\left(D_{-}\vert_{\mathrm{(a)APS}}\right) \\
&=& \Index_\Upgamma\left(D_{\mathrm{(a)APS}}\right) + \Index_\Upgamma\left(\tilde{D}_{\mathrm{(a)APS}}\right) \quad.
\end{eqnarray*}

%\cleardoublepage

%\section{title of the headnote]{title on the page and in toc}

%%%%%%%%%%%%%%%%%%%%%%%%%%%%%%%%%%%%%%%%%%%%%%%%%%%%%%%%%%%%%%%% appendix %%%%%%%%%%%%%%%%%%%%%%%%%%%%%%%%%%%%%%%%%%%%%%%%%%%%%%%%%%%%%%%%%%%%%%%%%%%

\appendix

%\section{Divergence Theorem on Lorentzian manifolds}\label{chap:app1}
%\input{appendix/Analysis/analysis}

%\section{Proof of Lemma \ref{helplem}}\label{chap:app2}
%\input{appendix/partofproof/partofproof}

%\section{Calculation of curvature quantities of the metric \cref{gensphspace}}\label{chap:app3}
%\input{appendix/riemchris}

\addtocontents{toc}{\vspace{-3ex}}
\section{Fourier integral operators}\label{chap:app2}
This chapter gives a sufficient detailed overview about Fourier integral operators (in short FIO) on manifolds and some applications according to inital value problems. For more details, concerning this topic on its own, we refer to the standard literature \cite{hoermfio}, \cite{duistfio} and diverse chapters in \cite{hoerm1}, \cite{hoerm3} and primarily \cite{hoerm4}. %A deeper analysis on their regularity and (special) mapping properties is done in \cite{sesost} and \cite{nazaetal}.
\\
\\
For this appendix $X$ and $Y$ are manifolds with dimensions $\dim(X)=n_X(=n)$, $\dim(Y)=n_Y$ and $E\rightarrow X$ as well as $F\rightarrow Y$ are (complex) vector bundles. 
\begin{comment}
The starting point for FIOs is to extend the kernels of {$\Psi$}DOs to be more general Lagrangian distributions. In order to do so, one needs to r
Recapitulate the notion of densities on manifolds, since they provide a framework for an invariant description of integration and distributions: let $\alpha \in \R$ and $p \in X$; the \textit{space of} $\alpha$\textit{-densities} $\vert\Omega\vert^\alpha(X):=\vert\Omega\vert^\alpha(TX)$ is defined as density bundle of the tangent bundle, given by
\begin{eqnarray*}
\vert\Omega\vert^{\alpha}(TX)&:=&\bigsqcup_{p \in X} \vert\Omega\vert^\alpha(T_pX) \quad,\,\text{where}\\
\vert\Omega\vert^\alpha(T_pX)&:=& \SET{f\,:\,(\dot{T}_p X)^{\wedge(n)} \,\rightarrow\,\R\,\vert\, f(rv)=\vert r\vert^\alpha f(v)\quad \forall\,v\in (\dot{T}_p X)^{\wedge(n)}\,\,\text{and}\,\, r \in \R}\quad,
\end{eqnarray*}
where the dot above indicates, that the zero element in each vector space $T_p X$ is neglected, and the exponent $\wedge(n)$ means the $n$'th exterior power of the underlying vector space. If $\omega \in \Omega^n(X)$ is a volume form, then $\vert \omega \vert$ denotes the corresponding 1-density and $\vert \omega \vert^\alpha$ a $\alpha$-density. %$(T^\ast M)^{\wedge(n)}$ and $\vert \Omega\vert$ are isomorphic to each other, if $X$ is orientable. 
This implies for $\alpha,\beta \in \R$ the properties
\end{comment}
For $\alpha \in \R$ the \textit{space of} $\alpha$\textit{-densities} $\vert\Omega\vert^\alpha(X):=\vert\Omega\vert^\alpha(T^\ast X)$ is defined as density bundle over the cotangent bundle via $\vert\Omega\vert^\alpha(T^\ast X)=\vert \Omega^n(X)\vert^\alpha$ (i.e. power of $n$-volume forms). For $\alpha,\beta \in \R$ they satisfy 
\begin{equation*}
\vert\Omega\vert^\alpha(X)\otimes\vert\Omega\vert^\beta(X)\cong \vert\Omega\vert^{\alpha+\beta}\quad\text{and}\quad \vert\Omega\vert^0(X)\cong \R\quad,
\end{equation*}  
from which one concludes, that $\vert\Omega\vert^{-\alpha}(X)$ is the dual bundle to $\vert\Omega\vert^\alpha(X)$. Any dual object of a smooth section $C^\infty(X,E)$ can be interpreted as distributional section of the bundle $E\otimes\vert\Omega\vert$ by $\alpha+\beta$. Following the standard convention ($\alpha=1/2$) the density bundle is divided into two half-density bundles, $\hdense{X}:=\vert\Omega\vert^{\frac{1}{2}}(X)$, such that
\begin{eqnarray*}
C^{-\infty}(X,E\otimes\vert\Omega\vert^\frac{1}{2}(X))&:=&\left(C^\infty_\comp(X,E\otimes\hdense{X})\right)' \\
\text{and} \quad C_\comp^{-\infty}(X,E\otimes\hdense{X})&:=&\left(C^\infty(X,E\otimes\hdense{X})\right)'\quad.
\end{eqnarray*} 
Let $\mathsf{\Lambda}\subset T^\ast X$; since $(T^\ast X,\omega_X)$ is a symplectic manifold with non-degenerate and closed two-form $\omega_X$ the space $\mathsf{\Lambda}$ is a \textit{Lagrangian submanifold}, if $\omega_X$ vanishes on it and $\dim(\mathsf{\Lambda})=\frac{1}{2}\dim(T^\ast X)=n_X$. If it is a subset of $\dot{T}^\ast X$, it is said to be \textit{conic}, if it is a Lagrangian submanifold with respect to $(T^\ast X,\omega_X)$ and if 
\begin{equation*}
(x,\xi) \in \mathsf{\Lambda}\,\Rightarrow\,(0,\xi)\in T_{(x,\xi)}\mathsf{\Lambda} \quad.
\end{equation*}
Hörmander has proven, that every conic Lagrangian submanifold can be locally parametrized by a non-degenerate phase function, which are homogeneous of degree 1; see Theorem 3.1.3 and remarks afterwards in \cite{hoermfio}. %This explains the alternative term \textit{homogeneous Lagrangian submanifold}. 
These concepts enables to lift the notion of oscillatory integrals to manifolds via coordinate neighborhoods. In this way \textit{Lagrangian distributions of order m}, denoted by the space $I^m(X;\mathsf{\Lambda},E\otimes\hdense{X})$, are defined as those distributional sections of $C^{-\infty}(X,E\otimes\hdense{X})$, which are represented as sum of locally finite supported oscillatory integrals with non-degenerate phase functions. % $\phi_j$, $j \in J$ counting set, on conic neighborhoods in such a way, that the mappings $(x,\xi)\,\mapsto\,(x,(\differ_x \phi_j)(x,\xi)$ are diffeomorphisms on open subsets of $\mathsf{\Lambda}$. The concrete definition with all details is given in \cite{hoerm1} Definition 3.2.2. as well Definition 4.2.1. in \cite{duistfio}.
Invariance under coordinate transformation is related to invariance under the change of the phase function. For this the \textit{(Keller-) Maslov line bundle} $M_{\mathsf{\Lambda}}$ is introduced, which is a complex line bundle of the Grassmanian $\mathsf{Gr}(\dot{T}^\ast X)$. In this way the amplitudes and half-densities are invariant under a change of local coordinates up to a multiplicative factor (Maslov factor).
\begin{comment}
, which locally describes the Lagrangian submanifolds as graph. Concrete calculations in \cite{hoermfio} sections 3.2. and 3.3. as well as \cite{hoerm3} section 21.6. show the need in introducing a line bundle, which is locally related with the invariance of the amplitude and the half-density of an oscillatory integral: the \textit{(Keller-)Maslov line bundle} $M_{\mathsf{\Lambda}}$ is a complex line bundle of the Lagrangian Grassmanian $\mathsf{Gr}(\dot{T}^\ast X)$, where the transition functions on the intersection of open conic subsets of $\mathsf{\Lambda}$ are defined by multiplication with powers of $\Imag$. In this way the half-densities are invariant under a change of local coordinates up to a multiplicative factor (Maslov factor), which can be either described by this $\Imag$-factor or by its complex exponential representation. The power - called Maslov index - is related to the signature difference of the phase functions Hessian in the old and new local coordinate system and is constant in any neighborhood of a point in a Lagrangian submanifold. 
\end{comment}
The principal symbol $\bm{\sigma}_{m}(K)$ of a Lagrangian distribution $K \in I^m(X;\mathsf{\Lambda},E\otimes\hdense{X})$ becomes a section of the bundle $M_{\mathsf{\Lambda}}\otimes\hdense{\mathsf{\Lambda}}\otimes\mathrm{pr}^\ast_{\mathsf{\Lambda}} E$, where $\mathrm{pr}^\ast_{\mathsf{\Lambda}} E$ denotes the lift of the vector bundle to $\mathsf{\Lambda}$ via the projection mapping $\mathrm{pr}_{\mathsf{\Lambda}}:(x,\xi)\,\mapsto\,x$. The appearing line bundle can always be trivialized, which is shown in Lemma 4.1.3 in \cite{duistfio} and in \cite{hoermfio}. %Local trivializations are generated by choosing a phase function, which parametrizes the Lagrangian submanifold. More details can be found in the given literature.
\\
\\
Let $\mathsf{\Lambda} \subset \dot{T}^\ast(X\times Y)$ be a closed conic Lagrangian submanifold with respect to the symplectic form $\omega_X \oplus \omega_Y$ on $T^\ast(X\times Y)$. A function $K_\mathcal{F}\in I^m(X\times Y;\mathsf{\Lambda},(E\boxtimes F^\ast)\otimes\hdense{X\times Y})$ can be identified with a continuous map $\mathcal{F}$ according to the Schwartz kernel theorem; the boxed tensor product $E\boxtimes F$ denotes the \textit{external tensor product}:
\begin{equation*}
E\boxtimes F := \mathrm{pr}_1^\ast E \otimes \mathrm{pr}_2^\ast F = \Hom(\mathrm{pr}_1^\ast E,(\mathrm{pr}_2^\ast F)^\ast)
\end{equation*} 
where the pullback bundles are defined for $\mathrm{pr}_1:X\times Y \,\rightarrow\, X$ and $\mathrm{pr}_2:X\times Y \,\rightarrow\, Y$ and $(\mathrm{pr}_2^\ast F)^\ast$ is the dual bundle of the pullback bundle $(\mathrm{pr}_2^\ast F)$. %If $\mathsf{\Lambda}$ is a Lagrangian submanifold in $\dot{T}^\ast X \times \dot{T}^\ast Y$, one can show, that $\mathcal{F}$ maps smooth sections with compact support of $E\rightarrow X$ to smooth sections of $F \rightarrow Y$ and by duality one can extend this as a map from compactly supported distributional sections to ordinary distributional sections. This statement uses some 
The wave front set analysis for Lagrangian distributions relates the Lagrangian submanifold $\mathsf{\Lambda}$ to a \textit{homogeneous canonical relation} $\mathsf{C}$ from $\dot{T}^\ast X$ to $\dot{T}^\ast Y$. This is a closed conic Lagrangian submanifold in $\dot{T}^\ast(X\times Y)$ with respect to the symplectic form $\omega_X \oplus (-\omega_Y)$, contained in $\dot{T}^\ast X \times \dot{T}^\ast Y$:
\begin{equation*}
\mathsf{C}:=\SET{(x,\xi,y,\eta) \in \dot{T}^\ast X \times \dot{T}^\ast Y \,:\,(x,\xi,y,-\eta)\in\mathsf{\Lambda}}\quad.
\end{equation*}
In the common literature the corresponding Lagrangian submanifold $\mathsf{\Lambda}$ is denoted by $\mathsf{C}'$ in order to stress the homogeneous canonical relation.
\begin{defi}%\label{defifourier}
Given two (complex) vector bundles $E\rightarrow X$ and $F\rightarrow Y$ of two smooth manifolds $X$ and $Y$ and a closed conic Lagrangian submanifold $\mathsf{\Lambda} \subset \dot{T}^\ast(X\times Y)$ with corresponding homogeneous canonical relation $\mathsf{C}$ from $\dot{T}^\ast X$ to $\dot{T}^\ast Y$; a \textit{Fourier integral operator of order m} from sections of $E$ to those of $F$ is an operator with kernel in $I^m(X\times Y;\mathsf{\Lambda},(E\boxtimes F^\ast)\otimes \hdense{X\times Y})$. The space of those operators will be denoted by $\FIO{m}(X,Y;\mathsf{C}';\Hom(E,F))$.
\end{defi}
The principal symbol of such an operator is a section of $M_{\mathsf{\Lambda}}\otimes\hdense{\mathsf{\Lambda}}\otimes\mathsf{j}^\ast_{\mathsf{\Lambda}}(E\boxtimes F^\ast)$, where $\mathsf{j}_{\Lambda}$ maps from $\mathsf{\Lambda}$ to $X\times Y$ via inclusion into $T^\ast(X\times Y)$ and projection on the bases. Before we list some properties, notions and concepts are needed to be introduced: the maps
\begin{eqnarray*}
\mathsf{r}&:&T^\ast X\rightarrow T^\ast X \quad , \,\,\text{such that}\,\,(x,\xi)\mapsto (x,-\xi) \\
&& \text{and} \\
\mathsf{s}&:&X\times Y \rightarrow Y \times X \quad , \,\,\text{such that}\,\,(x,y)\mapsto (y,x)
\end{eqnarray*}
are reflection in the cotangent bundle and interchanging the factors of the Cartesian product. $\mathsf{\Lambda}^{-1}:=\mathsf{r}^\ast\mathsf{s}^\ast(\mathsf{\Lambda})$ is the inverse of $\mathsf{\Lambda}$ and $\mathsf{C}^{-1}$ denotes the corresponding inverse canonical relation, where $\dot{T}^\ast Y$ and $\dot{T}^\ast X$ are interchanged. Given two homogeneous relations $\mathsf{C}_1$ from $\dot{T}^\ast Y$ to $\dot{T}^\ast Z$ and $\mathsf{C}_2$ from $\dot{T}^\ast X$ to $\dot{T}^\ast Y$, their composition is defined as
\begin{equation*}
\mathsf{C}_1\circ\mathsf{C}_2:=\SET{(x,\xi,z,\zeta)\in \dot{T}^\ast X \times \dot{T}^\ast Z\,\vert\,\exists\,(y,\eta)\in \dot{T}^\ast Y\,:\,(x,\xi,y,\eta)\in \mathsf{C}_2\quad\text{\&}\quad (y,\eta,z,\zeta)\in \mathsf{C}_1} \,\, .
\end{equation*} 
The composition is called \textit{transversal}, if $(\mathsf{C}_1\times\mathsf{C}_2)$ intersects $T^\ast X \times \mathrm{diag}(T^\ast Y)\times T^\ast Z$ transversally, i.e. %, i.e. they intersect in a manifold $\widetilde{\mathsf{C}}$, such that $T_p\widetilde{\mathsf{C}}=T_p(\mathsf{C}_1\times\mathsf{C}_2)\cap T_p(T^\ast X \times \mathrm{diag}(T^\ast Y)\times T^\ast Z)$ for all points $p$ in $\widetilde{\mathsf{C}}$. 
%A stronger condition is, that both are intersecting each other transversally, i.e. 
there exists a $\widetilde{\mathsf{C}} \subset T^\ast X \times \mathrm{diag}(T^\ast Y)\times T^\ast Z$, such that
\begin{equation*}
T_p\widetilde{\mathsf{C}}=T_p(\mathsf{C}_1\times\mathsf{C}_2)+T_p(T^\ast X \times \mathrm{diag}(T^\ast Y)\times T^\ast Z)
\end{equation*}
for all points $p$ in the intersection. The composition is furthermore called \textit{proper}, if the projection $\widetilde{C}\rightarrow\dot{T}^\ast(X\times Z)$ is a proper map. %Let $\lambda \in (\mathsf{C}_1\circ \mathsf{C}_2) \subset \dot{T}^\ast (X\times Z)$ and denote by $\mathsf{C}_{1,2,\lambda}$ the (compact) inverse image in $\widetilde{\mathsf{C}}$ under the mentioned projection. The composition is called \textit{connected}, if this inverse image is connected for any element in the composition, i.e. the fibers of $\mathsf{C}_1\circ \mathsf{C}_2 \rightarrow \mathsf{C}_1\times \mathsf{C}_2 $ are compact and connected. The \textit{excess} $\mathsf{e}$ is the codimension of fibers of the intersection. If both relations intersect transversally, one has $\mathsf{e}=0$.
For $n_X=n_Y$ a homogeneous canonical relation from $\dot{T}^\ast X$ to $\dot{T}^\ast Y$ will be called \textit{local canonical graph}, if %both projections on $\dot{T}^\ast X$ and $\dot{T}^\ast Y$ are local diffeomorphisms, i.e. 
the homogeneous canonical relation is locally the graph of a canonical transformation. In that case the homogeneous canonical relation becomes a symplectic manifold on its own right. %It is called \textit{bijective} in addition, if also $\mathsf{C}^{-1}$ is a local canonical graph, too. An operator with kernel $K$ on $X\times Y$ is \textit{properly supported}, if it properly supported in the sual sense, i.e. each projection $\supp{K}\rightarrow X$ and $\supp{K}\rightarrow Y$ is a proper map. A FIO associated to a homogeneous canonical relation $\mathsf{C}$ is called \textit{elliptic}, if its principal symbol does not vanish on $\mathsf{C}$. 

\begin{lem}[see/c.f. \cite{hoerm4},\cite{hoermfio},\cite{hoermduistfio} and \cite{nazaetal}]\label{fioprop}
Given some (complex) vector bundles $E\rightarrow X$, $F,H \rightarrow Y$ and $G\rightarrow Z$ for three smooth manifolds $X$, $Y$ and $Z$ and homogeneous canonical relations $\mathsf{C}=\mathsf{C}_1$ from $\dot{T}^\ast X$ to $\dot{T}^\ast Y$ and $\mathsf{C}_2$ from $\dot{T}^\ast Y$ to $\dot{T}^\ast Z$; the following properties hold for all $m \in \R$.
\begin{itemize}
\item[(a)] (adjoint FIO) If $A \in \FIO{m}(X,Y;\mathsf{C}';\Hom(E,F))$, then 
\begin{equation*}
A^\ast \in \FIO{m}(Y,X;(\mathsf{C}^{-1})';\Hom(F^\ast,E^\ast))\quad ;
\end{equation*} %with Schwartz kernel in $I^m(\mathsf{s}^\ast(X\times Y);(\mathsf{C}^{-1})',\mathsf{s}^\ast(\hdense{X\times Y})\otimes(F^\ast\boxtimes E))$;
\item[(b)] (composition) Given two operators $A_2 \in \FIO{m_2}(X,Y;\mathsf{C}_2;\Hom(E,F))$ and \\ $A_1 \in \FIO{m_1}(Y,Z;\mathsf{C}_1;\Hom(F,G))$; if at least one is properly supported and the composition $\mathsf{C}_1\circ \mathsf{C}_2$ is proper and transversal, then $A_1\circ A_2 \in \FIO{m_1+m_2}(X,Z;(\mathsf{C}_1\circ\mathsf{C}_2)';\Hom(E,G))$.
%If the composition $\mathsf{C}_1\circ \mathsf{C}_2$ is clean, proper and connected with excess $\mathsf{e}$, then $A_1\circ A_2 \in \FIO{m_1+m_2+\mathsf{e}/2}(X,Z;(\mathsf{C}_1\circ\mathsf{C}_2)';\Hom(E,G))$; if the composition of the canonical relations is even transversal and proper, the theorem holds true with $\mathsf{e}=0$.  
\item[(c)] (exact sequence) There is a symbol exact sequence
\begin{align*}
0\rightarrow \FIO{m-1}(X,Y;\mathsf{C}';\Hom(E,F))\hookrightarrow\FIO{m}&(X,Y;\mathsf{C}';\Hom(E,F)) \\
&\stackrel{\bm{\sigma}_m}{\longrightarrow} C^\infty(\mathsf{C},M_{\mathsf{C}'}\otimes\hdense{\mathsf{C}'}\otimes\mathrm{pr}^\ast_{\mathsf{C}'} E) \rightarrow 0
\end{align*}
for all $m \in \R$, hence the operator has at least one order less if and only if its principal symbol vanishes.
\item[(d)] (special case) If $A\in\ydo{m}{}(Y,\Hom(F,H))$, then $A\in\FIO{m}(Y, Y;N^\ast\mathrm{diag}(X);\Hom(F,H))$; %, where the kernel is conormal with respect to the conormal bundle of the diagonal or rather the conormal bundle $N^\ast \mathrm{grap}(\id{Y})$ of the identity map.
\item[(e)] ($L^2$-regularity) If $\mathsf{C}$ is locally the graph of a locally canonical transformation from $\dot{T}^\ast X$ to $\dot{T}^\ast Y$ and $A \in \FIO{0}(X,Y;\mathsf{C}';\Hom(E,F))$, then $(A^\ast\circ A) \in \ydo{2m}{}(X,\End(E))$ and $A$ maps continuously $L^2_{\comp}(X,E\otimes\hdense{X})$ to $L^2_{\loc}(Y,F\otimes\hdense{Y})$; if 
\begin{equation*}
\sup_{(x,y)\in K}\absval{\bm{\sigma}_0(A)(x,\xi;y,\eta)} \quad \rightarrow \quad 0
\end{equation*}
for $\absval{(\xi,\eta)}\rightarrow \infty$ for all $K \Subset X \times Y$, then it maps as compact operator between $L^2(X,E\otimes\hdense{X})$ to $L^2(Y,F\otimes\hdense{Y})$. 
\item[(f)] (Sobolev regularity) If $\mathsf{C}$ is locally the graph of a locally canonical transformation from $\dot{T}^\ast X$ to $\dot{T}^\ast Y$ and $A \in \FIO{m}(X,Y;\mathsf{C}';\Hom(E,F))$, then $(A^\ast\circ A) \in \ydo{2m}{}(X,\End(E))$ and $A$ maps continuously $H^s_{\comp}(X,E\otimes\hdense{X})$ to $H^s_{\loc}(Y,F\otimes\hdense{Y})$ for all $s\in \R$. 
%\item[(g)] (pushforward and pullback) Let $f\,:\,C^\infty(X)\,\rightarrow\,C^\infty(Y)$;
%if $f$ is an immersion, then the pullback $f^\ast$ can be extended to distributions\bnote{af2} and acts as operator in $\FIO{0}(Y,X;\mathsf{s}^\ast(N^\ast\mathrm{graph}(f)))$; if $f$ is a proper submersion, then the pushforward can be extended to distributions and $f_\ast\in\FIO{0}(X,Y;N^\ast\mathrm{graph}(f))$.
\end{itemize}  
\end{lem}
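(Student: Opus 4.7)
The proof of this lemma will essentially assemble classical results from the Hörmander–Duistermaat FIO calculus, adapted to the vector bundle setting via the standard trivialization of the Maslov line bundle mentioned in the preceding discussion. My plan is to treat (a)--(d) as consequences of the local oscillatory-integral representation of Lagrangian distributions and their symbol calculus, and then to deduce (e) and (f) from (a)--(c) via the standard ``$A^\ast A$ reduction''.

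For (a), the key observation is that formal adjoint corresponds on the kernel level to applying the pullback $\mathsf{r}^\ast \mathsf{s}^\ast$ and dualising the vector bundle; since this reflects the Lagrangian $\mathsf{\Lambda}$ to $\mathsf{\Lambda}^{-1}$ and preserves the order of a conormal/Lagrangian distribution, the order is unchanged. For (d), I will recall that a pseudo-differential operator has Schwartz kernel conormal to the diagonal, and that $N^\ast \mathrm{diag}(X) \subset \dot T^\ast(X\times X)$ is a conic Lagrangian submanifold; the standard order conventions in the two calculi (with the half-density normalization that matches our definition $I^m$) then match up so that $\ydo{m}{}(Y,\Hom(F,H))=\FIO{m}(Y,Y;N^\ast\mathrm{diag}(Y);\Hom(F,H))$. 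The exact sequence in (c) is the version of the symbol map on $I^m$ spelled out in Hörmander Vol.~IV, Theorem~25.1.9 (or Theorem 3.2.5 of~\cite{hoermfio}): a Lagrangian distribution is of one order lower iff its principal symbol vanishes, and every smooth section of $M_{\mathsf{C}'}\otimes\hdense{\mathsf{C}'}\otimes\mathrm{pr}^\ast_{\mathsf{C}'} E$ is realized as the symbol of some such distribution; no additional work beyond citing the scalar case and tensoring with the bundle of homomorphisms is required.

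The main technical step is (b), the transversal composition theorem. Here I would follow Hörmander~IV, Theorem~25.2.3 (or~\cite{duistfio}, Ch.~4): write both kernels as locally finite sums of oscillatory integrals with non-degenerate phase functions parametrizing $\mathsf{C}_1$ and $\mathsf{C}_2$, respectively, and form the phase function of the composition. The transversality hypothesis on $\mathsf{C}_1\times\mathsf{C}_2$ with $T^\ast X\times\mathrm{diag}(T^\ast Y)\times T^\ast Z$ is exactly what guarantees that the resulting phase is again non-degenerate and that stationary-phase/clean-intersection calculus applies; properness of the composition ensures the resulting sum is locally finite, and the assumption that at least one operator is properly supported makes the composition well-defined on $C^\infty_{\mathrm{comp}}$ without cutoffs. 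The orders add because the half-density factors multiply correctly under the composition of the two oscillatory integrals. This is the step most prone to sign and half-density mismatches, so I would carry it out with fixed conventions once and for all and simply quote the vector-bundle extension.

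For (e) and (f), the local-canonical-graph hypothesis is used as follows. By (a) we have $A^\ast\in\FIO{m}(Y,X;(\mathsf{C}^{-1})';\Hom(F^\ast,E^\ast))$, and the composition $\mathsf{C}^{-1}\circ\mathsf{C}$ is transversal and equals $N^\ast\mathrm{diag}(X)$ precisely because $\mathsf{C}$ is locally a canonical graph; by (b) and (d) we then obtain $A^\ast\circ A\in\ydo{2m}{}(X,\End(E))$. Sobolev (and in particular $L^2$) continuity of $A$ on compact/local Sobolev spaces follows by the usual estimate $\|Au\|_{H^{s-m}}^2 = \langle A^\ast A u, u\rangle \lesssim \|u\|_{H^s}^2$ together with properly supported modifications (using part~(b) and a cutoff) to control supports; this is the content of Corollary~25.3.2 in Hörmander~IV. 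The compactness criterion in~(e) under the decay of the principal symbol is then a direct application of the approximation argument: a principal symbol vanishing at infinity makes $A^\ast A$ a compact operator on $L^2_{\mathrm{comp}}$ by Rellich, whence $A$ is compact as claimed.
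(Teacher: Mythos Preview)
The paper does not give a proof of this lemma at all: it is stated as a reference result, with the bracket ``see/c.f.\ \cite{hoerm4},\cite{hoermfio},\cite{hoermduistfio} and \cite{nazaetal}'' replacing a proof, and the text immediately moves on to the restriction and corestriction operators. Your sketch is therefore not competing with any argument in the paper; it is a reasonable expansion of exactly the citations the author intended the reader to consult (in particular Theorems~25.1.9, 25.2.3 and 25.3.1--25.3.2 in \cite{hoerm4}, and Chapter~4 of \cite{duistfio}).

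One small imprecision worth flagging: in your compactness argument for~(e) you write that ``a principal symbol vanishing at infinity makes $A^\ast A$ a compact operator on $L^2_{\mathrm{comp}}$ by Rellich''. Decay of $\bm{\sigma}_0(A)$ at fibre infinity does \emph{not} by itself make $A^\ast A$ of negative order, so Rellich is not directly applicable. The standard route (e.g.\ \cite{hoerm4}, proof of Theorem~25.3.1, or the $\Psi$DO analogue in \cite{hoerm3}) is instead an approximation argument: truncate the full symbol in the fibre to obtain smoothing (hence compact) operators and show that the truncations converge to $A$ in operator norm using the $L^2$-bound you already established. With that adjustment your outline is a faithful summary of the cited literature.
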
 

Given a smooth submanifold $Y\subset X$ of a smooth manifold $X$ with codimension $k$ and inclusion $\inclus\,:\,Y\,\hookrightarrow\,X$. This gives rise to two operators\bnote{af1}: for $s\in\R$ with $s>k/2$ and local coordinates $(x',x'')$ on $X$, such that $Y=\SET{x''=0}$, one has
\begin{itemize}
\item[(1)] \textit{restriction operator}:
\begin{equation}\label{restop}
\begin{split}
\rest{Y}:=\inclus^\ast &: H^s_\loc(X)\quad\rightarrow\quad H^{s-k/2}_\loc(Y) \\
& u(x',x'') \quad\mapsto\quad u(x',0) 
\end{split}
\end{equation}
assigning each function its trace on the submanifold;
\item[(2)] \textit{corestriction operator}:
\begin{equation}\label{corestop}
\begin{split}
\inclus_\ast &: H^{-s+k/2}_\loc(Y)\quad\rightarrow\quad H^{-s}_\loc(X) \\
& v(x') \quad\mapsto\quad (v \otimes \updelta_{Y})(x',x'') 
\end{split}
\end{equation}
is the adjoint operator, which takes functions on the submanifold and lifts them to distributions on $X$ localized on $Y$ with a $\updelta$-Distribution.
\end{itemize}

These operators can be extended to act on vector-valued sections: let $E$ be a vector bundle over $X$ and $F$ a vector bundle over $Y$. The restriction operator maps sections of $E$ to sections on the restricted vector bundle $E\vert_Y$; the corestriction operator maps sections of $F$ to a vector bundle $\tilde{F}$ over $X$, such that $\tilde{F}\vert_Y=F$. Both operators are two special Fourier integral operators:
\begin{lem}\label{restcorestfio}
Consider a smooth submanifold $Y\subset X$ of a smooth manifold $X$ with codimension $k$ and inclusion map $\inclus\,:\,Y\,\hookrightarrow\,X$ as well as vector bundles $E\rightarrow X$ and $F\rightarrow Y$, then 
\begin{equation*}
\inclus^\ast \in \FIO{k/4}(X,Y;\mathsf{C}'(\inclus^\ast);\Hom(E,E\vert_Y))\quad\text{and}\quad \inclus_\ast \in \FIO{k/4}(Y,X;\mathsf{C}'(\inclus_\ast);\Hom(F,\tilde{F}))
\end{equation*}
with $\tilde{F}\vert_Y=F$ and the Lagrangian submanifolds are defined by the canonical relations
\begin{align}
\mathsf{C}(\inclus_\ast)&:=N^\ast \mathrm{graph}(\inclus):=\SET{(x,\xi,y,\eta)\in \dot{T}^\ast(X\times Y)\,\vert\, x=\inclus(y) \quad\text{and}\quad \xi\in(\differ \inclus\vert_x)^{\dagger}(\SET{\eta})} \nonumber \\ %\label{relcorest}\\
\text{and} & \label{relrest} \\
\mathsf{C}(\inclus^\ast)&:=%\mathsf{s}^\ast(N^\ast \mathrm{graph}(\inclus)):=\SET{(y,\eta,x,\xi)\in \dot{T}^\ast(Y\times X)\,\vert\, x=\inclus(y) \quad\text{and}\quad \xi\in(\differ \inclus\vert_x)^{\dagger}(\SET{\eta})}\nonumber\\
%&=& \SET{(y,\eta,x,\xi)\in \dot{T}^\ast(Y\times X)\,\vert\, x=\inclus(y) \quad\text{and}\quad \eta=\xi\vert_{T_{\inclus(y)}X}} \nonumber \\
\SET{(y,\eta,x,\xi)\in \dot{T}^\ast(Y\times X)\,\vert\, (x,\xi) \in \dot{T}^\ast X \,:\, \rest{Y}^\ast(x,\xi)=(y,\eta)}\quad. \nonumber
\end{align}
Moreover both operators are properly supported.
\end{lem}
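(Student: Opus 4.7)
The strategy is to write down the Schwartz kernels of $\inclus^\ast$ and $\inclus_\ast$ explicitly as oscillatory integrals, read off the phase function and amplitude, and then invoke Hörmander's conventions to identify the Lagrangian submanifold and the order. Work locally first: choose coordinates $(x',x'')$ on a tubular neighbourhood of $Y$ in $X$ with $x''\in\R^k$ and $Y=\{x''=0\}$, so that $\inclus(y)=(y,0)$ and the $x'$ serve as coordinates on $Y$. For trivial line bundles the Schwartz kernel of $\inclus_\ast$ reads
\begin{equation*}
K_{\inclus_\ast}(x,y)\;=\;\delta(x'-y)\,\delta(x'')\;=\;(2\uppi)^{-n_X}\int_{\R^{n_X}} e^{\,\Imag\bigl[(x'-y)\cdot\xi'+x''\cdot\xi''\bigr]}\,d\xi'\,d\xi'',
\end{equation*}
i.e.\ an oscillatory integral with non-degenerate phase $\phi(x,y,\xi)=(x'-y)\cdot\xi'+x''\cdot\xi''$, constant amplitude in $S^{0}$, and $N=n_X$ phase variables on $X\times Y$ of dimension $n_X+n_Y=2n_X-k$. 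The kernel of $\inclus^\ast$ is obtained by interchanging the $X$- and $Y$-factors.

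To identify the canonical relation I would compute the critical set of the phase. Setting $d_\xi\phi=0$ forces $x'=y$ and $x''=0$, i.e.\ $x=\inclus(y)$, while $d_x\phi=(\xi',\xi'')$ and $d_y\phi=-\xi'$. Thus the associated Lagrangian $\mathsf{C}'(\inclus_\ast)$ is
\begin{equation*}
\bigl\{(\inclus(y),(\xi',\xi''),y,-\xi')\;:\;y\in Y,\;(\xi',\xi'')\in\R^{n_X}\setminus\{0\}\bigr\},
\end{equation*}
and passing to the canonical relation by flipping the sign on the $Y$-fibre variable gives precisely the set $N^\ast\mathrm{graph}(\inclus)$ in \cref{relrest}, since in the same coordinates the tangent space of $\mathrm{graph}(\inclus)$ at $(\inclus(y),y)$ equals $\{((v,0),v):v\in\R^{n_Y}\}$, whose annihilator is parametrised by $\eta=\xi'=(d\inclus|_y)^{\dagger}\xi$. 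The canonical relation for $\inclus^{\ast}$ follows either by the analogous calculation with factors swapped, or more conceptually from Lemma~\ref{fioprop}(a): $\inclus^{\ast}$ is the formal adjoint of $\inclus_\ast$ (modulo the half-density conventions), so its canonical relation is $\mathsf{C}(\inclus_\ast)^{-1}$, which is exactly the set described in the lemma.

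For the order I would appeal to Hörmander's formula: a Lagrangian half-density distribution on a manifold of dimension $d$ that is locally an oscillatory integral with amplitude of order $m_a$ and $N$ phase variables has order $m=m_a+N/2-d/4$. With $m_a=0$, $N=n_X$ and $d=2n_X-k$ this gives
\begin{equation*}
m\;=\;\frac{n_X}{2}-\frac{2n_X-k}{4}\;=\;\frac{k}{4},
\end{equation*}
as required. Extending from trivial line bundles to general $E$, $F$, $\tilde F$ uses local trivialisations and fibrewise multiplication with smooth bundle morphisms; this neither changes the canonical relation nor the order, since the matrix-valued amplitude remains of order $0$. Properly supportedness is immediate from the description of the kernels, both of which are concentrated on $\mathrm{graph}(\inclus)\subset X\times Y$: the two factor projections restrict there to the inclusion $\inclus\colon Y\hookrightarrow X$ and the identity on $Y$, both proper maps, so conditions \cref{propsuppa} and \cref{propsuppb} are satisfied.

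The hardest part will be the bookkeeping that reconciles the Hörmander conventions (half-densities, Maslov factors, normalisations of phase variables and dimension shifts) with the conventions fixed at the beginning of this appendix, so that the claimed order is \emph{exactly} $k/4$ and the canonical relation comes out \emph{exactly} as in \cref{relrest}, not merely up to inversion, a sign on the fibre, or a shift. Once the Hörmander oscillatory-integral representation of $\delta(x'-y)\delta(x'')$ is fixed and interpreted as a section of the appropriate half-density bundle on $X\times Y$, everything else is a direct verification and no further analytic input is needed.
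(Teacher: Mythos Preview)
Your proposal is correct and more detailed than the paper's own proof. The paper simply cites Duistermaat's book and another reference for the fact that the kernels are Lagrangian distributions of the stated order with the stated canonical relations, whereas you actually carry out the oscillatory-integral computation and verify Hörmander's order formula. Both routes are legitimate; yours is self-contained, the paper's is a one-line appeal to the literature.

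The one genuine difference is in the argument for proper support. You argue directly from the support of the kernel: $\supp K \subset \mathrm{graph}(\inclus)$, and the two projections restricted to $\mathrm{graph}(\inclus)$ are the inclusion $\inclus$ and the identity on $Y$, both proper. The paper instead argues functionally: both operators map $H^s_{\loc}$ to $H^{s'}_{\loc}$ (this is \cref{restop} and \cref{corestop}), hence by duality their adjoints map $H^{-s'}_{\comp}$ to $H^{-s}_{\comp}$, which is exactly the characterisation \cref{propsuppa}--\cref{propsuppb}. Your argument is more geometric and avoids invoking Sobolev mapping properties, but note it tacitly uses that $\inclus$ is a proper map, i.e.\ that $Y$ is closed in $X$; this is automatic in the paper's applications (Cauchy hypersurfaces are closed embedded submanifolds), but you may want to say so explicitly.
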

\begin{proof}
In \cite{duistfio} and \cite{nazaetal} it is shown, that the integral kernels of both operators are Lagrangian distributions of order $k/2$ with claimed canonical relations. This implies, that both operators are FIOs as stated. Since both map local Sobolev sections to local Sobolev sections, their adjoints map compactly supported Sobolev sections to compactly supported Sobolev sections with negative orders by duality, see Theorem 7.7 in \cite{shubin11}. This shows in particular, that both operators are properly supported.
\end{proof}
\begin{comment}
Since both operators map local Sobolev sections into local Sobolev sections, their adjoints map compactly supported Sobolev sections to compactly supported Sobolev sections with negative orders by duality, see Theorem 7.7. in \cite{shubin11}. This shows in particular, that both operators are properly supported. %Applying both operators on functions with sufficient regularity and expressing the occuring Dirac distributions with oscillatory integrals in the chosen coordinate system shows, that the kernels of these operator are indeed Lagrangian distributions, see e.g. \cite{nazaetal}. 
The Lagrangian submanifolds are given by the canonical relations
\begin{eqnarray}
\mathsf{C}(\inclus_\ast)&:=&N^\ast \mathrm{graph}(\inclus):=\SET{(x,\xi,y,\eta)\in \dot{T}^\ast(X\times Y)\,\vert\, x=\inclus(y) \quad\text{and}\quad \xi\in(\differ \inclus\vert_x)^{\dagger}(\SET{\eta})}\label{relcorest}\\
&&\text{and} \nonumber \\
\mathsf{C}(\inclus^\ast)&:=&\mathsf{s}^\ast(N^\ast \mathrm{graph}(\inclus))%:=\SET{(y,\eta,x,\xi)\in \dot{T}^\ast(Y\times X)\,\vert\, x=\inclus(y) \quad\text{and}\quad \xi\in(\differ \inclus\vert_x)^{\dagger}(\SET{\eta})}\nonumber\\
%&=& \SET{(y,\eta,x,\xi)\in \dot{T}^\ast(Y\times X)\,\vert\, x=\inclus(y) \quad\text{and}\quad \eta=\xi\vert_{T_{\inclus(y)}X}} \nonumber \\
=\SET{(y,\eta,x,\xi)\in \dot{T}^\ast(Y\times X)\,\vert\, (x,\xi) \in \dot{T}^\ast X \,:\, \rest{Y}^\ast(x,\xi)=(y,\eta)}\quad. \label{relrest}
\end{eqnarray}
In summary the restriction and corestriction are properly supported Fourier integral operators of order $\codim(Y)/4=k/4$: $\rest{Y}=\inclus^\ast \in \FIO{k/4}(X,Y;\mathsf{C}'(\inclus^\ast))$ and $\inclus_\ast \in \FIO{k/4}(Y,X;\mathsf{C}'(\inclus_\ast))$. See also \cite{duistfio} p. 113. Here we already neglect the notion of the half-density bundle, as a coordinate system can be chosen in order to trivialize both the Maslov and the density bundle.
\end{comment}

The rest of this appendix is dedicated to applications with respect to initial value problems: consider the Cauchy Problem 
\begin{equation}\label{cauchygen}
Pu=0\quad \text{and} \quad \rest{\Sigma}Q_ju=Q_ju\vert_{\Sigma}=g_j , 
\end{equation}
where $P \in \ydo{m}{}(M),\, Q_j \in \ydo{m_j}{}(M)$ and $g_j \in C^{-\infty}_\comp(\Sigma)$, $j\in \SET{0,...,m-1}$, are initial values on the initial hypersurface $\Sigma$ of a manifold $M$ and the $Q_j$ can be related to operators, which generates the intial values from solutions $u$. The aim is to express $u$ with solution operators, acting on the initial values $g_j$ in \cref{cauchygen}. In fact these solution operators can be constructed with Lagrangian distributions as integral kernels, i.e. they are represented as FIOs. In order to do so, one needs to impose several conditions on $P$ and $Q_j$ - see \cite{duistfio} for details: a pseudo-differential operator $P$, with bicharacteristics from its principal symbol being transversal to the initial hypersurface and $\dim(\kernel{\bm{\sigma}_m(P)(x_0,\xi_0)})=\mu$ for all $(x_0,\xi_0)\in \dot{T}^\ast(\Sigma)$, is called \textit{strictly hyperbolic} of multiplicity $\mu$ with respect to $\Sigma$. Thus $\mu$ counts the number of solutions of
\begin{equation}\label{princecond}
\bm{\sigma}_m(P)(x_0,\xi)=0\quad \text{and}\quad \xi\vert_{T_{x_0}\Sigma}=\xi_0 \quad.
\end{equation}
The transversality of the bicharacteristics is a rephrasing of the condition, that zeroes of the principal symbol at $(x_0,\xi)$ are simple and thus non-vanishing on the orthogonal complement of $T^\ast_{x_0}\Sigma$ or in other words:
\begin{comment}
\begin{equation}\label{simplycond}
\differ_\xi \bm{\sigma}_m(P)(x_0,\xi) \notin T^\ast_{x_0}\Sigma\,\, , \quad \text{if}\,\,\bm{\sigma}_m(P)(x_0,\xi)=0\,\,\text{for}\,\,\xi\neq 0\quad;
\end{equation} 
\end{comment}
the initial hypersurface is noncharacteristic and the number of simple zeroes is finite. Further assumptions has to be made in order to have well defined compositions of the solution operators with the pseudo-differential operators on the hypersurface: the relation for the solution operators
\begin{equation}\label{canrelationfio}
\mathsf{C}:=\SET{(y,\eta,x_0,\xi_0) \in \dot{T}^\ast M \times \dot{T}^\ast \Sigma\,\vert\, (x_0,\xi)\rightsquigarrow(y,\eta)\,:\, \cref{princecond}\,\,\text{holds}}
\end{equation}
guarantees a well defined composition with all $Q_j$, where $(x_0,\xi)\rightsquigarrow(y,\eta)$ denotes, that $(y,\eta)$ is connected with a bicharacteristic through $(x_0,\xi)$. It is an embedded submanifold of $\dot{T}^\ast (M\times\Sigma)$, if in addition the following conditions hold:
\begin{itemize}
\item[(1)] (transversality) every bicharacteristic curve of $P$ intersects $\Sigma$ at most once;
\item[(2)] (properness) for all $K \Subset M$ exists a $K_0 \Subset \Sigma$, such that each bicharacteristic curve, starting in $K$, hits $\Sigma$ in $K_0$;
\item[(3a)] no bicharacteristic curve starting on $\Sigma$ stays in a compact region in $M$;
\item[(3b)] (pseudo convexity) for all $K_0 \Subset \Sigma$ and $K \Subset M$ exists a $K' \Subset M$, s.t. a segment of the bicharacteristic curve, connecting one point in $\Sigma$ and one in $M$, lies inside $K'$.
\end{itemize}
The last imposed condition is related to the initial value operators $Q_j$:
\begin{itemize}
\item[(4)] the principal symbols of $Q_j$ are non-singular for any $(x_0,\xi^{(j)}(x_0,\xi_0)) \in \dot{T}^\ast \Sigma$, s.t. \cref{princecond} holds for any solution $\xi^{(j)}$, $j \in \SET{1,..., \mu}$.
\end{itemize}
Theorem 5.1.2 in \cite{duistfio} claims, that for strictly hyperbolic pseudo-differential operators there exist solution operators $\mathcal{G}_k \in \FIO{-m_k -1/4}(\Sigma,M;\mathsf{C}')$ with canonical relation in \cref{canrelationfio}, such that it maps $C^{-\infty}_\comp(\Sigma) \rightarrow C^{-\infty}(M)$ continuously, conditions (1) up to (4) are satisfied and a solution $u$ of \cref{cauchygen} can be expressed as $u=\sum_{j=1}^\mu \mathcal{G}_j g_j$. Lemma 5.1.3 and Lemma 5.1.4 imply Theorem 5.1.6 in the same reference, which states a similar result for strictly hyperbolic differential operators and initial value operators $Q_j=(\nabla_{\partial_t})^j$. From this result one observes, that $\supp{\mathcal{G}_j} \subset \SET{(p,x)\in M \times \Sigma_0\,\vert\, x \in \mathscr{D}(p)\cap \Sigma_0}$ and $\mathcal{G}_j \in \FIO{-j-1/4}(\Sigma,M;\mathsf{C}',\Hom(E\vert_{\Sigma_0},E))$, where $\supp{\mathcal{G}_j}$ refers to the support of the corresponding Schwartz kernels. $\mathscr{D}(p)$ denotes the domain of dependence, containing all points in $M$, which can be reached from $p \in M$ by all curves, which tangent vectors are the tangent vectors of bicharacteristics through $p$, pointing downward along the lower cone in $T_pM$. The result can be extended to any differential operator with scalar valued principal symbol or to any (system of) differential operators of real principal type.\\
\\
A special situation arises, if $M$ is globally hyperbolic with Lorentzian metric $\met$, $E\rightarrow M$ vector bundle, $\Sigma_0$ a complete Cauchy hypersurface and $P\in \Diff{2}{}(M,\End(E))$ is \textit{normally hyperbolic}, i.e. its principal symbol is determined by the metric in such a way, that 
\begin{equation*}%\label{normhyp}
\bm{\sigma}_2(P)(x,\xi)=\pm\met_x(\xi^\sharp,\xi^\sharp) \id{E} \quad.
\end{equation*} 
The vanishing of the principal symbol corresponds to the vanishing of $\met_x(\xi^\sharp,\xi^\sharp)$ at each point $x \in M$, which is non-trivially fullfilled for $\xi^\sharp$ being a lightlike vector at $x$. As the bicharacteristic strips are determined by the Hamilton equations of the Hamilton function $1/2\met_x(\xi^\sharp,\xi^\sharp)=1/2\met_x^{\ast}(\xi,\xi)$, they can be reduced to the (co-) geodesic equations for null curves in $T^\ast M$. Thus the bicharacteristic curves (projection of the bicharacteristic strip on $M$) are given by lightlike geodesics. %Since normally hyperbolic operators have scalar valued principal symbol, one can try to apply Theorem \ref{cauchygendiff}: 
The domain of dependence corresponds to the past causal domain $\Jlight{-}(p)$ at $p$. Because $\Jlight{-}(p)\subset \Jlight{}(p)$ it itself spatially compact for any point in $M$ the intersection with any Cauchy hypersurface is compact, wherefore $\Jlight{-}(p)\cap \Sigma_0$ is compact for every $p \in M$. The global hyperbolicity of $M$ is equivalent to $M$ being causal and strongly causal, see Theorem \ref{theo2-2}. 
%The strong causality impose, that if any causal geodesic is confined inside a compact region of $M$ it has already its endpoints inside this compact region. By Picard-Lindelöf the domain of definition for the lightlike geodesics is $\R$ and thus the solution is maximal/complete. Their tangent vectors tend to $0$, if the parametrization tends to $\pm \infty$. Since any geodesic parallel transports is determined by its own tangent vector, it implies, that the tangent vectors are already vanishing for all times, wherefore the geodesics are constant and thus a contradiction to the assumption. The same Theorem implies, that the intersection of a future and past light cone of different points is compact: $\Jlight{+}(p)\cap\Jlight{-}(q)$ compact for all $p,q \in M$. This holds true for any two compact subsets of $M$ as well, wherefore one has for $K \Subset M$ also $\Jlight{+}(K)\cap\Jlight{-}(K)=:K'$ compact. If a segment of a lightlike geodesic has its endpoints in a compact region $K$, which contains a compact domain $K_0 \subset \Sigma$, from which it starts from, then the curve stays inside $K'$. 
Proposition 4.3 and Proposition 4.4 in \cite{radz} imply, that normally hyperbolic operators are strictly hyperbolic and thus (3a) and pseudo convexity (3b) are satisfied. Global hyperbolicity implies transversality (1) according to Theorem \ref{theo21-1}, since it induces the existence of a Cauchy temporal function, which level sets are again Cauchy hypersurfaces, but with the additional property, that any inextendable causal curve crosses any $\Sigma_t$ once. Thus the lightlike geodesic curves intersect the initial slice $\Sigma_0$ at most once. Properness follows from the following argument as in \cite{radz}: take any $K \Subset M$, then $\Jlight{+}(K)\cap\Jlight{-}(K)$ is compact. This implies, that $K_0:= (\Jlight{+}(K)\cap\Jlight{-}(K))\cap \Sigma_0$ is compact, as $\Sigma_0$ is closed by completeness. Any lightlike geodesic starting in $K$ will hit $\Sigma_0$ in $K_0$. In summary all conditions from (1) to (3) on the bicharacteristics are proven. The canonical relation in \cref{canrelationfio} for the solution operators reduces to
\begin{equation}\label{canrelationnormhyp}
\mathcal{C}:=\SET{(x,\xi,y,\eta) \in \dot{T}^\ast M \times \dot{T}^\ast \Sigma_0\,\vert\, (x,\xi)\sim(y,\eta)}\quad ,
\end{equation}
where $(x,\xi)\sim(y,\eta)$ means, that for a lightlike covector $\zeta \in T^\ast_y M$ the points $(x_0,\xi)$ and $(y,\zeta)$ are one the same lightlike (co-) geodesic strip %(i.e. are connected by the same orbit of the null geodesic flow in $M$)
and $\rest{\Sigma_0}^\ast \zeta = \eta$. \\
\begin{comment}Since $\zeta$ can either be future or past lightlike directed covector, its pullback to the initial hypersurface gives the same covector $\eta$. In order to distinguish both directions one considers $\xi$ to be a future or past directed lightlike vector, such that $(x,\xi)$ and $(y,\zeta)$ are connected by a null geodesic from past to future or vice versa. This allows to decompose $\mathcal{C}$ into two connected components $\mathcal{C}^{\pm}$.
:
\begin{equation}\label{canrelationnormhypsplit}
\mathcal{C}^{+} = \SET{(x,\xi,y,\eta) \in \mathcal{C}\,\vert\, \xi \vartriangleright 0}\quad\text{\&}\quad \mathcal{C}^{-} = \SET{(x,\xi,y,\eta) \in \mathcal{C}\,\vert\, \xi \vartriangleleft 0} \quad,
\end{equation}
where $\xi\vartriangleright 0$ means, that $+\xi$ future directed lightlike covector and $\xi\vartriangleleft 0$ means, that $\xi$ past directed lightlike covector or equivalently $-\xi$ is future directed. 
They play a crucial role in trivializing the Keller-Maslov line bundle and the half-density bundle over the canonical relation, which is explained in the appendix of \cite{BaerStroh}.
\end{comment}
\\
After all this details Theorem A.1 in \cite{BaerStroh} is recovered with all needed details in addition:
\newpage 
\begin{theo}\label{cauchynormhyphom}
Let $E\rightarrow M$ be a vector bundle of a globally hyperbolic Lorentzian manifold $M$ and $\Sigma_0$ a complete Cauchy hypersurface; the Cauchy problem
\begin{equation*}
Pu  =0\quad \text{in} \quad M \quad, \quad \rest{\Sigma_0}(\nabla_{\partial_t})^ju =g_j \quad\text{for}\quad j \in \SET{0,1}, 
\end{equation*}
for a normally hyperbolic operator $P\in \Diff{2}{}(M,\End(E))$ has a unique solution $u$ for every initial value $g_j \in C^\infty(\Sigma_0)$, such that 
\begin{equation*}
u =\sum_{j=0}^m \mathcal{G}_j g_j \quad,
\end{equation*}
where the solution operators $\mathcal{G}_j$ are continuous mappings from $C^\infty(\Sigma_0,E\vert_{\Sigma_0})$ to $C^\infty(M,E)$ with the properties
\begin{itemize}
\item[(1)] $\supp{\mathcal{G}_j} \subset \SET{(p,x)\in M \times \Sigma_0\,\vert\, x \in \Jlight{-}(p)\cap \Sigma_0}$ and
\item[(2)] $\mathcal{G}_j \in \FIO{-j-1/4}(\Sigma,M;\mathcal{C}';\Hom(E\vert_{\Sigma_0},E))$ with $\mathcal{C}$ as in \cref{canrelationnormhyp}.
\end{itemize} 
\end{theo}

%\newpage
%\cleardoublepage
%\input{appendix/smoothint}

% Include generates pagebreak after headline - input does not ---- or plug in the chapters into the included texts

%%%%%%%%%%%%%%%%%%%%%%%%%%%%%%%%%%%%%%%%%%%%%%%%%%%%%%%%%%%%%%% bibliography %%%%%%%%%%%%%%%%%%%%%%%%%%%%%%%%%%%%%%%%%%%%%%%%%%%%%%%%%%%%%%%%%%%%%%%%

\addtocontents{toc}{\vspace{-3ex}}
{\footnotesize
\bibliography{literature}}			% source
%\printbibliography

\noindent
{\footnotesize \textsc{Mathematisches Institut, Universit\"at Oldenburg, 26129 Oldenburg, Germany}\\
\emph{Email address:} \texttt{orville.damaschke@uni-oldenburg.de}
%\cleardoublepage 

%%%%%%%%%%%%%%%%%%%%%%%%%%%%%%%%%%%%%%%%%%%%%%%%% declaration and publishing reglementation %%%%%%%%%%%%%%%%%%%%%%%%%%%%%%%%%%%%%%%%%%%%%%%%%%%%%%%%

%XXXXXXXXXXXXXXXXXXXXXXXXXXXXXXXXXXXX

%%%%%%%%%%%%%%%%%%%%%%%%%%%%%%%%%%%%%%%%%%%%%%%%%%%%%%%%%%%%%%%%%%%%%%%%%%%%%%%%%%%%%%%%%%%%%%%%%%%%%%%%%%%%%%%%%%%%%%%%%%%%%%%%%%%%%%%%%%%%%%%%%%%%
\end{document}